\documentclass[a4paper,10pt]{amsart}
\usepackage{etex}
\usepackage[plainpages=false]{hyperref}
\usepackage{amsfonts,amssymb,amsthm, mathrsfs, lscape}
\usepackage{verbatim}
\usepackage[T1]{fontenc}
\usepackage[utf8]{inputenc}
\usepackage{lmodern}
\usepackage{mathtools}
\usepackage{latexsym}
\usepackage{amssymb}
\usepackage{graphics}
\usepackage{graphicx}
\usepackage[space]{cite}
\usepackage{tikz}

\usetikzlibrary{intersections}
\usetikzlibrary{shapes.callouts}
\tikzset{connect/.style={thick,black} }
\usepackage{latexsym}

\usepackage{amsmath}

\usepackage{rawfonts}
\usepackage[all]{xy}
 \usepackage[usenames,dvipsnames]{pstricks}
 \usepackage{epsfig}
 \usepackage{pst-grad} 
 \usepackage{pst-plot} 
 \usepackage[space]{grffile} 
 \usepackage{etoolbox} 
 \makeatletter 

\renewcommand{\Re}{{\operatorname{Re}\,}}
\renewcommand{\Im}{{\operatorname{Im}\,}}

\renewcommand{\epsilon}{\varepsilon}

\newcommand{\PP}{{\mathbb P}}

\newcommand{\C}{{\mathbb C}}

\newcommand{\CP}{\C\PP}

\newcommand{\dbar}{\bar\partial}
\newcommand{\ddbar}{\partial\bar\partial}

\newcommand{\se}{{\operatorname{Se}}}

\newcommand{\bcal}{\mathcal{B}}
\newcommand{\ccal}{\mathcal{C}}
\newcommand{\dcal}{\mathcal{D}}

\newcommand{\gcal}{\mathcal{G}}
\newcommand{\hcal}{\mathcal{H}}
\newcommand{\ical}{\mathcal{I}}

\newcommand{\kcal}{\mathcal{K}}

\newcommand{\mcal}{\mathcal{M}}
\newcommand{\ncal}{\mathcal{N}}
\newcommand{\ocal}{\mathcal{O}}
\newcommand{\pcal}{\mathcal{P}}
\newcommand{\qcal}{\mathcal{Q}}

\newcommand{\tcal}{\mathcal{T}}
\newcommand{\ucal}{\mathcal{U}}
\newcommand{\vcal}{\mathcal{V}}
\newcommand{\wcal}{\mathcal{W}}
\newcommand{\xcal}{\mathcal{X}}
\newcommand{\zcal}{\mathcal{Z}}

\def \xmd {X\backslash D}

\newtheorem{theorem}{{Theorem}}[section]

\newtheorem{corollary}[theorem]{{Corollary}}
\newtheorem{lem}[theorem]{{Lemma}}
\newtheorem{lemma}[theorem]{{Lemma}}

\newtheorem{proposition}[theorem]{{Proposition}}

\newtheorem{remark}[theorem]{Remark}

\theoremstyle{definition}
\newtheorem{definition}[theorem]{Definition}

\numberwithin{equation}{section}

\def \C {\mathbb C}

\def \ke {\text{KE}}

\def \Ric {\text{Ric}}

\def \Sing {\textbf{Sing}}

\title[projective embedding of stably degenerating family II]{Projective embedding of degenerating family of K\"ahler-Einstein manifolds with normal crossing limit}

\author{Jingzhou Sun}
\thanks{}
\address{Department of Mathematics, Shantou University, Shantou City, Guangdong Province 515063, China}

\email{jzsun@stu.edu.cn}

\begin{document}

\begin{abstract}
    We study the Bergman embeddings of degenerating families of K\"{a}hler-Einstein manifolds of negative curvature. We show that when the central fiber has only simple normal crossing singularities, we can construct orthonormal bases so that the induced Bergman embeddings converge to the Bergman embedding of the limit space together with multi-bubbles.
\end{abstract}

\maketitle

\tableofcontents

\section{Introduction}
This work is a sequel to our previous paper \cite{sun2024projective}. To avoid unnecessary repetition, we refer readers to \cite{sun2024projective} for background and motivation.

Let $D \subset X$ be a simple normal crossing divisor on a complex projective manifold such that $K_X + [D]$ is ample. Cheng-Yau, Kobayashi, Tian-Yau, and Bando (\cite{ChengYau2, Kobayashi, TianYau3, Bando}) have shown that the quasi-projective manifold $\xmd$ admits a unique complete Kähler-Einstein metric $\omega_{\text{KE}}$ (the Cheng-Yau metric) with finite volume and $\Ric(\omega_{\text{KE}}) = -\omega_{\text{KE}}$. This metric induces a Hermitian metric on $K_X$ restricted to $\xmd$.

A natural question arises: given a sequence of compact Kähler-Einstein manifolds degenerating to a variety equipped with the Cheng-Yau metric on its regular part, can one construct Bergman embeddings that converge to the Bergman embedding of the limit variety?

Recall that a degeneration of Kähler-Einstein manifolds is a holomorphic family $\pi: \xcal \to B$, where $B$ is the unit disk, such that:
\begin{itemize}
    \item The fibers $X_t = \pi^{-1}(t)$ are smooth for $t \neq 0$;
    \item Each $X_t$ for $t \neq 0$ admits a Kähler-Einstein metric.
\end{itemize}

In the case of negative Kähler-Einstein metrics, we have the following result:
\begin{theorem}[\cite{Leung1999DegenerationOK}, \cite{RuanWD1}]\label{thm-ruan}
    Let $\pi: \xcal \to B$ be a degeneration of Kähler-Einstein manifolds $\{X_t, g_{E,t}\}$ with $\Ric(g_{E,t}) = -g_{E,t}$. Assume that the total space $\xcal$ is smooth and the central fiber $X_0$ is the union of smooth normal crossing hypersurfaces $\{X_{0,i}\}_{0 \leq i \leq m}$ in $\xcal$ with ample dualizing line bundle $K_{X_0}$. Then the Kähler-Einstein metrics $g_{E,t}$ on $X_t$ converge to a complete Cheng-Yau Kähler-Einstein metric $g_{E,0}$ on $X_0 \setminus \mathrm{Sing}(X_0)$ in the sense of Cheeger-Gromov: there exists an exhaustion of compact subsets $F_\beta \subset\subset X_0 \setminus \mathrm{Sing}(X_0)$ and diffeomorphisms $\phi_{\beta,t}: F_\beta \to X_t$ such that:
    \begin{itemize}
        \item[(1)] $X_t \setminus \bigcup_{\beta=1}^\infty \phi_{\beta,t}(F_\beta)$ is a finite union of submanifolds of real codimension $1$;
        \item[(2)] For each fixed $\beta$, $\phi_{\beta,t}^* g_{E,t}$ converges to $g_{E,0}$ on $F_\beta$ in $C^2$-topology as $t \to 0$.
    \end{itemize}
\end{theorem}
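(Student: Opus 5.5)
The plan is to follow the strategy of Leung and Ruan: build explicit approximate Kähler--Einstein metrics on $X_t$ that converge to the Cheng--Yau model, then use the maximum principle and uniform local estimates to show the true metrics $g_{E,t}$ stay close to them on compact sets away from $\Sing(X_0)$.

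\emph{Step 1: approximate metrics.} Since $\xcal$ is smooth and $X_0=\sum_i X_{0,i}$ is reduced normal crossing, adjunction gives $K_{\xcal/B}|_{X_t}=K_{X_t}$ and $K_{\xcal/B}|_{X_0}=K_{X_0}$, and $K_{X_0}|_{X_{0,i}}=K_{X_{0,i}}+D_i$ with $D_i=X_{0,i}\cap\bigcup_{j\ne i}X_{0,j}$. Ampleness of $K_{X_0}$ then gives ampleness of $K_{X_{0,i}}+D_i$, so the Cheng--Yau metric $g_{E,0}$ exists on each $X_{0,i}\setminus D_i$. Choose a smooth Hermitian metric $h$ on $K_{\xcal/B}$ with positive curvature $\omega_h$ near $X_0$. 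Let $\sigma_i$ be defining sections of $\ocal(X_{0,i})$, so that $\prod_i\sigma_i=t$. Near a point of $D_i$ where, in local coordinates, $z_1\cdots z_k=t$, the model Cheng--Yau metric is a product of Poincaré cusp metrics $\sqrt{-1}\,dz\wedge d\bar z/(|z|^2\log^2|z|^2)$, and the smoothing $z_1z_2=t$ turns each pair of cusps into a long hyperbolic collar. Following Ruan, I would set
\[
\omega_t=\omega_h-\sqrt{-1}\partial\bar\partial\sum_i\log\bigl(\log|\sigma_i|^2\bigr)^2 ,
\]
suitably cut off and corrected with $t$-dependent terms, so that $\omega_t$ is a uniformly complete-looking metric on $X_t$ with bounded geometry. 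I would then verify two things: $\omega_t\to\omega_0$ smoothly on compact subsets of $X_0\setminus\Sing(X_0)$ (via the diffeomorphisms $\phi_{\beta,t}$ given by a connection or gradient flow transverse to fibers), and the Ricci potential satisfies $\Ric(\omega_t)+\omega_t=\sqrt{-1}\partial\bar\partial F_t$ with $\|F_t\|_{C^k(\omega_t)}$ bounded and $F_t\to F_0$ locally.

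\emph{Step 2: uniform estimates.} Write $\omega_{E,t}=\omega_t+\sqrt{-1}\partial\bar\partial u_t$, so that $(\omega_t+\sqrt{-1}\partial\bar\partial u_t)^n=e^{u_t+F_t}\omega_t^n$. The maximum principle on compact $X_t$ gives $\|u_t\|_{C^0}\le\sup|F_t|$. The Chern--Lu/Yau Schwarz lemma, using the uniform lower bound on bisectional curvature of $\omega_t$ in quasi-coordinates, gives $C^{-1}\omega_t\le\omega_{E,t}\le C\omega_t$. Higher-order bounds then follow from Calabi/Evans--Krylov estimates in the quasi-coordinate charts of Cheng--Yau/Tian--Yau, whose uniformity in $t$ near the collars must be checked.

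\emph{Step 3: convergence and the exceptional set.} Arzelà--Ascoli yields subsequential $C^{2,\alpha}$ convergence of $\phi_{\beta,t}^*u_t$ on each $F_\beta$. The limit solves the Cheng--Yau equation on $X_0\setminus\Sing(X_0)$ with bounded potential, so it is unique by the Yau generalized maximum principle, and hence the whole family converges. For condition (1), take $F_\beta=\{|\sigma_i|\ge 1/\beta\ \forall i\}\cap X_0$. The complement of $\bigcup_\beta\phi_{\beta,t}(F_\beta)$ is then the set where $|\sigma_i|=|\sigma_j|$ on the collars, which is a real hypersurface of codimension one (the midline of each neck).

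I expect the main obstacle to be Step 1: constructing $\omega_t$ near the higher-codimension strata, where three or more components meet, in such a way that it has \emph{uniformly} bounded geometry in quasi-coordinates across the neck region and a Ricci potential $F_t$ that is uniformly bounded there. I would first handle the double-point model $z_1z_2=t$ explicitly, using the hyperbolic collar metric $\bigl(\tfrac{\pi}{\log|t|}\bigr)^2\csc^2\bigl(\tfrac{\pi\log|z_1|}{\log|t|}\bigr)\tfrac{|dz_1|^2}{|z_1|^2}$, and then take products for the higher strata.
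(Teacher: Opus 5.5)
The paper does not prove this statement; it quotes it from Leung--Lu and Ruan. The ingredients it recalls from Zhang (Proposition~\ref{prop-ke-t}, Lemma~\ref{lem-zhang-3.2}, Theorem~\ref{the-zhang-3.3}) follow your route:
\begin{itemize}
\item your reference form is the paper's $\tilde\omega_t=\sqrt{-1}\partial\bar\partial\log(\chi_tV_t)$ with $\chi_t=(\log|t|^2)^2\prod_i\alpha_i^{-2}$, since $(\log|t|^2)^2$ is constant on $X_t$;
\item your Step~2 is Proposition~\ref{prop-ke-t};
\item your Step~3 is local convergence plus uniqueness of the bounded-potential Cheng--Yau solution.
\end{itemize}

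Your plan goes wrong at the step you single out as the main obstacle. First, no cut-off or $t$-dependent correction is needed. For $t\neq0$ no $\|\sigma_i\|$ vanishes on $X_t$, because $\prod_i\sigma_i=t$. Once $\|\sigma_i\|\le\epsilon$, the form is a smooth K\"ahler metric on all of $X_t$. Second, treating double points with the explicit $\csc^2$ collar and then taking products for higher strata would fail. Near $X_{0,I}$ with $|I|=l+1\ge3$ the fiber is locally $z_0z_1\cdots z_l=t$. In the log-variables $-\log|z_j|^2$ ($1\le j\le l$), the neck is the simplex $\{-\log|z_j|^2>M,\ \sum_{j=1}^l(-\log|z_j|^2)<|\log|t|^2|-M\}$, times a torus and a piece of $X_{0,I}$, not a cube. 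A product of one-dimensional collars does not live on this domain, and the correct model is not a product.

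The missing idea is the covering $P_{t,l}$, $z_j=e^{w_j\log|t|}$, $x_j=\Re w_j$. Under it your unmodified form converges smoothly to the $t$-independent complete model
\[
\omega^o_{U,I}+\frac{\sqrt{-1}}{2}\Bigl(\sum_{j=1}^{l}\frac{dw_j\wedge d\bar w_j}{x_j^2}+\frac{\sum_{i,j=1}^{l}dw_i\wedge d\bar w_j}{(1-\sum_{j=1}^{l}x_j)^2}\Bigr)=\omega^o_{U,I}-2\sqrt{-1}\partial\bar\partial\sum_{j=0}^{l}\log x_j,\qquad x_0=1-\sum_{j=1}^{l}x_j.
\]
This is a Hessian metric on the simplex $B_I$, and moreover $P_{t,l}^*(\chi_tV_t)\to V_0'$ (Lemma~\ref{lem-zhang-3.2}). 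Since $P_{t,l}$ is a local biholomorphism, the model controls both the curvature and $F_t=\log(\chi_tV_t/\tilde\omega_t^n)$. After the usual quasi-coordinate patching near the faces of $B_I$ and near $D_I$, you get bounded geometry and $\sup|F_t|\le C$ uniformly in $t$, at every stratum at once. That is exactly what the global $C^0$ and Laplacian estimates of Step~2 require. For $l=1$ the model is uniformly equivalent to your $\csc^2$ collar, so double points need no separate treatment either.

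Smaller points:
\begin{itemize}
\item Your $F_\beta=\{|\sigma_i|\ge1/\beta\ \forall i\}\cap X_0$ is empty, since $\sigma_i\equiv0$ on $X_{0,i}$. On $X_{0,i}$ impose the condition only for $j\ne i$.
\item Transport along a connection defines $\phi_{\beta,t}$ only for $|t|$ small relative to $\beta$. To get (1) at fixed $t$, take the $\phi_{\beta,t}$ to be restrictions of a single diffeomorphism of $X_{0,i}\setminus D_i$ onto the cell $\{\|\sigma_i\|<\|\sigma_j\|\ \forall j\ne i\}\subset X_t$, chosen close to the transport map on each fixed $F_\beta$.
\item The Chern--Lu/Schwarz-lemma argument needs an \emph{upper} bound on the bisectional curvature of the reference metric. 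The lower bound is what the Aubin--Yau version uses. Bounded geometry supplies both, so this is harmless.
\item Evans--Krylov and higher-order estimates are needed only on compact subsets of $X_0\setminus\mathrm{Sing}(X_0)$, where the reference metrics converge in fixed coordinates. So no uniformity across the collars has to be checked there.
\end{itemize}
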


Tian originally proved this theorem in \cite{Tian1993} under the additional assumption that no three of the divisors $X_{0,i}$ have nonempty intersection. Leung-Lu \cite{Leung1999DegenerationOK} generalized Tian's result to a broader setting, and Ruan \cite{RuanWD1} provided an alternative proof of Theorem~\ref{thm-ruan}. For simplicity, we do not state the more general result of Leung-Lu here. Ruan further extended Theorem~\ref{thm-ruan} to the toroidal case in \cite{RuanWD2}.

This article is set in the context of Theorem~\ref{thm-ruan}, and represents a special case in the study of Bergman embeddings for "collapsing" families of Kähler-Einstein manifolds, as initiated in \cite{sun2024apde}.

The determinant $\omega_t^n$ of the Kähler form $\omega_t$ associated to the Kähler-Einstein metric defines a Hermitian metric $h_t$ on $K_{X_t}$. By a slight abuse of notation, we also denote by $h_t$ the induced metric on $kK_{X_t}$.
Let $\hcal_{t,k}$ denote the Bergman space of $L^2$-integrable holomorphic sections $s \in H^0(X_t, kK_{X_t})$, equipped with the Hermitian inner product
\[
\langle s_1, s_2 \rangle = \int_{X_t} (s_1, s_2)_{h_t} \, \omega_t^n,
\]
for $s_1, s_2 \in H^0(X_t, kK_{X_t})$. For simplicity, we will write $K_{X_t}$ as $K_t$.

Since $X_t$ is of general type for $t \neq 0$, Siu's invariance of plurigenera theorem \cite{siu1} implies that $\dim H^0(X_t, kK_t)$ is independent of $t$ for $t \neq 0$. Let $N_k = \dim H^0(X_t, kK_t)$, so for $t \neq 0$, $\dim \hcal_{t,k} = N_k$. For $t \neq 0$, $K_t$ is naturally isomorphic to $K_\xcal|_{X_t}$ via tensoring with $dt$. Similarly, on the regular part of $X_0$, $K_\xcal$ is naturally isomorphic to the canonical bundle.

For convenience, we denote $K_\xcal$ by $L$. Then the pushforward sheaf $\pi_* \ocal(kL)$ is a coherent sheaf on $B$, which is locally free over $B^*$. Moreover, for sufficiently large $k$, $\pi_* \ocal(kL)$ is locally free on all of $B$. For completeness, we include a proof of this claim at the end of the article.

Let $\omega_0$ denote the Kähler form on $X_0' = X_0 \setminus \mathrm{Sing}(X_0)$ corresponding to the complete Cheng-Yau metric. Then $\omega_0^n$ defines a Hermitian metric $h_0$ on $K_{X_0'}$.
Let $\hcal_{0,k}$ denote the Bergman space of $(h_0, \omega_0^n)$-$L^2$-integrable holomorphic sections of $kK_{X_0'}$, and let $\hcal_{0,i,k}$ denote the Bergman space of $L^2$-integrable holomorphic sections of $kK_{X_{0,i}}$ on $X_{0,i} \setminus \mathrm{Sing}(X_0)$.

Since the Cheng-Yau metric is of Poincaré type, the argument at the beginning of Section~\ref{sec-3} shows that $\hcal_{0,i,k}$ can be identified with a subspace of $H^0(X_{0,i}, kL)$, and each $s \in \hcal_{0,i,k} \subset H^0(X_{0,i}, kL)$ must vanish along all divisors at infinity.
Furthermore, we can identify $\hcal_{0,i,k}$ as the subspace of $\hcal_{0,k}$ consisting of sections that vanish on all $X_{0,j}$ except when $j = i$. Therefore,
\[
\hcal_{0,k} = \bigoplus_i \hcal_{0,i,k}.
\]
Clearly, for $i \neq j$, the spaces $\hcal_{0,i,k}$ and $\hcal_{0,j,k}$ are mutually orthogonal.

For each subset $I \subset \{0, \ldots, m\}$, denote $X_{0,I} = \bigcap_{i \in I} X_{0,i}$ and $$D_I = \bigcup_{\{J \mid I \subset J,\, |J \setminus I| = 1\}} X_{0,J}.$$ We also write $|I|$ for the cardinality of $I$.

Let $H^0_0(X_{0,I}, kL)$ denote the subspace of $H^0(X_{0,I}, kL)$ consisting of sections that vanish along $D_I$. Then we have
\begin{equation}
    H^0(X_0, kL) = H^0(\mathrm{Sing}(X_0), kL) \oplus \bigoplus_i H^0_0(X_{0,i}, kL).
\end{equation}
Let $n_k = \dim \hcal_{0,k}$, $n_{i,k} = \dim \hcal_{0,i,k}$, and $n_{I,k}$ the dimension of $H^0_0(X_{0,I}, kL)$. Then
\[
N_k = n_k + \sum_{|I| \geq 2} n_{I,k}.
\]
Thus, we have a decomposition $\C^{N_k} = \C^{n_k} \oplus \bigoplus_{|I| \geq 2} \C^{n_{I,k}}$, which induces natural inclusions of projective spaces:
\[
I_0: \CP^{n_k-1} \hookrightarrow \CP^{N_k-1}, \qquad
I_I: \CP^{n_{I,k}-1} \hookrightarrow \CP^{N_k-1}, \quad |I| \geq 2.
\]
For $k$ sufficiently large, a basis of $\hcal_{0,k}$ induces an embedding
\[
\Phi_{0,k}: \tilde{X}_0 \to \CP^{n_k-1},
\]
where $\tilde{X}_0 = \bigsqcup_{1 \leq i \leq l} X_{0,i}$ is the disjoint union of the components of $X_0$, serving as its normalization. We denote by $\Phi_{0,i,k}$ the restriction of $\Phi_{0,k}$ to $X_{0,i}$.

Let $\ical = \{ I \subset \{1, \ldots, m\} \mid X_{0,I} \neq \emptyset \}$, and set $\kappa = \max_{I \in \ical} |I|$.

\begin{theorem}\label{thm-main}
Under the setting of Theorem~\ref{thm-ruan}, for $k$ sufficiently large, there exist orthonormal bases $\{s_{i,l}^0\}_{1 \leq l \leq n_{i,k}}$ for $\hcal_{0,i,k}$ for each $i$, such that for any sequence of points in $B^*$ converging to $0$, there is a subsequence $\{t_u\}_{u=0}^\infty$ and orthonormal bases $\{s_l^{t_u}\}_{1 \leq l \leq N_k}$ for $\hcal_{t_u,k}$ with the following property: the images of the Bergman embeddings
\[
\Phi_{t_u,k}: X_{t_u} \to \CP^{N_k-1}
\]
induced by $\{s_l^{t_u}\}_{1 \leq l \leq N_k}$ converge, as $u \to \infty$, to a subvariety $Y$ described as follows:
\begin{itemize}
    \item[(a)] The irreducible components of $Y$ can be grouped into $\kappa$ collections of smooth subvarieties, $Y = \bigcup_{1 \leq i \leq \kappa} Y_i$, where $Y_1$ is the image of $I_0 \circ \Phi_{0,k}$, with $\Phi_{0,k}$ the Bergman embedding of $\tilde{X}_0$ induced by $\{s_{i,l}^0\}$.
    \item[(b)] For each $i \geq 2$, $Y_i = \bigcup_{I \in \ical,\, |I| = i} Y_I$, where $Y_I = \bigcup_{j \in I} Y_{I,j}$, and each $Y_{I,j}$ is a $(\CP^1)^{i-1}$-bundle over $X_{0,I}$. For $j_1, j_2 \in I$ with $j_1 \neq j_2$, the intersection $Y_{I,j_1} \cap Y_{I,j_2}$ is a $(\CP^1)^{i-2}$-bundle over $X_{0,I}$.
\end{itemize}
\end{theorem}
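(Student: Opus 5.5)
Throughout, I use a local model near a point of $X_{0,I}$ with $|I| = p$: coordinates $(z_1,\dots,z_p,w)$ on $\xcal$ in which $\pi = z_1\cdots z_p$. The proof then splits into three steps.

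\textbf{Step 1: localization of the $L^2$ norms.}
The aim is to show that the Bergman space $\hcal_{t,k}$ splits asymptotically into pieces indexed by $I$ with $|I|\ge 1$. I would fix a basis of the locally free sheaf $\pi_*\ocal(kL)$ for $k$ large and then compare the norms $\|s\|_{h_t}$ with explicit model norms. Theorem~\ref{thm-ruan}, together with the known asymptotics of the Kähler--Einstein metrics near the collars (Tian, Ruan), gives the following picture:
\begin{itemize}
\item On each compact $F_\beta$, $h_t$ and $\omega_t^n$ converge to $h_0$ and $\omega_0^n$.
\item Near $X_{0,I}$ the metric is uniformly equivalent to a product of $p-1$ hyperbolic cylinders over $X_{0,I}$ with a Poincaré-type metric, the cylinders having length of order $|\log|t||$.
\end{itemize}
A section of $kL$ restricts on the collar to Laurent-type data in the variables $z_j$ subject to $z_1\cdots z_p=t$. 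Its $L^2$ norm is therefore comparable to a weighted sum, over monomial modes $\prod z_j^{a_j}$, of $L^2$ norms of coefficients on $X_{0,I}$. The weights are powers of $|t|$ and $|\log|t||$, determined by the exponents $a_j$ relative to the order $k$ pole allowed by $kL$.

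\textbf{Step 2: choice of bases.}
First take orthonormal bases $\{s^0_{i,l}\}$ of the spaces $\hcal_{0,i,k}$; these are elements of $H^0_0(X_{0,i},kL)$ and vanish on the double locus. Lift them to sections of $\pi_*\ocal(kL)$ near $t=0$ and apply Gram--Schmidt. The limit Bergman spaces are computed in Step 1, so these lifts become asymptotically orthonormal.

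For each $I$ with $|I|\ge2$ and each $\sigma\in H^0_0(X_{0,I},kL)$, extend $\sigma$ to a section on $\xcal$. This is possible because $\pi_*$ is locally free and $H^0(X_0,kL)$ decomposes as in the displayed decomposition. Renormalize by the rescaling factor $c_I(t)$ dictated by Step 1. The $t$-dependent bases are then obtained by Gram--Schmidt ordered by $|I|$. After passing to a subsequence $t_u$, the resulting unitary corrections converge by compactness of $U(N_k)$.

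\textbf{Step 3: identification of the limit.}
On $F_\beta$ the rescaled $I$-sections tend to $0$, so the limit image there is $I_0\circ\Phi_{0,k}$, which gives (a). For (b), on the collar around $X_{0,I}$ we write
\[
z_j = t^{\lambda_j}e^{i\theta_j}, \qquad \textstyle\sum_j \lambda_j = 1 .
\]
In the limit the rescaled sections become functions on $X_{0,I}$ multiplied by monomials in $p-1$ independent cylinder coordinates. The dominant part of the limit map is the set of coordinates whose weight is maximal, and which coordinates dominate is governed by which face of the simplex $\{\lambda\}$ the point lies near.

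Near a vertex $j\in I$, the limit coordinates are homogeneous coordinates on $(\CP^1)^{p-1}$, fibred over $X_{0,I}$ through the embedding by $H^0_0(X_{0,I},kL)$. This gives $Y_{I,j}$. Two neighboring chambers $j_1,j_2$ share one $\CP^1$ factor degenerating to a point, which gives the $(\CP^1)^{p-2}$-bundle intersections. One must also check that the $Y_{I,j}$ glue to $Y_1$ along $\Phi_{0,k}(X_{0,I})$ and that no further components appear. This follows by exhausting the collar regions.

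\textbf{Main obstacle.}
I expect the hardest part to be the uniform $L^2$ comparison in Step 1 on the collars near triple and higher intersections. Specifically, one must show that the cross terms between different modes and different $I$ are negligible after rescaling, so that Gram--Schmidt changes only lower-order terms and the limit is exactly the multi-bubble described. My first attempt would be to use Ruan's explicit approximate Kähler--Einstein metric near $X_{0,I}$ and compute model norms by Fourier expansion in the angles $\theta_j$.
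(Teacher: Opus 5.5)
The gap is the step you defer as the ``main obstacle'', and the tool you suggest for it would not close it. Your outline otherwise has the same architecture as the paper: sections attached to each stratum, Laurent-mode analysis on the collapsing collars, rescaling by the growth $y_t^{(|I|-1)(2k+1)/2}$ of the zero mode, almost-orthonormality, and Segre-type bubbles. Replacing Zhang's exact limit (Theorem~\ref{the-zhang-3.3}, which yields the measures $\nu_{\{t_u\},I,k+1}$) by uniform equivalence plus a convergent subsequence of Gram matrices would be an acceptable softening for part (b).

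Uniform equivalence with a rotation-invariant model controls diagonal norms. For instance, it shows the nonzero Laurent modes have relative mass $O(y_t^{-(2k+1)})$. It gives no cancellation in off-diagonal terms, however. $\omega_t^n$ is the model times $e^{-\phi_t}$ with $\phi_t$ merely bounded, so distinct monomials are not orthogonal for the true inner product, and Fourier expansion in the $\theta_j$ buys nothing there. Compactness does not help either. You still need the limiting Gram matrix to be nondegenerate and not to couple the $\hcal_{0,k}$-block to the others; otherwise Gram--Schmidt degenerates or moves $Y_1$ off the image of $I_0\circ\Phi_{0,k}$.

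The mechanism in the paper is spatial separation plus Cauchy--Schwarz. The zero mode of a rescaled $I$-section puts all but $O(y_t^{-(2k+1)/2})$ of its mass where $\sigma_i\ge\sqrt{y_t}$ for all $i\in I$. A section attached to $J$ with $X_{0,I}\subset D_J$ carries the factor $\prod_{i\in I\setminus J}z_i$, so it has relative mass $O(e^{-\sqrt{y_t}/2})$ there (Lemma~\ref{lem-concave}).

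This in turn requires localized lifts. An extension that merely restricts to $\sigma$ on $X_{0,I}$ may be nonzero on another stratum $X_{0,J'}$ with $|J'|\ge|I|$; this applies both to Finski's minimal extension and to one read off from the displayed decomposition of $H^0(X_0,kL)$. On such a stratum its norm grows at least as fast, so after rescaling it need not concentrate near $X_{0,I}$ at all. The paper cures this with the cutoff $\varpi(\tau_I)$ and the H\"ormander correction $v_t$, which is negligible precisely because of the second estimate of Theorem~\ref{thm-inner-sections-prop}. Alternatively, you must build lifts vanishing on every other stratum of depth $\ge|I|$, via the iterated splitting $H^0(S^a(X_0),kL)=H^0(S^{a+1}(X_0),kL)\oplus\bigoplus_{|I|=a}H^0_0(X_{0,I},kL)$.

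Step 3 has the right combinatorics: the vertex $e_j$ of the $\lambda$-simplex corresponds to $Y_{I,j}$, and edges correspond to the pairwise intersections. But it misses the scale at which the bubbles live and the estimate that produces them. For $\lambda$ in a compact subset of the open simplex, the competing monomials differ by powers of $|t|$, which beat every power of $|\log|t||$. So that whole region is sent to the single point $\Phi_{I,k+1}(p)$. The fibre of $Y_{I,j}$ over $p$ is swept out only where $|z_i|\asymp y_t^{-(2k+1)/2}$ for $i\neq j$, i.e.\ within $O(\log y_t/y_t)$ of $e_j$. This scale is fixed by the normalization ratio $y_t^{(|I|-|J|)(2k+1)/2}$ between $J$- and $I$-sections.

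After substituting $w_i=y_t^{(2k+1)/2}z_i$, you must show, uniformly in $p$ and up to $D_I$ (the paper inducts on the length of $I$), that the leading terms $b^{J,p}_{\beta(J)}z^{\beta(J)}$ of suitably chosen peak sections dominate all of the following:
\begin{itemize}
\item all other Laurent terms;
\item all non-peak sections;
\item all sections attached to strata not containing $X_{0,I}$;
\item any $\bar\partial$-corrections.
\end{itemize}
This is the content of the bound by $d_{p,t}$ and of Lemma~\ref{lem-epsilon-delta}.

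Finally, the vertices of the fibre over $p$ are the points $\Phi_{J,k+1}(p)$ for all $J$ with $j\in J\subset I$. They come from $H^0(X_{0,I},(k+1)L-\sum_{i\notin J}[X_{0,i}])$ rather than from the $I$-block alone. To conclude that these parallelograms form a smooth $(\CP^1)^{|I|-1}$-bundle with $(\CP^1)^{|I|-2}$-bundle pairwise intersections, you need the identification with $\zcal_{I,j}$. That bundle is built from the normal bundles $[X_{0,v}]|_{X_{0,I}}$, $v\in I\setminus\{j\}$, and the identification uses the splitting of Proposition~\ref{prop-x-m}.
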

\begin{remark}
    \begin{itemize}
        \item The intersection relations among the components $Y_{I,j}$ can be described in terms of the intersection relations between the $X_{0,J}$'s.
        \item Each $Y_{I,j}$ can be realized as the image of a certain completion of the normal bundle of $X_{0,I}$ in $X_{0,J}$ for all $J \subset I$ with $|I| - |J| = 1$, as will be made precise in Theorem~\ref{thm-y-j}.
    \end{itemize}
\end{remark}

In \cite{sun2024projective}, we proved the theorem under the additional assumption (as in \cite{Tian1993}) that no three of the divisors $X_{0,i}$, $0 \leq i \leq m$, have nonempty intersection. Removing this assumption leads to a much more intricate and challenging situation, though the core ideas remain similar.

We also established in \cite{sun2024projective} an estimate for the Bergman kernel function of $(X_t, kL)$, since constructing orthonormal bases (as required in the proof of our main theorem) is closely related to the analysis of the Bergman kernel. Here, we state a generalization of that result:

\begin{theorem}\label{thm-2}
    Let $\rho_{t,k}$ denote the Bergman kernel of $\hcal_{t,k}$. Define
    \[
    \lambda_u(t,k) = \max_{x \in X_t} \rho_{t,k}(x), \qquad
    \lambda_l(t,k) = \min_{x \in X_t} \rho_{t,k}(x).
    \]
    Then for $k$ sufficiently large, there exist constants $c_k > 0$, $c_k' > 0$, and $c_k'' > 0$ such that
    \[
    c_k < \frac{\lambda_u(t,k)}{|\log |t||^{\kappa-1}} < c_k', \qquad
    \lambda_l(t,k) < c_k'' \left(|\log |t||^{-2k+1} a_t^{2k}\right)^{\kappa-1},
    \]
    where $a_t = \log |\log |t||$.
\end{theorem}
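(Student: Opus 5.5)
The plan is to localize near the deepest strata of $X_0$ and use the explicit model of the Kähler–Einstein metric there. Near a point of $X_{0,I}$ with $|I|=p$, the family is locally $z_1\cdots z_p = t$. The metric $\omega_t$ is approximated in the $C^2$ sense (Theorem~\ref{thm-ruan} and the estimates of Ruan / Leung–Lu) by the product of a metric on $X_{0,I}$ with the standard complete model on the torus-fibred region. In logarithmic coordinates $w_j=\log|z_j|/\log|t|$, which range over the simplex $\sum_j w_j=1$, $w_j>0$, this model is $\sum_j \frac{i\,dz_j\wedge d\bar z_j}{|z_j|^2\,|\log|t||^2}$ plus a correction. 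Thus the model neck region is a family of $(S^1)^{p-1}$-tori over a $(p-1)$-simplex of linear size $|\log|t||$, and its volume form is comparable to $|\log|t||^{p-1}$ times a density on the simplex.

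For the upper bound on $\lambda_u$, I would use the sub-mean-value inequality for $|s|^2_{h_t}$ on unit balls. This works because the injectivity radius is uniformly bounded below away from collapse and the Ricci curvature is $-1$. It gives $\rho_{t,k}\le C_k$ pointwise, which is only a crude bound. The sharp bound comes from a better comparison: a section of norm one, restricted to the torus fibres, has its $|\cdot|^2$ controlled by Fourier modes. The constant mode can be as large as $|\log|t||^{p-1}/\mathrm{Vol}$-normalized. The maximal growth $|\log|t||^{\kappa-1}$ then appears exactly for $p=\kappa$, after integrating the volume of the neck region over the simplex directions.

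For the lower bound on $\lambda_u$, I would construct an explicit peak section. Take $\sigma\in H^0_0(X_{0,I},kL)$ with $|I|=\kappa$. Multiply by a cutoff on the simplex supported in the middle region, where all $w_j\in(\epsilon,1-\epsilon)$, times the trivializing pluricanonical form $(dz_1\wedge\cdots\wedge dz_n/dt)^k$. Correct it by Hörmander $L^2$-estimates with the weight $h_t^k$, using $\mathrm{Ric}=-\omega_t$ so that the curvature is $k\omega_t>0$. In the middle of the simplex, $|(dz/dt)^k|^2_{h_t}\sim \prod_j (|z_j|\,|\log|t||)^{2k}/|z_j|^{2k}$, which after normalization is roughly constant. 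The $L^2$-norm is then of order $|\log|t||^{\kappa-1}$ times the constant mode. Dividing by it yields $\rho\ge c_k|\log|t||^{\kappa-1}$ at the peak point. I expect the error term from $\bar\partial$ of the cutoff to be small because the cutoff varies over a scale of $|\log|t||$ in the $w$-directions.

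For the upper bound on $\lambda_l$, I would evaluate the Bergman kernel at the point of the neck closest to the corners of the simplex. More precisely, I would choose a point in the direction of a vertex of the simplex, at logarithmic distance $a_t=\log|\log|t||$ from the boundary, where every section is forced to be small. Every holomorphic section expands in monomials $\prod_j z_j^{\alpha_j}$ on the neck. Each non-constant mode decays exponentially in the $w$-coordinates, and the constant mode carries the vanishing of order $k$ along the divisors $D_I$ coming from $H^0_0$. This yields a pointwise factor $\bigl(|\log|t||^{-2k+1}a_t^{2k}\bigr)$ per independent direction. Multiplying over the $\kappa-1$ independent directions of the simplex gives the claimed bound.

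The main obstacle is the precise asymptotics of $\omega_t$ near triple and higher intersections. These are needed for two things: to justify treating the neck as a product over a $(\kappa-1)$-simplex uniformly up to the faces, and to get the $L^2$ estimate with error small relative to $|\log|t||^{\kappa-1}$. I would first try to use Ruan's explicit approximate metric together with the $C^2$ closeness, arguing inductively on $|I|$ and treating the faces of the simplex via the lower-dimensional models already handled.
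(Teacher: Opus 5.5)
The decisive gap is in the two upper bounds. An upper bound on $\rho_{t,k}(x)$ is a statement about $|s(x)|^2$ for \emph{every} unit-norm $s\in\hcal_{t,k}$, and your sketch never controls an arbitrary section. The remarks about Fourier modes and the size of the constant mode are heuristics. They say nothing about the non-constant modes near the faces of the simplex, which are exactly what must be bounded there. The mean-value estimate you actually invoke holds only where there is no collapse.

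The paper gets this control from a global device. The sections $s_{t,\mathrm{rn}}$ built from the spaces $\hcal_{I,k+1}$ of all strata are almost orthonormal (Theorems~\ref{thm-inner-sections-prop} and~\ref{thm-almost-orthon}). By the dimension count $N_k=n_k+\sum_{|I|\ge 2}n_{I,k}$ coming from invariance of plurigenera, they form a \emph{basis} $\bcal_t$ of $\hcal_{t,k+1}$. Hence $\rho_{t,k+1}$ is comparable to $\sum_{s\in\bcal_t}\|s\|^2$. Lemma~\ref{lem-epsilon-delta} then replaces this sum near $X_{0,I}$ by the explicit quantity $\sum_{J\subset I}y_t^{-(|J|-1)(2k+1)}|b^{J,p}_{\beta(J)}|^2|z^{\beta(J)}|^2\prod_i\sigma_i^{2k+2}$. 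From there:
the maximum follows from $F_{I'}\lesssim y_t^{|I'|-1}$ (via $\max_x e^{-x}x^{2k+2}$) and $F_I(q_c)\asymp y_t^{l}$ at $\sigma_i=y_t/(l+1)$;
the minimum follows by evaluating at $|z_j|^2=y_t^{-(2k+1)}$;
uniformity in $t$ comes from a good-subsequence/contradiction argument.
If you want a purely local route instead, you have to carry it out. Expand $f=\sum_\alpha a_\alpha z^\alpha$ on $U\cap X_t$. Bound $|\sum_\alpha a_\alpha z^\alpha|^2\le(\sum_\alpha|a_\alpha|^2B_{\alpha,t})(\sum_\alpha|z^\alpha|^2B_{\alpha,t}^{-1})$, as the paper does for $v_t$ with the weights $H_\alpha$. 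Treat modes with negative exponents on the charts $U^j(t)$. None of this is in the proposal.

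Second, your model metric is wrong in a way that removes the mechanism behind both exponents. Near $X_{0,I}$ the volume form is of Poincar\'e type in each $z_j$ (Lemma~\ref{lem-zhang-3.2}, Corollary~\ref{cor-wt-bound}). On $U^0(t)$, $\omega_t^n\asymp\prod_{j=1}^l|z_j|^{-2}\sigma_j^{-2}$ times the Euclidean form, with $\sigma_j=-\log|z_j|^2$; it is not $\prod_j|z_j|^{-2}|\log|t||^{-2}$. So $f\,(dz_1\wedge\cdots\wedge dz_n)^{\otimes(k+1)}/\prod_j z_j^{k+1}$ has pointwise norm $\asymp|f|^2\prod_j\sigma_j^{2k+2}$, and $L^2$ density $|f|^2\prod_j\sigma_j^{2k}\,d\sigma_j$ after integrating out the angles. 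With your weight the pointwise factor would be constant on the simplex. The constant mode alone would then keep $\rho\gtrsim|\log|t||^{l}$ throughout the neck over $X_{0,I}$, leaving no room for the decay in the $\lambda_l$ bound.

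With the correct weight, $|a_0|^2\lesssim y_t^{-l(2k+1)}$ (the constant mode has weight $\int^{y_t}\sigma^{2k}d\sigma$ per direction), while the mode $z_j$ has coefficient $O(1)$. The two balance at $|z_j|^2\approx y_t^{-(2k+1)}$, i.e.\ $\sigma_j\approx(2k+1)\log y_t$. That gives $y_t^{-(2k+1)}(\log y_t)^{2k+2}$ per direction, which is the stated bound after replacing $k+1$ by $k$. So the factor does not come from ``vanishing of order $k$ along $D_I$''. The $L^2$ sections from $X_{0,i}$ vanish only to first order along $D_{\{i\}}$, contributing $|z_j|^2$, and the power of $a_t$ comes from the Poincar\'e weight.

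The same scaling slip affects your lower bound. By Zhang's comparison $\omega_t\asymp\tilde\omega_t$, $|\dbar(\sigma_j/y_t)|_{\omega_t}$ is of order one in the middle of the simplex. So the cutoff error is not small because of the scale $|\log|t||$. It is small only because the transition layer carries a fraction $O(\epsilon^{2k+1})$ of the weight, and you would still need a pointwise bound for the H\"ormander correction at the peak point, which is the same missing local estimate.

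Finally, the obstacle you single out is not the real one. $\rho_{t,k}$ depends only on $\omega_t^n=e^{\varphi_t}\chi_tV_t$, and Zhang's uniform bound $|\varphi_t|\le C_1$ (Proposition~\ref{prop-ke-t}) controls it up to constants. Theorem~\ref{thm-ruan} concerns only compact subsets of the regular part and says nothing about the collapsing region.
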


As will be seen in the proof, the maximum is attained at points "closest" to the strata $X_{0,I}$ with $|I| = \kappa$. We also remark that the estimate for $\lambda_l(t,k)$ is not optimal.

This can be compared with Zhou's result in \cite{ZHOU2024109514}, where he showed that under the "non-collapsing" condition and a lower bound on Ricci curvature, if a sequence of pointed complete polarized Kähler manifolds converges in the Gromov-Hausdorff sense, then a subsequence of the polarizations also converges and the Bergman kernels converge to the Bergman kernel of the polarization on the limit space. In our setting, the convergence of the Bergman kernels in the non-collapsed part is clear; the interesting phenomenon occurs on the collapsing part. We hope this result can be generalized to more general degenerating families of Kähler-Einstein manifolds. It would also be very interesting to study the Bergman embeddings in other "collapsing" cases beyond the negative curvature case; see, for example, \cite{MR4322391}, \cite{songzhang2024Acta}, \cite{Biquard2022}, and \cite{songzhang2024csdegcollapsing}.

\

We briefly comment on the proofs. Theorem \ref{thm-ruan} concerning the convergence of the Kähler-Einstein metrics addresses the "non-collapsing part," while the main difficulties lie in the "collapsing part." We rely on Zhang's results in \cite{2015Collapsing} for the analysis of the collapsing region. Fortunately, the "neck" region between the non-collapsing and collapsing parts does not present substantial difficulties for our arguments. It would be interesting to see further developments in overcoming the complications caused by the neck region. In our setting, the collapsing part is concentrated near the strata $X_{0,I}$ with $|I| \geq 2$. When $|I| > 2$, the presence of corners of varying dimensions around $X_{0,I}$ makes the analysis more intricate. A natural strategy is to proceed inductively from lower to higher codimension, first handling the case when $D_I$ is empty. In our proofs of the intermediate theorems, especially Theorems \ref{thm-inner-sections-prop} and \ref{thm-almost-orthon}, we often employ this approach. Furthermore, in \cite{sun2024projective}, to describe the $Y_i$ for $i = 2$, it sufficed to show that, in the limit, linear $\CP^1$'s—referred to as bubbles—connecting different images of $X_{0,I}$ appear. In contrast, in the more general setting of this article, we must describe higher-dimensional bubbles, namely $(\CP^1)^l$'s, more explicitly, since these bubbles are not determined by $2^l$ points alone.

It will become clear that Theorem \ref{thm-2} is a byproduct of the proof of the main theorem.

\
The structure of this article is as follows. In Section~\ref{sec-2}, we recall general estimates for the Kähler-Einstein metrics on the general fibers and review Zhang's results on the collapsing part of $X_t$. With these estimates in hand, we carry out some preparatory calculations. Section~\ref{sec-3} is devoted to the construction of global sections on the general fibers and to the proof of Theorem~\ref{thm-main}, omitting for the moment the proofs of the intermediate results (Theorems~\ref{thm-inner-sections-prop}, \ref{thm-almost-orthon}, and \ref{thm-almost-Bergman-Embedding}). In Section~\ref{sec-4}, we prove Theorem~\ref{thm-inner-sections-prop}. Section~\ref{sec-5} contains the proof of Theorem~\ref{thm-almost-orthon}. In Section~\ref{sec-almost-bergman}, we prove Theorem~\ref{thm-almost-Bergman-Embedding}, thereby completing the proof of our main theorem, and then establish Theorem~\ref{thm-2}.

\medskip

\textbf{Acknowledgements.} The author would like to thank Professor Song Sun for many helpful discussions.

\medskip

\textbf{Declaration.} The author did not use ChatGPT or any other AI tools.

\section{Setting-up}\label{sec-2}
Although we will not explicitly use it, the following construction helps to visualize the intersection relations among the $X_{0,I}$'s via a cell complex.

We begin with $m+1$ vertices, labelled $0$ through $m$. For each pair $(v_i, v_j)$ such that $X_{0,i} \cap X_{0,j} \neq \emptyset$, we attach an edge $e(i,j)$ connecting $v_i$ and $v_j$. For each triple $(v_a, v_b, v_c)$ with $X_{0,a} \cap X_{0,b} \cap X_{0,c} \neq \emptyset$, we attach a 2-cell to the triangle $T(a,b,c)$ bounded by $e(a,b)$, $e(a,c)$, and $e(b,c)$. Proceeding inductively, for every nonempty intersection of $k$ components, we attach a $(k-1)$-cell corresponding to the $k$-tuple of vertices. Continuing this process up to the minimal-dimensional strata, we obtain a cell complex $\ccal$ encoding the intersection pattern of the components $X_{0,i}$, $0 \leq i \leq m$. Note that $\kappa = \dim \ccal + 1$.

\medskip

\textbf{Terminology.} When we focus on a subset $I \subset \{0, \ldots, m\}$, we will often write $l = |I| - 1$.

\subsection*{Zhang's results on the collapsing part}
For each $i$, fix a section \[S_i \in H^0(\xcal, [X_{0,i}])\] vanishing along $X_{0,i}$, and require that $\prod_{i=0}^{m} S_i = t$. Let $\|\cdot\|_i$ be a smooth Hermitian metric on $[X_{0,i}] \to \xcal$. By rescaling if necessary, we may assume $\|S_i\|_i \leq \epsilon \ll 1$.

The assumption in Theorem~\ref{thm-ruan} implies that $K_\xcal$ is ample on $\xcal$. Fix a smooth Hermitian metric $h$ on $K_\xcal$ with positive curvature, and let $\omega = \frac{\sqrt{-1}}{2\pi} \Theta_h$ be the associated Kähler form. Let $V$ be a smooth volume form on $\xcal$. For $t \neq 0$, define the volume form
\[
V_t = V \otimes (\sqrt{-1} dt \wedge d\bar{t})^{-1}.
\]
Set
\[
\alpha_i = \log \|S_i\|_i^2, \qquad \chi_t = (\log |t|^2)^2 \prod_{i=0}^{m} \alpha_i^{-2},
\]
and
\[
\tilde{\omega}_t = \sqrt{-1} \partial \bar{\partial} \log (\chi_t V_t).
\]

Zhang proved the following key estimate:

\begin{proposition}[Proposition 3.1 in \cite{2015Collapsing}]\label{prop-ke-t}
Let $\varphi_t$ be the unique solution to the Monge-Ampère equation
\begin{equation}\label{eqn-ma-t}
    (\tilde{\omega}_t + \sqrt{-1} \partial \bar{\partial} \varphi_t)^n = e^{\varphi_t} \chi_t V_t,
\end{equation}
and set $\omega_t = \tilde{\omega}_t + \sqrt{-1} \partial \bar{\partial} \varphi_t$. Then there exist constants $C_1, C_2 > 0$ independent of $t$ such that
\[
|\varphi_t| \leq C_1, \qquad C_2^{-1} \tilde{\omega}_t \leq \omega_t \leq C_2 \tilde{\omega}_t.
\]
\end{proposition}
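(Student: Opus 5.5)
This is Zhang's result, stated earlier as an input; I would reprove it by the standard maximum principle approach for Monge--Ampère equations with a Poincaré-type reference metric. The first step is to understand $\tilde{\omega}_t$ locally near the strata. Write $\tilde\omega_t = \sqrt{-1}\partial\bar\partial\log V_t - \sum_i 2\sqrt{-1}\partial\bar\partial \log(-\alpha_i)$. In local coordinates near a point of $X_{0,I}$ with $S_i = z_i$ for $i\in I$ and $\prod_{i\in I} z_i = t\cdot(\text{unit})$, the term $-\sqrt{-1}\partial\bar\partial\log(-\alpha_i)$ contributes $\frac{\sqrt{-1}\, dz_i\wedge d\bar z_i}{|z_i|^2\alpha_i^2}$ plus a bounded error. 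Also $\sqrt{-1}\partial\bar\partial \log V_t$ is essentially $\omega$ restricted to the fiber, since the factor $(\sqrt{-1}dt\wedge d\bar t)^{-1}$ is $\partial\bar\partial$-closed.

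From this local form I would prove two facts, uniformly in $t$:
\begin{itemize}
\item[(i)] $\tilde\omega_t$ is a Kähler form on $X_t$ whose bisectional curvature is bounded below. Equivalently, it is uniformly equivalent to a product of cusp-type metrics on the fiber $\{\prod z_i = t\}$, which is locally a torus-fibred ``generalized cusp''. This uses $\epsilon\ll1$ to absorb the error terms.
\item[(ii)] The Ricci potential $F_t := \log\frac{\tilde\omega_t^n}{\chi_t V_t}$ is uniformly bounded.
\end{itemize}
Fact (ii) is a direct volume computation. On the fiber, $\tilde\omega_t^n$ is comparable to $V_t\prod_{i}\alpha_i^{-2}$ times the factor $(\log|t|^2)^2/(\alpha_{i_0}^{-2}\cdots)$. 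This factor arises because one of the $|I|$ variables is eliminated via $\prod z_i=t$, and $\sum_{i\in I}\alpha_i = \log|t|^2 + O(1)$. The normalization $\chi_t$ was chosen precisely so that this ratio is bounded.

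Next comes the $C^0$ bound. Rewrite the equation as $(\tilde\omega_t+\sqrt{-1}\partial\bar\partial\varphi)^n = e^{\varphi - F_t}\tilde\omega_t^n$. Since $\tilde\omega_t$ is a genuine smooth metric on the compact manifold $X_t$, $\varphi_t$ attains its maximum and minimum. At a maximum point $\sqrt{-1}\partial\bar\partial\varphi\le0$, so $e^{\varphi-F_t}\le1$. At a minimum point the reverse inequality holds. This gives $|\varphi_t|\le \sup|F_t| =: C_1$.

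For the metric equivalence I would apply the Chern--Lu/Aubin--Yau second order estimate to $\log\mathrm{tr}_{\tilde\omega_t}\omega_t - A\varphi_t$. Using $\Ric(\omega_t)=-\omega_t$ and the lower bisectional curvature bound $-B$ of $\tilde\omega_t$, we get
\[
\Delta_{\omega_t}\log\mathrm{tr}_{\omega_t}\tilde\omega_t \ge -C - (B+1)\,\mathrm{tr}_{\omega_t}\tilde\omega_t .
\]
Subtracting $A\varphi_t$ with $A=B+2$ and using $\Delta_{\omega_t}\varphi_t = n - \mathrm{tr}_{\omega_t}\tilde\omega_t$, the maximum principle bounds $\mathrm{tr}_{\omega_t}\tilde\omega_t$, given the $C^0$ bound. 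This yields $\omega_t\ge C^{-1}\tilde\omega_t$. The volume ratio $\omega_t^n/\tilde\omega_t^n = e^{\varphi_t-F_t}$ is bounded, so the eigenvalue bound upgrades to $\omega_t\le C\tilde\omega_t$.

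The main obstacle is step (i): establishing a uniform lower bisectional curvature bound, and positivity, for $\tilde\omega_t$ near deep strata where several $|z_i|$ are small and interact through $\prod z_i=t$. I would handle it by exhibiting $\tilde\omega_t$ as a bounded perturbation of the pullback of a product of Poincaré metrics $\sum_{i\in I}\frac{\sqrt{-1}dz_i\wedge d\bar z_i}{|z_i|^2\log^2|z_i|^2}$ restricted to $\{\prod z_i = t\}$. In logarithmic coordinates $w_i=\log z_i$, this restriction is a flat-base, negatively curved quotient metric with curvature bounded in terms of $\log|z_i|^{-2}$-weights. The errors, measured in the Poincaré-type norm, are $O(\epsilon)$ together with all their derivatives.
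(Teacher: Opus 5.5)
The paper gives no proof of this statement; it is quoted from Zhang. Your architecture is the standard argument behind such results:
\begin{itemize}
\item a $C^0$ bound from a uniformly bounded Ricci potential $F_t$, via the maximum principle on the compact $X_t$ (using $\tilde{\omega}_t>0$);
\item a Schwarz-lemma-type second-order estimate;
\item the volume ratio $\omega_t^n/\tilde{\omega}_t^n=e^{\varphi_t-F_t}$ to get the reverse inequality.
\end{itemize}
Step (ii) is right. Its precise content is this: in a chart of the cover $\mathcal{U}$ centred on $X_{0,I}^o$ with $t=z_0\cdots z_l$, the ratio $\tilde{\omega}_t^n/(\chi_tV_t)$ is comparable to $\sum_{i\in I}\sigma_i^2/y_t^2$. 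Since $\sum_{i\in I}\sigma_i=y_t$ with all $\sigma_i>0$, this lies between $1/|I|$ and $1$.

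\textbf{The gap: the second-order step fails as written.} The inequality you use, $\Delta_{\omega_t}\log\mathrm{tr}_{\omega_t}\tilde{\omega}_t\geq -C-(B+1)\,\mathrm{tr}_{\omega_t}\tilde{\omega}_t$, is the Chern--Lu inequality for the identity map $(X_t,\omega_t)\to(X_t,\tilde{\omega}_t)$. Its curvature input is an \emph{upper} bound on the bisectional curvature of the target $\tilde{\omega}_t$. The lower bound $-B$ that your step (i) asserts does not give it. You also announce the test function $\log\mathrm{tr}_{\tilde{\omega}_t}\omega_t-A\varphi_t$ but then estimate $\log\mathrm{tr}_{\omega_t}\tilde{\omega}_t$.

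A lower bound is the right input for the other quantity. The Aubin--Yau computation gives $\Delta_{\omega_t}\log\mathrm{tr}_{\tilde{\omega}_t}\omega_t\geq 1-B\,\mathrm{tr}_{\omega_t}\tilde{\omega}_t$. The $1$ comes from $\Ric(\omega_t)=-\omega_t$, so no derivatives of $F_t$ enter. Then:
\begin{itemize}
\item applying the maximum principle to $\log\mathrm{tr}_{\tilde{\omega}_t}\omega_t-(B+1)\varphi_t$ bounds $\mathrm{tr}_{\omega_t}\tilde{\omega}_t$ at its maximum point;
\item the bounded volume ratio converts this into a bound on $\mathrm{tr}_{\tilde{\omega}_t}\omega_t$ there;
\item the $C^0$ bound spreads it everywhere, giving $\omega_t\leq C\tilde{\omega}_t$ first.
\end{itemize}
So either flip the sign in (i) or switch the test function.

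\textbf{The Chern--Lu route makes your ``main obstacle'' nearly free.} Since $\log(\log|t|^2)^2$ is constant on $X_t$ and $\sqrt{-1}\partial\bar\partial\log V_t=(\sqrt{-1}\partial\bar\partial\log V)|_{X_t}$, one has $\tilde{\omega}_t=\hat\omega|_{X_t}$ for the single form
\[
\hat\omega=\sqrt{-1}\partial\bar\partial\log V-2\sum_i\sqrt{-1}\partial\bar\partial\log(-\alpha_i)
\]
on $\mathcal{X}\setminus X_0$. For $\epsilon$ small and $\sqrt{-1}\partial\bar\partial\log V>0$, this is a Poincar\'e-type K\"ahler metric near the simple normal crossing divisor $X_0\subset\mathcal{X}$. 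The usual quasi-coordinate argument therefore gives it bounded curvature. Holomorphic bisectional curvature can only decrease on complex submanifolds (Gauss equation). Hence the bisectional curvature of $\tilde{\omega}_t$ is bounded above uniformly in $t$, with no analysis of the collapsing region or of the corners.

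A uniform \emph{lower} bound is also true, but it needs a uniform bound on the second fundamental form of $X_t$ in $(\mathcal{X}\setminus X_0,\hat\omega)$, which your sketch does not provide. Uniform equivalence of $\tilde{\omega}_t$ to a model metric, which you call ``equivalent'' to the curvature bound in (i), gives no curvature control at all.
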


Let $\omega_t^o = \sqrt{-1}\ddbar \log V_t$. Then,
\begin{eqnarray*}
    \tilde{\omega}_t &=& \sqrt{-1}\ddbar \log (\chi_t V_t) \\
    &=& \omega_t^o + \sqrt{-1}\ddbar \log \chi_t \\
    &=& \omega_t^o + 2\sum_{i=0}^{m} \left( \frac{\Ric(\|\cdot\|_i)}{\alpha_i} + \sqrt{-1} \frac{\partial \alpha_i \wedge \bar{\partial} \alpha_i}{\alpha_i^2} \right)\Big|_{X_t}
\end{eqnarray*}
We also have a complete Kähler metric on $X_{0,I}^o$:
\[
\tilde{\omega}_{0,I} = \omega^o|_{X_{0,I}^o} + 2\sum_{i\notin I} \left( \frac{\Ric(\|\cdot\|_i)}{\alpha_i} + \sqrt{-1} \frac{\partial \alpha_i \wedge \bar{\partial} \alpha_i}{\alpha_i^2} \right)\Big|_{X_{0,I}^o}.
\]

Define
\[
B_I = \left\{ (x_1, \ldots, x_l) \in \mathbb{R}^l \;\middle|\; \sum_{j=1}^l x_j < 1,\; x_j > 0 \text{ for } 1 \leq j \leq l \right\},
\]
and
\[
B_{t,l} = \left\{ (x_1, \ldots, x_l) \in \mathbb{R}^l \;\middle|\; x_j > \frac{\log \epsilon}{\log|t|},\; 1 - \sum_{j=1}^l x_j > \frac{\log \epsilon}{\log|t|} \right\}.
\]
For any subset $J \subset B_I$, denote by $G_J \subset \mathbb{C}^l$ the strip $J \times \sqrt{-1}\mathbb{R}^l$. Let $w = (w_1, \ldots, w_l)$ be the complex coordinates on $G_J$, and set $x_j = \Re w_j$.

Let $U$ be a neighborhood of $p \in X_{0,I}^o$ with local coordinates $z = (z_0, z_1, \ldots, z_n)$ such that $t = z_0 z_1 \cdots z_l$, where $l = |I| - 1$. Then $(z_{l+1}, \ldots, z_n)$ are local coordinates for $U \cap X_{0,I}$. For any compact subset $K \subset B_I$, we define a covering map
\[
P_{t,l} : G_K \times (U \cap X_{0,I}) \to U \cap X_t
\]
by
\[
P_{t,l}(w_1, \ldots, w_l, z_{l+1}, \ldots, z_n) = (e^{w_1 \log|t|}, \ldots, e^{w_l \log|t|}, z_{l+1}, \ldots, z_n).
\]

In \cite{2015Collapsing}, Zhang established the following results.

\begin{lem}[Lemma 3.2 in \cite{2015Collapsing}]\label{lem-zhang-3.2}
    Let $K \subset B_I$ be a compact subset such that $K \subset B_t$ for $|t| \ll 1$. On $G_K \times (U \cap D_\alpha)$, as $t \to 0$, we have:
    \begin{itemize}
        \item 
        \[
        P_t^*\chi_t V_t \to V_0' = \frac{1}{4(1-\sum_{j=1}^{l} x_j)^2} \prod_{j=1}^{l} \frac{dw_j \wedge d\bar{w}_j}{4x_j^2} \wedge V_I,
        \]
        in the $C^\infty$ sense, where $V_I$ is a smooth volume form on $U \cap X_{0,I}$.
        \item 
        \[
        P_t^*\tilde{\omega}_t \to \omega_{U,I}^o + \frac{\sqrt{-1}}{2} \left( \sum_{j=1}^{l} \frac{dw_j \wedge d\bar{w}_j}{x_j^2} + \frac{\sum_{i,j=1}^{l} dw_i \wedge d\bar{w}_j}{(1-\sum_{j=1}^{l} x_j)^2} \right),
        \]
        in the $C^\infty$ sense, where $\omega_{U,I}^o$ is the pullback of the complete Kähler metric $\tilde{\omega}_{0,I}$ on $U \cap X_{0,I}$.
    \end{itemize}
\end{lem}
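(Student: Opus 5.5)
The plan is a direct computation in the local coordinates $z=(z_0,\dots,z_n)$ near $p\in X_{0,I}^o$. We label so that $I=\{0,1,\dots,l\}$ and $X_{0,i}=\{z_i=0\}$ for $i\in I$. We write the sections and volume form in these coordinates and track the powers of $\log|t|$.

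\textbf{Step 1: coordinates and the functions $\alpha_i$.} On $U$ we have $\|S_i\|_i^2=|z_i|^2e^{-\psi_i}$ for $i\in I$, with $\psi_i$ smooth. Under $P_{t,l}$ we have $z_j=e^{w_j\log|t|}$ for $1\le j\le l$, and hence $|z_j|=|t|^{x_j}$ and $|z_0|=|t|^{1-\sum x_j}$. Therefore
\[
\alpha_j=2x_j\log|t|-\psi_j,\qquad \alpha_0=2\Big(1-\sum_{j=1}^l x_j\Big)\log|t|-\psi_0 .
\]
For $i\notin I$, the function $\alpha_i$ is the pullback of a function on $U\cap X_{0,I}^o$ up to a perturbation. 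Here $\psi_i$ and $\alpha_i$ ($i\notin I$) are smooth functions of $(z_0,\dots,z_l,z_{l+1},\dots,z_n)$. The key observation is that on $G_K$ with $K$ compact in $B_I$, each of $x_1,\dots,x_l$ and $1-\sum x_j$ is $\ge\delta>0$. Hence $|z_0|,\dots,|z_l|\le|t|^\delta$. Moreover, $\partial_{w_j}$ applied to a smooth function of $z_j$ produces a factor $z_j\log|t|=O(|t|^\delta\log|t|)$, and iterated derivatives behave likewise. So every smooth coefficient converges in $C^\infty(G_K\times(U\cap X_{0,I}))$ to its restriction to $z_0=\dots=z_l=0$, with all $w$-derivatives tending to $0$.

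\textbf{Step 2: the volume form.} Write $V=f\prod_{i=0}^n\sqrt{-1}dz_i\wedge d\bar z_i$. On $X_t$, from $t=z_0\cdots z_l$ we get $dz_0\wedge dz_1\wedge\cdots\wedge dz_n = \frac{z_0}{t}\,dt\wedge dz_1\wedge\cdots\wedge dz_n$ (up to sign). Thus
\[
V_t=f\,\frac{\prod_{i\ge1}\sqrt{-1}dz_i\wedge d\bar z_i}{|z_1\cdots z_l|^2}.
\]
Since $dz_j/z_j=\log|t|\,dw_j$, this equals $f\,(\log|t|)^{2l}\prod_{j=1}^l\sqrt{-1}dw_j\wedge d\bar w_j\wedge V'$, where $V'$ is the coordinate volume form in $z_{l+1},\dots,z_n$. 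Multiplying by
\[
\chi_t=(\log|t|^2)^2\prod_{i\in I}\alpha_i^{-2}\prod_{i\notin I}\alpha_i^{-2},
\]
the powers of $\log|t|$ cancel exactly: $(2\log|t|)^2(\log|t|)^{2l}/(2\log|t|)^{2(l+1)}=4^{-l}$. The $I$-factors converge to $\prod_j x_j^{-2}\,(1-\sum x_j)^{-2}$, up to the stated constants. Step 1 gives $C^\infty$ convergence, and we define $V_I=f|_{X_{0,I}}\prod_{i\notin I}\alpha_i^{-2}\,V'$ (up to a constant). This yields $V_0'$.

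\textbf{Step 3: the metric.} We use $\tilde\omega_t=\omega_t^o+\sqrt{-1}\ddbar\log\chi_t$. The form $\omega_t^o=\sqrt{-1}\ddbar\log V_t$ is $\sqrt{-1}\ddbar\log f$ restricted to $X_t$. Its pullback has $dw_j$-components carrying the factor $z_j\log|t|\to0$, so by Step 1 it converges to the restriction to $U\cap X_{0,I}$. For $i\notin I$, the terms $-2\sqrt{-1}\ddbar\log(-\alpha_i)$ converge to the corresponding terms of $\tilde\omega_{0,I}$; together with the previous limit this gives $\omega^o_{U,I}$. For $j\in I$ we write
\[
\log(-\alpha_j)=\log(-2\log|t|)+\log\Big(x_j-\tfrac{\psi_j}{2\log|t|}\Big).
\]
The first term is constant. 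The second converges in $C^\infty$ to $\log x_j$ (respectively $\log(1-\sum x_j)$ for $j=0$), again by Step 1. Finally, $-2\sqrt{-1}\ddbar\log x_j=\frac{\sqrt{-1}}{2}\frac{dw_j\wedge d\bar w_j}{x_j^2}$ because $x_j=\Re w_j$, and similarly for $1-\sum x_j$, which produces the cross terms.

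The only delicate point is the uniform $C^\infty$ control in Step 1. All error terms involve $z_j$ or $1/\log|t|$, and we must make sure every $w$-derivative still decays. This holds because each derivative costs at most a factor $\log|t|$ while gaining $|t|^\delta$. The factors of $4$ and $\frac12$ are just bookkeeping of the normalization $\frac{\sqrt{-1}}{2}$.
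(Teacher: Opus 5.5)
Your proposal is correct and is the standard direct coordinate verification. The paper itself does not reprove this lemma, since it is quoted from Zhang, but it carries out exactly the same bookkeeping in its proof that $\mathcal{V}_I$ is of Poincar\'e type: it writes $V_t$ as a smooth function times $|z_1\cdots z_l|^{-2}$ times the coordinate volume form, writes $\alpha_j=\log|z_j|^2+O(1)$, and lets the $O(1)/\log|t|^2$ corrections vanish under $P_{t,I}$.

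One small imprecision in your last paragraph. $m$ derivatives in $w$ cost $|\log|t||^m$ but gain only one factor $|t|^\delta$ overall, not one per derivative; for example, $\partial_{w_j}^m z_j=(\log|t|)^m z_j$. This is still enough, because $|t|^\delta|\log|t||^m\to0$.
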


When comparing different coordinate patches $\{U_i\}$ for $X_{0,I}^o$, it is more precise to denote the volume form in the lemma by $\vcal_{I,i}$ for $U_i$.

\subsubsection*{Appropriate Coordinate Patches}
Let $X^o_{0,I} = X_{0,I} \setminus D_I$. Around any point $p \in X^o_{0,I}$, we can choose a neighborhood $U$ with local coordinates $z = (z_0, z_1, \ldots, z_n)$ such that $t = z_0 z_1 \cdots z_l$, where $l = |I| - 1$. Thus, $(z_{l+1}, \ldots, z_n)$ serve as local coordinates for $X_{0,I}$, and the sets $\{z_i = 0\}$ for $i \leq l$ correspond to the divisors $X_{0,a}$ in $I$, ordered so that if $X_{0,a} \cap U = \{z_i = 0\}$ and $X_{0,b} \cap U = \{z_j = 0\}$ with $a < b$, then $i < j$.

We can further require that
\[
\|dz_0(p)\| = \|dz_1(p)\| = \cdots = \|dz_l(p)\|
\]
with respect to the norm defined by $\omega$, and that
\[
\left\langle \frac{\partial}{\partial z_i}(p), \frac{\partial}{\partial z_j}(p) \right\rangle = \delta^i_j, \quad \text{for } i > l,\, j > l.
\]
By shrinking $U$ if necessary, we may also assume
\[
\frac{1}{2} < \|dz_i(p)\| < 2, \quad \text{for } i > l,
\]
and
\[
\frac{\max_{i \leq l} \|dz_i(p)\|}{\min_{i \leq l} \|dz_i(p)\|} < 2.
\]
Moreover, we can arrange that $U$ is a polydisc in these coordinates:
\[
U = \{ z \mid |z_i| < R_i,\, 0 \leq i \leq n \}.
\]
We will refer to such a neighborhood, together with these coordinates, as an \emph{appropriate coordinate patch}.

\

For any $\delta > 0$, let $X_{0,I}^\delta$ denote the subset of $X_{0,I}^o$ consisting of points whose distance to $D_I$ is at least $\delta$. 
For fixed $\delta > 0$, we can cover $X_{0,I}^\delta$ by finitely many appropriate coordinate patches $\{U_i\}$. By the construction of appropriate coordinate patches, on the overlap of two such patches $(U_i, (z_0, \ldots, z_n))$ and $(U_j, (z'_0, \ldots, z'_n))$, we have
\[
\left| \log \frac{|z_a'|}{|z_a|} \right| < C, \quad a \leq l,
\]
for some constant $C > 0$. Thus, the corresponding real coordinates on $K$ satisfy $|x_a' - x_a| < \frac{C}{|\log |t||}$, so as $t \to 0$, $x_a' - x_a \to 0$. The same holds for the imaginary parts. Therefore, on overlaps, $\vcal_{I,j} = \vcal_{I,i}$, and the local volume forms glue together to define a global volume form $\vcal_I$ on $X_{0,I}^\delta$. Letting $\delta \to 0$, we obtain a global volume form, still denoted by $\vcal_I$, on $X_{0,I}^o$.

\begin{definition}
    Let $X$ be a compact complex manifold of dimension $n$, and let $D \subset X$ be a simple normal crossing divisor. A volume form $V$ on $X \setminus D$ is said to be of \emph{Poincar\'e type} near $D$ if, for any open neighborhood $U$ of a point $p \in D$ with coordinates $(z_1, \ldots, z_n)$ centered at $p$ such that $U \cap D = \cap_{1 \leq i \leq u} \{z_i = 0\}$, there exists a constant $C > 0$ such that on $U \setminus D$,
    \[
    C^{-1} < \frac{V}{\mu_p} < C,
    \]
    where $\mu_p = \frac{\bigwedge_{i=1}^n (dz_i \wedge d\bar{z}_i)}{\prod_{i=1}^u \left[ |z_i|^2 (\log |z_i|^2)^2 \right]}$.
\end{definition}

\begin{proposition}
    The volume form $\vcal_I$ is of Poincar\'e type near $D_I$.
\end{proposition}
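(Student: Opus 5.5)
The plan is to compute $\vcal_I$ explicitly in coordinates adapted to a point of $D_I$, repeating the computation behind Lemma~\ref{lem-zhang-3.2} in a chart that also sees the extra divisors. Fix $q \in D_I$. Then $q \in X_{0,J}$ for some $J \supsetneq I$, and we take $J$ maximal. Choose a neighborhood $U$ of $q$ in $\xcal$ with coordinates $(z_0,\dots,z_n)$ centered at $q$ such that
\[
t = z_0 z_1\cdots z_l\, z_{l+1}\cdots z_{l+u},
\]
where $|J| = l+u+1$. Here $\{z_0=0\},\dots,\{z_l=0\}$ are the components indexed by $I$, and $\{z_{l+1}=0\},\dots,\{z_{l+u}=0\}$ are those indexed by $J\setminus I$. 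In this chart $X_{0,I}\cap U=\{z_0=\cdots=z_l=0\}$, with coordinates $(z_{l+1},\dots,z_n)$, and $D_I\cap U=\bigcup_{a=1}^u\{z_{l+a}=0\}$. The divisors of the other components $X_{0,i}$, $i\notin J$, do not meet $U$ (after shrinking), so $\alpha_i$ is smooth and bounded there. For $i\in J$ we have $\alpha_i=\log|z_{i}|^2+\beta_i$ with $\beta_i$ smooth and bounded.

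First I write $V_t$ on $U\cap X_t$ in the coordinates $(z_1,\dots,z_n)$. Put $V=f\prod_{a=0}^n\sqrt{-1}dz_a\wedge d\bar z_a$ with $f>0$ smooth. Since $dt = (t/z_0)\,dz_0+\cdots$, we get
\[
V_t = f\,\Bigl|\prod_{a=1}^{l+u} z_a\Bigr|^{-2}\prod_{a=1}^n \sqrt{-1}dz_a\wedge d\bar z_a .
\]
Next I pull back by $z_j=e^{w_j\log|t|}$ for $1\le j\le l$, keeping $z_{l+1},\dots,z_n$ fixed on $X_{0,I}^o\cap U$. Then
\[
\frac{\sqrt{-1}dz_j\wedge d\bar z_j}{|z_j|^2}=(\log|t|)^2\,\sqrt{-1}dw_j\wedge d\bar w_j,\qquad \alpha_j=2x_j\log|t|+O(1),
\]
and
\[
\alpha_0=\log|t|^2-\sum_{a=1}^{l+u}\log|z_a|^2+O(1)=2(1-\textstyle\sum_j x_j)\log|t|-\sum_{a=1}^{u}\log|z_{l+a}|^2+O(1).
\]
Counting powers of $\log|t|$, the factor $(\log|t|^2)^2(\log|t|)^{2l}$ from $\chi_t$ and the $dw_j$ cancels against $\alpha_0^2\prod_{j=1}^l\alpha_j^2$. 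For fixed $z_{l+1},\dots,z_{l+u}\neq0$, the terms $\log|z_{l+a}|^2/\log|t|$ tend to $0$. Hence, as $t\to0$,
\[
P_t^*\chi_tV_t\;\longrightarrow\;\frac{1}{4(1-\sum x_j)^2}\prod_{j=1}^l\frac{dw_j\wedge d\bar w_j}{4x_j^2}\wedge f(0,\dots,0,z_{l+1},\dots,z_n)\prod_{a=1}^{u}\frac{1}{|z_{l+a}|^2\alpha_{l+a}^2}\prod_{a=l+1}^n\sqrt{-1}dz_a\wedge d\bar z_a,
\]
with $\alpha_i$ for $i \notin J$ contributing bounded positive factors. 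Comparing with Lemma~\ref{lem-zhang-3.2}, $\vcal_I$ on $U\cap X^o_{0,I}$ equals
\[
\vcal_I = g\,\prod_{a=1}^u\bigl(|z_{l+a}|^2\alpha_{l+a}^2\bigr)^{-1}\prod_{a>l}\sqrt{-1}dz_a\wedge d\bar z_a ,
\]
where $g$ is bounded above and below by positive constants. Since $\alpha_{l+a}=\log|z_{l+a}|^2+O(1)$ and $|z_{l+a}|$ is small, $\alpha_{l+a}^2$ is comparable to $(\log|z_{l+a}|^2)^2$. Therefore $C^{-1}<\vcal_I/\mu_q<C$, which is the Poincaré-type condition. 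Any other chart centered at $q$ changes $\mu_q$ only by a bounded factor, so it suffices to check this one chart.

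The step I expect to need the most care is justifying that this chart computes the same $\vcal_I$ as the one glued from appropriate coordinate patches. The chart at $q$ is not itself an appropriate patch, and its normalization of $w_j$ may differ. I would handle this exactly as in the gluing argument preceding the proposition. On the overlap with appropriate patches covering $X_{0,I}^\delta$, we have $|\log|z'_a|/|z_a||<C$ for $a\le l$, and the normalization constants only shift $x_a$ by $O(1/|\log|t||)$. So the limiting forms agree on overlaps. The only other uniformity to check is that the $O(1)$ errors and the $f$ factor are uniform in $z_{l+1},\dots,z_n$ on $U$. This holds because all the functions involved are smooth on the closure of the polydisc.
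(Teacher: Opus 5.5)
Your proposal is correct and takes essentially the paper's route. You compute $P_{t,I}^*(\chi_t V_t)$ in a chart through a point of $D_I$ (solving for $z_0$ is equivalent to the paper's substitution $z_0'=z_0z_{l+1}$) and observe that the surviving factors $\bigl(|z_{l+a}|^2\alpha_{l+a}^2\bigr)^{-1}$ give the Poincar\'e form in the limit; the only differences are that you handle all strata of $D_I$ at once, where the paper does the codimension-one case and says the rest is the same, and you make explicit the chart-compatibility that the paper leaves implicit.
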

\begin{proof}
    To illustrate the idea, we first verify the proposition near regular points of $D_I$.

    Let $J \subset \{1, \ldots, m\}$ with $J = I \cup \{\lambda\}$ for some $\lambda \notin I$ and $X_{0,J} \neq \emptyset$. Around a point $p \in X_{0,J}^o$, let $(U, (z_0, \ldots, z_n))$ be an appropriate coordinate patch for $X_{0,J}^o$ such that $\prod_{i=0}^{l+1} z_i = t$ and $U \cap X_{0,\lambda} = \{z_{l+1} = 0\}$. Using $(z_1, \ldots, z_n)$ as coordinates for $X_t \cap U$, we have
    \[
    V_t = \frac{(\sqrt{-1})^{n} \eta}{\left| \prod_{1 \leq j \leq l+1} z_j \right|^2} \bigwedge_{1 \leq j \leq n} (dz_j \wedge d\bar{z}_j),
    \]
    for some smooth bounded function $\eta$ on $U$.

    Similarly, $\chi_t$ can be written as
    \[
    \chi_t = \eta_{l+2} (\log |t|^2)^2 \prod_{j=0}^{l+1} (\log |z_j|^2 + \eta_j)^{-2},
    \]
    where each $\eta_j$ is a smooth bounded function on $U$.

    Thus,
    \[
    \chi_t V_t = \eta' \frac{(\log |t|^2)^2}{(\log |z_0|^2 + \eta_0)^2} \bigwedge_{j=1}^{l+1} \frac{dz_j \wedge d\bar{z}_j}{(\log |z_j|^2 + \eta_j)^2} \wedge \bigwedge_{j=l+2}^n (dz_j \wedge d\bar{z}_j),
    \]
    where $\eta'$ is another smooth bounded function on $U$.

    For any $q \in X_{0,I}^o \cap U$, choose a small neighborhood $U' \Subset U \setminus X_{0,\lambda}$. Let $z_0' = z_0 z_{l+1}$ and $z_i' = z_i$ for $i > 0$. Then $(z_0', \ldots, z_n')$ are coordinates on $U'$ with $\prod_{i=0}^l z_i' = t$. Note that $U'$ need not be an appropriate coordinate patch, but it suffices to define $P_{t,I}$.

    We then have
    \begin{align*}
        P_{t,I}^* (\chi_t V_t) &= \frac{\eta' \circ P_{t,I}}{(2x_0 + \frac{\eta_0}{\log |t|^2})^2} \bigwedge_{j=1}^l \frac{dw_j \wedge d\bar{w}_j}{(2x_j + \frac{\eta_j}{\log |t|^2})^2} \\
        &\quad \wedge \frac{dz_{l+1} \wedge d\bar{z}_{l+1}}{|z_{l+1}|^2 (\log |z_{l+1}|^2 + \eta_{l+1})^2} \wedge \bigwedge_{j=l+2}^n (dz_j \wedge d\bar{z}_j),
    \end{align*}
    where $x_0 = 1 - \sum_{j=1}^l x_j$. Letting $t \to 0$, it is clear that
    \[
    \vcal_I = \eta'' \frac{dz_{l+1} \wedge d\bar{z}_{l+1}}{|z_{l+1}|^2 (\log |z_{l+1}|^2 + \eta_{l+1})^2} \wedge \bigwedge_{j=l+2}^n (dz_j \wedge d\bar{z}_j),
    \]
    for some smooth function $\eta''$ on $U'$, bounded in terms of the bounds of $\eta'$. Thus, $\vcal_I$ is of Poincaré type near $X_{0,J}^o$.

    The same calculation applies to higher codimensional intersections, so the proposition follows.
\end{proof}

\begin{theorem}[Lemma 3.3 in \cite{2015Collapsing}]\label{the-zhang-3.3}
    Let $\varphi_t$ be the unique solution of \eqref{eqn-ma-t}, and set $\omega_t = \tilde{\omega}_t + \sqrt{-1}\partial\bar{\partial} \varphi_t$. For any sequence $t_k \to 0$, there exists a subsequence such that $\varphi_{t_k} \circ P_{t_k}$ converges in the $C^\infty$ sense on $K \times \sqrt{-1}\mathbb{R}^l \times (U \cap X_{0,I})$ to a function $\varphi_0$ satisfying the complex Monge-Ampère equation:
    \begin{equation}\label{eqn-ma-0}
        (\tilde{\omega}_0 + \sqrt{-1}\partial\bar{\partial} \varphi_0)^n = e^{\varphi_0} V_0'
    \end{equation}
    with $|\varphi_0| \leq C_3$, and $C_4^{-1} \tilde{\omega}_0 \leq \tilde{\omega}_0 + \sqrt{-1}\partial\bar{\partial} \varphi_0 \leq C_4 \tilde{\omega}_0$.
    
    Furthermore, $\varphi_0$ is independent of $\Im(w)$, i.e.,
    \[
    \varphi_0 = \varphi_0(x, z_{l+1}, \ldots, z_n).
    \]
\end{theorem}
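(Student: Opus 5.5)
Since this is Lemma 3.3 of Zhang \cite{2015Collapsing}, I would reproduce its proof, which rests on the uniform estimates of Proposition~\ref{prop-ke-t} and the local convergence of the reference data in Lemma~\ref{lem-zhang-3.2}.

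\textbf{Step 1: uniform bounds on the cover.} Pull everything back by $P_{t,l}$ to the fixed domain $G_K\times(U\cap X_{0,I})$. Proposition~\ref{prop-ke-t} gives $|\varphi_t\circ P_{t,l}|\le C_1$ and
\[
C_2^{-1}P_{t,l}^*\tilde\omega_t\le P_{t,l}^*\omega_t\le C_2P_{t,l}^*\tilde\omega_t .
\]
By Lemma~\ref{lem-zhang-3.2}, $P_{t,l}^*\tilde\omega_t$ converges in $C^\infty$ to the smooth limit metric $\tilde\omega_0$ on compact subsets. In particular, $P_{t,l}^*\tilde\omega_t$ has uniformly bounded geometry there, and $P_{t,l}^*\omega_t$ is uniformly equivalent to a fixed smooth metric.

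\textbf{Step 2: higher regularity and extraction.} The pulled-back equation reads
\[
(P^*\tilde\omega_t+\sqrt{-1}\partial\bar\partial\,\varphi_t\circ P)^n=e^{\varphi_t\circ P}\,P^*(\chi_tV_t).
\]
Its reference form and right-hand side converge smoothly by Lemma~\ref{lem-zhang-3.2}. The equation is therefore uniformly elliptic with smooth, uniformly controlled data. Evans--Krylov (or Calabi's $C^3$ estimate) then gives uniform $C^{2,\alpha}$ bounds on compact subsets, and Schauder bootstrapping gives uniform $C^k$ bounds for all $k$. By Arzel\`a--Ascoli and a diagonal argument over an exhaustion by compacts, a subsequence $\varphi_{t_k}\circ P_{t_k}$ converges in $C^\infty_{loc}$ to a limit $\varphi_0$. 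This $\varphi_0$ solves \eqref{eqn-ma-0}, and the bounds $|\varphi_0|\le C_3$ and $C_4^{-1}\tilde\omega_0\le\tilde\omega_0+\sqrt{-1}\partial\bar\partial\varphi_0\le C_4\tilde\omega_0$ pass to the limit.

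\textbf{Step 3: independence of $\Im w$ (the main point).} The map $P_{t,l}$ is invariant under $w_j\mapsto w_j+\frac{2\pi\sqrt{-1}}{\log|t|}$. Hence $\varphi_t\circ P_{t,l}$ is periodic in each $\Im w_j$ with period $2\pi/|\log|t||\to0$. Fix $y\in\mathbb R^l$ and approximate it by a lattice point $\tau_t$ of this period lattice, so that $|y-\tau_t|\lesssim 1/|\log|t||$. The uniform $C^1$ bound from Step 2 then gives
\[
|\varphi_t\circ P(w+\sqrt{-1}y)-\varphi_t\circ P(w)|\le C/|\log|t||\to0 .
\]
Passing to the limit along the subsequence shows that $\varphi_0$ is invariant under all imaginary translations, so $\varphi_0=\varphi_0(x,z_{l+1},\dots,z_n)$.

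The only delicate point is the uniformity of the higher-order estimates in Step 2, since the domain degenerates near $\partial B_I$. Working on compact $K\subset B_I$ avoids this: there, Lemma~\ref{lem-zhang-3.2} gives the needed smooth convergence of the data.
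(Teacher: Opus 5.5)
Your proposal is correct and follows essentially the same route as the proof of Zhang's Lemma 3.3. The paper itself only cites that lemma and gives no proof of its own. The ingredients match: uniform $C^0$ and metric-equivalence bounds from Proposition~\ref{prop-ke-t}, smooth convergence of the pulled-back reference data from Lemma~\ref{lem-zhang-3.2}, local Evans--Krylov/Calabi and Schauder estimates with Arzel\`a--Ascoli, and independence of $\Im w$ from the deck transformations $w_j\mapsto w_j+2\pi\sqrt{-1}/\log|t|$ of $P_{t,l}$, whose period tends to zero.
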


We denote by $\varphi_{I,U}$ the function $\varphi_0$ in the theorem to emphasize its dependence on $U$ and $X_{0,I}$. Note that $\varphi_{I,U}$ also depends on the chosen good sequence $\{t_u\}$, but for simplicity, we suppress this in the notation.

By Lemma~\ref{lem-zhang-3.2} and Theorem~\ref{the-zhang-3.3}, we have
\[
e^{\varphi_{I,U}} V_0' = e^{\varphi_{I,U}} \frac{\vcal_I}{4(1-\sum_{j=1}^{l} x_j)^2} \wedge \bigwedge_{j=1}^{l} \frac{dw_j \wedge d\bar{w}_j}{4x_j^2}
\]
on $U_i \cap X_{0,I}$.

\begin{definition}
    A sequence $\{t_u\}$ converging to $0$ is called a \emph{good sequence} if, for every $I$, every $U$, and every compact $K \subset B_I$, the pullbacks $P_{t_u}^* \omega_{t_u}^n$ converge on $G_K \times (U \cap X_{0,I})$.
\end{definition}
Let $t_u \to 0$ be a good sequence. Consider the form
\[
e^{\varphi_{I,U}} \frac{\vcal_I}{4(1-\sum_{j=1}^{l} x_j)^2} \prod_{j=1}^{l} \frac{1}{4x_j^2}
\]
on $G_K \times (U \cap X_{0,I})$. If we change the coordinates $(z_{l+1},\ldots,z_n)$ on $U \cap X_{0,I}$ to $(z_{l+1}',\ldots,z_n')$, then the representation function of $\omega_t^n$ is multiplied by $|\det J|^2$, where $J$ is the Jacobian matrix $\left\{\frac{\partial z_a}{\partial z_b'}\right\}_{a,b \geq l+1}$. Since the image of $G_K \times (U \cap X_{0,I})$ under $P_t$ converges to $U \cap X_{0,I}$ as $t \to 0$, the limit of the representation function of $P_t^*\omega_t^n$ is also multiplied by $|\det J|_{U \cap X_{0,I}}|^2$. Therefore, the expression
\[
e^{\varphi_{I,U}} \frac{\vcal_I}{4(1-\sum_{j=1}^{l} x_j)^2} \prod_{j=1}^{l} \frac{1}{4x_j^2}
\]
glues together to define a smooth section of $\Pi^{-1}\Omega_{X_{0,I}}^{n-l,n-l}$ on $G_K \times X_{0,I}^\delta$, where $\Pi: G_K \times X_{0,I}^\delta \to X_{0,I}^\delta$ is the natural projection. Letting $\delta \to 0$, we obtain a smooth section on $G_K \times X_{0,I}^o$. By taking an increasing sequence of compact subsets $K_j \subset B_I$ with $\bigcup K_j = B_I$, we further obtain a smooth section on $G_{B_I} \times X_{0,I}^o$. In particular, the functions $\varphi_{I,U}$ glue together to define a smooth bounded function $\varphi_I$ on $B_I \times X_{0,I}^o$.

\subsection*{Identifications of line bundles}
Let $A_I$ denote the line bundle $\sum_{j \notin I} [X_{0,j}]$. By the adjunction formula,
\[
K_{\xcal}|_{X_{0,i}} = K_{X_{0,i}} - [X_{0,i}] = K_{X_{0,i}} + A_{\{i\}}.
\]
By induction on $|I|$, we have
\[
L|_{X_{0,I}} = K_{X_{0,I}} + A_I.
\]
Let $K_I$ denote the canonical bundle $K_{X_{0,I}}$.

If $J \subset I$ with $|J| = |I| - 1$, the normal bundle $N_{I,J}$ of $X_{0,I}$ in $X_{0,J}$ is simply the restriction of $[X_{0,I \setminus J}]$ to $X_{0,I}$.

Around an appropriate coordinate patch $(U,(z_0,\cdots,z_n))$ with $t = z_0\cdots z_l$, let $(z'_0,\cdots,z'_n)$ be another such coordinate system. Then $z_i' = F_i(z)z_i$ for $i \leq l$, where each $F_i$ is a nowhere-vanishing holomorphic function on $U$. On $X_{0,I} \cap U$, we have $dz_i' = F_i(z)dz_i$, so
\begin{align*}
    dz_0'\wedge\cdots\wedge dz_l' &= \prod_{i=0}^{l} F_i(z) \, dz_0\wedge\cdots\wedge dz_l \\
    &= dz_0\wedge\cdots\wedge dz_l,
\end{align*}
since the product $\prod_{i=0}^l F_i(z)$ restricts to $1$ on $X_{0,I}$. Thus, we obtain a well-defined explicit isomorphism from $K_I$ to $K_\xcal$, locally given by
\[
dz_{l+1}\wedge\cdots\wedge dz_n \mapsto dz_{0}\wedge\cdots\wedge dz_n
\]
on $X_{0,I}^o$. In particular, when $D_I$ is empty, this isomorphism is defined over all of $X_{0,I}$. When $D_I$ is nonempty, let $(U, (z_0,\cdots,z_n))$ be local coordinates around a point $p \in D_I$ such that $t = z_0\cdots z_{l'}$ for some $l' > l$ and $U \cap X_{0,I} = \{z_i = 0 \mid i \leq l\}$. Set $z_0' = z_0 \prod_{i=l+1}^{l'} z_i$ and $z_i' = z_i$ for $i > 0$; then $(z_0',\cdots,z_n')$ are local coordinates on $U \setminus D_I$ with $\prod_{i=0}^l z_i' = t$. The morphism
\begin{equation}\label{e-iso-mero}
    f \wedge_{i>l} dz_i' \mapsto f \wedge_{i\geq 0} dz'_i
\end{equation}
gives the isomorphism on $X_{0,I}^o$. In the original coordinates $(z_0,\cdots,z_n)$, this is written as
\[
f \wedge_{i>l} dz_i \mapsto f \left( \prod_{i=l+1}^{l'} z_i \right) \wedge_{i\geq 0} dz_i.
\]
Identifying $K_{X_{0,I}} + A_I$ as the sheaf of meromorphic sections of $K_{X_{0,I}}$ with pole order at most $1$ along $D_I$, the morphism in \eqref{e-iso-mero} defines an isomorphism from $K_{X_{0,I}} + A_I$ to $\kcal$. In the following, we will tacitly use this isomorphism.

On the other hand, let
\[
S_{\hat{I}} = \prod_{j\notin I} S_j,
\]
and fix an isomorphism from $K_I$ on $X_{0,I}^o$ to $K_I + A_I$ defined by $F_I: s \mapsto s \otimes S_{\hat{I}}$ for any local section $s$.

\subsection{General bounds on volume forms}

\subsubsection*{A fixed open cover}

We use the notation
\[
\frac{1}{2}U = \left\{ z \in U \mid |z_i| < \frac{1}{2} R_i,\, 0 \leq i \leq n \right\}.
\]
We now construct a collection of open sets $\ucal = \{ (U_\beta, z) \}_{\beta \in \Lambda}$ as follows. Begin with those sets $I$ such that $D_I$ is empty, i.e., the $I$-cell in $\ccal$ does not bound any higher-dimensional cell. Set $\ucal = \emptyset$. For each such $I$, since $X_{0,I}^o = X_{0,I}$ is compact, we can find finitely many open sets $\{ U_i^I \}$ so that $\{ \frac{1}{2} U_i^I \}$ covers $X_{0,I}$. We add these open sets $\{ \frac{1}{2} U_i^I \}$ to $\ucal$. If necessary, shrink these open sets so that if $I \neq J$, then $U_i^I \cap U_j^J = \emptyset$ for any $i$ and $j$. Let $\wcal$ denote the collection of such $I$-cells.

Next, for each $J$ such that $|J| = |I| - 1$ and the $J$-cell bounds a cell in $\wcal$, since $X_{0,J}^o \setminus \bigcup_{U \in \ucal} U$ is compact, we can find finitely many open sets $\{ U_i^J \}$ so that $\{ \frac{1}{2} U_i^J \}$ covers $X_{0,J}^o \setminus \bigcup_{U \in \ucal} U$. Add these open sets $\{ \frac{1}{2} U_i^J \}$ to $\ucal$ and update $\wcal$ to include these $J$-cells. Repeat this process inductively: at each step, for each cell $K$ of codimension one less than the previous, cover the remaining part of $X_{0,K}^o$ not already covered, and add the corresponding open sets to $\ucal$.

By continuing this process, we obtain a collection of open sets $\ucal = \{ (U_\beta, z) \}_{\beta \in \Lambda}$ as claimed. Clearly, $\bigcup_{U \in \ucal} U$ covers $\Sing(X_0)$. We also denote by $\ucal^I \subset \ucal$ the subset consisting of those $U$ centered at a point in $X_{0,I}^o$.

\medskip

Then for each $I$ with $|I| \geq 2$, there exists a constant $C$ such that for any $U_i, U_j \in \ucal^I$ with coordinates $(z_0, z_1, \ldots, z_n)$ and $(z'_0, z'_1, \ldots, z'_n)$ respectively, and $U_i \cap U_j \neq \emptyset$, we have
\[
C^{-1} |z'_i| < |z_i| < C |z'_i|, \quad \text{for } i \leq l.
\]
There also exists a constant $C'$ such that
\[
\frac{1}{C'} < \frac{V}{(\sqrt{-1})^{n+1} dz_0 \wedge d\bar{z}_0 \wedge \cdots \wedge dz_n \wedge d\bar{z}_n} < C'
\]
on every $U_j$. Let $\mu_I = \omega^{n-l}$ denote the volume form on $X_{0,I}$. Then there exists a constant $C''$ such that
\[
\frac{1}{C''} < \frac{\mu_I}{(\sqrt{-1})^{n-l} dz_{l+1} \wedge d\bar{z}_{l+1} \wedge \cdots \wedge dz_n \wedge d\bar{z}_n} < C''
\]
on every $U_j \cap X_{0,I}$ with $U_j \in \ucal^I$.

\medskip

Let $dz = \bigwedge_{j \leq n} dz_j$ and $dz_{\hat{i}} = (-1)^i \bigwedge_{j \leq n,\, j \neq i} dz_j$. Since
\[
dt = \sum_{i=0}^l \left( \prod_{j \leq l,\, j \neq i} z_j \right) dz_i,
\]
we have $dt \wedge dz_{\hat{i}} = \left( \prod_{j \leq l,\, j \neq i} z_j \right) dz$ for $i \leq l$. Thus, for $t \neq 0$,
\[
\frac{(\sqrt{-1})^{(n+1)^2} dz \wedge d\bar{z}}{\sqrt{-1} dt \wedge d\bar{t}} = \frac{(\sqrt{-1})^n}{\left| \prod_{j \leq l,\, j \neq i} z_j \right|^2} \bigwedge_{j \leq n,\, j \neq i} (dz_j \wedge d\bar{z}_j).
\]

Therefore, for each $I$, there exists a constant $C$ such that
\[
\frac{1}{C} < \frac{|\prod_{j \leq l,\, j \neq i} z_j|^2 V_t}{(\sqrt{-1})^n \bigwedge_{j \leq n,\, j \neq i} (dz_j \wedge d\bar{z}_j)} < C
\]
for each $i \leq l$, valid on every $U \in \ucal^I$.

Now, set
\[
\tilde{\chi}_t = \frac{(\log |t|^2)^2}{\prod_{i \in I} \alpha_i^2}.
\]
Then, for each $I$, there exists a constant $C'$ such that
\[
C'^{-1} < \frac{\chi_t}{\tilde{\chi}_t} < C'
\]
on each $U \in \ucal^I$.

For simplicity, we may assume that $I = \{0, 1, \ldots, l\}$ and $U \cap X_{0,i} = \{z_i = 0\}$ for $i \leq l$. Set $\sigma_i = -\log |z_i|^2$ for $0 \leq i \leq l$, and $y_t = |\log |t|^2|$, so that $\sum_{i=0}^l \sigma_i = y_t$. Then, for $i \leq l$, we have $\alpha_i = \psi_i - \sigma_i$ for some bounded smooth functions $\psi_i$, and
\[
\tilde{\chi}_t^{-1} = \left(\frac{\alpha_0}{\log |t|^2}\right)^2 \prod_{i=1}^l \sigma_i^2 \left(1 - \frac{\psi_i}{\sigma_i}\right)^2,
\]
for each $U \in \ucal^I$.

We thus obtain the following proposition.

\begin{proposition}
    There exist positive constants $t_0$, $M$, and $C_3$ such that for $0 < |t| < t_0$, on the region
    \[
    U^0(t) = \left\{ z \in U \cap X_t \;\middle|\; \sum_{i=1}^l \sigma_i < \frac{l}{l+1} y_t,\; \sigma_i > M \text{ for } i \leq l \right\},
    \]
    we have
    \[
    C_3^{-1} < \frac{\left(\prod_{i=1}^l |z_i|^2 \sigma_i^2\right) \chi_t V_t}{(\sqrt{-1})^n dz_1 \wedge d\bar{z}_1 \wedge \cdots \wedge dz_n \wedge d\bar{z}_n} < C_3.
    \]
\end{proposition}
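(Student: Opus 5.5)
The estimate comes from combining the two bounds just established: the bound for $V_t$ with $i=0$, and the comparison $\chi_t\sim\tilde\chi_t$. It remains to show that on $U^0(t)$ the factor $\tilde\chi_t\,\bigl(\prod_{i=1}^l\sigma_i^2\bigr)$ is bounded above and below.

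\emph{Step 1: reduce to $\tilde\chi_t$.} Use $(z_1,\dots,z_n)$ as coordinates on $X_t\cap U$. The bound displayed above for $V_t$, taken with $i=0$, gives
\[
V_t\asymp \frac{(\sqrt{-1})^n\bigwedge_{j=1}^n dz_j\wedge d\bar z_j}{\prod_{j=1}^l|z_j|^2}
\]
uniformly on $U\in\ucal^I$. Since $C'^{-1}<\chi_t/\tilde\chi_t<C'$ on $U$, the ratio in the proposition is comparable to $\tilde\chi_t\prod_{i=1}^l\sigma_i^2$.

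\emph{Step 2: expand $\tilde\chi_t$.} Write
\[
\tilde\chi_t\prod_{i=1}^l\sigma_i^2=\Bigl(\frac{\log|t|^2}{\alpha_0}\Bigr)^2\prod_{i=1}^l\Bigl(1-\frac{\psi_i}{\sigma_i}\Bigr)^{-2}.
\]
\begin{itemize}
\item \emph{Factors $i\ge1$.} The $\psi_i$ are bounded, say $|\psi_i|\le A$. Choose $M\ge 2A$. Then $\sigma_i>M$ gives $1/2\le 1-\psi_i/\sigma_i\le 3/2$, so each factor lies between $4/9$ and $4$.
\item \emph{Factor $\alpha_0$.} We have $\alpha_0=\psi_0-\sigma_0$ with $\sigma_0=y_t-\sum_{i\ge1}\sigma_i$. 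The condition $\sum_{i=1}^l\sigma_i<\frac{l}{l+1}y_t$ gives $\sigma_0>\frac{1}{l+1}y_t$, and trivially $\sigma_0\le y_t$ since all $\sigma_i>0$.
\item \emph{Choice of $t_0$.} Choose $t_0$ small so that $\frac{1}{l+1}y_t\ge 2A$. Then $|\alpha_0|$ lies between $\frac{1}{2(l+1)}y_t$ and $2y_t$.
\end{itemize}
Since $|\log|t|^2|=y_t$, the factor $(\log|t|^2/\alpha_0)^2$ lies between $1/4$ and $4(l+1)^2$.

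\emph{Step 3: uniformity.} The constants depend only on the uniform bounds on $\psi_i$, $\eta$ and $C'$ over the finitely many patches in $\ucal^I$, and on the finitely many $I$. This yields $C_3$, $M$ and $t_0$ independent of $t$.

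The main point needing care is keeping $\alpha_0$ comparable to $\log|t|^2$; this is exactly what the region $U^0(t)$ is designed to guarantee. One also needs the uniform boundedness of the $\psi_i=\alpha_i+\sigma_i=\log\|S_i\|_i^2-\log|z_i|^2$ on the patches. This holds because $S_i/z_i$ is a nonvanishing holomorphic frame there, after shrinking the patches to relatively compact ones.
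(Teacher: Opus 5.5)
Your proof is correct and follows the paper's argument. The paper obtains the proposition by combining three facts: the bound on $V_t$ (taken with $i=0$), the comparison $\chi_t\sim\tilde\chi_t$, and the factorization $\tilde\chi_t^{-1}=(\alpha_0/\log|t|^2)^2\prod_{i=1}^l\sigma_i^2(1-\psi_i/\sigma_i)^2$. You have made explicit the choices of $M$ and $t_0$ that keep $1-\psi_i/\sigma_i$ and $\alpha_0/\log|t|^2$ bounded above and below on $U^0(t)$; the paper leaves these implicit.
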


Similarly, for $j \leq l$, define
\[
U^j(t) = \left\{ z \in U \cap X_t \;\middle|\; \sum_{i=0,\, i \neq j}^l \sigma_i < \frac{l}{l+1} y_t,\; \sigma_i > M \text{ for } i \leq l \right\}.
\]
Then we also have
\[
C_3'^{-1} < \frac{\left(\prod_{i=0,\, i \neq j}^l |z_i|^2 \sigma_i^2\right) \chi_t V_t}{(\sqrt{-1})^n \bigwedge_{i=0,\, i \neq j}^n dz_i \wedge d\bar{z}_i} < C_3'.
\]

Combining this with Proposition~\ref{prop-ke-t}, we obtain the following corollary.

\begin{corollary}\label{cor-wt-bound}
    There exist positive constants $t_0$, $M$, and $C_4$ such that for $0 < |t| < t_0$, on the region $U^j(t)$, we have
    \[
    C_4^{-1} < \frac{\left(\prod_{i=0,\, i \neq j}^l |z_i|^2 \sigma_i^2\right) \omega_t^n}{(\sqrt{-1})^n \bigwedge_{i=0,\, i \neq j}^n dz_i \wedge d\bar{z}_i} < C_4,
    \]
    where $\omega_t$ is the Kähler-Einstein metric on $X_t$.
\end{corollary}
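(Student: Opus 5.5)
This follows by combining the two-sided volume-form comparison on $U^j(t)$, stated just above for $\chi_t V_t$, with the uniform equivalence of $\omega_t$ and $\tilde\omega_t$ from Proposition~\ref{prop-ke-t}. The point is that $\omega_t^n$ and $\chi_t V_t$ differ only by the factor $e^{\varphi_t}$, which is uniformly bounded.

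\textbf{Step 1.} Write $\omega_t = \tilde\omega_t + \sqrt{-1}\ddbar\varphi_t$. By the Monge--Ampère equation \eqref{eqn-ma-t},
\[
\omega_t^n = e^{\varphi_t}\chi_t V_t .
\]
Proposition~\ref{prop-ke-t} gives $|\varphi_t|\le C_1$ with $C_1$ independent of $t$, so
\[
e^{-C_1}\chi_t V_t \le \omega_t^n \le e^{C_1}\chi_t V_t .
\]
This step does not even need the metric equivalence $C_2^{-1}\tilde\omega_t\le\omega_t\le C_2\tilde\omega_t$. That equivalence would also work, giving $C_2^{-n}\tilde\omega_t^n\le \omega_t^n\le C_2^n\tilde\omega_t^n$ with $\tilde\omega_t^n$ compared to $\chi_tV_t$, but the equation is more direct.

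\textbf{Step 2.} Insert the estimate on $U^j(t)$ stated just before the corollary:
\[
C_3'^{-1} < \frac{\bigl(\prod_{i\ne j,\, i\le l}|z_i|^2\sigma_i^2\bigr)\chi_t V_t}{(\sqrt{-1})^n\bigwedge_{i\ne j} dz_i\wedge d\bar z_i} < C_3' .
\]
Multiplying through by the bounds from Step 1 yields the corollary with $C_4 = C_3' e^{C_1}$. The same $t_0$ and $M$ are used, shrinking $t_0$ if needed so that Proposition~\ref{prop-ke-t} applies.

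\textbf{Step 3 (uniformity).} The constants must be uniform over all $U\in\ucal^I$ and all $I$. This holds because the cover $\ucal$ is finite and the constants $C'$, $C''$ and the comparison $C'^{-1}<\chi_t/\tilde\chi_t<C'$ were chosen uniformly on each $\ucal^I$.

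\textbf{Main obstacle.} The only step requiring care is the derivation of the $U^j(t)$ bound for $\chi_t V_t$, which the paper asserts by symmetry with $U^0(t)$. I would check it as follows. On $U^j(t)$, use $(z_i)_{i\ne j}$ as coordinates on $X_t$, so that $V_t\approx |\prod_{i\ne j,\, i\le l} z_i|^{-2}\bigwedge_{i\ne j}dz_i\wedge d\bar z_i$. Then write
\[
\tilde\chi_t^{-1} = \Bigl(\frac{\alpha_j}{\log|t|^2}\Bigr)^2\prod_{i\ne j}\sigma_i^2\Bigl(1-\frac{\psi_i}{\sigma_i}\Bigr)^2 .
\]
The condition $\sigma_i>M$ with $M$ large makes each factor $(1-\psi_i/\sigma_i)^2$ lie in $[1/4,4]$. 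The condition $\sum_{i\ne j}\sigma_i<\frac{l}{l+1}y_t$ forces $\sigma_j>y_t/(l+1)$; combined with $\sigma_j\le y_t$ (all other $\sigma_i>0$) and $y_t\gg|\psi_j|$, this gives $\alpha_j/\log|t|^2\in[\frac{1}{2(l+1)},2]$, with constants independent of $t$.
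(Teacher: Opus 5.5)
Your proposal is correct and is essentially the paper's argument. The paper obtains the corollary in one line, by combining the $U^j(t)$ bound for $\chi_tV_t$ with Proposition~\ref{prop-ke-t}; you do the same through $\omega_t^n=e^{\varphi_t}\chi_tV_t$ and $|\varphi_t|\le C_1$, and your check of the $U^j(t)$ bound (using $\sigma_j>y_t/(l+1)$ and $\sigma_i>M$) is the paper's $U^0(t)$ computation carried over by symmetry.
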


For each $U \in \ucal$, let $R_U = \min_{i \leq l} R_i$, where $R_i$ are the radii of the coordinate variables. We may choose $M$ so that $M > -\log R_U + 1$ for every $U \in \ucal$.

On $U \in \ucal^I$, for $0 \leq j \leq l$, we can write $\alpha_j = \log |z_j|^2 + \zeta_j$.
Then,
\[
\tilde{\omega}_t = \tilde{\omega}_t' + 2\sum_{i=0}^l \left( \sqrt{-1} \frac{\partial \alpha_i \wedge \bar{\partial} \alpha_i}{\alpha_i^2} \right),
\]
where
\[
\tilde{\omega}_t' = 2\sum_{i=0}^m \left( \frac{\Ric(\|\cdot\|_i)}{\alpha_i} \right) + 2\sum_{i=l+1}^m \left( \sqrt{-1} \frac{\partial \alpha_i \wedge \bar{\partial} \alpha_i}{\alpha_i^2} \right)
\]
is a smooth form in a neighborhood of $X_{0,I}^o$.

On $U^0(t)$, we have $\partial \alpha_j = \frac{dz_j}{(\log |z_j|^2 + \zeta_j) z_j}$ for $1 \leq j \leq l$. Therefore,
\begin{align*}
2\sqrt{-1} \sum_{i=0}^l \frac{\partial \alpha_i \wedge \bar{\partial} \alpha_i}{\alpha_i^2}
&\geq 2\sqrt{-1} \sum_{i=1}^l \frac{\partial \alpha_i \wedge \bar{\partial} \alpha_i}{\alpha_i^2} \\
&= 2\sqrt{-1} \sum_{i=1}^l \frac{dz_i \wedge d\bar{z}_i}{|z_i|^2 (\log |z_i|^2 + \zeta_i)^2} \\
&\geq (1+\epsilon) \sqrt{-1} \sum_{i=1}^l \frac{dz_i \wedge d\bar{z}_i}{|z_i|^2 (\log |z_i|^2)^2}
\end{align*}
for some $\epsilon > 0$ (possibly after shrinking $U$).

Thus, by the second item of Lemma~\ref{lem-zhang-3.2}, we obtain
\[
\tilde{\omega}_t \geq \sqrt{-1} \sum_{i=1}^l \frac{dz_i \wedge d\bar{z}_i}{|z_i|^2 (\log |z_i|^2)^2} + \frac{1}{2} \omega_{U,I}^o.
\]
In particular,
\[
\|v_j\|_{\tilde{\omega}_t}^2 \geq \frac{1}{|z_j|^2 (\log |z_j|^2)^2}
\]
for $v_j = \frac{\partial}{\partial z_j}$ or $v_j = \frac{\partial}{\partial \bar{z}_j}$.

Similar estimates hold on $U^i(t)$ for $i \leq l$.

\medskip

Let $d_I(q)$ denote the distance from a point $q$ to $X_{0,I}$ with respect to $\omega$, and set $\tau_I(q) = -\log d_I^2(q)$. There exists $\epsilon > 0$ such that, in any $U \in \ucal^I$, for $|z_0|^2 + \cdots + |z_l|^2 < \epsilon$, we have
\[
d_I^2(q) = (1 + O((\sum_{i=0}^l |z_i|^2)^{1/2})) \cdot (z_0, \ldots, z_l) Q (\bar{z}_0, \ldots, \bar{z}_l)^t,
\]
where $Q$ is a positive definite Hermitian matrix depending smoothly on $(z_{l+1}, \ldots, z_n)$. Thus, there exists $C > 0$ (independent of $U$) such that, for $|z_0|^2 + \cdots + |z_l|^2 < \epsilon$,
\[
\frac{1}{C} (|z_0|^2 + \cdots + |z_l|^2) < d_I^2(q) < C (|z_0|^2 + \cdots + |z_l|^2).
\]
Therefore, there exists $C_1 > 0$ such that
\[
-C_1 < \tau_I - \log (|z_0|^2 + \cdots + |z_l|^2) < C_1.
\]

For each $J$ with $I \subsetneq J$ and each $U \in \ucal^J$, we may assume $J = \{0, \ldots, l'\}$, $I = \{0, \ldots, l\}$, and $X_{0,i} \cap U = \{z_i = 0\}$ for $i \leq l'$. Then, similarly, there exists $C_2 > 0$ such that
\[
-C_2 < \tau_I - \log (|z_0|^2 + \cdots + |z_l|^2) < C_2.
\]

\

Let $\mcal_U$ denote the region $\{p\in U\mid M<\sigma_i<3M \text{ for some } 0\leq i\leq l\}\subset U$. Let $\ncal_I$ denote the region $\{q\in\xcal \mid \frac{3}{2}M<\tau_I(q)<\frac{5}{2}M\}$. For $M$ sufficiently large, we have $\ncal_I\cap U\subset \mcal_U$ for every $U\in \bigcup_{I\subset J}\ucal^J$.

When $U\in \ucal^I$, note that on $U^0(t)$, $\sum_{j=1}^{l}\sigma_j\leq \frac{l}{l+1}y_t$. By our estimates for $\tilde{\omega}_t$ on $U^0(t)$, there exists $C_3>0$ such that, for $M$ large enough,
\[
|\dbar \tau_I|^2\leq |d \tau_I|^2<C_3 y_t^2
\]
on $\mcal_U\cap U^0(t)$, and similarly on $U^i(t)\cap \mcal_U$. For those $U\in \ucal^J$ with $I\subsetneq J$, analogous estimates hold. In summary, on $\ncal_I\cap X_t$ we have
\begin{equation}\label{e-sigma-dbar}
    |\dbar \tau_I|^2 <C_4 y_t^2,
\end{equation}
for some constant $C_4>0$.

\section{Outline of the proof of Theorem~\ref{thm-main}}\label{sec-3}

Let $\{t_u\}$ be a good sequence, which we fix throughout this section.

When $|I|\geq 2$, we define a volume form on $X_{0,I}^o$ by
\[
\nu_{\{t_u\},I,k+1} \triangleq \vcal_I (4\pi)^l \int_{B_I} \left(4\left(1-\sum_{j=1}^{l}x_j\right)^2 \prod_{j=1}^{l} 4x_j^2\right)^k e^{-k\varphi_I} dx_1\cdots dx_l.
\]
Since $\varphi_I$ is bounded, $\nu_{\{t_u\},I,k+1}$ is also of Poincar\'e type near $D_I$.

Let $\hcal_{\{t_u\},I,k+1}$ denote the space of holomorphic $L^2$-integrable sections of $(k+1)K_I$ on $X_{0,I}^o$, with norm squared given by
\[
\int_{X_{0,I}^o} \| \cdot \|^2_{\vcal_I} \, \nu_{\{t_u\},I,k+1}.
\]
\textbf{Terminology.} Since $\{t_u\}$ is fixed, we will write $\hcal_{I,k+1}$ for $\hcal_{\{t_u\},I,k+1}$ for simplicity, and use $t$ in place of $t_u$ for an element of this good sequence.

When $|I|=1$, i.e., $I=\{i\}$ for some $0\leq i\leq m$, we set $\hcal_{I,k+1} := \hcal_{0,i,k+1}$.

Under this identification, we obtain a space of holomorphic sections of $(k+1)(K_I+A_I)$ on $X_{0,I}^o$. Let $V\subset X_{0,I}$ be a neighborhood of a point $p\in D_I$ with coordinates $(z_{l+1},\ldots,z_n)$ such that $D_I$ is defined by $\prod_{i=l+1}^{l'}z_i=0$ for some $l'\geq l+1$. We may choose a local frame of $A_I$ so that the local holomorphic function representing $S_{\hat{I}}$ is $\prod_{i=l+1}^{l'}z_i$. Let $s\in \hcal_{I,k+1}$; then locally $s=f\,dz_{l+1}\wedge\cdots\wedge dz_n$ for some holomorphic function $f$ on $V\setminus D_I$. Then
\[
\|s\|^2_{\vcal_I} = |f|^2 e^\eta \prod_{i=l+1}^{l'} |z_i|^{2(k+1)} \prod_{i=l+1}^{l'} (\log |z_i|^2)^{2(k+1)},
\]
for some bounded function $\eta$. Since $\nu_{\{t_u\},I,k+1}$ is also of Poincar\'e type near $D_I$, the $L^2$-integrability of $s$ implies that $|f|^2 \prod_{i=l+1}^{l'} |z_i|^{2(k+1)}$ must vanish along $D_I$, so $f \prod_{i=l+1}^{l'} z_i^{k+1}$ extends to a holomorphic function vanishing along $D_I$. Therefore,
\[
s \otimes S_{\hat{I}}^{\otimes (k+1)}
\]
extends to a holomorphic section in $H^0(X_{0,I}, (k+1)(K_I+A_I))$ vanishing along $D_I$. By this explicit identification, we view $\hcal_{I,k+1}$ as a subspace of $H^0(X_{0,I}, (k+1)(K_I+A_I))$, and hence as a subspace of $H^0(X_{0,I}, (k+1)K_{\xcal})$.

\

To apply these estimates to global sections, we use an extension theorem due to Finski. Let $X$ be a complex manifold of dimension $n$ equipped with a positive line bundle $(E, h^E)$, and let $(F, h^F)$ be an arbitrary Hermitian vector bundle over $X$. Let $Y$ be a complex submanifold of $X$ of dimension $m$, with embedding $\iota: Y \hookrightarrow X$. Fix volume forms $dv_X$ and $dv_Y$ on $X$ and $Y$, respectively, so that both $H^0(X, E^k \otimes F)$ and $H^0(Y, \iota^*(E^k \otimes F))$ are endowed with natural Hermitian inner products. Denote by $H^{0,\bot}(X, E^k \otimes F)$ the orthogonal complement of the subspace of sections vanishing along $Y$. Consider the restriction operator
\[
\mathrm{Res}_k: H^{0,\bot}(X, E^k \otimes F) \to H^0(Y, \iota^*(E^k \otimes F)).
\]

Under suitable bounded geometry assumptions, Finski proved the following result.

\begin{theorem}[Theorem 4.1 in \cite{Finski}]\label{thm-fin-total}
    There exist constants $c, C > 0$ and an integer $k_1 > 0$ such that for all $k \geq k_1$,
    \[
    c\, k^{\frac{n-m}{2}} \leq \| \mathrm{Res}_k \| \leq C\, k^{\frac{n-m}{2}}.
    \]
\end{theorem}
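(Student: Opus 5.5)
The plan is to reduce both bounds to the Bergman kernel $B_k(x,x')$ of $H^0(X,E^k\otimes F)$, taken with respect to $h^E$, $h^F$ and $dv_X$, and then use its near-diagonal asymptotics and off-diagonal decay. These are uniform under the bounded geometry assumptions. Note that $\mathrm{Res}_k$ kills the subspace of sections vanishing along $Y$. Hence
\[
\|\mathrm{Res}_k\| = \sup_{s\in H^0(X,E^k\otimes F),\, s\neq 0} \frac{\|s|_Y\|_{L^2(dv_Y)}}{\|s\|_{L^2(dv_X)}},
\]
because projecting $s$ onto $H^{0,\bot}$ leaves $s|_Y$ unchanged and does not increase $\|s\|$. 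So we may work with arbitrary global sections.

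\textbf{Upper bound.} We use $\|\mathrm{Res}_k\|^2=\|\mathrm{Res}_k\mathrm{Res}_k^*\|$. Here $\mathrm{Res}_k\mathrm{Res}_k^*$ acts on $L^2(Y,\iota^*(E^k\otimes F))$ as the integral operator with kernel $B_k(y,y')$, $y,y'\in Y$. By the Schur test,
\[
\|\mathrm{Res}_k\mathrm{Res}_k^*\|\leq \sup_{y\in Y}\int_Y |B_k(y,y')|\, dv_Y(y').
\]
We then insert the uniform off-diagonal estimate $|B_k(x,x')|\leq C k^n e^{-c\sqrt{k}\, d(x,x')}$. For $\mathrm{Res}_k\mathrm{Res}_k^*$ this follows from the kernel bound together with the symmetry of the estimate. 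On $Y$, bounded geometry gives a uniform volume comparison $\mathrm{vol}_Y(B(y,r))\leq C r^m e^{Cr}$. Therefore the integral is at most
\[
Ck^n\int_0^\infty e^{-c\sqrt{k}r}\, d(r^m e^{Cr}) \leq C' k^{n}k^{-m/2}\cdot k^{-m/2}=C'k^{n-m},
\]
which gives $\|\mathrm{Res}_k\|\leq Ck^{(n-m)/2}$.

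\textbf{Lower bound.} We test on peak sections. Fix $y\in Y$ and let $s_y=B_k(\cdot,y)\cdot v$ for a unit vector $v$ in the fiber. Then $\|s_y\|^2=\langle B_k(y,y)v,v\rangle\sim k^n$ by the uniform diagonal expansion. By the near-diagonal expansion in normal coordinates,
\[
|B_k(x,y)|\geq c k^n e^{-Ck\,d(x,y)^2}\quad\text{for } d(x,y)\leq k^{-1/2}.
\]
Moreover $Y\cap B(y,k^{-1/2})$ has $dv_Y$-volume at least $ck^{-m/2}$, again by bounded geometry of $Y$: a bounded second fundamental form and a lower bound on the injectivity radius. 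Hence $\|s_y|_Y\|^2\geq c k^{2n}k^{-m}$, and the ratio $\|s_y|_Y\|^2/\|s_y\|^2\geq ck^{n-m}$.

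\textbf{Main obstacle.} The hard step is establishing the Bergman kernel estimates with constants uniform over the noncompact $X$: the diagonal expansion at leading order and the exponential off-diagonal decay. I would obtain these by localizing: use Hörmander $L^2$ estimates for $\bar\partial$ with weight $k\phi$ plus a cut-off, combined with Agmon-type weights $e^{\epsilon\sqrt{k}\,d(\cdot,y)}$, following Ma--Marinescu. Bounded geometry of $(X,h^E,h^F,dv_X)$ ensures that every constant in the local model computation is independent of the base point, and a lower bound on the curvature of $h^E$ supplies the spectral gap needed for the decay.
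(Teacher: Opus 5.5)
The paper gives no proof of this statement; it quotes Finski's Theorem 4.1 under his bounded-geometry hypotheses. Your argument therefore has to stand on its own, and for the statement as written (two-sided bounds on the \emph{operator norm}) it does, modulo the repairs below. The reduction from $H^{0,\perp}$ to all $L^2$ holomorphic sections is correct. In fact $B_k(\cdot,y)v$ already lies in $H^{0,\perp}$, since $\langle s,B_k(\cdot,y)v\rangle=\langle s(y),v\rangle$. The operator $\mathrm{Res}_k\mathrm{Res}_k^*$ is indeed the integral operator on $Y$ with kernel $B_k|_{Y\times Y}$. The upper bound comes from Schur's test plus uniform exponential decay, and the lower bound from peak sections plus the uniform near-diagonal expansion.

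Two repairs are needed.

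\textbf{Dimension count.} $m$ is the \emph{complex} dimension of $Y$, so volumes of $Y$-balls scale like $r^{2m}$.
\begin{itemize}
\item The Schur integral is $\int_0^\infty e^{-c\sqrt{k}r}\,d(r^{2m}e^{Cr})\asymp(\sqrt{k})^{-2m}=k^{-m}$, coming from a single integral. With $r^m$ as you wrote it you would only get $k^{-m/2}$, and the product $k^{-m/2}\cdot k^{-m/2}$ has no justification.
\item Likewise $\mathrm{vol}_Y(Y\cap B(y,k^{-1/2}))\asymp k^{-m}$, so the claim that it is at least $ck^{-m/2}$ is false as stated. Your conclusion $\|s_y|_Y\|^2\geq ck^{2n-m}$ is nevertheless correct.
\item Since the decay is measured in $d_X$, the volume bound must hold for $\{y'\in Y: d_X(y,y')<r\}$. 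This uses a uniform tubular neighbourhood of $Y$, not just the intrinsic bounded geometry of $Y$.
\end{itemize}

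\textbf{Boundedness on noncompact $Y$.} The identity $\|\mathrm{Res}_k\|^2=\|\mathrm{Res}_k\mathrm{Res}_k^*\|$ presupposes that $\mathrm{Res}_k$ is bounded. Obtain this by duality: pair $s|_Y$ with compactly supported $g$ via $\int_Y\langle s,g\rangle\,dv_Y=\langle s,\mathrm{Res}_k^*g\rangle_X$, and apply the Schur bound to $\|\mathrm{Res}_k^*g\|^2$.

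It helps to compare with what the paper actually uses. Only the upper bound of this theorem is ever invoked, in Cases 1 and 2 of Section~\ref{sec-4}, to get $\|\tilde s\|^2_{h,\mathcal{X}}\geq c\,k^{-\mathrm{codim}}\|\tilde s\|^2_{h,X_{0,I'}}$. For that bound there is a more elementary route than off-diagonal decay:
\begin{itemize}
\item Choose, for each $y\in Y$, a holomorphic frame whose weight vanishes to second order at $y$ with no pluriharmonic quadratic part, so the weight is $O(1/k)$ on the $k^{-1/2}$-ball.
\item The sub-mean-value inequality on the normal $(n-m)$-disc $D_y$ of radius $k^{-1/2}$ then gives $|s(y)|^2\leq Ck^{n-m}\int_{D_y}|s|^2$.
\item Fubini in tubular coordinates yields $\|s|_Y\|^2\leq Ck^{n-m}\|s\|^2$.
\end{itemize}
This needs only uniform geometry near $Y$, with no spectral gap and no completeness. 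It fits the paper's situation of a compact $Y=X_{0,I}$ inside the total space $\mathcal{X}$ over a disc. Your kernel approach gets both bounds at once and exposes the Toeplitz-type structure of $\mathrm{Res}_k\mathrm{Res}_k^*$, at the price of the global Bergman-kernel machinery you identify as the main obstacle.

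Finally, your lower bound concerns only the norm. What the paper calls the ``more explicit'' lower bound (Finski's Theorem 4.4, used for \eqref{e-c1-x0I}) is much stronger: $\|\mathrm{Res}_kf\|\geq ck^{(n-m)/2}\|f\|$ for every $f\in H^{0,\perp}$, together with surjectivity. Peak sections cannot give that; it requires a genuine $L^2$-extension argument.
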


We refer the reader to \cite{Finski}, Definitions 2.3 and 2.4, for the precise definition of bounded geometry. The lower bound in the theorem can be made more explicit:

\begin{theorem}[Theorem 4.4 in \cite{Finski}]
    There exist constants $C > 0$ and $k_1 > 0$ such that for any $k \geq k_1$ and any $s \in H^0(Y, \iota^*(E^k \otimes F))$, there exists $f \in H^0(X, E^k \otimes F)$ with $f|_Y = s$ and
    \[
    \| f \|_{L^2(X)} \leq C\, k^{(m-n)/2} \| s \|_{L^2(Y)}.
    \]
\end{theorem}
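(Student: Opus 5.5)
We would prove this by the standard ``local extension plus $\bar\partial$-correction'' scheme, keeping every constant uniform via the bounded geometry assumptions of \cite{Finski}. Bounded geometry provides a uniform radius $r_0>0$ with the following properties. Around each point of $Y$ there are Fermi-type coordinates $(z,w)$, with $z$ tangent to $Y$ and $w$ normal, so that $Y=\{w=0\}$. In these coordinates $E$ and $F$ admit holomorphic frames in which
\[
|e|^2_{h^E}=e^{-\phi},\qquad \phi(z,w)=\phi(z,0)+|w|^2+O(|w|^3)+(\text{pluriharmonic terms}),
\]
with all derivatives of $\phi$ and of the frame of $F$ bounded uniformly. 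This normal form is the input that makes the whole argument uniform in the base point.

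The first step is a local holomorphic extension. Given $s\in H^0(Y,\iota^*(E^k\otimes F))$, we extend it from $Y$ to the tubular neighbourhood $T_{r}=\{|w|<r\}$, $r\le r_0$, by taking the local representative of $s$ to be independent of $w$ in the chosen frames, and we glue these with a partition of unity in the $z$-direction. The result $\tilde s$ satisfies $\tilde s|_Y=s$. It is holomorphic up to errors coming from the gluing, and those errors vanish on $Y$ to first order.

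Set $f_0=\chi(|w|/r)\,\tilde s$, where $\chi$ is a cutoff equal to $1$ near $0$. Because $|\tilde s|^2_{h^k}\approx |s|^2 e^{-k|w|^2}$, integrating over the normal fibres gives
\[
\|f_0\|^2_{L^2(X)}\le C\,k^{-(n-m)}\|s\|^2_{L^2(Y)},
\]
which is exactly the claimed order $k^{(m-n)/2}$ for the norm. The error $\bar\partial f_0$ has two sources:
\begin{itemize}
\item the cutoff region $|w|\sim r$, where the Gaussian factor makes it $O(e^{-ck r^2})$ times $\|s\|$;
\item the gluing error, which is $O(|w|)$ times $\tilde s$ and therefore gains at least a further factor $k^{-1/2}$ in $L^2$.
\end{itemize}

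The second step is the correction. We solve $\bar\partial u=\bar\partial f_0$ with the requirement $u|_Y=0$, and put $f=f_0-u$. To force the vanishing we use H\"ormander's $L^2$ estimate for the bundle $E^k\otimes F\otimes K_X^{-1}$ with the singular weight $e^{-\psi}$, where
\[
\psi=(n-m)\,\chi(|w|/r_0)\log|w|^2 .
\]
Non-integrability of $e^{-\psi}$ across $Y$ then forces $u|_Y=0$. The Levi form of $\psi$ is bounded below by $-C$ (the $\log|w|^2$ term is plurisubharmonic and the cutoff contributes a bounded error), and the curvatures of $F$ and $K_X^{-1}$ are bounded. Hence for $k\ge k_1$ the total curvature is at least $\tfrac{k}{2}\omega$, and we obtain
\[
\|u\|^2\le \int|u|^2e^{-\psi}\le \frac{C}{k}\int|\bar\partial f_0|^2 e^{-\psi}.
\]
On the cutoff region the weight $e^{-\psi}$ is bounded, so that piece contributes a term exponentially small in $k$. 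On the gluing region the factor $|w|^{-2(n-m)}$ against the Gaussian $e^{-k|w|^2}$ costs a factor $k^{n-m}$, and this has to be offset.

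We expect this offset to be the main obstacle. To handle it, the local extension must be chosen so that $\bar\partial\tilde s$ vanishes to high order (at least order $n-m$) along $Y$. We would do this by a finite formal power series correction in $w$: solve $\bar\partial$ order by order, using that the Taylor coefficients of the frames are uniformly bounded by bounded geometry. The error then becomes $O(|w|^{n-m+1})e^{-k|w|^2/2}$, and the weighted integral is $O(k^{-(n-m)-1})\|s\|^2$.

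Combining the two steps gives $\|f\|_{L^2(X)}\le Ck^{(m-n)/2}\|s\|_{L^2(Y)}$ with $f|_Y=s$, with constants uniform by bounded geometry.
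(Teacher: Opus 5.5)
The paper gives no proof of this statement; it is quoted from \cite{Finski}, where it comes out of a semiclassical (Bergman-kernel, Toeplitz-type) analysis of the minimal-norm extension operator. Judged on its own, your direct $\bar\partial$ route has a genuine gap. Every construction is made at a fixed scale $r_0$ with $k$-independent constants, but the mass lives at the Gaussian scale $|w|\sim k^{-1/2}$, and the two are incompatible.

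First, the holomorphic normal form you attribute to bounded geometry does not exist on a chart of fixed size. Let $a(z)=\partial_w\phi(z,0)$ be the coefficient of the $w$-linear term. A change of holomorphic frame removes only the holomorphic part of $a$. A change of the transversal holomorphic foliation, $z'=z+B(z)w$ with $B$ holomorphic, shifts $\partial_{\bar z}a=\phi_{w\bar z}(z,0)$ by $-\phi_{z\bar z}B$. So $a$ can be killed at the centre of a chart, but not on an $r_0$-ball unless the $i\Theta(h^E)$-orthogonal complement of $TY$ is a holomorphic subbundle. For instance, for $\phi=|z|^2+|w|^2+\Re(\bar z^2 w)$ a residual $\bar z^2/2$ always survives. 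Completing the square in $w$, your $w$-independent extension satisfies
\[
\int_{\text{normal fibre}}|\tilde s|^2_{h^k}\approx\Big(\frac{\pi}{k}\Big)^{n-m}e^{k|a(z)|^2}\,|s(z)|^2_{h^k},
\]
with $|a|\sim r_0^2$ at distance $\sim r_0$ from the centre. For $s$ a peak section at such a point this is exponentially larger than $k^{-(n-m)}\|s\|^2$. So the bound $\|f_0\|^2\le Ck^{-(n-m)}\|s\|^2$ already fails.

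Second, the error terms are not $k$-uniform. The mismatch $\tilde s_\alpha-\tilde s_\beta$ involves $g_{\alpha\beta}^k$ and the change of projection applied to $s$. Relative to $|\tilde s|$ it is therefore of size $k|w|$ on fixed-size charts, not $|w|$. For the same reason the coefficients of your order-by-order correction grow with $k$: each step involves $k\,\partial\phi$ and derivatives of $s$ of relative size $\gtrsim\sqrt k$. On $|w|\sim k^{-1/2}$, an extra order of vanishing gains $k^{-1/2}$ and costs at least $k^{1/2}$. Hence the factor $k^{n-m}$ lost to the weight $|w|^{-2(n-m)}$, cut off at the fixed scale $r_0$, is never recovered. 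Your bound $O(k^{-(n-m)-1})$ silently assumes $k$-independent constants in $O(|w|^{n-m+1})$.

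The missing idea is to localize at the semiclassical scale.
\begin{itemize}
\item Put the singular weight at scale $k^{-1/2}$, as in Tian's peak sections \cite{Tian1990On}, e.g. $\psi=(n-m)\chi(\sqrt k|w|/R)\log(k|w|^2/R^2)$. Then $\psi\le 0$, its Levi form is $\ge -Ck/R^2$ (absorbed by $k\,i\Theta(h^E)$ for $R$ large), and $e^{-\psi}$ is bounded on $|w|\ge R/\sqrt k$.
\item With this weight, even $|\bar\partial f_0|\lesssim k|w|\,|\tilde s|$ gives $\int|\bar\partial f_0|^2e^{-\psi}\lesssim k^{1-(n-m)}\|s\|^2$, hence $\|u\|^2\lesssim k^{-(n-m)}\|s\|^2$. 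No high-order vanishing is needed.
\item Build $f_0$ from pieces adapted at scale $O(k^{-1/4})$ or smaller along $Y$, each normalized at its own centre, so that $k|a|^2=O(1)$. For example, decompose $s$ via the Bergman kernel or peak sections of $Y$ and extend each piece separately.
\end{itemize}
This localization is exactly what the Bergman-kernel machinery of \cite{Finski} supplies.
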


In our setting, we take $Y = X_{0,I}$, $X = \xcal$ with Kähler form $\omega$, and $E = K_\xcal$ equipped with the Hermitian metric $h$. By possibly shrinking the base disk $B$, our situation (with $F$ trivial) satisfies the bounded geometry assumptions of \cite{Finski}. For any submanifold $Y' \subset \xcal$, we denote by $\| \cdot \|_{h, Y'}$ the $L^2$-norm on $H^0(Y', kL)$ defined by $h$ and $\omega$. Then there exist constants $C_1 > 0$ and $k_1 > 0$ such that for any section $s \in H^0(X_{0,I}, kL)$ of unit norm and $k \geq k_1$, there exists an extension $\tilde{s} \in H^0(\xcal, kL)$ satisfying
\begin{equation}\label{e-c1-x0I}
    \| \tilde{s} \|^2_{h, \xcal} \leq \frac{C_1}{k^{l+1}}.
    \end{equation}
To fix our choice, we let $\tilde{s}$ be the extension of minimal $L^2$-norm.

When $D_I$ is not empty, let $\hcal_{I,k+1,2} \subset \hcal_{I,k+1}$ denote the subspace of sections vanishing to order at least $2$ along $D_I$, and let $\gcal_{I,k+1}$ be its orthogonal complement.

We now apply Finski's extension theorem to extend sections from $X_{0,I}$ to $\xcal$. There are three cases:
\begin{itemize}
    \item[1.] $D_I$ is empty. For each $s \in H^0(X_{0,I}, (k+1)L)$, let $\tilde{s} \in H^0(\xcal, (k+1)L)$ be its minimal $L^2$-norm extension.
    \item[2.] $D_I$ is nonempty and $s \in \gcal_{I,k+1}$. Then $s = S_{\hat{I}} \otimes s'$ for some $s' \in H^0(X_{0,I}, (k+1)L - \sum_{j \notin I} [X_{0,j}])$. Let $h_{\hat{I},k+1}$ denote the Hermitian metric on $(k+1)L - \sum_{j \notin I} [X_{0,j}]$ induced by $h$ and the $\|\cdot\|_j$. Let $\tilde{s}' \in H^0(\xcal, (k+1)L - \sum_{j \notin I} [X_{0,j}])$ be the minimal extension of $s'$, and set $\tilde{s} = S_{\hat{I}} \otimes \tilde{s}'$.
    \item[3.] $D_I$ is nonempty and $s \in \hcal_{I,k+1,2}$. Then $s = S_{\hat{I}}^{\otimes 2} \otimes s'$ for some $s' \in H^0(X_{0,I}, (k+1)L - 2\sum_{j \notin I} [X_{0,j}])$. Let $\tilde{s}' \in H^0(\xcal, (k+1)L - 2\sum_{j \notin I} [X_{0,j}])$ be the minimal extension of $s'$, and set $\tilde{s} = S_{\hat{I}}^{\otimes 2} \otimes \tilde{s}'$.
\end{itemize}

\

If $s$ is from any of these three scenarios, we obtain a section, which can be locally written as
\[
s_t \triangleq \frac{\tilde{s}}{(dt)^{\otimes (k+1)}} = \frac{f}{\prod_{i=1}^{l} z_i^{k+1}} (dz_1 \wedge \cdots \wedge dz_n)^{\otimes (k+1)}
\]
in $H^0(X_t, (k+1)K_t)$.

\textbf{Terminology.} For simplicity, we refer to $s_t$ as the restriction of $\tilde{s}$ to $X_t$.

To control the behavior of $s_t$ away from $X_{0,I}$, we further modify $s_t$ using H\"ormander's $L^2$ estimates. The following lemma is standard; see, for example, \cite{Tian1990On}.

\begin{lem}
    Suppose $(M,g)$ is a complete Kähler manifold of complex dimension $n$, and $\mathcal{L}$ is a line bundle on $M$ with Hermitian metric $h$. If
    \[
    \langle -2\pi i\, \Theta_h + \mathrm{Ric}(g), v \wedge \bar{v} \rangle_g \geq C |v|_g^2
    \]
    for any tangent vector $v$ of type $(1,0)$ at any point of $M$, where $C > 0$ is a constant and $\Theta_h$ is the curvature form of $h$, then for any smooth $\mathcal{L}$-valued $(0,1)$-form $\alpha$ on $M$ with $\bar{\partial}\alpha = 0$ and $\int_M |\alpha|^2 dV_g < \infty$, there exists a smooth $\mathcal{L}$-valued function $\beta$ on $M$ such that $\bar{\partial}\beta = \alpha$ and
    \[
    \int_M |\beta|^2 dV_g \leq \frac{1}{C} \int_M |\alpha|^2 dV_g,
    \]
    where $dV_g$ is the volume form of $g$ and the norms are induced by $h$ and $g$.
\end{lem}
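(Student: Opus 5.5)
This is the standard Hörmander $L^2$ estimate on a complete Kähler manifold, and I would prove it by the Hilbert space duality method. Let $L^2_{p,q}$ denote the Hilbert space of $\mathcal{L}$-valued $(p,q)$-forms that are square integrable with respect to $g$ and $h$. Regard $\bar\partial$ as a closed, densely defined operator
\[
T: L^2_{0,0} \to L^2_{0,1}, \qquad S: L^2_{0,1} \to L^2_{0,2}.
\]
Given $\alpha \in \ker S$, it suffices to find $\beta$ with $T\beta = \alpha$ and $\|\beta\|^2 \leq C^{-1}\|\alpha\|^2$. Such a $\beta$ is smooth by elliptic regularity, since $\bar\partial$ is elliptic on functions modulo $\alpha$ smooth; more precisely $\beta$ differs from a smooth local solution by a holomorphic function.

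\textbf{Step 1: the a priori estimate.} I would establish
\[
\|T^*u\|^2 + \|Su\|^2 \geq C\|u\|^2 \quad \text{for all } u \in \mathrm{Dom}(T^*) \cap \mathrm{Dom}(S).
\]
First take $u$ smooth with compact support. Identify $\mathcal{L}$-valued $(0,1)$-forms with $(n,1)$-forms valued in $\mathcal{L}\otimes K_M^{-1}$. The metric $h\otimes(\det g)$ on $K_M^{-1}$ has curvature form $-2\pi i\Theta_h + \Ric(g)$, up to the paper's normalization. The Bochner--Kodaira--Nakano identity for $(n,1)$-forms then gives
\[
\|\bar\partial^*u\|^2 + \|\bar\partial u\|^2 \geq \int_M \langle [\,i\Theta',\Lambda]u,u\rangle \, dV_g .
\]
Here $\Theta'$ is the curvature of $\mathcal{L}\otimes K_M^{-1}$. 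The curvature operator $[i\Theta',\Lambda]$ on $(n,1)$-forms has eigenvalues equal to the eigenvalues of $i\Theta'$ with respect to $g$. So the hypothesis $\langle -2\pi i\Theta_h+\Ric(g), v\wedge\bar v\rangle_g \geq C|v|^2_g$ yields $\geq C\|u\|^2$.

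\textbf{Step 2: density.} Next I would extend the estimate to all of $\mathrm{Dom}(T^*)\cap\mathrm{Dom}(S)$. This is exactly where completeness of $g$ is used, and it is the main technical point. Take cutoff functions $\chi_\nu$ with $|d\chi_\nu|_g \leq 2^{-\nu}$, which exist because $g$ is complete. Friedrichs mollification then shows that compactly supported smooth forms are dense in the graph norm $\|u\|+\|T^*u\|+\|Su\|$. This is Andreotti--Vesentini/Hörmander density; I would cite it as standard rather than reprove it.

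\textbf{Step 3: duality.} For $\alpha\in\ker S$, consider $u\in\mathrm{Dom}(T^*)$. Decompose $u=u_1+u_2$ with $u_1\in\ker S$ and $u_2\in(\ker S)^\perp \subset \ker T^*$, using that $\mathrm{Im}\,T\subset\ker S$. Then
\[
|\langle\alpha,u\rangle| = |\langle \alpha,u_1\rangle| \leq \|\alpha\|\,\|u_1\| \leq C^{-1/2}\|\alpha\|\,\|T^*u_1\| = C^{-1/2}\|\alpha\|\,\|T^*u\|,
\]
where the middle inequality is Step 1 applied to $u_1$ (for which $Su_1=0$). Hence the functional $T^*u\mapsto\langle u,\alpha\rangle$ is well defined and bounded with norm at most $C^{-1/2}\|\alpha\|$. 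By Hahn--Banach and Riesz representation there is $\beta$ with $\|\beta\|\leq C^{-1/2}\|\alpha\|$ and $\langle \beta, T^*u\rangle = \langle\alpha,u\rangle$ for all $u$. This says $T\beta=\alpha$ weakly, which gives the stated bound $\int|\beta|^2 \leq C^{-1}\int|\alpha|^2$.
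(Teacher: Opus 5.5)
Your proposal is correct. It is precisely the standard H\"ormander--Andreotti--Vesentini argument: Bochner--Kodaira--Nakano for $(n,1)$-forms with values in $\mathcal{L}\otimes K_M^{-1}$, density of compactly supported forms in the graph norm via completeness, then Hilbert-space duality and local regularity of $\bar\partial$ on functions. The paper gives no proof of its own and simply quotes the lemma as standard from \cite{Tian1990On}, which rests on exactly this argument.
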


In our setting, $(M,g) = (X_t, \omega_t)$ and the line bundle is $kL$, so for $k$ sufficiently large, the assumptions of the lemma are satisfied.

We now introduce a cut-off function $\varpi(\eta)$ of one variable, satisfying:
\begin{itemize}
    \item $\varpi(\eta) = 1$ for $\eta \geq \frac{5}{2}M$;
    \item $\varpi(\eta) = 0$ for $\eta \leq \frac{3}{2}M$;
    \item $0 \leq \varpi'(\eta) < \frac{1}{2}$.
\end{itemize}
We seek to solve the equation
\[
\bar{\partial} v = \bar{\partial} \varpi(\tau) \otimes s_t
\]
on $X_t$.

By formula~\ref{e-sigma-dbar}, we have on $\ncal_I \cap X_t$ that
\[
|\bar{\partial} \varpi(\tau_I)|^2 < c_1 y_t^2,
\]
for some constant $c_1 > 0$.

We will need the following theorem.

\begin{theorem}\label{thm-inner-sections-prop}
    If $0 \neq s \in \hcal_{I,k+1}$ is from any of these three scenarios, then:
    \begin{itemize}
        \item
        \[
        \int_{X_t} \| s_t \|^2_{\mathrm{KE}} \, \omega_t^n > c_2 y_t^{l(2k+1)},
        \]
        for some constant $c_2 > 0$ depending on $s$ but independent of $t$.
        \item
        \[
        \int_{X_t \cap \ncal_I} \| s_t \|^2_{\mathrm{KE}} \, \omega_t^n < c_3 y_t^{(l-1)(2k+1)},
        \]
        for some constant $c_3 > 0$ depending on $s$ but independent of $t$.
    \end{itemize}
\end{theorem}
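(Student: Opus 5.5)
The plan is to compute $\|s_t\|^2_{\mathrm{KE}}\,\omega_t^n$ explicitly in the appropriate coordinate patches of the fixed cover $\ucal$, using Corollary~\ref{cor-wt-bound}. This reduces both estimates to elementary integrals in the variables $\sigma_i=-\log|z_i|^2$.

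\emph{Local formula.} On $U\in\ucal^I$ with $t=z_0\cdots z_l$, write $s_t = f\prod_{i=1}^l z_i^{-(k+1)}(dz_1\wedge\cdots\wedge dz_n)^{\otimes(k+1)}$, where $f$ is holomorphic on $U$ and uniformly bounded: it comes from the fixed extension $\tilde s$, so $\sup_U|f|$ is controlled by $\|\tilde s\|_{h,\xcal}$. Since $h_t$ is induced by $\omega_t^n$, we have
\[
\|s_t\|^2_{\mathrm{KE}}\,\omega_t^n=\frac{|f|^2}{\prod_{i=1}^l|z_i|^{2(k+1)}}\Big(\frac{\omega_t^n}{(\sqrt{-1})^n\bigwedge_{i\ge1}dz_i\wedge d\bar z_i}\Big)^{-k}\omega_t^n .
\]
On $U^0(t)$, Corollary~\ref{cor-wt-bound} shows this is comparable, with constants independent of $t$, to
\[
|f|^2\prod_{i=1}^l\frac{\sigma_i^{2k}}{|z_i|^2}\,(\sqrt{-1})^n\bigwedge_{i\ge1}dz_i\wedge d\bar z_i .
\]
In polar coordinates each factor $\sigma_i^{2k}|z_i|^{-2}\,dA(z_i)$ becomes $\pi\,\sigma_i^{2k}\,d\sigma_i\,d\theta_i/(2\pi)$. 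On the regions $U^j(t)$ we use the same formula with $z_j$ eliminated in place of $z_0$.

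\emph{Lower bound.} Since $s\neq0$, the restriction $f|_{X_{0,I}}$ is not identically zero. Choose a compact set $W\subset X_{0,I}^o\cap U$ where $|f|>c>0$, and a compact set $K\subset B_I$, taking $U\in\ucal^I$ for which such a $W$ exists. Substitute $\sigma_i=y_tx_i$, i.e.\ use the covering map $P_{t,l}$ together with Lemma~\ref{lem-zhang-3.2} and Theorem~\ref{the-zhang-3.3}. The integral over $P_{t,l}(G_K\times W)$ is then at least
\[
c\,y_t^{l}\cdot y_t^{2kl}\int_K\prod_{i=1}^l x_i^{2k}\,dx\cdot\mathrm{vol}(W)\;\ge\; c_2\,y_t^{l(2k+1)} .
\]
Here the factor $y_t^l$ comes from $d\sigma_i=y_t\,dx_i$, and the factor $y_t^{2kl}$ from $\sigma_i^{2k}$. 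In fact the limit of $y_t^{-l(2k+1)}\int\|s_t\|^2\omega_t^n$ over such regions is exactly the $\nu_{\{t_u\},I,k+1}$-norm, up to the normalising constants.

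\emph{Upper bound on $\ncal_I$.} On $\ncal_I\cap U$ we have $\frac32M<\tau_I<\frac52M$. Since $\tau_I=\log(\sum|z_i|^2)^{-1}+O(1)$, at least one $\sigma_j$ with $j\le l$ is bounded by roughly $3M$, i.e.\ we are in $\mcal_U$. In the chart $U^j(t)$ where that variable is eliminated, we bound $|f|$ by its supremum. The $\sigma_j$-direction then contributes only $O(1)$, while the remaining $l-1$ variables range over a simplex of size $y_t$. This gives $\lesssim y_t^{(l-1)}\cdot y_t^{2k(l-1)}=y_t^{(l-1)(2k+1)}$. The case where the bounded variable is $\sigma_0$ (so that $\sigma_1,\dots,\sigma_l$ are not individually small) is handled the same way in the chart $U^0(t)$ after relabelling. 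The part of $\ncal_I\cap X_t$ outside all the regions $U^j(t)$ has some $\sigma_i\le M$; there $\omega_t$ is uniformly comparable to a smooth metric in fewer variables, and it contributes even less.

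\emph{Main obstacle.} The difficulty is the set of patches $U\in\ucal^J$ with $I\subsetneq J$, near the corners $D_I$. There $t=z_0\cdots z_{l'}$ with $l'>l$, so $s_t$ acquires the additional poles $\prod_{i=l+1}^{l'}z_i^{-(k+1)}$, and the metric has more collapsing directions. Without some vanishing these extra directions would produce higher powers of $y_t$. I would use the scenario structure here: in cases 2 and 3, $\tilde s=S_{\hat I}\otimes\tilde s'$ (respectively $S_{\hat I}^{\otimes2}\otimes\tilde s'$), and the factor $\prod_{i=l+1}^{l'}z_i$ gives $|z_i|^2$ decay in each extra direction. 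Combined with the weight $\sigma_i^{2k}|z_i|^{-2}$, the extra integrals become $\int|z_i|^{2}\sigma_i^{2k}|z_i|^{-2}\,dA$, which should be bounded uniformly in $t$. The hard point is to verify that this decay survives on the relevant part of the $J$-chart, where $\sigma_i$ for $l<i\le l'$ may be comparable to $y_t$. If needed, I would argue by induction on the codimension, starting from strata with $D_J=\emptyset$. I expect the scenario-2 case (vanishing of order exactly one along $D_I$) to need the most careful bookkeeping.
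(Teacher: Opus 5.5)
Your argument is correct, but it handles the holomorphic function $f$ differently from the paper.

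\textbf{The paper's route.} The paper expands $f=\sum_\beta b_\beta z^\beta$ and substitutes $z_0=t/\prod_{j\ge1}z_j$ to get a Laurent series $\sum_\alpha a_\alpha z^\alpha$ on $X_t$. Angular orthogonality then splits the local norm as $\sum_\alpha B_{\alpha,t}\int|a_\alpha|^2\nu_l$, and the exponents are sorted into cases:
\begin{itemize}
\item $\alpha=0$ gives the $y_t^{l(2k+1)}$ term, with $a_0\to b_0=f|_{X_{0,I}}$.
\item The classes $Q_J$ and $Q_{J'}$ are controlled through Finski's restriction bounds (Theorem~\ref{thm-fin-total}) on the intermediate strata $X_{0,I'}$, and contribute at most $y_t^{(l-1)(2k+1)}$.
\item Negative exponents are moved to the chart $U^{i_0}(t)$.
\end{itemize}
The bound on $\ncal_I$ then comes from \eqref{e-geq-4m} for the $\alpha=0$ term plus these estimates.

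\textbf{Your route, and what each buys.} For the upper estimate you bound $|f|$ by its supremum. For the lower estimate you use continuity of $f$ at $X_{0,I}$: points of $P_{t,l}(G_K\times W)$ have every $|z_i|\le|t|^{\min_K x}$, so they converge uniformly to $W$. Since the constants may depend on $s$, this suffices and is much shorter. What the paper's expansion buys is the identification of the leading coefficient and the smallness of all the others, with constants in terms of $\|s\|_{h,\omega,X_{0,I}}$. That is reused in Section~\ref{sec-5} to obtain the exact limit of $y_t^{-l(2k+1)}\|s_t\|^2$ against $\nu_{\{t_u\},I,k+1}$. It is also reused in Section~\ref{sec-almost-bergman} to isolate the terms $b_{\beta(J)}z^{\beta(J)}$. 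A supremum bound cannot supply either of these.

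\textbf{The obstacle near $D_I$ is not serious.} On a $J$-patch write $f=z_{l+1}\cdots z_{l'}\,g$.
\begin{itemize}
\item In any chart where $z_i$ with $i\in J\setminus I$ is a coordinate, that direction contributes $\int_M^\infty\sigma^{2k}e^{-\sigma}\,d\sigma=O(1)$.
\item In the chart $U^i(t)$ where $z_i$ is eliminated, $|z_i|^2<|t|^{2/(l'+1)}$, which beats every power of $y_t$.
\end{itemize}
So vanishing of order one already suffices, and no induction is needed.

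\textbf{Two small slips.}
\begin{itemize}
\item The last factor in your first local formula should be $(\sqrt{-1})^n\bigwedge_{i\ge1}dz_i\wedge d\bar z_i$, not $\omega_t^n$. Your next display is the correct consequence of the corrected formula.
\item On $\ncal_I$ you have the charts backwards. On $U^j(t)$ one has $\sigma_j>\frac{1}{l+1}y_t$, so the eliminated variable is the large one. The pinned $\sigma_{j_0}\in(M,3M)$ is one of the $l$ integration variables. That is exactly why your count $y_t^{(l-1)(2k+1)}$ is right.
\end{itemize}
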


Due to the length of the arguments, we postpone the proof of the theorem to maintain the coherence of the exposition.

We now state the following proposition, which will be useful in the construction:

\begin{proposition}
    If $s \neq 0$ is as in any of the three cases above, then
    \[
    \int_{X_t} \| \bar{\partial} \varpi(\tau) \otimes s_t \|^2_{\mathrm{KE}} \, \omega_t^n < c_4 y_t^{(l-1)(2k+1)+2},
    \]
    for some constant $c_4$ independent of $t$.
\end{proposition}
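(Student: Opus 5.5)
The estimate follows from the pointwise bound \eqref{e-sigma-dbar} on $|\bar\partial\tau_I|$ combined with the second item of Theorem~\ref{thm-inner-sections-prop}.

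\textbf{Support of the integrand.} By the chain rule,
\[
\bar{\partial}\varpi(\tau_I) = \varpi'(\tau_I)\,\bar{\partial}\tau_I .
\]
Since $\varpi'(\eta)=0$ outside the interval $[\tfrac32 M,\tfrac52 M]$, the form $\bar\partial\varpi(\tau_I)$ is supported in the closure of $\ncal_I\cap X_t$. To keep the boundary harmless, I would arrange $\varpi'$ to vanish near the endpoints, or equivalently enlarge $\ncal_I$ slightly; the estimate \eqref{e-sigma-dbar} still holds there for $M$ large, by the same derivation.

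\textbf{Pointwise bound and integration.} The pointwise norm of the $kL$-valued $(0,1)$-form factors as
\[
\|\bar\partial\varpi(\tau_I)\otimes s_t\|^2_{\ke} = |\bar\partial\varpi(\tau_I)|^2_{\omega_t}\,\|s_t\|^2_{\ke}.
\]
Using $0\le\varpi'<\tfrac12$ and \eqref{e-sigma-dbar}, we get $|\bar\partial\varpi(\tau_I)|^2_{\omega_t}<c_1y_t^2$ on $\ncal_I\cap X_t$. Here I need \eqref{e-sigma-dbar} measured with respect to $\omega_t$ rather than $\tilde\omega_t$. Its derivation used $\tilde\omega_t$, but Proposition~\ref{prop-ke-t} gives $C_2^{-1}\tilde\omega_t\le\omega_t\le C_2\tilde\omega_t$, so the norms of $(0,1)$-forms are comparable up to $C_2$. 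Integrating, I obtain
\[
\int_{X_t}\|\bar\partial\varpi(\tau)\otimes s_t\|^2_{\ke}\,\omega_t^n\le c_1C_2\,y_t^2\int_{X_t\cap\ncal_I}\|s_t\|^2_{\ke}\,\omega_t^n.
\]

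\textbf{Conclusion.} The second item of Theorem~\ref{thm-inner-sections-prop} bounds the last integral by $c_3y_t^{(l-1)(2k+1)}$. This yields the claim with $c_4=c_1C_2c_3$, which depends on $s$ but not on $t$. That dependence is consistent with how the constant is used later, since the sections involved range over a finite basis.

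\textbf{Main obstacle.} The real content is the second item of Theorem~\ref{thm-inner-sections-prop}, which is assumed here. The only delicate point in this proposition is the uniform validity of \eqref{e-sigma-dbar} on all of $\ncal_I\cap X_t$. That region is covered by patches $U\in\ucal^J$ with $J\supseteq I$, including those near corners where $D_I\neq\emptyset$. In each such patch I would check that $\tau_I$ differs from $\log(|z_0|^2+\cdots+|z_l|^2)$ by a bounded smooth function. I would then check that on $\mcal_U$ the lower bound $\|v_j\|^2_{\tilde\omega_t}\ge |z_j|^{-2}(\log|z_j|^2)^{-2}$ makes $|\bar\partial\log|z_j|^2|^2\lesssim(\log|z_j|^2)^2\lesssim y_t^2$. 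Both facts were asserted in the setup, so I would simply invoke \eqref{e-sigma-dbar}.
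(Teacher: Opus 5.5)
Your argument is correct and is exactly the one the paper intends: the paper states this proposition without a separate proof, right after recording $|\bar\partial\varpi(\tau_I)|^2<c_1y_t^2$ on $\ncal_I\cap X_t$ from \eqref{e-sigma-dbar}. The estimate is therefore just this pointwise bound times the second item of Theorem~\ref{thm-inner-sections-prop}, with the $\tilde\omega_t$/$\omega_t$ comparison from Proposition~\ref{prop-ke-t} handling the change of metric as you note.

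The only superfluous part is your worry about the endpoints. Smoothness of $\varpi$ already forces $\varpi'$ to vanish at $\tfrac32M$ and $\tfrac52M$, so $\bar\partial\varpi(\tau_I)$ vanishes identically outside $\ncal_I\cap X_t$ and no enlargement is needed.
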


Therefore, by Hörmander's $L^2$-estimate, we can find a solution $v_t \in C^\infty(X_t, kL)$ to
\[
\bar{\partial} v_t = \bar{\partial} \varpi(\tau) \otimes s_t
\]
such that
\[
\int_{X_t} \| v_t \|^2_{\mathrm{KE}} \, \omega_t^n < c_5 y_t^{(l-1)(2k+1)+2},
\]
for some constant $c_5$ independent of $t$. We choose $v_t$ to be the solution of minimal $L^2$-norm.

We then define a new holomorphic pluricanonical section by
\[
s_{t,\mathrm{rn}} \triangleq y_t^{-\frac{l}{2}(2k+1)} \left( \varpi(\tau) s_t - v_t \right).
\]

\medskip

When $D_I$ is empty, given an orthonormal basis $\mathcal{B}_{I,0} = \{ s^i \}$ of $\mathcal{H}_{I,k+1}$, we obtain a set of sections $\mathcal{B}_{I,t} = \{ s^i_{t,\mathrm{rn}} \}$, and we fix an order on $\mathcal{B}_{I,t}$.

When $D_I$ is nonempty, we construct $\mathcal{B}_{I,t}$ in two steps:
\begin{itemize}
    \item Given an orthonormal basis $\mathcal{B}_{I,0,1} = \{ s^i \}$ of $\mathcal{G}_{I,k+1}$, we obtain $\mathcal{B}_{I,t,1} = \{ s^i_{t,\mathrm{rn}} \}$ and fix an order.
    \item Given an orthonormal basis $\mathcal{B}_{I,0,2} = \{ s^i \}$ of $\mathcal{H}_{I,k+1,2}$, we obtain $\mathcal{B}_{I,t,2} = \{ s^i_{t,\mathrm{rn}} \}$ and fix an order.
\end{itemize}
We then set $\mathcal{B}_{I,t} = \mathcal{B}_{I,t,1} \cup \mathcal{B}_{I,t,2}$, ordered so that elements of $\mathcal{B}_{I,t,1}$ precede those of $\mathcal{B}_{I,t,2}$. The corresponding basis of $\mathcal{H}_{I,k+1}$ is denoted by $\mathcal{B}_{I,0}$.

We define the global basis
\[
\mathcal{B}_t = \bigcup_{I,\, X_{0,I} \neq \emptyset} \mathcal{B}_{I,t},
\]
with the following order: first by increasing $|I|$, then lexicographically within subsets of the same $|I|$, and finally by the order within each $\mathcal{B}_{I,t}$.

Let $\mathcal{Q}_t$ be the Gram matrix of inner products of the elements of $\mathcal{B}_t$ with respect to the Kähler-Einstein metric on $X_t$:
\[
\mathcal{Q}_t = \left( \langle s_i, s_j \rangle_{\mathrm{KE}, X_t} \right)_{i,j}.
\]

We have the following theorem:

\begin{theorem}\label{thm-almost-orthon}
    The sections in $\mathcal{B}_t$ are almost orthonormal in the sense that
    \[
    \lim_{u \to \infty} \mathcal{Q}_{t_u} = \mathrm{Id}.
    \]
\end{theorem}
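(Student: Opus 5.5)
The plan is to compute each entry $\langle s_{t,\mathrm{rn}}, s'_{t,\mathrm{rn}}\rangle_{\mathrm{KE},X_t}$ by splitting $X_t$ into the collapsing region near $X_{0,I}$ (where $\tau_I>\frac52 M$) and its complement. On the complement we use the cut-off and the estimate for $v_t$. Write $s_{t,\mathrm{rn}} = y_t^{-\frac{l}{2}(2k+1)}(\varpi(\tau_I)s_t - v_t)$. By the proposition preceding Theorem~\ref{thm-almost-orthon} and the choice of $v_t$, we have $\|y_t^{-\frac l2(2k+1)}v_t\|^2 \le c_5 y_t^{-(2k+1)+2}\to 0$ for $k\ge 1$. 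Hence, up to $o(1)$, the Gram matrix equals that of the truncated sections $y_t^{-\frac l2(2k+1)}\varpi(\tau_I)s_t$. Moreover, by the second item of Theorem~\ref{thm-inner-sections-prop}, the neck region $\ncal_I$ contributes only $O(y_t^{-(2k+1)})$. Everything therefore reduces to the region $\{\tau_I>\frac52M\}\cap X_t$, which is covered by the patches in $\ucal^J$ with $I\subset J$.

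\textbf{Diagonal blocks (same $I$).} On an appropriate patch $U\in\ucal^I$, the local form $s_t = f\prod_{i=1}^l z_i^{-(k+1)}(dz_1\wedge\cdots\wedge dz_n)^{k+1}$ and Corollary~\ref{cor-wt-bound} let us write $\|s_t\|^2\omega_t^n$ in the coordinates $w_j$ via $P_{t,l}$. The change of variables $x_j=\sigma_j/y_t$ produces the factor $y_t^{l}$ from $dw$ and $y_t^{2kl}$ from the density $\prod \sigma_j^{2k}$, i.e.\ exactly $y_t^{l(2k+1)}$. By Lemma~\ref{lem-zhang-3.2} and Theorem~\ref{the-zhang-3.3} along the good sequence, the rescaled integrand converges to
\[
|f|_{X_{0,I}}|^2\, e^{-k\varphi_I}\Bigl(4(1-\textstyle\sum x_j)^2\prod 4x_j^2\Bigr)^k \vcal_I\, dx,
\]
and the integration over $\Im w$ gives the factor $(4\pi)^l$. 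This is precisely the density $\nu_{\{t_u\},I,k+1}$. Dominated convergence (using that $f-f|_{X_{0,I}}$ is $O(|z_0|+\dots+|z_l|)$, together with the Poincaré-type bounds) then shows that
\[
y_t^{-l(2k+1)}\langle \varpi s_t,\varpi s'_t\rangle \to \langle s,s'\rangle_{\hcal_{I,k+1}}.
\]
The tails near $\partial B_I$ and near $D_I$ must be controlled uniformly. Near $D_I$, the three scenarios (vanishing of $s$ along $D_I$ to order $1$ or $2$, encoded via $S_{\hat I}$) give integrable Poincaré-type weights. The region where some $x_j\to0$ is the neck towards a lower stratum, where the section is small by Theorem~\ref{thm-inner-sections-prop}.

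\textbf{Off-diagonal blocks ($I\neq I'$).} If neither of $I, I'$ contains the other, the supports of the truncations near the respective strata are asymptotically disjoint, and the overlap lies in regions near $X_{0,I\cup I'}$. If $I\subsetneq I'$, I would proceed inductively from higher to lower codimension (starting with the $I$ for which $D_I=\emptyset$). A section from $I$ restricted near $X_{0,I'}$ carries at least one factor $S_{\hat I}$, hence vanishes to order $\ge1$ along the extra divisors. In the $x$-coordinates of $B_{I'}$, its mass then concentrates on the face corresponding to $B_I$, whereas a section from $I'$ has mass spread over the interior. Cauchy--Schwarz on the region where $x_j$ (for $j\in I'\setminus I$) is bounded below then gives a cross term of order $y_t^{\frac{l+l'}{2}(2k+1)-\epsilon}$, hence $o(1)$ after normalization.

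The main obstacle I expect is the block $\gcal_{I,k+1}$ versus $\hcal_{I,k+1,2}$, and the cross terms between $I$ and $J\supset I$. Here the powers of $y_t$ nearly match: a section vanishing only to order $1$ along $D_I$ has a nonnegligible piece of its $y_t^{l(2k+1)}$-mass living near $X_{0,J}$. I would first try to compute this contribution explicitly in the $B_J$-coordinates. The goal is to show that it appears with a lower power of $y_t$ (the loss of one $\log$ from the $(\log|z|^2)^{-2}$ Poincaré factor), so that the limiting inner product is still exactly the $\hcal_{I,k+1}$ one. Orthogonality of $\gcal$ and $\hcal_{\cdot,2}$ then passes to the limit.
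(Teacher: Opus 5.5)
Your skeleton is the paper's. You discard $v_t$ (normalized squared norm $O(y_t^{-2k+1})$) and the neck $\ncal_I$, identify the rescaled diagonal limit with $\nu_{\{t_u\},I,k+1}$ via Zhang's convergence along the good sequence, and kill off-diagonal terms by showing the masses live in different $\sigma$-regions.

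\textbf{The step that fails.} It is your proposed resolution of the ``main obstacle''. Take $J=I\cup\{\lambda\}$, $U\in\ucal^J$, and $s\in\gcal_{I,k+1}$ vanishing to order exactly one along the component $X_{0,J}$ of $D_I$. The local function of $\tilde s$ is then $z_\lambda g$ with $g\neq 0$ on $U\cap X_{0,J}$. By Corollary~\ref{cor-wt-bound}, the mass of $s_t$ on $U\cap X_t$ is comparable to
\[
\Bigl(\prod_{i\in I\setminus\{0\}}\int_M^{y_t/(l+2)}\sigma_i^{2k}\,d\sigma_i\Bigr)\int_M^{\infty}e^{-\sigma_\lambda}\sigma_\lambda^{2k}\,d\sigma_\lambda\asymp y_t^{l(2k+1)} .
\]
This is the same power as the total mass; no $\log$ is lost. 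The reason is that $e^{-\sigma}\sigma^{2k}\,d\sigma$ is exactly the Poincar\'e-type density $\|s\|^2_{\vcal_I}\nu_{\{t_u\},I,k+1}$ near $D_I$ written in $\sigma=-\log|z_\lambda|^2$. The factor $(\log|z_\lambda|^2)^{-2}$ in $\nu$ is absorbed by the factor $(\log|z_\lambda|^2)^{2(k+1)}$ in $\|\cdot\|^2_{\vcal_I}$. So this piece is the part of the $\hcal_{I,k+1}$-norm carried by $U\cap X_{0,I}$, which is positive. If you could show it is of lower order, you would get $\lim\|s_{t,\mathrm{rn}}\|^2<\|s\|^2_{\hcal_{I,k+1}}$. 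It must be kept, not suppressed.

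\textbf{What works instead.} The paper, and really your own ``integrable Poincar\'e-type weights'' remark, treats it as a tail. Prove convergence on $M_1$-admissible patches covering $X_{0,I}^{\delta_1}$, exactly as when $D_I=\emptyset$. Then note two smallness facts, both below $\epsilon_1$:
\begin{itemize}
\item the normalized mass of $s_{t,\mathrm{rn}}$ in the $\delta_1$-neighbourhood of $D_I$, uniformly in $t$, because $\int_{M_2}^\infty e^{-ax}x^{2k}\,dx\to 0$;
\item the limiting mass on $X_{0,I}\setminus X_{0,I}^{\delta_1}$.
\end{itemize}
This gives $\|s_{t_u,\mathrm{rn}}\|\to\|s\|_{\hcal_{I,k+1}}$ for every $s$. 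The map $s\mapsto s_{t,\mathrm{rn}}$ is linear on $\hcal_{I,k+1}=\gcal_{I,k+1}\oplus\hcal_{I,k+1,2}$, since $\lambda_1 s_1+\lambda_2 s_2$ is extended by $\lambda_1\tilde s_1+\lambda_2\tilde s_2$. Polarization then yields the whole diagonal block, so the $\gcal$--$\hcal_{I,k+1,2}$ entries need no separate analysis.

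\textbf{The $I$--$J$ cross terms} are not a matter of nearly matching powers either. Split at $\sigma_\lambda=\sqrt{y_t}$. The $I$-section carries $z_\lambda$, so by Lemma~\ref{lem-concave} its relative mass beyond the split is $<e^{-\sqrt{y_t}/2}$. The $J$-section carries $\sigma_\lambda^{2k}$, so by \eqref{e-geq-sqrt-yt} its relative mass below the split is $O(y_t^{-(2k+1)/2})$. Cauchy--Schwarz finishes.

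\textbf{Smaller points.}
\begin{itemize}
\item Your split at a fixed $x_j=\delta$ leaves an $O(\delta^{2k+1})$ fraction of the $I'$-mass on the wrong side. It yields $o(1)$ only after $\delta\to0$, not the rate $y_t^{-\epsilon}$ you claim.
\item Non-nested pairs are handled by the same argument with any $j\in I'\setminus I$. Their cut-off supports are not asymptotically disjoint: they overlap on a fixed neighbourhood of $X_{0,I\cup I'}$.
\item Blocks with $|I|=1$ lie outside Zhang's rescaling. They require Theorem~\ref{thm-ruan}, with $\omega_0^n$ in place of $\vcal_I$ and $\nu_{\{t_u\},I,k+1}$.
\end{itemize}
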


We will prove Theorem~\ref{thm-almost-orthon} after establishing Theorem~\ref{thm-inner-sections-prop}.

By the formula for $\dim H^0(X_t, (k+1)L)$, it follows that $\mathcal{B}_t$ forms a basis for $\mathcal{H}_{t,k+1}$ that is almost orthonormal. This basis $\mathcal{B}_t$ defines a Kodaira embedding
\[
\Phi'_{t,k+1}: X_t \to \mathbb{CP}^{N_k-1}.
\]
We will prove the following result in Section~\ref{sec-almost-bergman}:

\begin{theorem}\label{thm-almost-Bergman-Embedding}
    Let $\{t_u\}$ be a good sequence. Then the images of $\Phi'_{t_u,k+1}$ converge, as $u \to \infty$, to a subvariety $Y$ as described in Theorem~\ref{thm-main}.
\end{theorem}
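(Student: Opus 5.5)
The plan is to compute the limit of $\Phi'_{t_u,k+1}$ region by region on $X_{t}$, using the explicit local form of the basis elements together with Theorems~\ref{thm-inner-sections-prop} and \ref{thm-almost-orthon}. By Theorem~\ref{thm-almost-orthon} the Gram matrix tends to $\mathrm{Id}$, so $\Phi'$ differs from a genuine Bergman embedding by a linear map converging to the identity. It therefore suffices to show two things: (i) every limit point of $\Phi'_{t_u,k+1}(X_{t_u})$ lies in $Y$; (ii) every point of $Y$ is such a limit. Both are proved in the affine charts of $\CP^{N_k-1}$ determined by the dominant block of coordinates in each region.

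\emph{Non-collapsing part.} On compact subsets of $X_{0,i}\setminus\Sing(X_0)$, pulled back by the maps $\phi_{\beta,t}$ of Theorem~\ref{thm-ruan}:
\begin{itemize}
\item the sections $s_{t,\mathrm{rn}}$ coming from $|I|=1$ converge smoothly to the original $s\in\hcal_{0,i,k+1}$, since $\varpi(\tau)=1$ there and $v_t\to 0$ by the $L^2$ bound and elliptic estimates;
\item the sections coming from $|I|\ge 2$ carry the factor $y_t^{-\frac{l}{2}(2k+1)}\to 0$ and are uniformly bounded there, so they tend to zero.
\end{itemize}
Hence this part converges to $I_0\circ\Phi_{0,k}(\tilde X_0)=Y_1$.

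\emph{Collapsing part near $X_{0,I}^o$, $|I|=l+1\geq 2$.} Work in an appropriate coordinate patch with $t=z_0\cdots z_l$ and the coordinates $x_a=\log|z_a|/\log|t|$. Here $s_t=f\prod_{a\ge1} z_a^{-(k+1)}(dz_1\wedge\cdots\wedge dz_n)^{k+1}$. The size of each basis element is governed by its stratum $J\subset I$, where (by the identification of Section~\ref{sec-3}) it is a section of $(k+1)(K_J+A_J)$ vanishing on $D_J\supset X_{0,I}$:
\begin{itemize}
\item elements of $\gcal_{J,k+1}$ vanish to order exactly one along each $X_{0,a}$, $a\in I\setminus J$, so near $X_{0,I}$ they look like $\prod_{a\in I\setminus J} z_a$ times data on $X_{0,I}$;
\item elements of $\hcal_{J,k+1,2}$ gain an extra factor $z_a$, and sections from strata $J\not\subset I$ are exponentially small.
\end{itemize}
Comparing the normalizations $y_t^{-\frac{|J|-1}{2}(2k+1)}$, I would show the following:
\begin{itemize}
\item In the interior region $x\in K\Subset B_I$, only the $I$-block survives. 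There the map converges to $I_I$ of the Kodaira map of $X_{0,I}$ given by the orthonormal basis of $\hcal_{I,k+1}$. This is a point of $Y_{I,j}$ lying over $X_{0,I}$.
\item Near the corner where, for a fixed $j\in I$, $x_j\approx 1$ and $x_a\to0$ for $a\neq j$, introduce rescaled fibre coordinates $\zeta_a=c_t z_a$ for $a\in I\setminus\{j\}$. The constants $c_t$ are chosen (powers of $y_t$ and $a_t$) so that the $I$-block and the $(I\setminus\{a\})$-blocks are of the same order. The surviving coordinates are then the $\gcal_{J,k+1}$-blocks with $j\in J\subset I$, whose restrictions are multilinear monomials $\prod_{a\in I\setminus J}\zeta_a$ in the fibre coordinates of $\bigoplus_{a\neq j}N_{I,I\setminus\{a\}}$. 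In the limit the map becomes the Segre-type embedding of the fibrewise $(\CP^1)^{l}$-completion of this normal bundle. This gives $Y_{I,j}$ as in Theorem~\ref{thm-y-j}.
\item Letting one $\zeta_a\to\infty$ or $0$ matches $Y_{I,j}$ with lower strata. Two corners $j_1\ne j_2$ share the faces with $\zeta_{j_1},\zeta_{j_2}$ frozen, which gives $(\CP^1)^{l-1}$-bundles.
\end{itemize}

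\emph{Surjectivity, intermediate regions, and the expected obstacle.} For (ii), every configuration $(p,\zeta)$ with $p\in X_{0,I}^o$ is realized by choosing $z\in X_t$ with the prescribed rescaled coordinates, so each point of $Y$ is attained. For (i), points over $D_I$ or in the neck region are handled by induction on codimension, starting from $I$ with $D_I=\emptyset$: near $D_I$ one passes to the charts of a larger $I'$ and lands in the closure of $Y_{I'}$. The step I expect to be hardest is making the rescaling uniform across all $J\subset I$ at once. The pointwise comparisons must hold with errors $o(1)$ relative to the dominant block, uniformly over the whole $(l)$-dimensional corner, including where several $x_a$ tend to $0$ at different rates. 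This requires pointwise (not just $L^2$) control of the correction terms $v_t$. I would first get it from the $L^2$ bounds via mean-value inequalities on the $\omega_t$-balls, using the uniform geometry provided by Zhang's Theorem~\ref{the-zhang-3.3} and the cover $\ucal$.
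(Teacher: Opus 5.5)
\textbf{There is a genuine gap, in the step you yourself flag as hardest.} The architecture of your proposal matches the paper's proof. You do local analysis at $p\in X_{0,I}^o$ and identify the leading monomials $z^{\beta(J)}=\prod_{a\in I\setminus J}z_a$ for the blocks with $j\in J\subset I$. You rescale to get Segre images of the $(\CP^1)^l$-completions, induct on the length of $I$ when $D_I\neq\emptyset$, and use Theorem~\ref{thm-ruan} on the thick part. The problem is that mean-value inequalities on unit $\omega_t$-balls are too weak to give pointwise control of the corrections in the bubble region.

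Take $l=1$ and a point $q$ with $|z_1(q)|^2=y_t^{-(2k+1)}$, i.e.\ $\sigma_1(q)=(2k+1)\log y_t$, where the rescaled coordinate has size one.
\begin{itemize}
\item There $d_{p,t}(q)\asymp y_t^{-(2k+1)}$. You must show $|f^{v_t}(q)|^2=o(y_t^{-(2k+1)})$ for the normalized correction $y_t^{-\frac12(2k+1)}v_t$, whose squared $L^2$-norm is only $O(y_t^{-2k+1})$.
\item The circle $|z_1|=\mathrm{const}$ has $\omega_t$-length $O(\sigma_1^{-1})$. So a unit ball at $q$, lifted to Zhang's cover, meets about $\sigma_1$ fundamental domains.
\item The sub-mean-value inequality then gives $\|\cdot\|^2_{\mathrm{KE}}(q)\lesssim\sigma_1(q)\,y_t^{-2k+1}$ for the normalized correction. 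Since $\|s_t\|^2_{\mathrm{KE}}\asymp|f|^2\sigma_1^{2k+2}$, this means $|f^{v_t}(q)|^2\lesssim y_t^{-2k+1}\sigma_1(q)^{-2k-1}$.
\item This exceeds $d_{p,t}(q)$ by a factor $\asymp y_t^{2}(\log y_t)^{-2k-1}\to\infty$.
\end{itemize}
The same computation in the interior, where $\sigma_j\asymp y_t$, does give relative error $y_t^{-2k+1}$. So the local method only works where the map is already collapsing to the vertex $\Phi_{I,k+1}(p)$, not on the part that produces the bubbles $Y_{I,j}$.

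No local estimate can fix this, because the smallness at $q$ is global in the annulus. The zero Laurent mode in $z_1$ is penalized by $H_0=\int_{M_1}^{y_t/(l+1)}\sigma^{2k}\,d\sigma\asymp y_t^{2k+1}$, accumulated over the whole range of $\sigma_1$. A unit ball only sees $\sigma_1\in[\sigma_1(q)/C,\,C\sigma_1(q)]$.

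\textbf{What is needed is the paper's argument.}
\begin{itemize}
\item Expand $f^{v_t}=\sum_\alpha a^{v_t}_\alpha z^\alpha$ on $U^0(t)$ and use orthogonality of monomials for the weighted norm.
\item Apply Cauchy--Schwarz, $|\sum_\alpha a^{v_t}_\alpha(0)z^\alpha|^2\le(\sum_\alpha|a^{v_t}_\alpha(0)|^2H_\alpha)(\sum_\alpha H_\alpha^{-1}|z^\alpha|^2)$, together with Proposition~\ref{prop-delta-2}.
\item This gives $|f^{v_t}|^2\lesssim y_t^{-2k+1}\prod_j(y_t^{-(2k+1)}+|z_j|^2)\lesssim y_t^{-2k+1}d_{p,t}$ on $W_t\cap U^0(t)$, with negative powers moved to the charts $U^i(t)$.
\end{itemize}

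This also shows that your claim that sections from strata $J\not\subset I$ are exponentially small near $X_{0,I}^o$ is wrong. Away from $X_{0,J}$ the cut-off $\varpi(\tau_J)$ vanishes, so such a section is just $-y_t^{-\frac{|J|-1}{2}(2k+1)}v_t$, of $L^2$-size $O(y_t^{-k+\frac12})$. It needs exactly the same Laurent-series bound, and the paper uses that bound again for the neck comparison on $X_t\setminus\wcal_t$. By contrast, the higher Taylor terms $b_\beta z^\beta$ of $f^{J,p}$ are harmless: they come from a fixed holomorphic function on $\xcal$ controlled by Finski's bounds.

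Finally, no powers of $a_t$ belong in the rescaling. The weights $\prod_j\sigma_j^{2k+2}$ are common to all homogeneous coordinates and cancel projectively. Balancing $y_t^{-\frac l2(2k+1)}$ against $y_t^{-\frac{l-1}2(2k+1)}|z_a|$ gives exactly $\zeta_a=y_t^{\frac{2k+1}{2}}z_a$.
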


Applying the Gram-Schmidt process to the bases $\mathcal{B}_{I,t_u}$ yields orthonormal bases $\overline{\mathcal{B}}_{I,t_u}$, which in turn define Kodaira embeddings $\Phi_{t_u,k+1}$. It is clear that, as $u \to \infty$, the images of $\Phi_{t_u,k+1}$ converge to the same limit as those of $\Phi'_{t_u,k+1}$. This completes the proof of Theorem~\ref{thm-main}.

Note that $\mathcal{B}_{I,0}$ also induces an embedding
\[
\Phi_{I,k+1}: X_{0,I} \to \mathbb{CP}^{n_{I,k+1}-1}.
\]
The inclusion $\mathcal{B}_{I,t} \subset \mathcal{B}_t$ induces an inclusion $\iota_I: \mathbb{CP}^{n_{I,k+1}-1} \hookrightarrow \mathbb{CP}^{N_{k+1}-1}$. For simplicity, we will continue to denote by $\Phi_{I,k+1}$ its composition with $\iota_I$.

\section{Proof of Theorem~\ref{thm-inner-sections-prop}}\label{sec-4}

\subsection{Case: $D_I$ is empty}\label{subsec-case-0}

Let $s \in H^0(X_{0,I}, (k+1)L)$ be a section of unit $L^2$-norm with respect to $(h, \omega)$. On $U \in \mathcal{U}^I$, we can write $\tilde{s} = f (dz_0 \wedge \cdots \wedge dz_n)^{\otimes (k+1)}$. Since $U$ is a polydisc, let $R_i$ denote the radius of the $z_i$ variable for $i \leq n$.

On $U^0(t)$, we can write
\[
\omega_t^n = e^{-\phi_t} \frac{(\sqrt{-1})^{n}}{\prod_{i=1}^{l} |z_i|^2 \sigma_i^2} dz_1 \wedge d\bar{z}_1 \wedge \cdots \wedge dz_n \wedge d\bar{z}_n,
\]
where $|\phi_t| < \log C_4$ with $C_4$ as in Corollary~\ref{cor-wt-bound}.

Thus, the pointwise norm of $s_t$ is
\[
\| s_t \|_{\mathrm{KE}}^2 = e^{(k+1)\phi_t} |f|^2 \left( \prod_{i=1}^{l} \sigma_i^2 \right)^{k+1}.
\]
The local $L^2$-norm of $s_t$ on $U^0(t)$ is then
\begin{align*}
   & (\sqrt{-1})^{n} \int_{U^0(t)} e^{k\phi_t} |f|^2 \left( \prod_{i=1}^{l} \sigma_i^2 \right)^{k} \frac{dz_1 \wedge d\bar{z}_1 \wedge \cdots dz_n \wedge d\bar{z}_n}{\prod_{i=1}^{l} |z_i|^2} \\
   &= (\sqrt{-1})^{n} \int \left( \int e^{k\phi_t} |f|^2 \left( \prod_{i=1}^{l} \sigma_i^2 \right)^{k} \frac{dz_1 \wedge d\bar{z}_1 \wedge \cdots dz_l \wedge d\bar{z}_l}{\prod_{i=1}^{l} |z_i|^2} \right) \bigwedge_{j>l} dz_j \wedge d\bar{z}_j.
\end{align*}

To estimate the local $L^2$-norm of $s_t$, by Corollary~\ref{cor-wt-bound}, it suffices to estimate integrals of the form
\[
\int |f|^2 \left( \prod_{i=1}^l \sigma_i^2 \right)^k \frac{dz_1 \wedge d\bar{z}_1 \wedge \cdots \wedge dz_l \wedge d\bar{z}_l}{\prod_{i=1}^l |z_i|^2}.
\]
We can expand
\[
f = \sum_{\beta} b_{\beta}(z_{l+1}, \ldots, z_n) \prod_{i=0}^l z_i^{\beta_i},
\]
where $\beta = (\beta_0, \ldots, \beta_l)$ runs over all multi-indices with $\beta_i \geq 0$ for $i \leq l$, and the coefficients $b_{\beta}$ are holomorphic functions. For simplicity, we write $z^\beta = \prod_{i=0}^l z_i^{\beta_i}$, with the convention that $\beta_j = 0$ for $j > l$.

On $U \cap X_t$, using $(z_1, \ldots, z_n)$ as holomorphic coordinates, we can substitute $z_0 = t / \prod_{j=1}^l z_j$ into each term $\prod_{i=0}^l z_i^{\beta_i}$, yielding
\[
f = \sum_{\alpha} a_{\alpha}(z_{l+1}, \ldots, z_n) \prod_{i=1}^l z_i^{\alpha_i},
\]
where $\alpha = (\alpha_1, \ldots, \alpha_l)$ runs over all multi-indices (possibly with some $\alpha_i < 0$), and the coefficients $a_{\alpha}$ are holomorphic functions (depending on $t$). For notational simplicity, we write $z^\alpha = \prod_{i=1}^l z_i^{\alpha_i}$.

If we define $A_{\alpha} = \{ \beta \mid \beta_i - \beta_0 = \alpha_i,\, 1 \leq i \leq l \}$, then
\[
a_\alpha = \sum_{\beta \in A_{\alpha}} b_\beta\, t^{\beta_0}.
\]

Therefore,
\begin{align*}
    &\int_{U^0(t)} |f|^2 \left( \prod_{i=1}^l \sigma_i^2 \right)^k \frac{(\sqrt{-1})^n dz_1 \wedge d\bar{z}_1 \wedge \cdots dz_n \wedge d\bar{z}_n}{\prod_{i=1}^l |z_i|^2} \\
    &= \sum_{\alpha} B_{\alpha, t} \int_{|z_j| < R_j} |a_\alpha|^2\, (\sqrt{-1})^{n-l} dz_{l+1} \wedge d\bar{z}_{l+1} \wedge \cdots dz_n \wedge d\bar{z}_n,
\end{align*}
where
\[
B_{\alpha, t} = (2\pi)^l \int_{V(t)} |z^\alpha|^2 \left( \prod_{i=1}^l \sigma_i^2 \right)^k d\sigma_1 \cdots d\sigma_l,
\]
and $V(t)$ is defined as
\[
V(t) = \left\{ (\sigma_1, \ldots, \sigma_l) \in \mathbb{R}^l \;\middle|\; \sum_{i=1}^l \sigma_i < \frac{l}{l+1} y_t,\;\; \sigma_i > M \text{ for } 1 \leq i \leq l \right\}.
\]

It remains to estimate the integral
\[
\int_{V(t)} |z^\alpha|^2 \left( \prod_{i=1}^l \sigma_i^2 \right)^k d\sigma_1 \cdots d\sigma_l.
\]

\subsubsection*{Case 0}
When $\alpha = 0 = (0, \ldots, 0)$, we have
\begin{align*}
    \int_{V(t)} \left( \prod_{i=1}^{l} \sigma_i^2 \right)^k d\sigma_1 \cdots d\sigma_l
    &> \left( \int_{M}^{\frac{1}{l+1} y_t} \sigma^{2k} d\sigma \right)^l \\
    &= \left[ \frac{\left( \frac{1}{l+1} y_t \right)^{2k+1} - M^{2k+1}}{2k+1} \right]^l.
\end{align*}
Similarly,
\begin{align*}
    \int_{V(t)} \left( \prod_{i=1}^{l} \sigma_i^2 \right)^k d\sigma_1 \cdots d\sigma_l
    &< \left( \int_{M}^{\frac{l}{l+1} y_t} \sigma^{2k} d\sigma \right)^l \\
    &= \left[ \frac{\left( \frac{l}{l+1} y_t \right)^{2k+1} - M^{2k+1}}{2k+1} \right]^l.
\end{align*}
If we set
\[
s_{t,0} = \frac{a_{(0,\ldots,0)}}{\prod_{i=1}^{l} z_i^{k+1}} (dz_1 \wedge \cdots \wedge dz_n)^{\otimes (k+1)},
\]
then for $|t|$ sufficiently small,
\begin{equation}\label{e-a-0-int}
    \int_{U \cap X_t} \| s_{t,0} \|^2_{\mathrm{KE}} \, \omega_t^n > C_0^k y_t^{l(2k+1)} \int_{|z_j| < R_j} |a_0|^2 (\sqrt{-1})^{n-l} \bigwedge_{j=l+1}^n (dz_j \wedge d\bar{z}_j),
\end{equation}
for some constant $C_0$ independent of $t$ and $U$.

Let $W_i(t) \subset V(t)$, $1 \leq i \leq l$, be defined by
\[
W_i(t) = \left\{ (\sigma_1, \ldots, \sigma_l) \in V(t) \mid \sigma_i < \sqrt{y_t} \right\}.
\]
Then,
\begin{align*}
    \int_{W_j(t)} \left( \prod_{i=1}^{l} \sigma_i^2 \right)^k d\sigma_1 \cdots d\sigma_l
    &< \left( \int_{M}^{\frac{l}{l+1} y_t} \sigma^{2k} d\sigma \right)^{l-1} \int_{M}^{\sqrt{y_t}} \sigma^{2k} d\sigma \\
    &= \left[ \frac{\left( \frac{l}{l+1} y_t \right)^{2k+1} - M^{2k+1}}{2k+1} \right]^{l-1} \frac{\sqrt{y_t}^{2k+1} - M^{2k+1}}{2k+1}.
\end{align*}
It is then clear that, for $|t|$ sufficiently small,
\begin{equation}\label{e-geq-sqrt-yt}
    \frac{ \int_{W_j(t)} \left( \prod_{i=1}^{l} \sigma_i^2 \right)^k d\sigma_1 \cdots d\sigma_l }{ \int_{V(t)} \left( \prod_{i=1}^{l} \sigma_i^2 \right)^k d\sigma_1 \cdots d\sigma_l }
    < c\, y_t^{-\frac{2k+1}{2}},
\end{equation}
for some constant $c$ depending only on $k$ and $l$.

Similarly, let $T_i(t) \subset V(t)$, $1 \leq i \leq l$, be defined by
\[
T_i(t) = \left\{ (\sigma_1, \ldots, \sigma_l) \in V(t) \mid \sigma_i < 4M \right\}.
\]
Then,
\begin{equation}\label{e-geq-4m}
    \frac{ \int_{T_j(t)} \left( \prod_{i=1}^{l} \sigma_i^2 \right)^k d\sigma_1 \cdots d\sigma_l }{ \int_{V(t)} \left( \prod_{i=1}^{l} \sigma_i^2 \right)^k d\sigma_1 \cdots d\sigma_l }
    < c' y_t^{-(2k+1)},
\end{equation}
for some constant $c'$ depending only on $k$, $M$, and $l$.

\medskip

Denote by $\nu_l=(\sqrt{-1})^{n-l}\bigwedge_{j=l+1}^n (dz_{j}\wedge d\bar{z}_{j})$. 
We need to show that the integral $\int_{|z_j|<R_j}|a_{0}|^2\nu_l$ in formula \ref{e-a-0-int} converges to $\int_{|z_j|<R_j}|b_{0}|^2\nu_l$ as $|t|\to 0$.

Let $\omega^{n+1}=(\sqrt{-1})^{n+1}e^{-\eta}dz_0\wedge d\bar{z}_0\wedge\cdots dz_n\wedge d\bar{z}_n$, so that 
$$\|\tilde{s}\|_h^2=|f|^2e^{(k+1)\eta}.$$
It is easy to see that there exists $\epsilon_1>0$ such that on each $U\in\ucal^I$, we have $e^{k\eta}>\epsilon_1$.
Therefore, by formula \ref{e-c1-x0I},
\begin{align*}
    \frac{C_1}{k^{l+1}} &> \int_{U}|f|^2 e^{k\eta}(\sqrt{-1})^{n+1}dz_0\wedge d\bar{z}_0\wedge\cdots dz_n\wedge d\bar{z}_n \\ 
    &> \epsilon_1\int_{U}|f|^2(\sqrt{-1})^{n+1}dz_0\wedge d\bar{z}_0\wedge\cdots dz_n\wedge d\bar{z}_n \\
    &= \epsilon_1(2\pi)^{l+1} \sum_{\beta}\int_{|z_j|<R_j}\prod_{i=0}^{l}\frac{R_i^{\beta_i+1}}{\beta_i+1}|b_\beta|^2\nu_l,
\end{align*}
where $c_\beta=\prod_{i=0}^{l}\frac{R_i^{\beta_i+1}}{\beta_i+1}$. By the Cauchy-Schwarz inequality and the fact that $c_\beta>(\min_i R_i)^{l+\sum \beta_i}$, we have
\begin{align*}
    \left|\sum_{\beta\in A_0,\, \beta_0>0}b_\beta t^{\beta_0}\right|^2
    &\leq \left(\sum_{\beta\in A_0,\, \beta_0>0}|b_\beta|^2c_\beta\right)\left(\sum_{\beta\in A_0,\, \beta_0>0}\frac{|t|^{2\beta_0}}{c_\beta}\right) \\
    &\leq C|t| \sum_{\beta\in A_0,\, \beta_0>0}|b_\beta|^2c_\beta,
\end{align*}
for some constant $C>0$ independent of $U$. 
Therefore,
$$
\int_{|z_j|<R_j}\left|\sum_{\beta\in A_0,\, \beta_0>0}b_\beta t^{\beta_0}\right|^2\nu_l < \frac{CC_1}{k^{l+1}\epsilon_1 (2\pi)^{l+1}}|t|.
$$
Thus, $\int_{|z_j|<R_j}|a_{0}|^2\nu_l$ converges to $\int_{|z_j|<R_j}|b_{0}|^2\nu_l$ as $t\to 0$.

\medskip

Similarly, for $\alpha\neq 0$, we have
\begin{align}
    \left|\sum_{\beta\in A_\alpha,\, \beta_0>0}b_\beta t^{\beta_0}\right|^2
    &\leq \left(\sum_{\beta\in A_\alpha,\, \beta_0>0}|b_\beta|^2c_\beta\right)\left(\sum_{j=1}^{\infty}\frac{|t|^{2j}}{c_{(j,\alpha_1+j,\cdots,\alpha_l+j)}}\right) \notag\\
    &\leq C_2|t| \sum_{\beta\in A_\alpha,\, \beta_0>0}|b_\beta|^2c_\beta, \label{e-alpha-not0}
\end{align}
for some constant $C_2>0$ independent of $\alpha$ and $U$, when $|t|$ is small enough.

\medskip

For any subset $J'\subset J=\{1,\cdots,l\}$, define
$$
Q_{J'} = \left\{\alpha \;\middle|\; \alpha_i>0 \text{ for all } i\in J',\; \alpha_i=0 \text{ for } i\in J\setminus J'\right\}.
$$
When $\alpha\neq 0$, there are three cases:
\begin{enumerate}
    \item[(1)] $\alpha\in Q_J$;
    \item[(2)] $\alpha\in Q_{J'}$ for some proper subset $J'\subset J$;
    \item[(3)] $\alpha_i<0$ for some $i$.
\end{enumerate}
The calculations for cases (1) and (2) are similar, but for clarity, we treat them separately below.

\subsubsection*{Case 1.} When $\alpha \in Q_J$, we have 
\begin{align*}
    B_{\alpha,t} &= (2\pi)^l \int_{V(t)} |z^\alpha|^2 \left( \prod_{i=1}^{l} \sigma_i^{2k} \right) d\sigma_1 \cdots d\sigma_l \\
    &\leq (2\pi)^l \prod_{i=1}^{l} \int_M^\infty |z_i|^{2\alpha_i} \sigma_i^{2k} d\sigma_i.
\end{align*}

Since $\tilde{s}$ is also an extension of $\tilde{s}|_{X_{0,i}}$ for each $i \in I$, by Theorem~\ref{thm-fin-total}, we have
\[
\| \tilde{s} \|^2_{h, \xcal} \geq \frac{c'}{k} \| \tilde{s} \|^2_{h, X_{0,i}},
\]
for some $c' > 0$. Thus,
\[
\| \tilde{s} \|^2_{h, X_{0,i}} \leq \frac{C'}{k^l} \| \tilde{s} \|^2_{h, X_{0,I}},
\]
for some $C' > 0$. 

Let $\omega^{n}|_{X_{0,0}} = (\sqrt{-1})^{n} e^{-\eta_1} dz_1 \wedge d\bar{z}_1 \wedge \cdots \wedge dz_n \wedge d\bar{z}_n$. Then there exists $\epsilon_2 > 0$ such that on $U \cap X_{0,0}$, $e^{(k+1)\eta - \eta_1} > \epsilon_2$. Therefore,
\begin{align*}
    \frac{C'}{k^{l}} &> \int_{U \cap X_{0,0}} |f|^2 e^{(k+1)\eta - \eta_1} (\sqrt{-1})^{n} dz_1 \wedge d\bar{z}_1 \wedge \cdots dz_n \wedge d\bar{z}_n \\
    &> \epsilon_2 \int_{U \cap X_{0,0}} |f|^2 (\sqrt{-1})^{n} dz_1 \wedge d\bar{z}_1 \wedge \cdots dz_n \wedge d\bar{z}_n \\
    &= \epsilon_2 (2\pi)^{l} \sum_{\beta,\, \beta_0=0} \int_{|z_j|<R_j} \prod_{i=1}^{l} \frac{R_i^{\beta_i+1}}{\beta_i+1} |b_\beta|^2 \nu_l.
\end{align*}
Thus,
\[
\sum_{\beta,\, \beta_0=0} \int_{|z_j|<R_j} \prod_{i=1}^{l} \frac{R_i^{\beta_i+1}}{\beta_i+1} |b_\beta|^2 \nu_l < \frac{C'}{k^{l} \epsilon_2 (2\pi)^{l}}.
\]

Let $R = \min_i R_i$. We have the following proposition.

\begin{proposition}
    For $k$ sufficiently large,
    \[
    B_{\alpha,t} < \left( (2k)! \frac{4\pi}{R^2} \right)^l \prod_{i=1}^{l} \frac{R_i^{\alpha_i+1}}{\alpha_i+1}.
    \]
\end{proposition}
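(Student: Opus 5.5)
Since $\alpha\in Q_J$, every $\alpha_i\geq 1$, and the bound on $B_{\alpha,t}$ comes from a direct one-variable computation. I start from the factorized upper bound already derived above,
\[
B_{\alpha,t}\leq (2\pi)^l\prod_{i=1}^{l}\int_M^\infty |z_i|^{2\alpha_i}\sigma_i^{2k}\,d\sigma_i .
\]
This reduces everything to a single-variable estimate, applied to each factor separately.

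First I change variables. Recall $\sigma_i=-\log|z_i|^2$, so $|z_i|^{2\alpha_i}=e^{-\alpha_i\sigma_i}$. Each factor is then
\[
\int_M^\infty e^{-\alpha_i\sigma}\sigma^{2k}\,d\sigma\;\leq\;\int_0^\infty e^{-\alpha_i\sigma}\sigma^{2k}\,d\sigma=\frac{(2k)!}{\alpha_i^{2k+1}} .
\]
This alone is not enough. The target bound has the factor $R_i^{\alpha_i+1}/(\alpha_i+1)$, which decays geometrically in $\alpha_i$ once $R_i<1$, whereas $(2k)!/\alpha_i^{2k+1}$ decays only polynomially.

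The second step recovers the geometric decay from the lower cutoff $\sigma>M$. We chose $M>-\log R_U+1$, so on the domain $e^{-\sigma}<e^{-M}<R_U/e\leq R_i/e$. I split $e^{-\alpha_i\sigma}=e^{-(\alpha_i-1)\sigma}e^{-\sigma}$ and bound $e^{-(\alpha_i-1)\sigma}\leq e^{-(\alpha_i-1)M}\leq (R_i/e)^{\alpha_i-1}$. The remaining integral is then controlled by $\int_0^\infty e^{-\sigma}\sigma^{2k}d\sigma=(2k)!$. This gives
\[
\int_M^\infty e^{-\alpha_i\sigma}\sigma^{2k}d\sigma\leq (2k)!\,R_i^{\alpha_i-1}e^{-(\alpha_i-1)} = (2k)!\,\frac{1}{R_i^2}\,R_i^{\alpha_i+1}e^{-(\alpha_i-1)} .
\]
Two elementary facts finish the factor estimate. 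First, $e^{-(\alpha_i-1)}\leq 2/(\alpha_i+1)$ for all integers $\alpha_i\geq1$, since $(\alpha+1)e^{-(\alpha-1)}\leq 2$. Second, $1/R_i^2\leq 1/R^2$ with $R=\min_i R_i$. Together they give
\[
\int_M^\infty |z_i|^{2\alpha_i}\sigma_i^{2k}\,d\sigma_i\leq (2k)!\,\frac{2}{R^2}\,\frac{R_i^{\alpha_i+1}}{\alpha_i+1}.
\]

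Multiplying by $(2\pi)^l$ and taking the product over $i$ yields exactly $\bigl((2k)!\,\tfrac{4\pi}{R^2}\bigr)^l\prod_i \tfrac{R_i^{\alpha_i+1}}{\alpha_i+1}$. The only delicate point is extracting the geometric factor $R_i^{\alpha_i}$ uniformly in $\alpha$, and the choice of $M$ relative to $R_U$ handles it. If the constants did not close, I would use part of the polynomial decay $\alpha_i^{-(2k+1)}$, via the inequality $e^{-(\alpha-1)\sigma}\sigma^{2k}\leq\ldots$ for $\sigma>M$. The ``$k$ sufficiently large'' hypothesis appears not to be needed and merely gives room in the constants.
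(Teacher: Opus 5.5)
Your proof is correct. It takes a different and more elementary route than the paper.

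The paper sets $g_1(\lambda)=\frac{1}{(2k)!}\int_M^\infty e^{-\lambda\sigma}\sigma^{2k}\,d\sigma$ and $g_2(\lambda)=\frac{R^{\lambda+1}}{\lambda+1}$. It seeds the comparison at $\lambda=2$ via $g_1(2)<2^{-(2k+1)}<\frac{1}{3}R^3=g_2(2)$, which is the only place where ``$k$ sufficiently large'' enters. It then propagates $g_1<g_2$ to all $\lambda\ge 2$ by comparing logarithmic derivatives, $(\log g_1)'\le -M<\log R-1\le (\log g_2)'$. The case $\lambda=1$ is treated separately through $g_1(1)<1=\frac{2}{R^2}g_2(1)$, and this is where the factor $\frac{2}{R^2}$ comes from.

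You instead peel off one factor $e^{-\sigma}$ and bound $e^{-(\alpha_i-1)\sigma}\le e^{-(\alpha_i-1)M}<(R_i/e)^{\alpha_i-1}$ on $\sigma>M$. You then absorb the leftover $e^{-(\alpha_i-1)}$ with the elementary inequality $(\alpha+1)e^{-(\alpha-1)}\le 2$ for integers $\alpha\ge 1$.

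Both arguments rest on the same input: the choice $M>-\log R_U+1$ turns the cutoff $\sigma>M$ into geometric decay in $\alpha$. Your version has three advantages:
\begin{itemize}
\item It needs no lower bound on $k$, no seeding step and no differential comparison.
\item It uses only $R_U\le R_i$ and $R\le R_i$, so it does not matter whether $R$ is the minimum over $i\le l$ or over all $i$. The paper's derivative comparison needs $M>-\log R+1$ for that same $R$.
\item Strictness of the final inequality is automatic in your chain, since $\int_M^\infty e^{-\sigma}\sigma^{2k}\,d\sigma<(2k)!$ for $M>0$.
\end{itemize}

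In exchange, the paper's route gives the slightly sharper bound $g_1(\lambda)<g_2(\lambda)$ for $\lambda\ge 2$, without the $\frac{2}{R^2}$ loss. Only the stated form of the estimate is used afterwards, so nothing is lost by your approach. Your closing fallback remark is unnecessary, since the constants close exactly.
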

\begin{proof}
    The idea is to compare the functions $g_1(\lambda) = \frac{1}{(2k)!} \int_M^\infty e^{-\lambda \sigma} \sigma^{2k} d\sigma$ and $g_2(\lambda) = \frac{1}{\lambda+1} R^{\lambda+1}$. Since
    \[
    g_1(\lambda) < \frac{1}{(2k)!} \int_0^\infty e^{-\lambda \sigma} \sigma^{2k} d\sigma = \frac{1}{\lambda^{2k+1}},
    \]
    we can choose $k$ large enough so that $\frac{1}{2^{2k+1}} < \frac{1}{3} R^3$. Now,
    \[
    \frac{d}{d\lambda} \log g_1 = -\frac{\int_M^\infty \sigma e^{-\lambda \sigma} \sigma^{2k} d\sigma}{\int_M^\infty e^{-\lambda \sigma} \sigma^{2k} d\sigma} \leq -M,
    \]
    while $\frac{d}{d\lambda} \log g_2 = \log R - \frac{1}{\lambda+1}$. By our choice of $M$, we have $\frac{d}{d\lambda} \log g_1 < \frac{d}{d\lambda} \log g_2$. Thus, $g_1(\lambda) < g_2(\lambda)$ for $\lambda \geq 2$. When $\lambda = 1$, clearly $g_1(1) < 1$. The conclusion follows.
\end{proof}

By formula \ref{e-alpha-not0}, we have 
\begin{eqnarray}
    \left|\sum_{\beta\in A_\alpha,\, \beta_0>0} b_\beta t^{\beta_0}\right|^2 \leq C'|t| \sum_{\beta\in A_\alpha,\, \beta_0>0} |b_\beta|^2 c_\beta.
\end{eqnarray}
Therefore,
\begin{eqnarray*}
   &&\frac{R^2}{((2k)!4\pi)^l} \sum_{\alpha\in Q_{J}} B_{\alpha,t} \int_{|z_j|<R_j} |a_{\alpha}|^2 \nu_l \\
   &\leq& \sum_{\alpha\in Q_{J}} \left( \prod_{i=1}^{l} \frac{R_i^{\alpha_i+1}}{\alpha_i+1} \right) \int_{|z_j|<R_j} |a_{\alpha}|^2 \nu_l \\
   &\leq& 2 \sum_{\alpha\in Q_{J}} \left( \prod_{i=1}^{l} \frac{R_i^{\alpha_i+1}}{\alpha_i+1} \right) \int_{|z_j|<R_j} \left[ |b_{\overline{0\alpha}}|^2 + \left| \sum_{\beta\in A_\alpha,\, \beta_0>0} b_\beta t^{\beta_0} \right|^2 \right] \nu_l \\
   &\leq& \frac{2C'}{k^{l}\epsilon_2(2\pi)^{l}} + 2 \sum_{\alpha\in Q_{J}} \int_{|z_j|<R_j} C_2|t| \sum_{\beta\in A_\alpha,\, \beta_0>0} |b_\beta|^2 c_\beta \nu_l \\
   &\leq& \frac{2C'}{k^{l}\epsilon_2(2\pi)^{l}} + \frac{2C_2C_1}{k^{l+1}\epsilon_1 (2\pi)^{l+1}} |t| ,
\end{eqnarray*}
where we denote by $\overline{0\alpha}$ the multi-index $\beta\in A_\alpha$ with $\beta_0=0$, and $C_2$ is the constant appearing in formula \ref{e-alpha-not0}. Thus, for $|t|$ sufficiently small,
\[
\sum_{\alpha\in Q_{J}} B_{\alpha,t} \int_{|z_j|<R_j} |a_{\alpha}|^2 \nu_l < \frac{3C'((2k)!4\pi)^l}{R^2 k^{l} \epsilon_2 (2\pi)^{l}}.
\]

\medskip

For any $i\neq 0$ with $i\leq l$, since $\sigma_i \geq \frac{1}{l+1} y_t$ on $U^i(t)$, we have $|z_i|^2 \leq |t|^{\frac{2}{l+1}}$. Thus,
\[
|z^\alpha|^2 \leq |t|^{\frac{2}{l+1}\alpha_i} \prod_{j=1,\, j\neq i}^{l} |z_j|^2.
\]
Let $\vartheta_i = \prod_{j=0,\, j\neq i}^{l} \sigma_j^{2k}$ and define
\[
\mu_i = \vartheta_i \left( \bigwedge_{j=0,\, j\neq i}^{l} \frac{dz_j \wedge d\bar{z}_j}{|z_j|^2} \right) \wedge \bigwedge_{j=l+1}^{n} dz_j \wedge d\bar{z}_j.
\]
Then,
\begin{eqnarray*}
    \int_{U^i(t)} |a_{\alpha}|^2 \prod_{j=1}^{l} |z_j^{\alpha_j}|^2 \mu_i
    &<& |t|^{\frac{2}{l+1}\alpha_i} \left( \prod_{j=1,\, j\neq i}^{l} \int_{M}^{\frac{l}{l+1} y_t} |z_j|^2 \sigma_j^{2k} d\sigma_j \right) \int_{|z_j|<R_j} |a_{\alpha}|^2 \nu_l \\
    &<& ((2k)!4\pi)^l |t|^{\frac{2}{l+1}\alpha_i} \frac{\alpha_i+1}{R_i^{\alpha_i+1}} \left( \prod_{j=1}^{l} \frac{R_j^{\alpha_j+1}}{\alpha_j+1} \right) \int_{|z_j|<R_j} |a_{\alpha}|^2 \nu_l.
\end{eqnarray*}
Let $\mathcal{S}_t(\alpha) = |t|^{\frac{2}{l+1}\alpha_i} \frac{\alpha_i+1}{R_i^{\alpha_i+1}} \prod_{j=1}^{l} \frac{R_j^{\alpha_j+1}}{\alpha_j+1}$.
Therefore,
\begin{eqnarray*}
    && \frac{1}{((2k)!4\pi)^l} \sum_{\alpha\in Q_J} \int_{U^i(t)} |a_{\alpha}|^2 \prod_{j=1}^{l} |z_j^{\alpha_j}|^2 \mu_i \\
    &\leq& \sum_{\alpha\in Q_J} \mathcal{S}_t(\alpha) \int_{|z_j|<R_j} |a_{\alpha}|^2 \nu_l \\
    &\leq& \sum_{\alpha\in Q_J} \mathcal{S}_t(\alpha) 2 \int_{|z_j|<R_j} \left[ |b_{\overline{0\alpha}}|^2 + \left| \sum_{\beta\in A_\alpha,\, \beta_0>0} b_\beta t^{\beta_0} \right|^2 \right] \nu_l \\
    &\leq& |t|^{\frac{2}{l+1}} \frac{4}{R^{2}} \left[ \frac{2C'}{k^{l}\epsilon_2(2\pi)^{l}} + \sum_{\alpha\in Q_J} \int_{|z_j|<R_j} C_2|t| \sum_{\beta\in A_\alpha,\, \beta_0>0} |b_\beta|^2 c_\beta \nu_l \right] \\
    &\leq& |t|^{\frac{2}{l+1}} \frac{4}{R^{2}} \left[ \frac{2C'}{k^{l}\epsilon_2(2\pi)^{l}} + \frac{2C_2C_1}{k^{l+1}\epsilon_1 (2\pi)^{l+1}} |t| \right] \\
    &\leq& |t|^{\frac{2}{l+1}} \frac{4}{R^{2}} \left[ \frac{3C'}{k^{l}\epsilon_2(2\pi)^{l}} \right],
\end{eqnarray*}
for $|t|$ sufficiently small.

Then, if we denote
\[
s_{t,J} = \frac{\sum_{\alpha \in Q_J} a_\alpha z^\alpha}{\prod_{i=1}^{l} z_i^{k+1}} (dz_1 \wedge \cdots \wedge dz_n)^{\otimes (k+1)},
\]
we have, for $|t|$ sufficiently small,
\[
\int_{U \cap X_t} \| s_{t,J} \|^2_{\mathrm{KE}} \, \omega_t^n < C_3,
\]
for some constant $C_3 > 0$ depending on $k$, but independent of $t$ and $U$.

\subsubsection*{Case 2.} When $\alpha \in Q_{J'}$ for some non-empty $J' \subsetneq J$, for simplicity, we may assume $J' = \{l', \ldots, l\}$ for some $1 < l' \leq l$. Let $I' = \{0, \ldots, l'-1\}$. Then $\tilde{s}$ is also an extension of $\tilde{s}|_{X_{0,I'}}$. By Theorem~\ref{thm-fin-total}, we have
\[
\| \tilde{s} \|^2_{h, \xcal} \geq \frac{c''}{k^{l'}} \| \tilde{s} \|^2_{h, X_{0,I'}},
\]
for some $c'' > 0$. Thus,
\[
\| \tilde{s} \|^2_{h, X_{0,I'}} \leq \frac{C''}{k^{l - l' + 1}} \| \tilde{s} \|^2_{h, X_{0,I}},
\]
for some $C'' > 0$.

Since
\[
\prod_{i=1}^{l'-1} \int_{M}^{\frac{l}{l+1} y_t} \sigma_i^{2k} d\sigma_i < c_1 y_t^{(l'-1)(2k+1)},
\]
for some $c_1 > 0$, we have
\[
B_{\alpha, t} < c_1 y_t^{(l'-1)(2k+1)} \prod_{i=l'}^{l} \int_{M}^{\frac{l}{l+1} y_t} |z_i|^{2\alpha_i} \sigma_i^{2k} d\sigma_i.
\]

On $U^0(t)$, the remainder of the argument is similar to Case 0, and we obtain
\[
\sum_{\alpha \in Q_{J'}} B_{\alpha, t} \int_{|z_j| < R_j} |a_\alpha|^2 \nu_l < c_2 y_t^{(l'-1)(2k+1)},
\]
for some $c_2$ independent of $U$ and $t$.

\medskip

On $U^i(t)$ for any $i \neq 0$ with $i \leq l$, there are two possibilities. If $i \in J'$, then, similar to Case 1,
\[
\int_{U^i(t)} \sum_{\alpha \in Q_{J'}} |a_\alpha|^2 \prod_{j=1}^{l} |z_j|^{2\alpha_j} \mu_i < c_3 |t|^{\frac{2}{l+1}} y_t^{l'(2k+1)},
\]
for some $c_3$ independent of $U$ and $t$. If $i \notin J'$, we have the same bound as in $U^0(t)$:
\[
\int_{U^i(t)} \sum_{\alpha \in Q_{J'}} |a_\alpha|^2 \prod_{j=1}^{l} |z_j|^{2\alpha_j} \mu_i < c_2 y_t^{(l'-1)(2k+1)}.
\]

Therefore, if we denote
\[
s_{t,J'} = \frac{\sum_{\alpha \in Q_{J'}} a_\alpha z^\alpha}{\prod_{i=1}^{l} z_i^{k+1}} (dz_1 \wedge \cdots \wedge dz_n)^{\otimes (k+1)},
\]
then for $|t|$ sufficiently small,
\begin{equation}\label{in-st-star}
    \int_{U \cap X_t} \| s_{t,J'} \|^2_{\mathrm{KE}} \, \omega_t^n < c_4 y_t^{(l'-1)(2k+1)},
\end{equation}
for some constant $c_4$ independent of $t$ and $U$.

\subsubsection*{Case 3.} Assume $0 > \alpha_{i_0} = \min\{\alpha_i \mid i \leq l\}$. Then on $U^{i_0}(t)$, using
\[(z_0, \ldots, z_{i_0-1}, z_{i_0+1}, \ldots, z_n)\]  as coordinates, the term $z^\alpha$ can be rewritten as
\[
z^\alpha = t^{\alpha_{i_0}} \left( \prod_{i=0,\, i \neq i_0}^{l} z_i \right)^{-\alpha_{i_0}} \prod_{i=1,\, i \neq i_0}^{l} z_i^{\alpha_i} = t^{\alpha_{i_0}} z_0^{-\alpha_{i_0}} \prod_{i=1,\, i \neq i_0}^{l} z_i^{\alpha_i - \alpha_{i_0}},
\]
which reduces to either Case 1 or Case 2 on $U^{i_0}(t)$, as previously discussed.

\medskip

Combining the estimates from all three cases, we obtain
\begin{equation}\label{e-st-star}
    \int_{U \cap X_t} \| s_{t,*} \|^2_{\mathrm{KE}} \, \omega_t^n < c_5 y_t^{(l-1)(2k+1)},
\end{equation}
for some constant $c_5$ independent of $t$ and $U$, where
\[
s_{t,*} = \frac{\sum_{\alpha \neq 0} a_\alpha z^\alpha}{\prod_{i=1}^{l} z_i^{k+1}} (dz_1 \wedge \cdots \wedge dz_n)^{\otimes (k+1)}.
\]

\medskip

Let $\tcal$ denote the region where $\tau_I > \frac{3}{2}M$. Then, by inequalities~\ref{e-a-0-int} and~\ref{e-st-star}, we have
\begin{equation}\label{e-tcal-u}
    \| s_t \|^2_{\mathrm{KE},\, \tcal \cap U} > c_6 y_t^{l(2k+1)} \int_{|z_j| < R_j} |b_{(0,\ldots,0)}|^2 (\sqrt{-1})^{n-l} \bigwedge_{j=l+1}^n (dz_j \wedge d\bar{z}_j),
\end{equation}
for some constant $c_6$ independent of $t$ and $U$.

Therefore, the $L^2$-norm of $s_t$ over $\tcal$ satisfies
\begin{equation}\label{e-st-tcal}
    \| s_t \|^2_{\mathrm{KE},\, \tcal} > c_7 y_t^{l(2k+1)} \| s \|^2_{h,\, \omega,\, X_{0,I}},
\end{equation}
for some constant $c_7$ independent of $t$.

Moreover, by inequalities~\ref{e-geq-4m} and~\ref{in-st-star}, we obtain
\begin{equation}\label{e-l2-mu}
    \int_{\mcal_U} \| s_t \|^2_{\mathrm{KE}} \, \omega_t^n < c_8 y_t^{(l-1)(2k+1)},
\end{equation}
for some $c_8 > 0$.

Define $s_{tn} = \frac{1}{\| s_t \|_{\mathrm{KE},\, \tcal}} s_t$. Then the total $L^2$-norm of $s_{tn}$ over the region $\mcal_U$ is bounded by $c_9 y_t^{-(2k+1)/2}$ for some $c_9 > 0$.

Thus, we have the following proposition:
\begin{proposition}\label{prop-stn-ncal}
    There exists $c_{10} > 0$ such that, for $|t|$ sufficiently small,
    \[
    \int_{\ncal_I} \| s_{tn} \|^2_{\mathrm{KE}} \, \omega_t^n < c_{10} y_t^{-(2k+1)}.
    \]
\end{proposition}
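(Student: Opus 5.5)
The plan is to cover $\ncal_I\cap X_t$ by the sets $\mcal_U$ with $U\in\bigcup_{I\subset J}\ucal^J$, which is possible because $\ncal_I\cap U\subset\mcal_U$ for every such $U$. I then sum the local estimates \eqref{e-l2-mu} over this finite cover and divide by the global lower bound \eqref{e-st-tcal}. The normalization $s_{tn}=s_t/\|s_t\|_{\mathrm{KE},\tcal}$ gives
\[
\int_{\ncal_I}\|s_{tn}\|^2_{\mathrm{KE}}\,\omega_t^n\le \frac{\sum_U\int_{\mcal_U}\|s_t\|^2_{\mathrm{KE}}\,\omega_t^n}{\|s_t\|^2_{\mathrm{KE},\tcal}}<\frac{|\ucal|\,c_8\,y_t^{(l-1)(2k+1)}}{c_7\,y_t^{l(2k+1)}\|s\|^2_{h,\omega,X_{0,I}}},
\]
which is $c_{10}y_t^{-(2k+1)}$ once $s$ has unit norm. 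So the task reduces to justifying \eqref{e-l2-mu} on each relevant $U$, uniformly in $U$.

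For $U\in\ucal^I$, I split $s_t=s_{t,0}+s_{t,*}$ on $\mcal_U$. On $\mcal_U$ some $\sigma_i$ lies in $(M,3M)$, so $\mcal_U\cap U^0(t)$ sits inside $\bigcup_j T_j(t)$, or inside the analogous sets in the other charts $U^i(t)$. By \eqref{e-geq-4m}, the contribution of $s_{t,0}$ there is at most $c'y_t^{-(2k+1)}$ times its full contribution, and the full contribution is $O(y_t^{l(2k+1)})$ by the upper bound in Case 0. That gives $O(y_t^{(l-1)(2k+1)})$. For $s_{t,*}$, the bound \eqref{e-st-star} already gives $O(y_t^{(l-1)(2k+1)})$ on all of $U\cap X_t$. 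The cross term is handled by $|a+b|^2\le2|a|^2+2|b|^2$.

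The main obstacle is the patches $U\in\ucal^J$ with $I\subsetneq J$, where the region near $\ncal_I$ lies close to the deeper stratum $X_{0,J}$. Here I would reuse the same expansion in the $J$-coordinates with $l'=|J|-1$. On $\ncal_I$ one has $\tau_I\approx-\log(|z_0|^2+\cdots+|z_l|^2)\in(\tfrac32M,\tfrac52M)$, so at least one $\sigma_i$ with $i\le l$ is bounded by $3M$. This pins at least one of the $l'$ logarithmic variables to a bounded range, so that integral contributes $O(1)$ rather than $O(y_t^{2k+1})$.

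The remaining $l'-1$ variables each contribute at most $y_t^{2k+1}$, which gives $O(y_t^{(l'-1)(2k+1)})$. That is not good enough when $l'>l$, so I would need the vanishing of $s$ along $D_I$. The extension is $\tilde s=S_{\hat I}\otimes\tilde s'$ or $S_{\hat I}^{\otimes 2}\otimes\tilde s'$, so $f$ carries factors $z_{l+1}\cdots z_{l'}$. These make the monomials in the extra directions fall under Case 1 or Case 2 with positive exponent, and those directions then contribute $O(1)$ rather than $y_t^{2k+1}$. The expected total is again $O(y_t^{(l-1)(2k+1)})$.

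If $D_I$ is empty, only $U\in\ucal^I$ occurs and the argument closes. In general I would proceed inductively from the deepest strata upward, as the paper suggests, checking that the constants are uniform over the finite cover.
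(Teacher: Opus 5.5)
Your argument for the case the proposition actually covers ($D_I=\emptyset$, the subsection where it is stated) is the paper's own proof. You cover $\ncal_I$ by the sets $\mcal_U$ and bound the $a_0$-part of $s_t$ on $\mcal_U$ by \eqref{e-geq-4m}, the remaining terms by the Case 1--3 estimates, and the total by $O(y_t^{(l-1)(2k+1)})$; you then divide by the lower bound \eqref{e-st-tcal}. Your extra sketch for $D_I\neq\emptyset$ goes beyond the statement, and it parallels the paper's separate, equally brief argument leading to \eqref{e-stn-di-nempty}.
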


\subsection{When $D_I$ is not empty}
Let $s \in \gcal_{I,k+1} \subset H^0(X_{0,I}, (k+1)L)$ be such that $s = S_{\hat{I}} \otimes s'$ for some $s' \in H^0(X_{0,I}, (k+1)L - \sum_{j \notin I} [X_{0,j}])$ with unit $L^2$-norm with respect to $h_{\hat{I},k+1}$ and $\omega$. Since we have chosen $\|S_i\|_i$ to be sufficiently small for each $i$, it follows that
\[
\| \tilde{s} \|^2_{h, \omega, \xcal} < \frac{c}{k^{l+1}},
\]
for some constant $c > 0$.

When $(U, z) \in \ucal^I$, the proof proceeds as in the case $D_I$ is empty, yielding the same estimates as in formulas~\ref{e-tcal-u} and~\ref{e-l2-mu}.

Now consider $(U, z) \in \ucal^J$ for some $J$ with $I \subsetneq J$. Without loss of generality, assume $I = \{0, \ldots, l\}$ and $J = \{0, \ldots, l_1\}$. Then $\tilde{s} = f (dz_0 \wedge \cdots \wedge dz_n)^{\otimes (k+1)}$ for some holomorphic function $f$ on $U$. In these coordinates,
\[
f = \sum_{\beta \in \Upsilon} b_\beta \prod_{i=0}^{l_1} z_i^{\beta_i},
\]
where $\beta = (\beta_0, \ldots, \beta_{l_1})$ runs over multi-indices and $b_\beta$ are holomorphic functions of $(z_{l_1+1}, \ldots, z_n)$. Since $\tilde{s}$ vanishes to order at least $1$ along $X_{0,j}$ for $j \in J \setminus I$, the sum is over $\beta$ with $\beta_j \geq 1$ for all $j \in J \setminus I$.

On $U^0(t)$, using $(z_1, \ldots, z_n)$ as coordinates, we can write $f = \sum_{\alpha} a_\alpha z^\alpha$, where $a_\alpha$ are holomorphic functions of $(z_{l_1+1}, \ldots, z_n)$ and $z^\alpha = \prod_{j=1}^{l_1} z_j^{\alpha_j}$. For each $\alpha$,
\[
a_\alpha = \sum_{\tau = 0}^{\infty} \sum_{\substack{\beta \in A_\alpha \\ \beta_0 = \tau}} b_\beta t^\tau.
\]
If $\alpha_j < 0$ for some $l+1 \leq j \leq l_1$, then $\{\beta \in A_\alpha \mid \beta_0 = 0\} = \emptyset$. Thus, by formula~\ref{e-alpha-not0},
\begin{equation}\label{e-alpha-not-empty}
    |a_\alpha|^2 \leq C_5 |t| \sum_{\substack{\beta \in A_\alpha \\ \beta_0 > 0}} |b_\beta|^2 c_\beta,
\end{equation}
for some constant $C_5 > 0$ independent of $\alpha$ and $U$, provided $|t|$ is sufficiently small. Therefore, it suffices to focus on those $\alpha$ with $\alpha_j \geq 1$ for all $l+1 \leq j \leq l_1$.

Let $Q_+$ denote the set of $\alpha$ with $\alpha_j > 0$ for all $j$. Then, as in case 1 for $D_I$ empty, we have for $|t|$ small enough,
\[
\int_{U \cap X_t} \| s_{t,+} \|^2_{\ke} \omega_t^n < C_6,
\]
for some constant $C_6 > 0$ independent of $t$ and $U$, where
\[
s_{t,+} = \frac{\sum_{\alpha \in Q_+} a_\alpha z^\alpha}{\prod_{i=1}^{l_1} z_i^{k+1}} (dz_1 \wedge \cdots \wedge dz_n)^{\otimes (k+1)}.
\]

Let $Q_{\geq 0}$ denote the set of $\alpha$ with $\alpha_j \geq 0$ for all $j$. As in case 2 for $D_I$ empty, since there are at most $l$ indices $j$ with $\alpha_j = 0$, we obtain
\[
\sum_{\alpha \in Q_{\geq 0}} B_{\alpha, t} \int_{|z_j| < R_j} |a_\alpha|^2 \nu_{l_1} < C_7 y_t^{l(2k+1)},
\]
for some $C_7$ independent of $U$ and $t$. Moreover, if we set
\[
s_{t, \geq 0} = \frac{\sum_{\alpha \in Q_{\geq 0}} a_\alpha z^\alpha}{\prod_{i=1}^{l_1} z_i^{k+1}} (dz_1 \wedge \cdots \wedge dz_n)^{\otimes (k+1)},
\]
then
\[
\int_{U \cap X_t} \| s_{t, \geq 0} \|^2_{\ke} \omega_t^n < C_8 y_t^{l(2k+1)},
\]
for some constant $C_8 > 0$ independent of $t$ and $U$. In particular, when $\alpha_j = 0$ for $j \leq l$, by formula~\ref{e-geq-4m},
\[
\int_{\mcal_U} \| s_{t, \geq 0} \|^2_{\ke} \omega_t^n < C_9 y_t^{(l-1)(2k+1)},
\]
for some constant $C_9 > 0$ independent of $t$ and $U$.
For those $\alpha$ with at least one negative entry, the arguments are similar to those in case 3 when $D_I$ is empty, namely, they reduce to cases 1 or 2.

The case when $s \in \hcal_{I,k+1,2} \subset H^0(X_{0,I}, (k+1)L)$ is analogous.

In summary, as in the case when $D_I$ is empty, we obtain the following conclusions. When $s \in \gcal_{I,k+1}$ or $s \in \hcal_{I,k+1,2}$, we have:
\begin{itemize}
    \item There exists a constant $C_{10}$ independent of $t$ such that
    \begin{equation}\label{e-st-di-nempty}
        \| s_t \|^2_{\mathrm{KE},\, \tcal} > C_{10} y_t^{l(2k+1)} \| s \|^2_{h,\, \omega,\, X_{0,I} \setminus \tilde{D}_I},
    \end{equation}
    where $\tilde{D}_I$ is the union of those $U \in \ucal$ centered at points in $D_I$.
    \item There exists $C_{11} > 0$ such that, for $|t|$ sufficiently small,
    \begin{equation}\label{e-stn-di-nempty}
        \int_{\ncal_I} \| s_{tn} \|^2_{\mathrm{KE}}\, \omega_t^n < C_{11} y_t^{-(2k+1)}.
    \end{equation}
\end{itemize}

\section{Proof of Theorem~\ref{thm-almost-orthon}}\label{sec-5}

\begin{theorem}[Theorem~\ref{thm-almost-orthon}]
    The sections in $\bcal_{t}$ are almost orthonormal in the sense that
    \[
    \lim_{u \to \infty} \qcal_{t_u} = \mathrm{Id}.
    \]
\end{theorem}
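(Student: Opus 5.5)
The plan is to compute the entries of $\qcal_t$ by splitting each inner product into the region where both sections concentrate and its complement. Every $s_{t,\mathrm{rn}}$ comes from some $s\in\hcal_{I,k+1}$. By Theorem~\ref{thm-inner-sections-prop} and the Hörmander bound on $v_t$, the correction $y_t^{-\frac l2(2k+1)}v_t$ has $L^2$-norm $O(y_t^{-(2k+1)/2+1})$, which tends to $0$ for $k\geq 1$. So it suffices to study $y_t^{-\frac l2(2k+1)}\varpi(\tau_I)s_t$. Its mass concentrates in the collapsing region around $X_{0,I}$, in the part with $\tau_I>\frac52 M$ where all $\sigma_i$ ($i\in I$) are of order $y_t$. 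By the proof of Theorem~\ref{thm-inner-sections-prop}, only the $\alpha=0$ component $s_{t,0}$ contributes there to leading order, and the remaining components $s_{t,*}$ are $O(y_t^{(l-1)(2k+1)})$.

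\textbf{Diagonal entries and same-$I$ entries.} For $s,s'\in\hcal_{I,k+1}$, I will compute $\lim y_t^{-l(2k+1)}\langle \varpi s_t,\varpi s'_t\rangle$ with the convergence of Lemma~\ref{lem-zhang-3.2} and Theorem~\ref{the-zhang-3.3} along the good sequence. Pulling back by $P_{t,l}$, the $\alpha=0$ part becomes $b_0\bar b_0'$ integrated against $e^{k\varphi_I}$ times the explicit weight in $x$. The fibre $\sqrt{-1}\mathbb R^l$-direction is periodic, with period $2\pi/\log|t|$ rescaled, and it produces the factor $(4\pi)^l$. The substitution $\sigma_i=2x_i\,y_t/2$ turns $\prod\sigma_i^{2k}$ into $y_t^{l(2k+1)}\prod x_i^{2k}$ times the Jacobian. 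Accounting for $\|s_t\|_{\ke}^2=e^{(k+1)\phi_t}|f|^2\prod\sigma_i^{2(k+1)}$, the limit should be exactly the inner product defining $\nu_{\{t_u\},I,k+1}$; the normalization there was evidently chosen for this purpose. Dominated convergence works because $\varphi$ is bounded (Proposition~\ref{prop-ke-t}) and the tails near $\partial B_I$ are controlled by \eqref{e-geq-sqrt-yt} and \eqref{e-geq-4m}. Near $D_I$ I will use the Poincaré-type property of $\vcal_I$ and \eqref{e-st-di-nempty} to exhaust $X_{0,I}^o$ by $X_{0,I}^\delta$. This gives $\langle s_{t,\mathrm{rn}},s'_{t,\mathrm{rn}}\rangle\to\langle s,s'\rangle_{\hcal_{I,k+1}}$, which is $\delta$ on the chosen orthonormal bases. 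Because $\gcal_{I,k+1}\perp\hcal_{I,k+1,2}$, this also covers the two-step construction.

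\textbf{Off-diagonal entries with $I\ne J$.} For $|I|\le|J|$, I will split $X_t$ into the bulk region $\tcal_J=\{\tau_J>\frac52M\}$ of the $J$-section and its complement. On the complement, the normalized $J$-section has norm $O(y_t^{-(2k+1)/2})$ by Proposition~\ref{prop-stn-ncal} and \eqref{e-stn-di-nempty}, iterated over the neck regions. Cauchy–Schwarz then makes that contribution tend to $0$. On $\tcal_J$, I work in a patch $U\in\ucal^{J'}$ with $J\subset J'$. The $I$-section written in $z_1,\dots,z_n$ has the expansion $\sum a_\alpha z^\alpha$. The monomials are $L^2$-orthogonal under the torus action in the $\arg z_i$ directions, up to the error from $\phi_t$, which is $\Im w$-independent in the limit by Theorem~\ref{the-zhang-3.3}. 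Relative to the $J$-coordinates, the $I$-section's $\alpha=0$ term becomes a term with positive exponents in some $z_j$, $j\in J\setminus I$, or it carries a factor of $S_{\hat I}$. Its mass on $\tcal_J$ is therefore $O(y_t^{(|J|-2)(2k+1)})$ by Cases 1–2. Dividing by $y_t^{\frac{|I|-1}{2}(2k+1)}y_t^{\frac{|J|-1}{2}(2k+1)}$ and using Cauchy–Schwarz gives decay when $|I|=|J|$. When $|I|<|J|$ I will instead bound the $I$-section's normalized mass near $X_{0,J}$, which is small by the same case analysis.

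\textbf{Main obstacle.} I expect the hardest step to be the cross terms between nested strata $I\subsetneq J$ and between the two pieces $\gcal_{I,k+1}$ and $\hcal_{I,k+1,2}$. There, Cauchy–Schwarz alone gives only $O(1)$ bounds, so one genuinely needs the asymptotic orthogonality of different monomials $z^\alpha$. My first attempt is to average over the torus fibres $\sqrt{-1}\mathbb R^l$ in the $P_{t,l}$ picture: since the limiting metric is $\Im w$-independent, the Fourier modes with distinct $\alpha$ decouple in the limit. I would then proceed by induction on $|J|$, starting from strata with $D_J=\emptyset$, as the paper suggests.
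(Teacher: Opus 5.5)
The entries inside a single $\bcal_{I,t}$ are handled correctly and essentially as in the paper's Part~1. You drop $v_t$, keep the $\alpha=0$ term, pass to Zhang's limit along $\{t_u\}$, and exhaust $X_{0,I}^o$. That limit depends only on the restrictions to $X_{0,I}$, and $\gcal_{I,k+1}\perp\hcal_{I,k+1,2}$, so it already settles the $\gcal_{I,k+1}$--$\hcal_{I,k+1,2}$ cross terms; those are not an obstacle.

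The genuine gap is in the entries with $I\neq J$ and $X_{0,I}\cap X_{0,J}\neq\emptyset$. You split $X_t$ into $\tcal_J$ and its complement, which is a $t$-independent scale. But $\tcal_J$ contains a fixed neighbourhood of $X_{0,I\cup J}$ inside the collapsing region over $X_{0,I}$. There the normalized $I$-section generally keeps a $t$-independent share of its mass: your own Part~1 limit identifies it with the $\nu_{\{t_u\},I,k+1}$-mass of $s$ near $X_{0,I\cup J}$.

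Concretely, take a patch of $\ucal^{J'}$ with $J'\supseteq I\cup J$. There the $I$-section is forced to vanish only in the variables $z_j$ with $j\notin I$. Cases~1--2 then give it mass of the full order $y_t^{(|I|-1)(2k+1)}$, not $O(y_t^{(|J|-2)(2k+1)})$. So your estimate for $|I|=|J|$ is false. For $|I|<|J|$, nested or not, the normalized $I$-mass near $X_{0,J}$ is $O(1)$ rather than small. This is exactly why Cauchy--Schwarz gives you only $O(1)$.

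Your torus-averaging remedy is only sketched. It would also need uniform control of the angular dependence of $\phi_t$, and of the cut-offs, up to $\partial B_J$. Zhang's convergence on compact $K\subset B_J$ does not provide that.

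The missing idea, which the paper uses, is to split at a threshold in $\sigma_j$ that tends to infinity but is $o(y_t)$.
\begin{itemize}
\item Choose $j$ lying in exactly one of $I,J$, say $j\in J\setminus I$. Then $D_I\neq\emptyset$, and the $I$-section carries the factor $|z_j|^2=e^{-\sigma_j}$.
\item Split along $\sigma_j\gtrless\sqrt{y_t}$.
\item The exponent $-b\sigma+2k\log\sigma$ is concave, so Lemma~\ref{lem-concave} shows the $I$-section's mass on $\{\sigma_j>\sqrt{y_t}\}$ is at most $e^{-\frac12\sqrt{y_t}}$ times its mass on $\{M<\sigma_j<\sqrt{y_t}\}$.
\item The $J$-section has relative mass $O(y_t^{-(2k+1)/2})$ on $\{\sigma_j<\sqrt{y_t}\}$ by \eqref{e-geq-sqrt-yt}.
\item Both normalized total masses are bounded. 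Cauchy--Schwarz applied separately on the two regions therefore sends every off-diagonal entry to $0$, with no orthogonality of distinct monomials needed.
\end{itemize}
With a fixed threshold on $\sigma_j$, which is effectively what $\tcal_J=\{\tau_J>\frac52M\}$ is, a fixed fraction of the $I$-section's mass sits on the $J$ side. That is what breaks your argument.
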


To prove this theorem, we divide the argument into two parts:
\begin{itemize}
    \item The sections in each $\bcal_{I,t}$ are almost orthonormal, i.e., the inner product matrix $\qcal_{I,t}$ satisfies
    \[
    \lim_{u \to \infty} \qcal_{I,t_u} = \mathrm{Id}.
    \]
    \item For $I \neq J$, the sections $\varsigma_1 \in \bcal_{I,t}$ and $\varsigma_2 \in \bcal_{J,t}$ are almost orthogonal to each other.
\end{itemize}

Let $\{t_u\}$ be a good sequence, which we fix throughout this section. For simplicity of notation, we continue to write $t$ for an arbitrary element in this sequence.
\subsection{Part 1}
\subsubsection*{When $D_I$ is empty}

When $|I| \geq 2$, we define a volume form on $X_{0,I}^o$ by
\[
\nu_{\{t_u\},I,k+1} \triangleq \vcal_I (4\pi)^l \int_{B_I} \left( 4\left(1-\sum_{j=1}^{l} x_j\right)^2 \prod_{j=1}^{l} 4x_j^2 \right)^k e^{-k\varphi_I} dx_1 \cdots dx_l.
\]
Since $\varphi_I$ is bounded, $\nu_{\{t_u\},I,k+1}$ is also of Poincar\'e type near $D_I$.

Let $\hcal_{\{t_u\},I,k+1}$ denote the space of holomorphic $L^2$-integrable sections of $(k+1)K_I$ on $X_{0,I}^o$, with norm squared given by
\[
\int_{X_{0,I}^o} \| \cdot \|^2_{\vcal_I} \, \nu_{\{t_u\},I,k+1}.
\]

For any $U \in \ucal^I$ with coordinates $(z_0, \ldots, z_n)$, let $f$ be the holomorphic function representing $\tilde{s}$. As in Case 0 of subsection~\ref{subsec-case-0}, we may ignore all terms except $b_0 = b_{(0,\ldots,0)}$, i.e., we may assume $b_\beta = 0$ for $\beta \neq 0$.

We can write
\[
\omega_t^n = e^{-\psi_t} \frac{(\sqrt{-1})^n}{\prod_{j=1}^l |z_j|^2} \bigwedge_{i=1}^n dz_i \wedge d\bar{z}_i.
\]
Since $P_t^* dz_i \wedge d\bar{z}_i = (\log |t|)^2 |z_i|^2 dw_i \wedge d\bar{w}_i$ for $1 \leq i \leq l$, it follows that
\[
e^{\psi_t \circ P_t} P_t^* \omega_t^n = (\log |t|)^{2l} (\sqrt{-1})^n \bigwedge_{i=1}^l (dw_i \wedge d\bar{w}_i) \wedge \bigwedge_{j=l+1}^n (dz_j \wedge d\bar{z}_j).
\]

Thus, we need to estimate the integral
\[
\int_{U \cap X_t} |b_0|^2 e^{k\psi_t} \frac{(\sqrt{-1})^n}{\prod_{j=1}^l |z_j|^2} \bigwedge_{i=1}^n dz_i \wedge d\bar{z}_i.
\]
By inequality~\ref{e-geq-4m}, it suffices to estimate the integral over $^M U \cap X_t$, where $^M U = \{ q \in U \mid \sigma_i > 4M,\, 0 \leq i \leq l \}$.

For simplicity, let $d\mu = (\sqrt{-1})^n \bigwedge_{i=1}^l (dw_i \wedge d\bar{w}_i) \wedge \bigwedge_{j=l+1}^n (dz_j \wedge d\bar{z}_j)$. If we write $\vcal_I = e^{\psi_{I,U}} (\sqrt{-1})^{n-l} dz_{l+1} \wedge d\bar{z}_{l+1} \wedge \cdots \wedge dz_n \wedge d\bar{z}_n$, then
\[
\lim_{u \to \infty} (\log |t_u|)^{2l} e^{-\phi_{t_u} \circ P_{t_u}} = \frac{e^{\psi_{I,U} + \varphi_I}}{4(1-\sum_{j=1}^l x_j)^2} \prod_{j=1}^l \frac{1}{4x_j^2}.
\]

For any compact subset $K \subset B_I$, define
\[
\nu_{\{t_u\},I,K,k+1} \triangleq \vcal_I (4\pi)^l \int_K \left( 4(1-\sum_{j=1}^l x_j)^2 \prod_{j=1}^l 4x_j^2 \right)^k e^{-k\varphi_I} dx_1 \cdots dx_l.
\]
Then,
\begin{align*}
& (\log |t|)^{-l(2k+1)} \int_{P_t(G_K)} |b_0|^2 e^{k\psi_t} \frac{(\sqrt{-1})^n}{\prod_{j=1}^l |z_j|^2} \bigwedge_{i=1}^n dz_i \wedge d\bar{z}_i \\
&= \int_{K \times (\sqrt{-1}[0, \frac{2\pi}{-\log|t|}])^l \times (U \cap X_{0,I})} |b_0|^2 \frac{e^{k\psi_t}}{(\log|t|)^{2kl}} (\log|t|)^l d\mu \\
&\to (4\pi)^l \int_{K \times (U \cap X_{0,I})} |b_0|^2 e^{-k(\psi_{I,U} + \varphi_I)} \left[ 4(1-\sum_{j=1}^l x_j)^2 \prod_{j=1}^l 4x_j^2 \right]^k d\mu \\
&= \int_{U \cap X_{0,I}} |b_0|^2 e^{-k\psi_{I,U}} \nu_{\{t_u\},I,K,k+1},
\end{align*}
as $t = t_u \to 0$. Let
\[
K_\delta = \left\{ (x_1, \ldots, x_l) \in \mathbb{R}^l \;\middle|\; \sum_{j=1}^l x_j \leq 1-\delta,\; x_j \geq \delta,\; 1 \leq j \leq l \right\}.
\]
Since $\varphi_I$ is bounded, it is clear that as $\delta \to 0$, $\nu_{\{t_u\},I,K_\delta,k+1}$ converges uniformly to $\nu_{\{t_u\},I,k+1}$. Therefore,
\[
\int_{U \cap X_{0,I}} |b_0|^2 e^{-k\psi_{I,U}} \nu_{\{t_u\},I,K_\delta,k+1} \to \int_{U \cap X_{0,I}} |b_0|^2 e^{-k\psi_{I,U}} \nu_{\{t_u\},I,k+1}
\]
as $\delta \to 0$.

On the other hand, we can estimate
\[
(\log |t|)^{-l(2k+1)} \int_{U^i(t)\setminus P_t(G_{K_\delta})} |b_0|^2 e^{k\psi_t} \frac{(\sqrt{-1})^n}{\prod_{j=1}^{l}|z_j|^2} \bigwedge_{j=1}^{n} dz_j \wedge d\bar{z}_j,
\]
for each $0 \leq i \leq n$. For example, when $i = 0$, since
\[
U^0(t) \setminus P_t(G_{K_\delta}) = \left\{ q \in U^0(t) \;\middle|\; \sigma_j < \delta y_t \text{ for some } 1 \leq j \leq l \right\},
\]
we have
\begin{align*}
    &\int_{U^0(t)\setminus P_t(G_{K_\delta})} |b_0|^2 e^{k\psi_t} \frac{(\sqrt{-1})^n}{\prod_{j=1}^{l}|z_j|^2} \bigwedge_{j=1}^{n} dz_j \wedge d\bar{z}_j \\
    &\leq C^k \int_{U^0(t)\setminus P_t(G_{K_\delta})} |b_0|^2 \left( \prod_{j=1}^{l} |\sigma_j| \right)^{2k} \frac{(\sqrt{-1})^n}{\prod_{j=1}^{l}|z_j|^2} \bigwedge_{j=1}^{n} dz_j \wedge d\bar{z}_j \\
    &\leq C^k l \left( \int_M^{y_t} x^{2k} dx \right)^{l-1} \int_{M}^{\delta y_t} x^{2k} dx \int_{|z_j|<R_j,\, j\geq l+1} |b_0|^2 \nu_l,
\end{align*}
for some constant $C$ independent of $U$ and $t$. It is then straightforward to see that
\[
\lim_{\delta \to 0} (\log |t|)^{-l(2k+1)} \int_{U^0(t)\setminus P_t(G_{K_\delta})} |b_0|^2 e^{k\psi_t} \frac{(\sqrt{-1})^n}{\prod_{j=1}^{l}|z_j|^2} \bigwedge_{j=1}^{n} dz_j \wedge d\bar{z}_j = 0.
\]
The arguments for the other $U^i(t)$ are analogous. Therefore, we have shown that
\[
\lim_{u \to \infty} \| s_{t_u,\mathrm{rn}} \|_{\ke,\, U \cap X_{t_u}} = \int_{U \cap X_{0,I}} \| s \|_{\vcal_I} \, \nu_{\{t_u\},I,k+1}.
\]
For any open subset $W' \subset \{ (z_{l+1},\ldots,z_n) \mid |z_j| < R_j \}$, let $U' = \{ z \in U \mid (z_{l+1},\ldots,z_n) \in W' \}$. Then the same argument applies to $U'$, namely,
\[
\lim_{u \to \infty} \| s_{t_u,\mathrm{rn}} \|_{\ke,\, U' \cap X_{t_u}} = \int_{U' \cap X_{0,I}} \| s \|_{\vcal_I} \, \nu_{\{t_u\},I,k+1}.
\]
This implies that
\[
\lim_{u \to \infty} \| s_{t_u,\mathrm{rn}} \|_{\ke,\, X_{t_u}} = \int_{X_{0,I}} \| s \|_{\vcal_I} \, \nu_{\{t_u\},I,k+1} = 1.
\]
Therefore,
\[
\lim_{u \to \infty} \qcal_{I,t_u} = \mathrm{Id}.
\]

\subsubsection*{When $D_I$ is not empty}
We first assume that $|I| > 1$.

Let $s \in \gcal_{I,k+1}$. On each $U \in \ucal^I$, we can apply the same arguments as in the case $D_I$ is empty to obtain
\[
\lim_{u \to \infty} \| s_{t_u,\mathrm{rn}} \|^2_{\ke,\, U' \cap X_{t_u}} = \int_{U' \cap X_{0,I}} \| s \|^2_{\vcal_I} \, \nu_{\{t_u\},I,k+1},
\]
for any sub-polydisc $U'$ as defined previously.
Consequently,
\[
\lim_{u \to \infty} \| s_{t_u,\mathrm{rn}} \|^2_{\ke,\, X_{t_u} \cap (\bigcup_{U \in \ucal^I} U)} = \int_{X_{0,I} \cap (\bigcup_{U \in \ucal^I} U)} \| s \|^2_{\vcal_I} \, \nu_{\{t_u\},I,k+1}.
\]
For a fixed $M_1 > 0$, denote $^{M_1}\ucal^{I} = \{\,^{M_1}U \mid U \in \ucal^I \}$, i.e., we replace each $U \in \ucal^I$ by $^{M_1}U$. Then, by formula~\ref{e-geq-sqrt-yt}, we also have
\[
\lim_{u \to \infty} \| s_{t_u,\mathrm{rn}} \|^2_{\ke,\, X_{t_u} \cap (\bigcup_{U \in\, ^{M_1}\ucal^{I}} U)} = \int_{X_{0,I} \cap (\bigcup_{U \in\, ^{M_1}\ucal^{I}} U)} \| s \|^2_{\vcal_I} \, \nu_{\{t_u\},I,k+1}.
\]

For each $U \in \ucal$ centered at a point $p \in D_I$, let $(z_0, \ldots, z_n)$ be coordinates such that
\[
X_{0,I} \cap U = \{ z \mid z_i = 0,\, 0 \leq i \leq l \}.
\]
Define $^{M_1}U = \{ z \in U \mid \sigma_i > M_1,\, 0 \leq i \leq l \}$, and let $^{M_1}\ucal^{I,D}$ denote the collection of such $^{M_1}U$. We call an appropriate coordinate patch centered at $p \in X_{0,I}^o$ \emph{$M_1$-admissible} if $^{M_1}U$ has compact closure in $U$.

Given any $\delta_1 > 0$, we can choose $M_1 > M$ so that, by adding finitely many $M_1$-admissible appropriate coordinate patches to $^{M_1}\ucal^{I}$, we obtain a cover of $X_{0,I}^{\delta_1}$. Denote this enlarged collection by $^{M_1}\ucal^{I}_{\delta_1}$. Clearly, we also have
\[
\lim_{u \to \infty} \| s_{t_u,\mathrm{rn}} \|^2_{\ke,\, X_{t_u} \cap (\bigcup_{U \in\, ^{M_1}\ucal^{I}_{\delta_1}} U)} = \int_{X_{0,I} \cap (\bigcup_{U \in\, ^{M_1}\ucal^{I}_{\delta_1}} U)} \| s \|^2_{\vcal_I} \nu_{\{t_u\},I,k+1}.
\]

Since $\lim_{M_2 \to \infty} \int_{M_2}^\infty e^{-a x} x^{2k} dx = 0$, for any $\epsilon_1 > 0$, we can find $\delta_1$ such that
\[
\| s_{t_u,\mathrm{rn}} \|^2_{\ke,\, X_{t_u} \cap N_{D_I}^{\delta_1}} < \epsilon_1
\]
for $u$ sufficiently large, where $N_{D_I}^{\delta_1}$ is the $\delta_1$-neighborhood of $D_I$ in $\xcal$, and
\[
\int_{X_{0,I} \setminus X_{0,I}^{\delta_1}} \| s \|^2_{\vcal_I} \nu_{\{t_u\},I,k+1} < \epsilon_1.
\]
We can then choose $M_1 > M$ so that the $M_1$-admissible appropriate coordinate patches cover $X_{0,I}^{\delta_1}$. Thus,
\[
\lim_{u \to \infty} \| s_{t_u,\mathrm{rn}} \|^2_{\ke,\, X_{t_u} \setminus W_{I,M_1}} = 0,
\]
where $W_{I,M_1} = \bigcup_{U \in\, ^{M_1}\ucal^{I}_{\delta_1} \cap\, ^{M_1}\ucal^{I,D}} U$.
Therefore, in conclusion,
\[
\lim_{u \to \infty} \| s_{t_u,\mathrm{rn}} \|^2_{\ke,\, X_{t_u}} = \int_{X_{0,I}^o} \| s \|^2_{\vcal_I} \nu_{\{t_u\},I,k+1} = 1.
\]

The same argument applies when $s \in \hcal_{I,k+1,2}$, so we also have
\[
\lim_{u \to \infty} \| s_{t_u,\mathrm{rn}} \|^2_{\ke,\, X_{t_u}} = \int_{X_{0,I}^o} \| s \|^2_{\vcal_I} \nu_{\{t_u\},I,k+1} = 1.
\]

For any $s \in \hcal_{I,k+1}$ of unit norm, write $s = \lambda_1 s_1 + \lambda_2 s_2$ with $s_1 \in \gcal_{I,k+1}$, $s_2 \in \hcal_{I,k+1,2}$, and $|\lambda_1|^2 + |\lambda_2|^2 = 1$. Let $\tilde{s} = \lambda_1 \tilde{s}_1 + \lambda_2 \tilde{s}_2$, and define $s_t$ and $s_{t_u,\mathrm{rn}}$ accordingly. Then
\[
\| \tilde{s} \|_{h,\omega,\xcal} \leq \| \tilde{s}_1 \|_{h,\omega,\xcal} + \| \tilde{s}_2 \|_{h,\omega,\xcal}.
\]
Repeating the arguments above for $s_1$ and $s_2$ shows that
\[
\lim_{u \to \infty} \| s_{t_u,\mathrm{rn}} \|^2_{\ke,\, X_{t_u}} = 1.
\]

Therefore,
\[
\lim_{u \to \infty} \qcal_{I,t_u} = \mathrm{Id}.
\]
\
When $|I|=1$, i.e., $I = \{i\}$ for some $i \leq m$, if we replace both $\vcal_I$ and $\nu_{\{t_u\},I,k+1}$ by $\omega_0^n$, then by Theorem~\ref{thm-ruan}, the arguments for the case $|I| > 1$ apply verbatim, yielding
\[
\lim_{u \to \infty} \qcal_{I,t_u} = \mathrm{Id}.
\]
This completes the proof of the first part.

\subsection{Part 2}
If $X_{0,I} \cap X_{0,J} = \emptyset$, then it is clear from the construction of $s_{t,\mathrm{rn}}$ that
\[
\lim_{u \to \infty} \left\langle s^I_{t_u,\mathrm{rn}}, s^J_{t_u,\mathrm{rn}} \right\rangle_{\ke, X_t} = 0,
\]
for any $s^I \in \hcal_{I,k+1}$ and $s^J \in \hcal_{J,k+1}$.

If $X_{0,I}$ and $X_{0,J}$ are not disjoint, we have $X_{0,I} \cap X_{0,J} \subset D_J$. By inequality~\ref{e-geq-sqrt-yt} and repeating the arguments in the proof of Theorem~\ref{thm-inner-sections-prop}, one sees that $s^I_{t_u,\mathrm{rn}}$ has negligible mass outside a $\sqrt{y_t}$-neighborhood of $X_{0,I}$. Similarly, $s^J_{t_u,\mathrm{rn}}$ has negligible mass inside a $\sqrt{y_t}$-neighborhood of $D_J$. To see this, recall the following basic lemma from \cite{sun2024ChengYau}:

\begin{lem}\label{lem-concave}
    Let $f(x)$ be a concave function. Suppose $f'(x_0) < 0$, then
    \[
    \int_{x_0}^\infty e^{f(x)}\,dx \leq \frac{e^{f(x_0)}}{-f'(x_0)}.
    \]
\end{lem}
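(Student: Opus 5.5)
Since $f$ is concave and differentiable at $x_0$, I will use the tangent-line bound. Concavity says the graph of $f$ lies below each of its tangent lines. In particular, for all $x \geq x_0$,
\[
f(x) \leq f(x_0) + f'(x_0)(x - x_0).
\]
If $f$ is not differentiable, the same inequality holds with $f'(x_0)$ replaced by any supergradient at $x_0$. The one-sided derivative $f'_+(x_0)\le f'(x_0)$ works, and it only improves the final estimate because a more negative slope gives a smaller bound. Exponentiating preserves the inequality, so pointwise on $[x_0,\infty)$ we get
\[
e^{f(x)} \leq e^{f(x_0)}\, e^{f'(x_0)(x-x_0)}.
\]

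Next I will integrate this bound from $x_0$ to $\infty$. Put $a = -f'(x_0) > 0$. The right-hand side is the explicit exponential $e^{f(x_0)} e^{-a(x-x_0)}$, and it is integrable because $a>0$:
\[
\int_{x_0}^\infty e^{f(x_0)} e^{-a(x-x_0)}\,dx = \frac{e^{f(x_0)}}{a} = \frac{e^{f(x_0)}}{-f'(x_0)}.
\]
The left integrand $e^{f}$ is nonnegative and measurable, since a concave function is continuous on the interior of its domain. By monotonicity of the integral, $\int_{x_0}^\infty e^{f(x)}\,dx$ is finite and bounded by this quantity, which is the claim.

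There is no real obstacle here. The only points needing care are the sign condition $f'(x_0)<0$, which ensures the comparison exponential decays, and the direction of the tangent-line inequality for concave functions. In the paper's applications, $f$ will be something like $f(\sigma) = 2k\log\sigma - \lambda\sigma$, coming from the factor $|z_i|^{2\alpha_i}\sigma_i^{2k} = e^{-\alpha_i\sigma_i}\sigma_i^{2k}$. Its derivative $2k/\sigma - \lambda$ is negative once $\sigma > 2k/\lambda$. The lemma then bounds tails such as $\int_{\sqrt{y_t}}^\infty e^{-\lambda\sigma}\sigma^{2k}\,d\sigma$ by the value of the integrand at the endpoint, up to a factor close to $1/\lambda$. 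This is what shows that the mass of the sections concentrates away from $D_J$.
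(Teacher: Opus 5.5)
Your proof is correct: bounding $f$ by its tangent line $f(x)\le f(x_0)+f'(x_0)(x-x_0)$ on $[x_0,\infty)$, then exponentiating and integrating the decaying exponential, gives exactly $e^{f(x_0)}/(-f'(x_0))$. The paper does not prove the lemma itself but recalls it from \cite{sun2024ChengYau}, and your argument is the standard proof; the only blemish is the aside on the nondifferentiable case (writing $f'_+(x_0)\le f'(x_0)$ is meaningless when $f'(x_0)$ does not exist), which you can simply drop since the lemma assumes $f'(x_0)$ exists.
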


Since the exponent $-b\sigma + 2k\log \sigma$ is a concave function of $\sigma$, we have
\[
\int_{\sqrt{y_t}}^{\frac{l}{l+1}y_t} e^{-b\sigma + 2k\log \sigma}\,d\sigma < (b - \tfrac{1}{2})^{-1} e^{-b\sqrt{y_t} + k\log y_t}.
\]
Also, clearly,
\[
\int_M^{\sqrt{y_t}} e^{-b\sigma + 2k\log \sigma}\,d\sigma > e^{-b(M+1) + 2k\log(M+1)}.
\]
Therefore, when $b \geq 1$ and $|t|$ is sufficiently small,
\[
\frac{\int_{\sqrt{y_t}}^{\frac{l}{l+1}y_t} e^{-b\sigma + 2k\log \sigma}\,d\sigma}{\int_M^{\sqrt{y_t}} e^{-b\sigma + 2k\log \sigma}\,d\sigma} < e^{-\frac{1}{2}\sqrt{y_t}}.
\]

Thus, the off-diagonal inner products decay rapidly, and we have established the almost orthogonality. This completes the proof of Theorem~\ref{thm-almost-orthon}.

\section{Almost Bergman embeddings}\label{sec-almost-bergman}
\subsection{Embedding of projective bundles}\label{subsec-pro-bundle}

Let $p_i : (\CP^1)^l \to \CP^1$ denote the projection onto the $i$-th factor. Set $\ocal(1,\ldots,1) = \bigotimes_{i=1}^l p_i^* \ocal(1)$. A choice of basis for $H^0((\CP^1)^l, \ocal(1,\ldots,1))$ induces the Segre embedding
\[
\se_l : (\CP^1)^l \to \CP^{2^l - 1}.
\]
A standard form in homogeneous coordinates is
\[
[a_{1,1}, a_{1,2}] \times \cdots \times [a_{l,1}, a_{l,2}] \mapsto [b_1 \prod_{i=1}^l a_{i,1},\; b_2 a_{1,2} \prod_{i=2}^l a_{i,1},\; \ldots,\; b_{2^l} \prod_{i=1}^l a_{i,2}],
\]
where $b_j \neq 0$ for each $1 \leq j \leq 2^l$. We say the Segre embedding is of type $\{b_j\}_{1 \leq j \leq 2^l}$.

\
Following the convention, we denote by $0$ and $\infty$ the points $[1,0]$ and $[0,1]$ in $\CP^1$, respectively. 
Thus, the tuples $(x_1,\ldots,x_l) \in (\CP^1)^l$ with $x_i = 0$ or $\infty$ are mapped under the Segre embedding to points $\{\gamma_i\}_{1 \leq i \leq 2^l}$, where each $\gamma_i$ has homogeneous coordinates $[z_1, \ldots, z_{2^l}]$ with $z_i = 1$ and $z_j = 0$ for $j \neq i$. We refer to the image $\se_l((\CP^1)^l)$ as a parallelogram with vertices $\{\gamma_i\}_{1 \leq i \leq 2^l}$. 
A different choice of the coefficients $\{b_j\}_{1 \leq j \leq 2^l}$ yields a different parallelogram; to distinguish them, we say the parallelogram is of type $\{b_j\}_{1 \leq j \leq 2^l}$. Here, both $\{\gamma_i\}$ and $\{b_j\}$ are considered as ordered sets.

Via the map $z \mapsto [1, z]$, we identify $\C$ with $\CP^1 \setminus \{\infty\}$. Thus, we have an open subset $\C^l \subset (\CP^1)^l$, and the standard Segre embedding restricted to $\C^l$ can be written as
\[
(z_1, \ldots, z_l) \mapsto [b_1, b_2 z_1, \ldots, b_{2^l} \prod_{i=1}^l z_i].
\]

More generally, given $2^l$ linearly independent points $\{\beta_i\}_{1 \leq i \leq 2^l}$ in $\CP^N$ with $N \geq 2^l - 1$, let $V$ be a subvariety containing all these points and biholomorphic to $(\CP^1)^l$. Suppose that, after a unitary transformation $H$ of $\CP^N$, each $\beta_i$ is mapped to $[e_i]$ (i.e., the point with only the $i$-th coordinate nonzero), and $HV$ is the image of the composition of the natural inclusion $\CP^{2^l-1} \subset \CP^N$ with a Segre embedding of type $\{b_j\}_{1 \leq j \leq 2^l}$. In this case, we still call $V$ the parallelogram of type $\{b_j\}_{1 \leq j \leq 2^l}$ with vertices $\{\beta_i\}_{1 \leq i \leq 2^l}$.

Let $\pi_i: A_i \to M$, $1 \leq i \leq l$, be line bundles over a connected compact complex manifold $M$. Let $\hat{A}_i = A_i \cup M_i$ denote the projective completion of the total space $A_i$. Let $L$ be a positive line bundle over $M$, and $F$ an arbitrary line bundle over $M$. 
Let $\pi_{i,\infty}: \hat{A}_i \to M_i$ be the projection, and let $L$ on $\hat{A}_i$ denote the pullback $\pi_i^* L$. We use $M \subset \hat{A}_i$ to denote the zero section of $A_i$.

Let $P \to M$ be the principal $(\mathbb{C}^*)^l$-bundle associated to the vector bundle $\bigoplus_{i=1}^l A_i \to M$. We fix a $\mathbb{C}^*$-action on $\CP^1$ given by $\lambda [a_1, a_2] = [\lambda a_1, a_2]$. This induces a $(\C^*)^l$-action $\varrho$ on $(\CP^1)^l$. We then define the $(\CP^1)^l$-bundle $\pi: \Theta_{\{A_1, \ldots, A_l\}} \to M$ by
\[
\Theta_{\{A_1, \ldots, A_l\}} \coloneqq P \times_{\varrho} (\CP^1)^l.
\]
For simplicity, we refer to $\Theta_{\{A_1, \ldots, A_l\}}$ as $X$ when there is no confusion, and we also use the notation $\Theta_{\vartheta}$, where $\vartheta = \{A_1, \ldots, A_l\}$ is a set of line bundles.

Let $\varrho_i$ denote the $(\C^*)^l$-action on $\CP^1$ given by projecting $(\C^*)^l$ onto the $i$-th factor and composing with the above $\mathbb{C}^*$-action on $\CP^1$. Thus,
\[
\hat{A}_i = P \times_{\varrho_i} \CP^1.
\]
Since the projection $(\CP^1)^l \to \CP^1$ onto the $i$-th factor is $(\C^*)^l$-invariant, we obtain projections
\[
p_i: X \to \hat{A}_i.
\]
Define the line bundle $\varDelta \to X$ by $\varDelta = \bigotimes_{i=1}^l p_i^*([M_i])$. The restriction of $\varDelta$ to each fiber $\pi^{-1}(q)$ is $\ocal(1, \ldots, 1)$. For simplicity, we still denote by $L$ the line bundle $\pi^* L$ and by $F$ the line bundle $\pi^* F$.

\begin{proposition}\label{prop-x-m}
We have
\begin{equation}\label{e-prop-x}
    H^0(X, kL + F + \varDelta) \simeq \bigoplus_{(c_1, \ldots, c_l),\, c_i = 0,1} H^0\left(M, kL + F - \sum_{i=1}^l c_i A_i\right).
\end{equation}
\end{proposition}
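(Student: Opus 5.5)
Since $L$ and $F$ are pulled back from $M$, the plan is to compute $\pi_*\varDelta$ and apply the projection formula. This gives
\[
H^0(X, kL+F+\varDelta) = H^0\bigl(M, (kL+F)\otimes \pi_*\varDelta\bigr),
\]
so the statement follows once $\pi_*\varDelta$ is identified with $\bigoplus_{c\in\{0,1\}^l}\ocal\bigl(-\sum_i c_iA_i\bigr)$.

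Because $X$ is the fiber product over $M$ of the $\CP^1$-bundles $\hat A_i$, and $\varDelta=\bigotimes_i p_i^*[M_i]$, a Künneth-type argument applies: on fibers, $H^0((\CP^1)^l,\ocal(1,\ldots,1))=\bigotimes_i H^0(\CP^1,\ocal(1))$, and this has constant dimension $2^l$. Hence $\pi_*\varDelta$ is locally free of rank $2^l$ by Grauert's theorem, with
\[
\pi_*\varDelta\cong \bigotimes_{i=1}^l (\pi_i)_*[M_i].
\]
This identification can be checked in local trivializations of $P$, since the transition maps act factorwise.

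The reduction is then to $l=1$, that is, computing $(\pi_i)_*\ocal_{\hat A_i}([M_i])$. The section $S_{M_i}$ of $[M_i]$ vanishing on the infinity section gives an inclusion $\ocal_M\hookrightarrow(\pi_i)_*[M_i]$. Restricting to the zero section $M\subset\hat A_i$, which is disjoint from $M_i$, and to $M_i$ gives the exact sequence
\[
0\to \ocal_{\hat A_i}\to [M_i]\to [M_i]|_{M_i}\to 0 .
\]
Here $[M_i]|_{M_i}$ is the normal bundle of the infinity section, which is $A_i^{-1}$. I expect the sign convention to be the main obstacle, so I will fix it by a local computation. The fiber coordinate on $A_i$ is $\zeta$, a linear function on $A_i$, so near infinity the coordinate is $1/\zeta$. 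The normal direction at infinity therefore transforms like $A_i^{-1}$; the alternative is $A_i$ depending on the orientation of the $\C^*$-action. Since $(\pi_i)_*$ of the structure sheaf has $R^1=0$ on $\CP^1$ fibers, pushing forward yields
\[
0\to\ocal_M\to(\pi_i)_*[M_i]\to A_i^{-1}\to 0 .
\]
This extension splits. Its splitting is given by the sections of $[M_i]$ that vanish on the zero section $M$, which exist because the zero section is also a divisor, linearly equivalent to $M_i$ minus $\pi_i^*A_i$. Concretely, $[M]=[M_i]\otimes\pi_i^*A_i^{-1}$ (possibly with the opposite sign), and the canonical section of $[M]$ twisted gives the complementary summand. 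Therefore $(\pi_i)_*[M_i]\cong\ocal\oplus A_i^{-1}$.

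Taking the tensor product gives $\pi_*\varDelta\cong\bigoplus_{c}\bigotimes_i A_i^{-c_i}$. The projection formula and taking global sections then give \eqref{e-prop-x}. I will double-check the sign $-A_i$ against the $\C^*$-action convention $\lambda[a_1,a_2]=[\lambda a_1,a_2]$: the fiberwise functions $a_1,a_2$ pick up weights $0$ and $-1$ in the $A_i$-direction, which matches the weights $c_i\in\{0,1\}$ with $-c_iA_i$. This weight bookkeeping is the step most likely to need care. A cleaner alternative is to decompose sections by $(\C^*)^l$-weight: a section of $\varDelta$ on $P\times_\varrho(\CP^1)^l$ is an equivariant function on $P$ valued in $\bigotimes_i\mathrm{span}(a_{i,1},a_{i,2})$, and each weight monomial $\prod_i a_{i,c_i+1}$ contributes exactly a section of $-\sum_i c_iA_i$.
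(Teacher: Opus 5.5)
Your argument is correct, and it follows a different route from the paper's.

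\textbf{The paper's route.} The paper inducts on $l$. It views $X\to X'$ as a $\CP^1$-bundle and uses the section of $[X']$ to write the restriction sequence to the zero section $X'$. It gets surjectivity of $H^0(X,kL+F+\varDelta)\to H^0(X',kL+F+\varDelta)$ from $H^1(X,kL+F+\varDelta-[X'])=0$, via Leray and Kodaira vanishing. That step needs $k$ large and uses the ampleness of $L$. It then identifies the kernel with $H^0(X',kL+F+\varDelta'-\pi'^*A_l)$ by restricting to $X'_\infty$.

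\textbf{Your route.} You work at the sheaf level on $M$. Künneth gives $\pi_*\varDelta\cong\bigotimes_i(\pi_i)_*[M_i]$. The rank-two computation gives $(\pi_i)_*[M_i]\cong\ocal\oplus A_i^{-1}$, which is just the canonical splitting of fiberwise affine functions on $A_i$ into constant and linear parts. The projection formula finishes the proof.

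\textbf{What each route buys.} Your isomorphism holds for every $k$, and indeed with $kL+F$ replaced by any line bundle on $M$, with no vanishing theorem or positivity. In fact the paper's surjectivity is already automatic from its own explicit splitting $s\mapsto\bar\pi^*s\otimes s_\infty$. Your splitting is the same as the paper's explicit one. The $c_i=0$ summand is obtained by tensoring with the canonical section of $p_i^*[M_i]$. The $c_i=1$ summand consists of sections vanishing along $p_i^{-1}(M)$. So it serves equally well for the later identification $\Phi_\upsilon(q)=\Phi(\beta_j(q))$. The paper's inductive form, for its part, matches the face-by-face induction it uses afterwards to describe the parallelograms.

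\textbf{A sign to fix.} The tautological section of $\pi_i^*A_i$ vanishes simply along $M$ and has a simple pole along $M_i$. Hence $[M]\cong[M_i]\otimes\pi_i^*A_i$, not $[M_i]\otimes\pi_i^*A_i^{-1}$. The complementary summand is therefore $\sigma\mapsto s_M\otimes\pi_i^*\sigma$ for $\sigma$ a section of $A_i^{-1}$. This is consistent with $[M_i]|_{M_i}\cong A_i^{-1}$ and with the $-c_iA_i$ in the statement.

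\textbf{A way to avoid the sign bookkeeping.} Restriction to the zero section $M$, which is disjoint from $M_i$, gives a retraction $(\pi_i)_*[M_i]\to\ocal_M$ of the inclusion by $S_{M_i}$. Its kernel, the sections vanishing on $M$, maps isomorphically onto $A_i^{-1}$. This splits the extension directly.
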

\begin{proof}
We proceed by induction on $l$. Let $\pi': X' \to M$ denote the $(\CP^1)^{l-1}$-bundle associated to the line bundles $A_i$ for $i \leq l-1$, and let $\varDelta'$ be the corresponding line bundle on $X'$. Clearly, $\bar{\pi}: X \to X'$ is a $\CP^1$-bundle, and we have the following commutative diagram:
\begin{equation*}
    \xymatrix{
        X\ar[r]^{p_l}\ar[d]_{\bar{\pi}}& \hat{A}_l\ar[d]^{\pi_l}\\
        X'\ar[r]^{\pi'}&M
    }
\end{equation*}
There is a natural inclusion $X' \subset X$ as the zero section of $\pi'^* A_l$, and we denote $X'_\infty = p_l^{-1}(M_l)$. Then $\varDelta = \bar{\pi}^* \varDelta' + [X'_\infty]$. Fix a section $s' \in H^0(X, [X'])$ vanishing along $X'$. This gives rise to the short exact sequence:
\[
0 \to H^0(X, kL + F + \varDelta - [X']) \xrightarrow{\otimes s'} H^0(X, kL + F + \varDelta) \to H^0(X', kL + F + \varDelta) \to 0.
\]
We claim that $H^1(X, kL + F + \varDelta - [X']) = 0$ for $k$ sufficiently large. Indeed, since $kL + F + \varDelta - [X']$ is trivial along the fibers of $\bar{\pi}$, and its restriction to $X'$ is $kL + F + \varDelta' - \pi'^* A_l$, we have
\[
kL + F + \varDelta - [X'] = \bar{\pi}^*(kL + F + \varDelta' - \pi'^* A_l).
\]
Because $H^i(\CP^1, \mathcal{O}) = 0$ for $i > 0$, it follows that $R^i \bar{\pi}_* (kL + F + \varDelta - [X']) = 0$ for $i > 0$. Therefore,
\[
H^1(X, kL + F + \varDelta - [X']) \cong H^1(X', kL + F + \varDelta' - \pi'^* A_l).
\]
Similarly, since $H^i((\CP^1)^{l-1}, \mathcal{O}(1, \ldots, 1)) = 0$ for $i > 0$, we have
\[
R^i \pi'_* (kL + F + \varDelta' - \pi'^* A_l) = 0 \quad \text{for } i > 0.
\]
Thus,
\[
H^1(X', kL + F + \varDelta' - \pi'^* A_l) \cong H^1(M, kL + F - A_l + E),
\]
where $E = \pi'_* \varDelta'$ is a vector bundle on $M$. Since $L$ is ample, the Kodaira vanishing theorem implies $H^1(M, kL + F - A_l + E) = 0$ for $k$ sufficiently large. Therefore, $H^1(X, kL + F + \varDelta - [X']) = 0$ as claimed.

It follows that the restriction map $H^0(X, kL + F + \varDelta) \to H^0(X', kL + F + \varDelta)$ is surjective. By similar arguments, the restriction map
\[
H^0(X, kL + F + \varDelta - [X']) \to H^0(X'_\infty, kL + F + \varDelta - [X'])
\]
is an isomorphism. Noting that $H^0(X'_\infty, kL + F + \varDelta - [X']) \cong H^0(X', kL + F + \varDelta' - \pi'^* A_l)$ and $H^0(X', kL + F + \varDelta) \cong H^0(X', kL + F + \varDelta')$, we obtain
\begin{equation}\label{e-h0-x}
H^0(X, kL + F + \varDelta) \cong H^0(X', kL + F + \varDelta') \oplus H^0(X', kL + F + \varDelta' - \pi'^* A_l).
\end{equation}
The proposition then follows by induction on $l$.
\end{proof}
Notice that in the proof of the proposition, the isomorphism in formula~\ref{e-h0-x} can be made explicit by identifying $H^0(X', kL + F + \varDelta')$ with a subspace of $H^0(X, kL + F + \varDelta)$ via the map $s \mapsto \bar{\pi}^* s \otimes s_\infty$, where $s_\infty$ is a fixed section of $[X'_\infty]$ on $X$ vanishing along $X'_\infty$. Consequently, following the induction process, by fixing a section $s_{i,\infty} \in H^0(\hat{A}_i, [M_i])$ vanishing along $M_i$, we obtain an explicit isomorphism in formula~\ref{e-prop-x}. Since $X \setminus X'$ can be identified with the total space of the line bundle $\pi'_\infty: [X'_\infty] \to X'_\infty$, for each point $p \in X'_\infty$ and any choice of local frame $e$ of $[X'_\infty]$, on the fiber of $[X'_\infty]$, $s_\infty$ can be written as $a z (\pi'_\infty)^*(e)$ for some $a \neq 0$, where $z$ is a linear coordinate on the fiber $(\pi'_\infty)^{-1}(p)$.

For each component $H^0(M, kL + F - \sum_{i=1}^l c_i A_i)$ of the direct sum, choose a basis $\bcal_{(c_1, \ldots, c_l)}$. Collecting all such bases yields a basis $\bcal$ for $H^0(X, kL + F + \varDelta)$. We can impose a lexicographic (dictionary) order on the set $\Upsilon = \{ c = (c_1, \ldots, c_l) \mid c_i = 0,1 \}$ by interpreting each tuple as a word $c_1 c_2 \cdots c_l$. With this ordering, we use the index $j$, $1 \leq j \leq 2^l$, to denote the $j$-th element of $\Upsilon$.

By ordering each $\bcal_{(c_1, \ldots, c_l)}$, the full basis $\bcal$ becomes an ordered set. For $k$ sufficiently large, $\bcal$ induces a Kodaira embedding $\Phi: X \to \CP^N$, where $N = \dim H^0(X, kL + F + \varDelta) - 1$. For each tuple $\upsilon = (c_1, \ldots, c_l)$, we have a Kodaira embedding induced by $\bcal_{(c_1, \ldots, c_l)}$:
\[
\Phi_{\upsilon}: M \to \CP^{n_\upsilon},
\]
where $n_\upsilon = \# \bcal_{(c_1, \ldots, c_l)} - 1$. The inclusions $\bcal_{(c_1, \ldots, c_l)} \subset \bcal$ as ordered sets induce inclusions $I_{\upsilon}: \CP^{n_\upsilon} \hookrightarrow \CP^N$. By abuse of notation, we still denote by $\Phi_{\upsilon}$ the composition $I_{\upsilon} \circ \Phi_{\upsilon}$. We can now describe the image of $\Phi$.

For each $i$, we have a pair of divisors $M$ and $M_i$ on $\hat{A}_i$, yielding $l$ pairs of divisors $(p_i^{-1}(M), p_i^{-1}(M_i))$, $1 \leq i \leq l$. Picking one divisor from each pair gives $l$ divisors whose intersection is a copy of $M$. This can be seen by considering the fibers $\pi^q$, $q \in M$, where on each fiber the intersection is exactly one point. Therefore, for each $q \in M$, we obtain a set $\varSigma(q)$ consisting of $2^l$ points $\{ \beta_j(q) \}_{1 \leq j \leq 2^l }$. For each $i$, associate $0$ to $p_i^{-1}(M_i)$ and $1$ to $p_i^{-1}(M)$. Thus, each choice of $l$ divisors as above corresponds to a word of $l$ letters, each being $0$ or $1$. We order these choices lexicographically, and accordingly order the set $\varSigma(q)$. We may permute the indices of $\beta_j(q)$ so that $\beta_j(q)$ denotes the $j$-th element of $\varSigma(q)$ in this order.

We denote by $M_{sp} = \bigcap_{1 \leq i \leq l} p_i^{-1}(M_i)$ the distinguished (special) copy of $M$ in $X$. Let $\pi_\infty: X \to M_{sp}$ be the natural projection, so that $X$ is a $(\CP^1)^l$-bundle over $M_{sp}$. The complement $X \setminus \bigcup_{1 \leq i \leq l} p_i^{-1}(M_i)$ is the total space of a rank $l$ vector bundle $E$ over $M_{sp}$.

Fix a point $p \in M_{sp}$, and choose local frames $e_L$ for $L$, $e_F$ for $F$, and $e_i$ for $A_i$ ($1 \leq i \leq l$). We can also choose linear coordinates $(z_1, \ldots, z_l)$ on the fiber $(\pi_\infty)^{-1}(p) \cap E$ so that the identification is explicit.

A section $s \in H^0(M, kL + F - \sum_{i=1}^l c_i A_i)$ can be written locally as $f_c\, e_L^{\otimes k} \otimes e_F \otimes \bigotimes_{i,\, c_i=1} e_i^*$. When viewed as a section of $H^0(X, kL)$, its restriction to the fiber $(\pi_\infty)^{-1}(p) \cap E$ is
\[
s = f_c(p) \left( \prod_{i=1,\, c_i=1}^l a_i z_i \right) e_L^{\otimes k} \otimes e_F,
\]
where $a_i$ are the transition constants for the frames.

For each tuple $(c_1, \ldots, c_l)$, we can apply a unitary $(n_\upsilon+1) \times (n_\upsilon+1)$ matrix $\mathcal{A}_{(c_1, \ldots, c_l)} = (c_{ij})$ to the basis $\mathcal{B}_{(c_1, \ldots, c_l)} = \{ s_j \mid 1 \leq j \leq n_\upsilon+1 \}$ so that $s_1' = \sum_j c_{1j} s_j$ satisfies $s_1'(p) \neq 0$, while $s_i'(p) = 0$ for $i > 1$. After a unitary transformation of $\CP^N$, we may assume that $\mathcal{B}_{(c_1, \ldots, c_l)} = \{ s_i' \mid 1 \leq i \leq n_\upsilon+1 \}$.

By rearranging the order of the homogeneous coordinates, the restriction of $\Phi$ to $(\pi_\infty)^{-1}(p) \cap E$ can be written as
\[
\left[ \left( f_c(p) \prod_{i=1,\, c_i=1}^l a_i z_i \right)_{c},\, 0, \ldots, 0 \right],
\]
where the tuple $(f_c(p) \prod_{i=1,\, c_i=1}^l a_i z_i)_c$ is ordered according to the lexicographic order of $c$.

Define a linear map $H: \C^l \to (\pi_\infty)^{-1}(p) \cap E$ by $(w_1, \ldots, w_l) \mapsto (a_1^{-1} w_1, \ldots, a_l^{-1} w_l)$. Then the composition $\Phi \circ H$ is given by
\[
(w_1, \ldots, w_l) \mapsto \left[ \left( f_c(p) \prod_{i=1,\, c_i=1}^l w_i \right)_c,\, 0, \ldots, 0 \right].
\]
Therefore, the image $\Phi((\pi_\infty)^{-1}(p))$ is a parallelogram of type $\{ f_c(p) \}_c$ with vertices $\Phi(\varSigma(p))$.

By the inductive construction in the proof of formula~\ref{e-prop-x}, we have the following:

\begin{proposition}
Let $\upsilon = (c_1, \ldots, c_l)$ be the $j$-th element of $\Upsilon$. Then for each $q \in M$, we have
\[
\Phi_{\upsilon}(q) = \Phi(\beta_j(q)).
\]
\end{proposition}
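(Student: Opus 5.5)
We have to show that when $\upsilon=(c_1,\ldots,c_l)$ is the $j$-th element of $\Upsilon$, the embedding $\Phi_\upsilon$ of $M$ agrees with $\Phi$ on the copy of $M$ in $X$ cut out by the $l$ divisors indexed by the word $c_1\cdots c_l$. The approach is to track, through the inductive construction of the isomorphism \eqref{e-prop-x}, which summands vanish on which of these copies of $M$.

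\textbf{Step 1: identify the divisors.} With the conventions fixed above, letter $0$ in position $i$ corresponds to $p_i^{-1}(M_i)$ and letter $1$ to $p_i^{-1}(M)$. Hence $\beta_j(q)$ is the point of $\pi^{-1}(q)$ lying on $p_i^{-1}(M_i)$ when $c_i=0$ and on $p_i^{-1}(M)$ when $c_i=1$.

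\textbf{Step 2: vanishing of the summands.} Unwinding the induction, a section in the summand labelled $c'$ is built as follows. For each $i$ with $c'_i=0$ we tensor with the pulled-back section $s_{i,\infty}$; this vanishes exactly on $p_i^{-1}(M_i)$ and is nonzero on the zero section $p_i^{-1}(M)$. For each $i$ with $c'_i=1$ the section arises from $H^0(\cdot,\,\cdot-[X'])$, so it is $\otimes s'$ applied to a pullback. It therefore vanishes on $p_i^{-1}(M)$ and is nonvanishing near $p_i^{-1}(M_i)$; equivalently, in the fiber coordinate $z_i$ it is linear with no constant term.

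The local formula already derived shows this directly. On $(\pi_\infty)^{-1}(p)\cap E$ the summand $c'$ restricts to $f_{c'}(p)\prod_{c'_i=1}a_iz_i$. By symmetry, in the coordinates adapted to the other vertices we get the complementary monomial.

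\textbf{Step 3: evaluate at $\beta_j(q)$.} At the point $\beta_j(q)$, every summand with $c'\neq c$ vanishes. Indeed, there is an $i$ with $c'_i\neq c_i$. If $c_i=0$ and $c'_i=1$, the point lies on $p_i^{-1}(M_i)$; there the coordinate $z_i=\infty$ kills the factor $s_{i,\infty}$ coming from the complementary structure. If $c_i=1$ and $c'_i=0$, the point lies on $p_i^{-1}(M)$, and the factor $z_i$ vanishes.

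To make this rigorous I would use homogeneous fiber coordinates $[a_{i,1},a_{i,2}]$. The summand $c'$ is then $f_{c'}(q)\prod_i(\text{$a_{i,2}$ if }c'_i=1,\ \text{$a_{i,1}$ if }c'_i=0)$, up to the fixed constants $a_i$, which is exactly the Segre form above. The point $\beta_j(q)$ is the vertex where exactly the coordinates matching $c$ are nonzero.

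In the summand $c$ itself, the section is $\pi^*s\otimes(\text{nonvanishing factor at }\beta_j(q))$. Writing it in the lifted frame, its value at $\beta_j(q)$ equals $s(q)$ times one nonzero constant, and that constant is common to all $s\in\bcal_\upsilon$. Consequently the homogeneous coordinates of $\Phi(\beta_j(q))$ are $(0,\ldots,0,(s(q))_{s\in\bcal_\upsilon},0,\ldots,0)$, placed in the block given by $I_\upsilon$. This is exactly $I_\upsilon\circ\Phi_\upsilon(q)$.

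\textbf{Main obstacle.} The hard part is the bookkeeping: verifying that the explicit isomorphism assembled by induction, through successive choices of $s_\infty$ and extensions, really produces sections whose restriction to each vertex section $\{\beta_{j'}(q)\}$ is the product structure described above. In particular, the nonvanishing factor must be independent of $s$, and must not merely be nonzero generically. I would prove this by induction on $l$ alongside the proof of Proposition~\ref{prop-x-m}. The key observation is that the summand of \eqref{e-h0-x} coming from $H^0(X',\cdot)$ is $\bar\pi^*s\otimes s_\infty$, which restricts on $X'$ to $s$ times the nonvanishing function $s_\infty|_{X'}$ and vanishes on $X'_\infty$. The other summand vanishes on $X'$ and restricts on $X'_\infty\cong X'$ isomorphically. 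The inductive hypothesis then applies on $X'$ and $X'_\infty$ respectively.
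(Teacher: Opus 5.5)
There is a genuine gap, and it lies exactly in the bookkeeping you flag as the main obstacle.

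Your Step 2 reads the induction correctly. A section in the summand labelled $c'$ is $\pi^*\sigma$ tensored with two kinds of factors:
\begin{itemize}
\item $p_i^*s_{i,\infty}$ for each $i$ with $c'_i=0$, which vanishes on $p_i^{-1}(M_i)$;
\item the canonical section of $p_i^*[M]$ for each $i$ with $c'_i=1$, which vanishes on $p_i^{-1}(M)$.
\end{itemize}
Now combine this with the labelling of Step 1, which is the paper's ($0\leftrightarrow p_i^{-1}(M_i)$, $1\leftrightarrow p_i^{-1}(M)$). For every $i$, the vertex $\beta_j(q)$ lies on exactly the divisor where the $i$-th factor of the summand $c$ vanishes. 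So the summand $c$ vanishes at $\beta_j(q)$. The only summand that does not vanish there is the one labelled by the complementary word $\bar c=(1-c_1,\ldots,1-c_l)$.

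Step 3 reaches the opposite conclusion only by attaching the factors to the wrong summands. When $c_i=0$ and $c'_i=1$, the factor $s_{i,\infty}$ that is killed at $z_i=\infty$ belongs to the summand $c$. The summand $c'$ carries the factor $z_i$ (the section vanishing on $p_i^{-1}(M)$), which is nonzero on $p_i^{-1}(M_i)$. The case $c_i=1$, $c'_i=0$ is symmetric. For the same reason, your claim that the summand $c$ is $\pi^*s$ times a factor nonvanishing at $\beta_j(q)$ is false.

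The failure is visible already for $l=1$. On $\pi^{-1}(q)\cong\CP^1$ the summand $(0)$ consists of sections $\pi^*\sigma\otimes s_{1,\infty}$, which vanish at $\beta_1(q)\in M_1$. So $\Phi(\beta_1(q))$ lies in the block of $(1)$ and equals $\Phi_{(1)}(q)$, not $\Phi_{(0)}(q)$.

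Your homogeneous-coordinate paragraph does not rescue this, because you never fix which of $a_{i,1},a_{i,2}$ vanishes on $p_i^{-1}(M)$. Once $a_{i,1}$ is matched with $s_{i,\infty}$, as Step 2 forces, the vertex where only the $c$-th Segre coordinate survives is the one labelled $\bar c$.

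The paper's own proof is only the remark that the statement follows from the inductive construction. Its conventions are inconsistent at exactly this point. It describes $X\setminus\bigcup_ip_i^{-1}(M_i)$ as a vector bundle over $M_{sp}$. In fact it is the total space of $\bigoplus_iA_i$ over $\bigcap_ip_i^{-1}(M)$, and the $z_i$ in $f_c\prod_{c_i=1}a_iz_i$ are linear coordinates centred there (the frame $e_i^*$ of $A_i^{-1}$ is a fibrewise linear function on $A_i$). So, with the labelling fixed just before it, the statement as printed is off by complementation, and you should not try to prove it verbatim.

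You can repair this in either of two ways:
\begin{itemize}
\item swap the labelling, associating $0$ to $p_i^{-1}(M)$ and $1$ to $p_i^{-1}(M_i)$; or
\item prove $\Phi_\upsilon(q)=\Phi(\beta_{j'}(q))$, where $j'$ is the position of $\bar c$ in the lexicographic order.
\end{itemize}
With a consistent labelling, the induction you sketch at the end is the right argument and supplies the detail the paper omits. The summand $\bar\pi^*s\otimes s_\infty$ equals $s$ times a fixed nowhere-vanishing section on $X'$ and vanishes on $X'_\infty$. The other summand vanishes on $X'$ and restricts isomorphically to $X'_\infty$. The resulting nonvanishing factor at the surviving vertex is a fixed product of the $s_{i,\infty}$ and the canonical sections, hence common to all $s\in\bcal_\upsilon$.
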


\begin{proposition}
For each $q \in M$, let $\mathcal{N}_q$ be the net (parallelogram) spanned by $\{ \Phi(\beta_j(q)) \}_{1 \leq j \leq 2^l }$. Then the image $\Phi(X)$ is the union of all such $\mathcal{N}_q$ as $q$ varies over $M$.
\end{proposition}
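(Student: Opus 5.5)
The plan is to prove the two inclusions $\Phi(X)\supseteq\bigcup_{q}\mathcal{N}_q$ and $\Phi(X)\subseteq\bigcup_q\mathcal{N}_q$ fiber by fiber over $M_{sp}$. The basic fact is that $X=\bigsqcup_{q\in M}\pi_\infty^{-1}(q)$. So it suffices to show, for each $q$, that $\Phi(\pi_\infty^{-1}(q))=\mathcal{N}_q$. Here $\mathcal{N}_q$ is the parallelogram with vertices $\{\Phi(\beta_j(q))\}_{1\le j\le 2^l}$, of the type determined by the values of the distinguished basis elements at $q$. The identification of $M_{sp}$ with $M$ lets us view each $q\in M$ as a point of $M_{sp}$.

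First I will reuse the local computation made just before the two propositions. Fix $q$ and choose local frames $e_L,e_F,e_i$ near $q$. After the unitary change of basis $\mathcal{A}_{(c_1,\ldots,c_l)}$ in each block, only one basis element $s'_{1,c}$ per block is nonzero at $q$. The restriction of $\Phi$ to the affine part $\pi_\infty^{-1}(q)\cap E\cong\C^l$ is then
\[
(z_1,\ldots,z_l)\mapsto\Big[\big(f_c(q)\textstyle\prod_{c_i=1}a_iz_i\big)_c,0,\ldots,0\Big],
\]
with $f_c(q)\neq 0$. This is exactly the affine chart of a Segre embedding of type $\{f_c(q)\}_c$, composed with the linear rescaling $H$. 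Since $\pi_\infty^{-1}(q)\cong(\CP^1)^l$ is compact and $\Phi$ is continuous, the whole fiber maps onto the closure of the image of $\C^l$. That closure is the full Segre variety $\se_l((\CP^1)^l)$, placed in the span of the coordinate vectors $[e_{c}]$ after the unitary transformation.

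Next I will check that this Segre image is the parallelogram whose vertices are the points $\Phi(\beta_j(q))$. By the preceding proposition, $\Phi(\beta_j(q))=\Phi_\upsilon(q)$ with $\upsilon$ the $j$-th word. In the transformed coordinates, $\Phi_\upsilon(q)$ is the coordinate point $[e_{\upsilon}]$, because only $s'_{1,\upsilon}$ is nonzero at $q$. These are precisely the images of the $0/\infty$ tuples under the Segre map, ordered lexicographically. This matches the definition of a parallelogram with vertices $\{\Phi(\beta_j(q))\}$, where the unitary transformation $H$ is the inverse of the block-diagonal unitary change of basis. Linear independence of the vertices is automatic because they are distinct coordinate points.

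Finally I will take the union over $q\in M$ to get $\Phi(X)=\bigcup_q\mathcal{N}_q$. The main obstacle is well-definedness. The type $\{f_c(q)\}$ depends on the choice of frames and of the unitary matrices, so it is not canonical. I expect to handle this by observing that the parallelogram is determined intrinsically as $\Phi(\pi_\infty^{-1}(q))$. Any change of frames rescales $f_c(q)$ and the coordinates $z_i$ compatibly, which is absorbed by $H$. So the statement "the net spanned by the vertices" should be read as this specific Segre variety through those $2^l$ points, and that variety is independent of the choices.
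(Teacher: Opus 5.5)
Your proposal is correct and follows essentially the paper's argument. The paper likewise applies a unitary change of basis in each block so that every $\Phi_\upsilon(q)$ becomes a coordinate point, and then uses the explicit decomposition of Proposition~\ref{prop-x-m} (the fiberwise Segre computation preceding the statement) to see that $\pi^{-1}(q)$ is mapped onto the parallelogram with vertices $\Phi(\beta_j(q))$. Your closure argument from $\C^l$ to $(\CP^1)^l$, and your remark that the type $\{f_c(q)\}$ depends on choices while the image $\Phi(\pi^{-1}(q))$ does not, make explicit what the paper leaves implicit.
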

\begin{proof}
For a fixed $q \in M$, we can apply unitary transformations to each $\CP^{n_\upsilon}$ so that $\Phi_{\upsilon}(q) = [1, 0, \ldots, 0] \in \CP^{n_\upsilon}$. Composing with the inclusions $I_{\upsilon}$, each $\Phi(\beta_j(q))$ has homogeneous coordinates with a single $1$ and the rest zero. By the explicit decomposition in~\ref{e-prop-x}, each fiber $\pi^{-1}(q)$ is mapped to $\mathcal{N}_q$.
\end{proof}

\subsection{Proof of Theorem~\ref{thm-almost-Bergman-Embedding}}
\begin{theorem}[Theorem~\ref{thm-almost-Bergman-Embedding}]
    Let $\{t_u\}$ be a good sequence. Then the images of $\Phi'_{t_u,k+1}$ converge, as $u \to \infty$, to a subvariety $Y$ as described in Theorem~\ref{thm-main}.
\end{theorem}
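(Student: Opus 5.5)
We will prove the theorem by localizing the map $\Phi'_{t,k+1}$ region by region on $X_t$ and identifying the limit of each piece with a component of $Y$. By Theorem~\ref{thm-almost-orthon} and Theorem~\ref{thm-inner-sections-prop}, every $s_{t,\mathrm{rn}}\in\bcal_{I,t}$ has its mass concentrated in a neighbourhood of $X_{0,I}$ of the form $\{\tau_I>\tfrac32 M\}$, and outside this neighbourhood its pointwise norm is small relative to the sections of $\bcal_{J,t}$ with $J\subset I$. The pointwise bounds come from the $L^2$ bounds via the mean value inequality on balls of bounded $\omega_t$-geometry, which is available by Proposition~\ref{prop-ke-t} and the uniform $C^2$ bounds. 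Concretely, we decompose $X_t$ into three kinds of regions. The first is the non-collapsed region $\phi_{\beta,t}(F_\beta)$ of Theorem~\ref{thm-ruan}. The second consists of the collapsing regions $P_t(G_K\times (U\cap X_{0,I}))$ with $K\Subset B_I$. The third consists of the transition strips where some $\sigma_i$ stays bounded or where $x$ approaches a face of $B_I$.

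On the non-collapsed region, only $\bcal_{\{i\},t}$ survives after projective normalization, and $s_{t,\mathrm{rn}}\to s$ smoothly by Theorem~\ref{thm-ruan}. This gives convergence to $I_0\circ\Phi_{0,k+1}(X_{0,i})$, and hence to $Y_1$.

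On a collapsing chart over $X_{0,I}$ with $|I|=l+1$, we expand each relevant section as in Section~\ref{sec-4}. There the section is dominated by the monomials $a_\alpha z^\alpha$ with $\alpha$ ranging over the exponents contributed by sections of $\bcal_{J,t}$ for $J\subseteq I$. After normalizing by the largest coordinate, the map takes the form
\[
(z_1,\dots,z_l,z'')\mapsto\Big[\big(y_t^{-|J|(2k+1)/2}\,(\text{power of }\textstyle\prod\sigma_i)\,b_{J}(z'')\,z^{c_J}\big)_{J\subseteq I},\,0,\dots\Big].
\]
Writing $z_i=e^{w_i\log|t|}$ and rescaling the fibre coordinate as in the map $H$ of Subsection~\ref{subsec-pro-bundle}, we expect the limit over a point $q\in X_{0,I}$ to be exactly a parallelogram, that is, a Segre image of $(\CP^1)^{l}$. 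Its vertices are the points $\Phi_{J,k+1}(q)$ for the index sets $J$ with $I\setminus J$ a singleton, together with the appropriate lower strata, and its type is given by the leading coefficients. To globalize over $X_{0,I}$, we identify the leading coefficients with sections in the decomposition of Proposition~\ref{prop-x-m}. Here we take $M=X_{0,I}$, the bundles $A_i$ to be the normal bundles $N_{I,I\setminus\{i\}}=[X_{0,i}]|_{X_{0,I}}$, and the adjunction identifications $L|_{X_{0,I}}=K_I+A_I$. The propositions of Subsection~\ref{subsec-pro-bundle} then show that the union of these fibrewise parallelograms is the image of $\Theta_{\{A_i\}}$, which is a $(\CP^1)^{|I|-1}$-bundle. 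Each chamber of $B_I$, determined by which coordinate $z_j$ is used, i.e.\ by the charts $U^j(t)$, yields one component $Y_{I,j}$. Adjacent chambers share a face in which one $\CP^1$ factor degenerates to a point, and this gives the $(\CP^1)^{|I|-2}$-bundle structure of $Y_{I,j_1}\cap Y_{I,j_2}$.

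Finally, we must show that the transition strips and the neighbourhoods of $D_I$ contribute nothing new to the limit. Their images must converge into the union already described, namely into boundaries of the bundles, such as the faces where $x_j\to0$ or $x_0\to0$, which map to lower-stratum images. For this we use the estimates \ref{e-geq-sqrt-yt}, \ref{e-geq-4m} and Lemma~\ref{lem-concave}, which show that the cross terms are dominated there. Convergence of the images is then understood in the Hausdorff sense: every limit point lies in $Y$, and every point of $Y$ is such a limit.

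We expect the main obstacle to be the collapsing step. There one must control the relative sizes of the normalizing factors $y_t^{|J|(2k+1)/2}$ against $\prod\sigma_i^{k}$ uniformly across all sub-simplices of $B_I$, so that exactly the $2^l$ relevant coordinate blocks survive and the limiting coefficients are genuinely nonzero and holomorphic in $q$. Our first attempt at this will be an induction on $|I|$: first treat the case $D_I=\emptyset$, and then pass to faces of $B_I$ using the inductive descriptions for $J\subset I$.
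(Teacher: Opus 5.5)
Your skeleton matches the paper's: leading terms near each stratum, Segre images, globalization via Proposition~\ref{prop-x-m}, induction starting from $D_I=\emptyset$, and domination by $\bcal_{\{i\},t}$ away from $\Sing(X_0)$. Your assignment of the chambers $U^j(t)$ to the components $Y_{I,j}$ is also right. The gap is that you put the bubbles in the wrong part of each chamber. The region you expect to produce the parallelograms produces a single point, and the region you plan to discard is where the parallelograms actually live.

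\textbf{Where the bubbles live.} On $U^0(t)$, up to negligible terms and a common factor, the map is
\[
q\mapsto\Big[\,y_t^{\frac{|I|-|J|}{2}(2k+1)}\,b^{J,p}_{\beta(J)}(0)\prod_{j\in I\setminus J}z_j\,\Big]_{0\in J\subset I}.
\]
Powers of $\sigma_i$ are common to all sections in a fixed frame and drop out projectively. The only $J$-dependent weights are the normalizations in $s_{t,\mathrm{rn}}$, and these set the relevant scale.

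On $P_t(G_K\times(U\cap X_{0,I}))$ with $K\Subset B_I$ one has $|z_j|=|t|^{x_j}$, which is smaller than any power of $y_t$. So every coordinate except $J=I$ tends to $0$, and this whole region converges to $\Phi_{I,k+1}(X_{0,I})$, the vertex common to all the parallelograms. The Segre image appears only after the $t$-dependent linear rescaling $w_j=y_t^{\frac{2k+1}{2}}z_j$, i.e.\ on $\sigma_j=(2k+1)\log y_t+O(1)$. In Zhang's coordinates this is an $O(\log y_t/y_t)$-neighbourhood of the vertex $\{x_1=\cdots=x_l=0\}$ of $\bar B_I$.

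More generally, write $x_0=1-\sum_{j\ge1}x_j$. The open torus orbit of the face with vertices $\{\Phi_{J,k+1}(p)\}_{K\subset J\subset I}$ comes from points near the relative interior of the face $\{x_j=0,\ j\notin K\}$ of $\bar B_I$. So the strips near the faces of $B_I$, which you plan to dismiss as contributing only lower-stratum points, are exactly where the $(\CP^1)^{l}$ fibres are. Neither the substitution $z_i=e^{w_i\log|t|}$ nor the constant rescaling $H$ can see them. This is why the paper works on $W_t=\{|z_j|<y_t^{-k+3/2}\}$ in linear coordinates, uses the chambers $W^c_{t,i}$, and inducts on $l$ for the parts where some $\sigma_i\geq\frac{1}{l+1}y_t$.

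\textbf{Pointwise bounds.} The same misplacement breaks your route to pointwise estimates. In the bubble region the leading terms, and the Bergman kernel itself, are only of size about $\big(y_t^{-(2k+1)}(\log y_t)^{2k+2}\big)^{l}$; this is the source of the bound on $\lambda_l$ in Theorem~\ref{thm-2}. You therefore need a relative bound on the H\"ormander corrections, not an absolute one.

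A mean value inequality on unit $\omega_t$-balls, lifted to the local cover, turns $\int\|y_t^{-\frac{l'}{2}(2k+1)}v_t\|^2_{\mathrm{KE}}\,\omega_t^n=O(y_t^{-2k+1})$ into at best $\|\cdot\|^2\lesssim\prod_j\sigma_j\,y_t^{-2k+1}$. At $\sigma_j\sim\log y_t$ this exceeds the leading terms by a positive power of $y_t$. The paper instead expands $v_t$ in monomials on all of $U$ and applies Cauchy--Schwarz with the weights $H_\lambda$ (notably $H_0\sim y_t^{2k+1}$, together with Proposition~\ref{prop-delta-2}). Combined with $d_{p,t}\geq\epsilon_3\prod_j(y_t^{-(2k+1)}+|z_j|^2)$, this gives $|f^{v_t}|^2\leq c\,y_t^{-2k+1}d_{p,t}$.

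\textbf{Peak sections.} You also need the paper's peak sections $s^{J,p}$. These have $b^{J,p}_{\beta(J)}(p)\neq0$, while $b_{\beta(J)}(p)=0$ for the other elements of $\bcal'_{J,t,1}$. Without them you do not get a non-degenerate Segre image whose $2^l$ vertices are exactly $\{\Phi_{J,k+1}(p)\}_{i\in J\subset I}$.
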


Let $I \subset \{0, \ldots, m\}$ with $X_{0,I} \neq \emptyset$ and $|I| - 1 = l \geq 1$. Fix $p \in X_{0,I}^o$ and let $(U, (z_0, \ldots, z_n))$ be an appropriate coordinate patch centered at $p$. Without loss of generality, assume $I = \{0, \ldots, l\}$. Let $R_i$ denote the radius of the $i$-th coordinate variable. Choose $M_1 > 3M$ and shrink $U$ so that $-\log R_i^2 = M_1$ for $i \leq l$. Redefine
\[
U^j(t) = \left\{ z \in U \cap X_t \;\middle|\; \sum_{i=0,\, i \neq j}^{l} \sigma_i < \frac{l}{l+1} y_t,\; \sigma_i > M_1 \text{ for } i \leq l \right\},
\]
where $\sigma_i = -\log |z_i|^2$ and $y_t = |\log |t|^2|$.

We have
\begin{align*}
    & (\sqrt{-1})^{n} \int_{U^0(t)} e^{k\phi_t} |f|^2 \left( \prod_{i=1}^{l} \sigma_i^2 \right)^{k} \frac{dz_1 \wedge d\bar{z}_1 \wedge \cdots dz_n \wedge d\bar{z}_n}{\prod_{i=1}^{l} |z_i|^2} \\
    &= (\sqrt{-1})^{n} \int \left( \int e^{k\phi_t} |f|^2 \left( \prod_{i=1}^{l} \sigma_i^2 \right)^{k} \frac{dz_1 \wedge d\bar{z}_1 \wedge \cdots dz_l \wedge d\bar{z}_l}{\prod_{i=1}^{l} |z_i|^2} \right) \bigwedge_{j>l} dz_j \wedge d\bar{z}_j.
\end{align*}

\subsubsection{Case: $D_I$ is empty}\label{subsubsec-6.1}

For any non-empty subset $J \subset I$, for $k$ sufficiently large, the restriction map
\[
\hcal_{J,k+1} \to H^0(D_J, (k+1)L - \sum_{j \notin J} [X_{0,j}])
\]
given by $s \mapsto \frac{s}{S_{\hat{J}}}$ is surjective. Thus, we may apply a unitary transformation $\mathcal{A}$ to the basis of $\hcal_{J,k+1,1}$ (corresponding to $\bcal_{J,t,1}$) so that, in the new basis, all but one section vanish at $p$ when restricted to $D_J$, i.e., $\frac{s}{S_{\hat{J}}}(p) = 0$. Denote the unique section not vanishing at $p$ by $s^{J,p}$. Let $f^{J,p}$ be the local holomorphic function on $U$ representing $\tilde{s}^{J,p}$.

\textbf{Terminology:} We refer to $s^{J,p}$ as the \emph{chosen peak section at $p$ of degree $J$}.

Then as before, we can expand
\[
f^{J,p} = \sum_{\beta} b_{\beta}^{J,p} z^\beta,
\]
where the sum is over all multi-indices $\beta \geq 0$ with $\beta_j \geq 1$ for $j \notin J$.
The condition on $s^{J,p}$ ensures that
\[
b^{J,p}_{\beta(J)} \neq 0,
\]
where $\beta(J)$ is the multi-index with $\beta_j = 0$ for $j \in J$ and $\beta_j = 1$ for $j \in I \setminus J$.

On $U \cap X_t$, using $(z_1, \ldots, z_n)$ as holomorphic coordinates, we have
\[
f^{J,p} = \sum_{\alpha} a^{J,p}_{\alpha}(z_{l+1}, \ldots, z_n) \prod_{i=1}^{l} z_i^{\alpha_i},
\]
where
\[
a^{J,p}_\alpha = \sum_{\beta \in A_{\alpha}} b^{J,p}_\beta t^{\beta_0}.
\]

Recall that
\[
\sum_{\substack{\beta \in A_\alpha \\ \beta_0 > 0}} \int_{|z_j| < R_j} |b^{J,p}_\beta|^2 c_\beta \nu_l < \frac{C_1}{k^{l+1} \epsilon_1 (2\pi)^{l+1}},
\]
where $C_1$ depends only on the $L^2$-norm of $\frac{s^{J,p}}{S_{\hat{J}}}$ on $X_{0,J}^o$, and
\[
\left| \sum_{\substack{\beta \in A_\alpha \\ \beta_0 > 0}} b^{J,p}_\beta t^{\beta_0} \right|^2 < C_2 |t| \sum_{\substack{\beta \in A_\alpha \\ \beta_0 > 0}} |b^{J,p}_\beta|^2 c_\beta,
\]
for some constant $C_2 > 0$ independent of $\alpha$ and $U$, provided $|t|$ is sufficiently small.

Since $|b^{J,p}_\beta(p)|^2 (2\pi)^{n-l} \prod_{j=l+1}^{n} R_j^2 \leq \int_{|z_i| < R_i} |b^{J,p}_\beta|^2 \vcal_l$, we obtain
\[
\left| \sum_{\substack{\beta \in A_\alpha \\ \beta_0 > 0}} b^{J,p}_\beta(p) t^{\beta_0} \right|^2 < \frac{C_1 C_2 |t|}{k^{l+1} \epsilon_1 (2\pi)^{n+1} \prod_{j=l+1}^{n} R_j^2}.
\]
Therefore,
\[
|a^{J,p}_\alpha(0) - b^{J,p}_{\overline{0\alpha}}(0)| < C_{12} |t|,
\]
for some constant $C_{12} > 0$ independent of $\alpha$ and $t$.

Let $\bcal_{J,t,1}' = \mathcal{A} \bcal_{J,t,1}$ denote the transformed basis. For each $i \in I$, we obtain a set of $2^l$ sections:
\[
\dcal_{t,p,i} = \left\{ s^{J,p}_{t,\mathrm{rn}} \mid i \in J \subset I \right\}.
\]
We denote $z_{\hat{J}} = \prod_{j \in I \setminus J} z_j$. Then, on $U^0(t)$, letting $\epsilon_3 = \min_{0 \in J \subset I} |b^{J,p}_{\beta(J)}(0)|^2$, we have
\begin{equation}\label{e-dpt}
    \sum_{0 \in J \subset I} y_t^{-(|J|-1)(2k+1)} |b^{J,p}_{\beta(J)}(0)|^2 |z_{\hat{J}}|^2 \geq \epsilon_3 \prod_{j=1}^{l} \left( y_t^{-(2k+1)} + |z_j|^2 \right).
\end{equation}
For later use, we set
\[
d_{p,t} = \sum_{0 \in J \subset I} y_t^{-(|J|-1)(2k+1)} |b^{J,p}_{\beta(J)}(0)|^2 |z_{\hat{J}}|^2.
\]

Recall that in the construction of $s_{t,\mathrm{rn}}$ for $X_{0,J}$ with $|J| = l'+1$, we needed to solve the $\bar{\partial}$-equation
\[
\bar{\partial} v = \bar{\partial} \varpi(\tau) \otimes s_t,
\]
and obtain a minimal solution $v_t$ satisfying
\[
\int_{X_t} \| v_t \|^2_{\mathrm{KE}} \, \omega_t^n < c_5 y_t^{(l'-1)(2k+1)+2},
\]
for some constant $c_5$ independent of $t$. Since $\bar{\partial} \varpi(\tau) = 0$ on $U$, $v_t$ is holomorphic on $U$.

Let $f^{v_t}$ denote the local holomorphic function on $U$ representing $y_t^{-\frac{l'}{2}(2k+1)} v_t$. We can expand
\[
f^{v_t} = \sum_{\beta} b_{\beta}^{v_t} z^\beta,
\]
and on $X_t \cap U$ with coordinates $(z_1, \ldots, z_n)$,
\[
f^{v_t} = \sum_{\alpha} a^{v_t}_{\alpha}(z_{l'+1}, \ldots, z_n) \prod_{i=1}^{l} z_i^{\alpha_i}.
\]
Since
\[
\int_{X_t} \| y_t^{-\frac{l'}{2}(2k+1)} v_t \|^2_{\mathrm{KE}} \, \omega_t^n < c_5 y_t^{-2k+1},
\]
we have
\[
\sum_{\alpha \geq 0} \left[ \int_{|z_i| < R_i} |a^{v_t}_\alpha|^2 \bigwedge_{j \geq l+1} (dz_j \wedge d\bar{z}_j) \right] \left[ \prod_{j=1}^{l} \int_{M_1}^{\frac{1}{l+1} y_t} |z_j|^{2\alpha_j} \sigma_j^{2k} d\sigma_j \right] < C_{13} y_t^{-2k+1}.
\]

Let $H_\lambda = \int_{M_1}^{\frac{1}{l+1} y_t} e^{-\lambda x} x^{2k} dx$ and $H_\alpha = \prod_{j=1}^{l} H_{\alpha_j}$. Then
\[
\sum |a^{v_t}_\alpha(0)|^2 H_\alpha \leq \frac{C_{13}}{(2\pi)^l \prod_{j=l+1}^{n} R_j^2} y_t^{-2k+1} = c y_t^{-2k+1}.
\]

Therefore,
\begin{align*}
    \left| \sum_{\alpha \geq 0} a^{v_t}_\alpha(0) z^\alpha \right|^2
    &\leq \left( \sum |a^{v_t}_\alpha(0)|^2 H_\alpha \right) \left( \sum H_\alpha^{-1} |z^\alpha|^2 \right) \\
    &\leq c' y_t^{-2k+1} \prod_{j=1}^{l} \left( \sum_{\lambda=0}^\infty H_\lambda^{-1} |z_j|^{2\lambda} \right).
\end{align*}

We have $H_0 = \frac{(\frac{1}{l+1} y_t)^{2k+1} - M_1^{2k+1}}{2k+1}$, and for $\lambda \geq 1$,
\[
H_\lambda = \lambda^{-2k-1} \int_{\lambda M_1}^{\lambda \frac{1}{l+1} y_t} e^{-x} x^{2k} dx.
\]

Let $g(\lambda) = \log \int_{\lambda M_1}^{\lambda \frac{1}{l+1} y_t} e^{-x} x^{2k} dx$. Then
\[
g'(\lambda) = \frac{ -M_1 e^{-\lambda M_1} (\lambda M_1)^{2k} + \frac{1}{l+1} y_t e^{-\lambda \frac{1}{l+1} y_t} (\lambda \frac{1}{l+1} y_t)^{2k} }{ \int_{\lambda M_1}^{\lambda \frac{1}{l+1} y_t} e^{-x} x^{2k} dx }.
\]
For $|t|$ sufficiently small, $g'(\lambda) < 0$, and
\begin{align*}
    |g'(\lambda)| &< M_1 \frac{ e^{-\lambda M_1} (\lambda M_1)^{2k} }{ \int_{\lambda M_1}^{\lambda \frac{1}{l+1} y_t} e^{-x} x^{2k} dx } \\
    &< M_1 \frac{ e^{-\lambda M_1} (\lambda M_1)^{2k} }{ e^{-(\lambda M_1 + 1)} (\lambda M_1 + 1)^{2k} } \\
    &< e M_1.
\end{align*}

Therefore, we have the following proposition.

\begin{proposition}\label{prop-delta-2}
    There exists $\delta_2 > 0$ such that, when $|z_j| < \delta_2$,
    \[
    \sum_{\lambda=1}^{\infty} |z_j|^{2\lambda} H_\lambda^{-1} < 2 |z_j|^2 H_1^{-1}.
    \]
\end{proposition}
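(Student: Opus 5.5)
The plan is to compare each term $|z_j|^{2\lambda}H_\lambda^{-1}$ for $\lambda\ge 2$ with the first term $|z_j|^2H_1^{-1}$ through the ratio $H_{\lambda-1}/H_\lambda$, and then dominate the tail by a geometric series. From the representation $H_\lambda=\lambda^{-2k-1}e^{g(\lambda)}$ we get
\[
\log\frac{H_{\lambda-1}}{H_\lambda}=(2k+1)\log\frac{\lambda}{\lambda-1}+g(\lambda-1)-g(\lambda).
\]
The bound $|g'|<eM_1$, valid for all $\lambda\ge 1$ once $|t|$ is small, gives $g(\lambda-1)-g(\lambda)\le eM_1$ by the mean value theorem. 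Since $\lambda/(\lambda-1)\le 2$ for $\lambda\ge 2$, we conclude
\[
H_\lambda^{-1}\le 2^{2k+1}e^{eM_1}H_{\lambda-1}^{-1}=:A\,H_{\lambda-1}^{-1},
\]
where $A$ depends only on $k$ and $M_1$ and not on $t$ or $\lambda$.

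Iterating this gives $H_\lambda^{-1}\le A^{\lambda-1}H_1^{-1}$. Hence
\[
\sum_{\lambda\ge1}|z_j|^{2\lambda}H_\lambda^{-1}\le |z_j|^2H_1^{-1}\sum_{\mu\ge0}(A|z_j|^2)^{\mu}=\frac{|z_j|^2H_1^{-1}}{1-A|z_j|^2}.
\]
Choosing $\delta_2$ with $A\delta_2^2\le \tfrac12$ makes the right side at most $2|z_j|^2H_1^{-1}$ whenever $|z_j|<\delta_2$, which is the claim. Because $A$ is independent of $t$, the constant $\delta_2$ is uniform for all sufficiently small $|t|$ in the good sequence.

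The one point to check carefully is that the estimate $|g'(\lambda)|<eM_1$, and the sign $g'<0$, hold uniformly in $\lambda\ge1$ for one fixed small $|t|$. Two things need verifying. First, the positive term $\tfrac1{l+1}y_te^{-\lambda y_t/(l+1)}(\lambda y_t/(l+1))^{2k}$ must be dominated by $M_1e^{-\lambda M_1}(\lambda M_1)^{2k}$ uniformly in $\lambda$. This holds since $x\mapsto xe^{-\lambda x}(\lambda x)^{2k}$ decays, for $x\gg 2k$, faster as $\lambda$ grows. Second, the lower bound of the denominator by the integral over $[\lambda M_1,\lambda M_1+1]$ must be justified; this needs $\lambda y_t/(l+1)>\lambda M_1+1$, which holds for small $|t|$. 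If any uniformity issue arises, I would instead bound $\log(H_{\lambda-1}/H_\lambda)$ directly by the crude estimate $H_\lambda\ge\int_{M_1}^{M_1+1}e^{-\lambda x}x^{2k}dx\ge e^{-\lambda(M_1+1)}M_1^{2k}$ together with $H_{\lambda-1}\le (2k)!\,(\lambda-1)^{-2k-1}e^{-(\lambda-1)M_1}\cdot C$. This still gives a ratio bounded by $C'^{\lambda}$, which suffices for the geometric series argument.
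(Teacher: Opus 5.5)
Your proposal is correct and follows the paper's route. The paper proves exactly the uniform bound $0<-g'(\lambda)<eM_1$ for $\lambda\geq 1$, using the same lower bound of the denominator on $[\lambda M_1,\lambda M_1+1]$, and then simply writes ``therefore''; your consecutive-ratio estimate $H_\lambda^{-1}\le 2^{2k+1}e^{eM_1}H_{\lambda-1}^{-1}$ followed by the geometric series is a natural way to fill in that step.

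Your fallback, however, is wrong as stated. A consecutive-ratio bound of size $(C')^{\lambda}$ would iterate to roughly $(C')^{\lambda^2/2}$, and then the series would not converge. What works instead is to use the crude bound directly: $H_\lambda\ge e^{-\lambda(M_1+1)}M_1^{2k}$ together with $H_1\le(2k)!$ gives $H_\lambda^{-1}\le (2k)!\,M_1^{-2k}e^{\lambda(M_1+1)}H_1^{-1}$, which suffices.
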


We denote by $\mathfrak{N}_p$ the vertical polydisc at $p$, i.e.,
\[
\mathfrak{N}_p = \{ q \in U \mid z_j(q) = z_j(p),\ j > l \},
\]
and define
\[
W_t = \left\{ q \in \mathfrak{N}_p \cap X_t \mid |z_j(q)| < y_t^{-k+3/2},\ 0 \leq j \leq l \right\}.
\]
On $U^0(t) \cap W_t$, we have
\[
\left| \sum_{\alpha \geq 0} a^{v_t}_\alpha(0) z^\alpha \right|^2 \leq c' y_t^{-2k+1} \prod_{j=1}^l \left( H_0^{-1} + 2 H_1^{-1} |z_j|^2 \right).
\]
Since for $|t|$ sufficiently small, $H_1$ is close to $\int_{M_1}^{\infty} e^{-x} x^{2k} dx$, there exists $\epsilon_4 > 0$ such that
\[
\prod_{j=1}^l \left( H_0^{-1} + 2 H_1^{-1} |z_j|^2 \right) < \epsilon_4 \prod_{j=1}^l \left( y_t^{-(2k+1)} + |z_j|^2 \right).
\]
Therefore,
\begin{equation}\label{e-a-vt-u0}
    \left| \sum_{\alpha \geq 0} a^{v_t}_\alpha(0) z^\alpha \right|^2 \leq c' y_t^{-2k+1} \frac{\epsilon_4}{\epsilon_3} d_{p,t},
\end{equation}
on $U^0(t) \cap W_t$.

Next, we consider terms with some $\alpha_i < 0$. The idea is similar to Case 3 in subsection~\ref{subsec-case-0}. Let
\[
\Gamma_i = \left\{ \alpha \geq 0 \mid \alpha_i > 0,\ \alpha_j \leq \alpha_i\ \text{for}\ j \leq l \right\}.
\]
On $U_i(t)$, using coordinates $(z_0, \ldots, z_{i-1}, z_{i+1}, \ldots, z_n)$, the terms in the expansion with negative degrees arise from $a_\alpha z^\alpha$ for $\alpha \in \Gamma_i$.

On $U_i(t)$ with $\alpha \in \Gamma_i$, we have
\[
H_\alpha^{-1} |z^\alpha|^2 < H_{\alpha_i}^{-1} e^{-\frac{\alpha_i}{l+1} y_t} \prod_{\substack{j=1 \\ j \neq i}}^l H_{\alpha_j}^{-1} |z_j^{\alpha_j}|^2.
\]
Therefore,
\begin{align*}
    \left| \sum_{\alpha \in \Gamma_i} a^{v_t}_\alpha(0) z^\alpha \right|^2
    &\leq c' y_t^{-2k+1} \left( \sum_{\alpha \in \Gamma_i} H_\alpha^{-1} |z^\alpha|^2 \right) \\
    &\leq c' y_t^{-2k+1} e^{-\frac{1}{l+2} y_t} \prod_{\substack{j=1 \\ j \neq i}}^l \left( \sum_{\lambda=0}^\infty H_\lambda^{-1} |z_j^\lambda|^2 \right) \\
    &\leq c' y_t^{-2k+1} e^{-\frac{1}{l+3} y_t} \prod_{j=0,\, j \neq i}^l \left( \sum_{\lambda=0}^\infty H_\lambda^{-1} |z_j^\lambda|^2 \right),
\end{align*}
for $|t|$ sufficiently small. Thus, by symmetry between $U^i(t)$ and $U^0(t)$, we obtain
\begin{equation*}
    \left| \sum_{\alpha,\ \alpha_j < 0\ \text{for some}\ j} a^{v_t}_\alpha(0) z^\alpha \right|^2 \leq c_6 e^{-\frac{1}{l+3} y_t} d_{p,t},
\end{equation*}
for some $c_6 > 0$.

Combining the above, we have
\begin{equation}\label{e-a-vt-0}
    \left| \sum_{\alpha} a^{v_t}_\alpha(0) z^\alpha \right|^2 \leq c_7 y_t^{-2k+1} d_{p,t},
\end{equation}
on $W_t \cap U^0(t)$ for some $c_7 > 0$ independent of $t$. In other words, on $W_t \cap U^0(t)$, the contribution from $v_t$ can be neglected.

Up to this point, we have not fully utilized the definition of $W_t$; specifically, we have only required $|z_j| < \delta_2$ for $j \in J$. Now, by combining the previous arguments with the additional condition $|z_j| < y_t^{-k+3/2}$ for $j \in J$, we see that, for the sum $\sum_{\beta \neq \beta(J)} b_{\beta}^{J,p}(0) z^\beta$, its contribution can be neglected on $W_t \cap U^0(t)$. Thus, to understand the image of $W_t \cap U^0(t)$ under $\Phi_{t,k+1}$, for the sections $s^{J,p}_{t,\mathrm{rn}}$, it suffices to consider only the leading term, i.e., we may represent $s^{J,p}_{t,\mathrm{rn}}$ by
\[
y_t^{-\frac{|J|-1}{2}(2k+1)} b_{\beta(J)}^{J,p}(0) z^{\beta(J)}.
\]
For the other sections $s_{t,\mathrm{rn}} \in \bcal_{J,t,1}'$, a similar argument shows that only the term $b_{\beta(J)}(0) z^{\beta(J)}$ matters, but since $b_{\beta(J)}(0) = 0$ for these sections, their contribution can also be ignored on $W_t \cap U^0(t)$. The same reasoning applies to all sections in $\bcal_{J,t,2}$ for $J \subset I$, and to all sections in $\bcal_{J,t}$ for $J \nsubseteq I$, since their $L^2$-norms on $U$ are $O(y_t^{-2k+1})$ and thus negligible.

In summary, to describe the image of $W_t \cap U^0(t)$ under $\Phi_{t,k+1}$, it suffices to consider the map to $\CP^{2^l-1}$ defined by
\[
\left\{ y_t^{-\frac{|J|-1}{2}(2k+1)} b_{\beta(J)}^{J,p}(0) z^{\beta(J)} \right\}_{0 \in J \subset I},
\]
or, equivalently,
\[
\left\{ y_t^{\frac{|I|-|J|}{2}(2k+1)} b_{\beta(J)}^{J,p}(0) z^{\beta(J)} \right\}_{0 \in J \subset I}.
\]
We now define $W_{t,0}^c \subset W_t \cap U^0(t)$ as the subset where
\[
(2k-3)\log y_t < \sigma_i < \frac{1}{l+1} y_t, \quad 1 \leq i \leq l,
\]
and similarly define $W_{t,i}^c$ for other $i$.

\begin{figure}[h]
    \caption{An illustration of the position of $W_{t,i}^c$ in $W_t$}
    \centering
    \begin{tikzpicture}
        \draw[connect] (0,0)--(9,0) (9,0)--(4.5,7.794) (4.5,7.794)--(0,0);
        \draw[connect] (3,0)--(6,5.196) (6,0)--(3,5.196) (1.5,2.598 )--(7.5,2.598);
        \node[label] at (4.5,5.196) {$W_{t,0}^c$};
        \node[label] at (2.25,1.299) {$W_{t,1}^c$};
        \node[label] at (6.75,1.299) {$W_{t,2}^c$};
    \end{tikzpicture}
\end{figure}
We can perform a change of variables on $W_{t,0}^c$ by setting $w_i = y_t^{\frac{1}{2}(2k+1)} z_i$ for $1 \leq i \leq l$. Then each $w_i$ ranges over
\[
A_t = \left\{ x \in \C \;\middle|\; y_t^{\frac{1}{2}(2k+1)} e^{-\frac{1}{2(l+1)} y_t} < |x| < y_t^2 \right\}.
\]
This yields a map $\phi_{p,t}: A_t^l \to \CP^{2^l-1}$ defined by
\[
(w_1, \ldots, w_l) \mapsto \left[ b_{\beta(J)}^{J,p}(0) w^{\beta(J)} \right]_{0 \in J \subset I},
\]
where the homogeneous coordinates are ordered according to the reverse order of the $J$'s as specified before Theorem~\ref{thm-almost-orthon}. It is clear that as $|t| \to 0$, the map $\phi_{p,t}$ converges to a Segre embedding $\Psi_p$ of type $\{ b_{\beta(J)}^{J,p}(0) \}_{0 \in J \subset I}$. The vertices of the image of $\phi_{p,t}$ are the points $\gamma_i$. Thus, we have the following proposition.

\begin{proposition}\label{prop-wc-0}
    As $u \to \infty$, the images $\Phi_{t_u,k+1}(W_{t_u,0}^c)$ converge to the parallelogram of type $\{ b_{\beta(J)}^{J,p}(0) \}_{0 \in J \subset I}$ with vertices $\{ \Phi_{J,k+1}(p) \}_{0 \in J \subset I}$.
\end{proposition}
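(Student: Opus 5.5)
We reduce the statement to a limit of explicit finite-dimensional maps, using the reductions carried out just before it. On $W_{t,0}^c$ we work in a unitary frame of $\C^{N_{k+1}}$ adapted to $p$: for each $J\subset I$ with $0\in J$, the chosen peak section $s^{J,p}$ comes first in $\bcal'_{J,t,1}$, and all remaining basis elements come after. In homogeneous coordinates, $\Phi'_{t,k+1}(q)$ for $q\in W_{t,0}^c$ is the vector whose $s^{J,p}$-entries equal
\[
y_t^{-\frac{|J|-1}{2}(2k+1)}\Bigl(b^{J,p}_{\beta(J)}(0)z^{\beta(J)}+E_J(q)\Bigr).
\]
All other entries are collected into a vector $E'(q)$. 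By \eqref{e-a-vt-0}, the $v_t$-part of every entry is bounded by $c_7y_t^{-2k+1}d_{p,t}$.

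The first step is to show that every other contribution is bounded by $o(1)\,d_{p,t}$ uniformly on $W_{t,0}^c$. This covers the nonleading terms $\sum_{\beta\neq\beta(J)}b^{J,p}_\beta z^\beta$, the other elements of $\bcal'_{J,t,1}$ (whose $\beta(J)$-coefficient vanishes at $p$), $\bcal_{J,t,2}$, and all $\bcal_{J,t}$ with $J\not\subset I$. For the nonleading terms we use $|z_j|<y_t^{-k+3/2}$ on $W_t$ together with the Cauchy–Schwarz bounds on $\sum|b_\beta|^2c_\beta$ inherited from Finski's extension estimate \eqref{e-c1-x0I}. Each extra factor $z_j$ costs at least $y_t^{-k+3/2}$, which is small relative to the corresponding summand in the lower bound \eqref{e-dpt}. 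The terms $b_{\beta(J)}(z'')$ with $z''\neq0$ do not arise, since we restrict to the vertical polydisc $\mathfrak N_p$. The $t^{\beta_0}$ corrections are handled by $|a_\alpha(0)-b_{\overline{0\alpha}}(0)|<C_{12}|t|$.

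By the lower bound \eqref{e-dpt}, $d_{p,t}$ dominates the squared norm of the leading vector, up to the constant $\epsilon_3$. Hence, after normalising by $d_{p,t}^{1/2}$, the Fubini–Study distance between $\Phi'_{t,k+1}(q)$ and the point $[(b^{J,p}_{\beta(J)}(0)z^{\beta(J)}y_t^{-\frac{|J|-1}{2}(2k+1)})_J,0]$ tends to $0$ uniformly on $W_{t,0}^c$. This is the main obstacle. It requires the bound to be relative to $d_{p,t}$ rather than absolute, because near the vertex $z=0$ all leading entries except the $J=I$ entry are tiny. The $J=I$ entry has size $y_t^{-l(2k+1)/2}$, and this is exactly the $y_t^{-(2k+1)}$ floor in each factor of \eqref{e-dpt}. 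So every error term must be compared with $\prod_j(y_t^{-(2k+1)}+|z_j|^2)$. For the $v_t$ terms this is done in \eqref{e-a-vt-u0}. For the nonleading $b$-terms and the negative-$\alpha$ terms we repeat that argument, using Proposition~\ref{prop-delta-2} and the $e^{-y_t/(l+3)}$ gain on $U^i(t)$.

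Next, we multiply all homogeneous coordinates by $y_t^{\frac{l}{2}(2k+1)}$ and substitute $w_i=y_t^{\frac12(2k+1)}z_i$. The leading map becomes $\phi_{p,t}(w)=[b^{J,p}_{\beta(J)}(0)w^{\beta(J)}]_J$ on $A_t^l$, which is independent of $t$ apart from its domain. As $u\to\infty$, the annuli $A_t$ exhaust $\C^*$: the inner radius is $y_t^{(2k+1)/2}e^{-y_t/(2(l+1))}\to0$ and the outer radius $y_t^2\to\infty$. Therefore the closure of $\phi_{p,t}(A_t^l)$ converges in the Hausdorff sense to $\Psi_p((\CP^1)^l)$, the Segre image of type $\{b^{J,p}_{\beta(J)}(0)\}$. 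The missing coordinate hyperplanes and their limit points are recovered because Hausdorff limits of images of $(\C^*)^l$ contain the closure. The reverse inclusion holds since every limit point lies in the closed Segre variety.

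Finally, we identify the vertices. The vertex $\gamma_J$ corresponds to the coordinate direction of $s^{J,p}$. In the adapted frame this direction is exactly $\Phi_{J,k+1}(p)$, because all other elements of $\bcal'_{J,t,1}$ vanish at $p$ after division by $S_{\hat J}$, and $\Phi_{J,k+1}$ is defined by $\bcal_{J,0}$ through the inclusion $\iota_J$. Undoing the unitary change of frame gives the parallelogram with vertices $\{\Phi_{J,k+1}(p)\}_{0\in J\subset I}$, which is the claim of Proposition~\ref{prop-wc-0}. Since Hausdorff convergence and the perturbation estimate above are both invariant under a fixed unitary transformation, this completes the argument.
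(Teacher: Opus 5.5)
Your proposal is correct and follows the paper's own route: you discard the $v_t$-parts, the nonleading terms and the non-peak sections relative to $d_{p,t}$ via \eqref{e-dpt}, rescale by $w_i=y_t^{(2k+1)/2}z_i$ so that the leading map becomes a fixed Segre map on $A_t^l$, and use that $A_t$ exhausts $\C^*$. Your Hausdorff-convergence and vertex-identification steps spell out what the paper calls clear. One item is missing from your list of negligible contributions: the peak sections $s^{J,p}$ with $0\notin J$. They are dispatched at once, since $\sigma_0>\frac{1}{l+1}y_t$ on $U^0(t)$ and their leading term carries a factor $z_0$. This makes them smaller than the $J\cup\{0\}$ summand of $d_{p,t}$ by a factor $O\bigl(y_t^{2k+1}e^{-y_t/(l+1)}\bigr)$.
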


Moreover, if we change the upper bound in the definition of $W_t$ to $c y_t^{-k+3/2}$ for any fixed $c > 0$, the conclusion remains unchanged. We can therefore state the following proposition.

\begin{proposition}
    As $u \to \infty$, the images $\Phi_{t_u,k+1}(W_t \cap U^0(t))$ converge to the parallelogram of type $\{ b_{\beta(J)}^{J,p}(0) \}_{0 \in J \subset I}$ with vertices $\{ \Phi_{J,k+1}(p) \}_{0 \in J \subset I}$.
\end{proposition}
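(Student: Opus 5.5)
The plan is to deduce this statement from Proposition~\ref{prop-wc-0} by showing that the part of $W_t\cap U^0(t)$ not covered by $W_{t,0}^c$ is mapped, up to an error tending to zero, into the boundary strata of the same parallelogram. On $W_t\cap U^0(t)$ the estimates \eqref{e-a-vt-0} and the discussion after it still apply: all sections other than the chosen peak sections $s^{J,p}_{t,\mathrm{rn}}$, $0\in J\subset I$, and all non-leading terms of the peak sections, are bounded by $c\,y_t^{-2k+1}d_{p,t}$. Since $d_{p,t}$ is, by \eqref{e-dpt}, comparable to the squared norm of the leading vector, the map $\Phi_{t,k+1}$ on $W_t\cap U^0(t)$ is uniformly within $O(y_t^{-k+1/2})$, in the Fubini--Study distance, of the map
\[
\psi_t:(z_1,\ldots,z_l)\mapsto\Bigl[y_t^{\frac{|I|-|J|}{2}(2k+1)}b^{J,p}_{\beta(J)}(0)z^{\beta(J)}\Bigr]_{0\in J\subset I}.
\]
This holds on all of $W_t\cap U^0(t)$, not only on $W_{t,0}^c$. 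The same reasoning applies verbatim if the cutoff $y_t^{-k+3/2}$ is replaced by $c\,y_t^{-k+3/2}$.

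Next I would compute the image of $\psi_t$ directly. In the rescaled variables $w_i=y_t^{\frac12(2k+1)}z_i$, the map $\psi_t$ is the restriction of the Segre embedding $\Psi_p$ of type $\{b^{J,p}_{\beta(J)}(0)\}$ to the region
\[
\Omega_t=\Bigl\{\,y_t^{\frac12(2k+1)}e^{-\frac{1}{2(l+1)}y_t}\cdot(\text{const})<|w_i|<c\,y_t^{2},\ \text{plus the constraint from }U^0(t)\Bigr\}.
\]
Here $\sigma_i>M_1$ is automatic because of the bound $|z_i|<y_t^{-k+3/2}$. For the upper bound, $\psi_t(W_t\cap U^0(t))$ lies inside the closed parallelogram $\Psi_p((\CP^1)^l)$, so every limit point lies in it. For the lower bound, the open sets $\Omega_t$ increase to $(\C^*)^l$, because the inner radius tends to $0$ and the outer radius tends to $\infty$. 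Hence every point of $\Psi_p((\C^*)^l)$ is a limit of points in the image. The parallelogram is the closure of this dense torus orbit, so the Hausdorff limit is the whole parallelogram. Its vertices are $\Phi_{J,k+1}(p)$ by the identification of coordinates with the peak sections made in Proposition~\ref{prop-wc-0}.

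The main obstacle is uniformity near the faces. When some $|w_i|$ is small or large, $d_{p,t}$ is dominated by the terms with fewer variables. I have to check that the neglected terms are still small \emph{relative to} $d_{p,t}$ there and not just in absolute size, since it is the relative size that controls the projective distance. This is exactly what \eqref{e-dpt} and the $d_{p,t}$-relative bounds \eqref{e-a-vt-u0} and \eqref{e-a-vt-0} provide. I would verify the analogous relative bound for the non-leading $b^{J,p}_\beta$ terms using Proposition~\ref{prop-delta-2} and the Cauchy--Schwarz estimates from Section~\ref{sec-4}. Combined with the convergence $\phi_{p,t}\to\Psi_p$ from Proposition~\ref{prop-wc-0}, this yields convergence of $\Phi_{t_u,k+1}(W_{t_u}\cap U^0(t_u))$ to the stated parallelogram.
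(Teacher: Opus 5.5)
Your proposal is correct, but it takes a different route from the paper.

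\textbf{The paper's route.} It argues by induction on $l$. It writes $(W_t\cap U^0(t))\setminus W_{t,0}^c=\bigcup_i G_i$ with $G_i=\{\sigma_i\geq \frac{1}{l+1}y_t\}$. On $G_i$ one has $|z_i|^2<e^{-\frac{1}{l+1}y_t}$, so the coordinates indexed by $J\not\ni i$ drop out. The induction hypothesis then shows that $\Phi_{t_u,k+1}(G_i)$ converges to a face (sub-parallelogram) of the limit of $\Phi_{t_u,k+1}(W_{t_u,0}^c)$ given by Proposition~\ref{prop-wc-0}.

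\textbf{Your route.} You use that, in the variables $w_i=y_t^{\frac12(2k+1)}z_i$, the leading-term map is exactly the Segre map $\Psi_p$ restricted to a $t$-dependent subset of $(\C^*)^l$. This requires the leading-term approximation to hold uniformly, relative to $d_{p,t}$, on all of $W_t\cap U^0(t)$. The paper also asserts this, and it follows from \eqref{e-a-vt-u0}, \eqref{e-a-vt-0} and $|z_j|<y_t^{-k+3/2}$ on $W_t$. Given that, containment of the limit in the parallelogram is automatic. The filling is in fact already supplied by $W_{t,0}^c$, whose $w$-image is $A_t^l$.

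\textbf{Comparison.} Your version needs no decomposition or induction. It also avoids having to fit the regions $G_i$, which are not literally instances of the lower-dimensional statement, into the induction hypothesis. In exchange, the paper's decomposition records which part of $W_t\cap U^0(t)$ is mapped near which face. The paper reuses that viewpoint for Proposition~\ref{prop-empty-wt} and in the inductive proof of Theorem~\ref{thm-y-j}.

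\textbf{A small correction.} Some non-leading terms of the peak sections are controlled relative to $d_{p,t}$ only by factors like $|z_j|^2<y_t^{-2k+3}$. An example is the $z_j^2$ term of the peak section for $J=I\setminus\{j\}$, compared with its leading term $z_j$. So the uniform Fubini--Study error is $O(y_t^{-k+3/2})$, not $O(y_t^{-k+1/2})$. This is harmless, since it still tends to $0$ for the large $k$ under consideration.
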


\begin{proof}
    When $l = 1$, we have $W_{t,0}^c = W_t \cap U^0(t)$, so the result follows directly from Proposition~\ref{prop-wc-0}. For $l > 1$, we proceed by induction. Observe that
    \[
    (W_t \cap U^0(t)) \setminus W_{t,0}^c = \bigcup_{1 \leq i \leq l} G_i,
    \]
    where $G_i = \{ q \in W_t \cap U^0(t) \mid \sigma_i \geq \frac{1}{l+1} y_t \}$. On $G_i$, we have $|z_i|^2 < e^{-\frac{1}{l+1} y_t}$, so the terms $b_{\beta(J)}(0) z^{\beta(J)}$ for $i \notin J$ can be neglected. For example, if $i = 1$, the map $G_1 \to \CP^{2^{l-1}-1}$ is given by $z \mapsto [b_{\beta(J)}(0) z^{\beta(J)}]_{\{0,1\} \subset J \subset I}$. By the induction hypothesis, the images $\Phi_{t_u,k+1}(G_1)$ converge to the parallelogram of type $\{ b_{\beta(J)}^{J,p}(0) \}_{\{0,1\} \subset J \subset I}$ with vertices $\{ \Phi_{J,k+1}(p) \}_{\{0,1\} \subset J \subset I}$, which is a sub-parallelogram of the limit of $\Phi_{t_u,k+1}(W_{t_u,0}^c)$. The same argument applies for each $i$. This completes the proof.
    \end{proof}
Similar arguments apply to $W_t \cap U^0(t)$ with $i \neq 0$, where $J$ ranges over all $J$ satisfying $i \in J \subset I$. Thus, we have the following proposition.

\begin{proposition}\label{prop-empty-wt}
    As $u \to \infty$, the images $\Phi_{t_u,k+1}(W_{t_u})$ converge to the union of $l+1$ parallelograms $\bigcup_{0 \leq i \leq l} \mathcal{P}_i$, where each $\mathcal{P}_i$ is the parallelogram of type $\{b_{\beta(J)}^{J,p}(0)\}_{i \in J \subset I}$ with vertices $\{\Phi_{J,k+1}(p)\}_{i \in J \subset I}$. For $i \neq j$, the intersection $\mathcal{P}_i \cap \mathcal{P}_j$ is a codimension $1$ sub-parallelogram of $\mathcal{P}_i$.
\end{proposition}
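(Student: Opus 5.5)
The plan is to cover $W_t$ by the $l+1$ regions $W_t\cap U^i(t)$, $0\le i\le l$, and apply to each the analysis just done for $W_t\cap U^0(t)$. By construction of the $U^i(t)$ (with $\sigma_j>M_1$ and $\sum_{j\neq i}\sigma_j<\frac{l}{l+1}y_t$), every point of $W_t$ lies in at least one $U^i(t)$, since the $\sigma_j$ sum to $y_t$ and so some $\sigma_i\ge \frac{1}{l+1}y_t$. Hence $\Phi_{t_u,k+1}(W_{t_u})=\bigcup_i\Phi_{t_u,k+1}(W_{t_u}\cap U^i(t_u))$, and a limit of a finite union is the union of the limits, at least after passing to Hausdorff limits of closed sets along the sequence.

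On $U^i(t)$ I would use the coordinates $(z_0,\dots,\hat z_i,\dots,z_n)$ and repeat the argument verbatim with the index $i$ in the role of $0$. First, \eqref{e-a-vt-0} holds with $d_{p,t}$ replaced by its analogue $d^i_{p,t}=\sum_{i\in J\subset I}y_t^{-(|J|-1)(2k+1)}|b^{J,p}_{\beta(J)}(0)|^2|z_{\hat J}|^2$. The lower bound analogous to \eqref{e-dpt} uses $\epsilon_3^i=\min_{i\in J\subset I}|b^{J,p}_{\beta(J)}(0)|^2>0$, which is positive by the choice of peak sections. As a result, the contributions of the $v_t$, of the non-leading terms, of the sections of $\bcal_{J,t,2}$, and of those with $J\not\subset I$ or $i\notin J$ all become negligible. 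On $U^i(t)$ the sections $s^{J,p}_{t,\mathrm{rn}}$ with $i\notin J$ contain the factor $z_i$, and $|z_i|^2\le e^{-y_t/(l+1)}$ there, so they drop out. The rescaling $w_j=y_t^{(2k+1)/2}z_j$ for $j\neq i$ then gives, via the proposition above, convergence to $\mathcal P_i$: the parallelogram of type $\{b^{J,p}_{\beta(J)}(0)\}_{i\in J\subset I}$ with vertices $\{\Phi_{J,k+1}(p)\}_{i\in J\subset I}$.

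For the intersection claim, I would note that the vertex sets satisfy $\{J: i\in J\}\cap\{J: j\in J\}=\{J:\{i,j\}\subset J\subset I\}$, which has $2^{l-1}$ elements. The coordinate functions $w^{\beta(J)}$ defining $\mathcal P_i$ and $\mathcal P_j$ coincide on the common coordinates. This is because both are restrictions of the single Segre-type map in the homogeneous coordinates $[b^{J,p}_{\beta(J)}(0)z^{\beta(J)}]_{J\subset I}$ before dehomogenizing by $z_i$ versus $z_j$. Concretely, $\mathcal P_i\cap\{\text{coords }J\not\ni j\text{ vanish}\}$ is the face obtained by sending the $j$-th $\CP^1$ factor to the vertex on the side where $j\in J$, and this face is the sub-parallelogram of type $\{b^{J,p}_{\beta(J)}(0)\}_{\{i,j\}\subset J}$. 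The same face appears in $\mathcal P_j$. Moreover, this face is also the limit of the overlap region $W_t\cap U^i(t)\cap U^j(t)$, where both $\sigma_i,\sigma_j\ge\frac{1}{l+1}y_t$ up to the defining inequalities, consistent with the inductive $G_i$ argument in the previous proof. Conversely, any common point lies in the span of the common vertices, because the vertex sets of $\mathcal P_i$ and $\mathcal P_j$ together are linearly independent (distinct coordinate points). A point of $\mathcal P_i$ lying in that coordinate subspace has its coordinates with $j\notin J$ equal to zero, which forces the $j$-factor to $\infty$, so the face is exactly the sub-parallelogram of codimension one.

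The main obstacle will be justifying uniformity on the overlaps and boundary strata. Specifically, in the transition regions where some $\sigma_j$ is near $\frac{1}{l+1}y_t$ or near $(2k-3)\log y_t$, the estimates from two charts must agree, so that no extra limit points appear beyond $\bigcup\mathcal P_i$. I would handle this as in the preceding proof, by induction on $l$ using the regions $G_i$, checking that every limit point of $\Phi_{t_u,k+1}(W_{t_u}\cap U^i)$ lies either in the interior image from $W^c_{t,i}$ or in a lower-dimensional face.
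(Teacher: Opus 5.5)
Your proposal is correct and follows the paper's route: the paper likewise reruns the $W_t\cap U^0(t)$ analysis on each $W_t\cap U^i(t)$, with $J$ ranging over $i\in J\subset I$, and takes the union of the resulting limits. Your Segre-structure argument for $\mathcal P_i\cap\mathcal P_j$ supplies a step the paper only asserts; note that this face is cut out by $w_j=0$, which in the paper's convention is the point $0=[1,0]$ of the $j$-th factor, not $\infty$.
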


\begin{figure}[h]
    \caption{An illustration of the limit of the image $\Phi_{t_u,k+1}(W_t)$}
    \centering
    \begin{tikzpicture}
        \draw[connect] (0,0)--(4,0) (4,0)--(6,3.464) (6,3.464)--(8,0) (6,3.464)--(2,3.464);
        \draw[connect] (4,0)--(6,-3.464) (8,0)--(6,-3.464) (6,-3.464)--(2,-3.464) (2,-3.464)--(0,0) (2,3.464)--(0,0);
        \node[label] at (3,1.732) {$\mathcal{P}_0$};
        \node[label] at (3,-1.732) {$\mathcal{P}_1$};
        \node[label] at (6,0) {$\mathcal{P}_2$};
    \end{tikzpicture}  
\end{figure}

For each $i \in I$, let $A_i$ denote the normal bundle of $X_{0,I}$ in $X_{0,I \setminus \{i\}}$, which is simply the restriction of $[X_{0,i}]$ to $X_{0,I}$. Then, for each $i \in I$, by the construction in subsection~\ref{subsec-pro-bundle} and using the notations there, we have the $(\CP^1)^l$-bundle
\[
\pi_{I,i} : \mathcal{Z}_{I,i} \coloneqq \Theta_{\{A_0, \ldots, A_{i-1}, A_{i+1}, \ldots, A_l\}} \to X_{0,I}.
\]

For each proper subset $J \subset I$, the embedding $\Phi_{J,k+1}$ induced by $\{B_{J,0,1}\}$ embeds $X_{0,I}$ into $\CP^{n_J-1}$. Let $V_{J,I}$ denote the minimal linear subspace containing $\Phi_{J,k+1}(X_{0,I})$. Let $[z_1, \ldots, z_{n_J}]$ be the homogeneous coordinates of $\CP^{n_J-1}$. After a unitary transformation on $\CP^{n_J-1}$, we may assume that $V_{J,I}$ is defined by $z_j = 0$ for $j > \gamma$ for some $\gamma$. Then $\{z_j\}_{j \leq \gamma}$ pulls back to a basis $\mathcal{E}_{J,I} = \{s_j\}_{j \leq \gamma}$ of $H^0(X_{0,I}, (k+1)L - \sum_{i \notin J} [X_{0,i}])$. For each $p \in X_{0,I}$, we can apply a unitary $\gamma \times \gamma$ matrix to $\{s_j\}_{j < \gamma}$ so that the new basis $\{s'_j\}_{j < \gamma}$ satisfies $s'_i(p) = 0$ for $i > 1$.

By multiplying $s'_1$ by a scalar, we can arrange $s'_1(p) = \frac{s^{J,p}}{S_{\hat{J}}}(p)$ for all $p \in X_{0,I}$. By choosing local frames $e_i$ for $[X_{0,i}]$ for $0 \leq i \leq m$ and $e_L$ for $L$, we can write
\[
s'_0 = g_J e_L^{\otimes(k+1)} \otimes \bigotimes_{0 \leq i \leq m} e_i^*, \quad S_i = h_i e_i, \; i > l, \quad S_i = h_i z_i e_i, \; i \leq l,
\]
where $h_i(p) \neq 0$ for all $i$. Then
\[
b^{J,p}_{\beta(J)}(p) = g_J(p) \prod_{i \notin J} h_i(p) = c_p g_J(p) \left( \prod_{i \in I \setminus J} h_i(p) \right),
\]
where $c_p = \prod_{i \notin I} h_i(p)$.

By the decomposition in Proposition~\ref{prop-x-m}, the bases $\mathcal{E}_{J,I}$ for $0 \in J \subset I$ together define an embedding $\Psi : \mathcal{Z}_{I,0} \to \CP^{m_I}$ for some $m_I$. The image of the fiber $\pi_{I,0}^{-1}(p)$ is a parallelogram of type $\{g_J(p)\}_{0 \in J \subset I}$ with vertices $\{\Psi_J(p)\}_{0 \in J \subset I}$, where $\Psi_J$ is the embedding defined by $\mathcal{E}_{J,I}$. As seen before, by a change of variables on $(\CP^1)^l$, the image of the fiber $\pi_{I,0}^{-1}(p)$ is also a parallelogram of type $\{c_p g_J(p) \prod_{i \in I \setminus J} h_i(p)\}_{0 \in J \subset I}$ with vertices $\{\Psi_J(p)\}_{0 \in J \subset I}$.

Let $\wcal_{I,t} = \{ q \in X_t \mid d_I(q) < y_t^{-k+3/2} \}$. Then we have the following theorem.

\begin{theorem}
    As $u \to \infty$, the image $\Phi_{t_u,k+1}(\wcal_{I,t_u})$ converges to the union of $l+1$ varieties $\bigcup_{0 \leq i \leq l} Y_{I,i}$, where each $Y_{I,i}$ is the image of the projective bundle $\zcal_{I,i}$. Moreover, for $i \neq j$, the intersection $Y_{I,i} \cap Y_{I,j}$ is the image of the projective bundle $\Theta_{\{A_v \mid 0 \leq v \leq l,\, v \neq i, j\}}$.
\end{theorem}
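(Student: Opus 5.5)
The plan is to globalize the pointwise statement of Proposition~\ref{prop-empty-wt} over $X_{0,I}$. Throughout we stay in the case $D_I=\emptyset$ of subsection~\ref{subsubsec-6.1}.

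\emph{Step 1: cover $\wcal_{I,t}$ by vertical polydiscs.} Cover $X_{0,I}$ by finitely many appropriate coordinate patches $U\in\ucal^I$, with radii shrunk as at the start of the section. Inside each $U$, the set $\wcal_{I,t}\cap U$ is the union over $p\in U\cap X_{0,I}$ of vertical slices of the form $W_t$. Here we use the comparison $C^{-1}(|z_0|^2+\cdots+|z_l|^2)<d_I^2<C(|z_0|^2+\cdots+|z_l|^2)$. Replacing the bound $y_t^{-k+3/2}$ by $c\,y_t^{-k+3/2}$ does not change the limit, as remarked after Proposition~\ref{prop-wc-0}.

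The estimates used for Proposition~\ref{prop-empty-wt} must hold uniformly in $p$. These are \eqref{e-dpt}, \eqref{e-a-vt-0}, the bound $|a^{J,p}_\alpha(0)-b^{J,p}_{\overline{0\alpha}}(0)|<C_{12}|t|$, and Proposition~\ref{prop-delta-2}. Their constants depend only on $L^2$ norms and on the patch data, which are uniform over $\ucal^I$. The one exception is $\epsilon_3=\min_J|b^{J,p}_{\beta(J)}(0)|^2$. By surjectivity of $\hcal_{J,k+1}\to H^0(D_J,\cdot)$ and compactness of $X_{0,I}$, we can choose peak sections so that $\epsilon_3$ is bounded below uniformly. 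This works because the relevant quantity is the pointwise norm of $\Phi_{J,k+1}$ lifted, which never vanishes.

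\emph{Step 2: remove the dependence on the choice of peak sections.} The unitary change $\mathcal A$ depends on $p$, but it is unitary. So after undoing it, the limit of $\Phi_{t_u,k+1}(W_t)$ is $\bigcup_i\mathcal P_i(p)$, expressed in the fixed basis $\bcal_t$.

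We then identify $\mathcal P_i(p)$ with $\Psi(\pi_{I,i}^{-1}(p))$ for the embedding $\Psi$ of $\zcal_{I,i}$ built from the bases $\mathcal E_{J,I}$, $i\in J\subset I$, via Proposition~\ref{prop-x-m}. Both are parallelograms with vertices $\Phi_{J,k+1}(p)=\Psi_J(p)$, using the proposition identifying $\Phi_\upsilon(q)=\Phi(\beta_j(q))$. Their types agree up to the fiberwise rescaling $c_p\prod_{i\in I\setminus J}h_i(p)$, which is absorbed by a linear change of the fiber coordinates $w$, exactly as in the computation preceding the theorem. Hence $\mathcal P_i(p)=\Psi(\pi_{I,i}^{-1}(p))$, and taking the union over $p$ gives $Y_{I,i}$.

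\emph{Step 3: upgrade fiberwise convergence to convergence of the image.} Hausdorff convergence of $\Phi_{t_u,k+1}(\wcal_{I,t_u})$ to $\bigcup_iY_{I,i}$ needs two inclusions.

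First, every point of $Y_{I,i}$ is a limit. Given a fiber point over $p$, Proposition~\ref{prop-wc-0} and its inductive extension produce approximating points in $W_t$. The same argument on neighbouring slices, with the constants from Step 1 uniform, gives local uniform convergence on $U$.

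Second, limits of image points lie in $\bigcup_iY_{I,i}$. Take $q_u\in\wcal_{I,t_u}$ and pass to a subsequence with $\pi(q_u)\to p$. By the uniform error bounds, $\Phi_{t_u,k+1}(q_u)$ is within $o(1)$ of the image of the model map at the base point of $q_u$. The model maps depend continuously on the base point, so the limit lies in $\mathcal P_i(p)$ for some $i$.

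\emph{Step 4: intersections.} For $i\neq j$, Proposition~\ref{prop-empty-wt} shows that $\mathcal P_i(p)\cap\mathcal P_j(p)$ is the sub-parallelogram with vertices $\{\Phi_{J,k+1}(p)\}_{\{i,j\}\subset J\subset I}$. This is the image of the fiber of $\Theta_{\{A_v\mid v\neq i,j\}}$, embedded via the summands of Proposition~\ref{prop-x-m} with $c$ restricted accordingly. Taking the union over $p$ gives the claim, provided $\Psi$ is injective on the different fibers, which holds because $\Psi$ is an embedding for $k$ large.

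The main obstacle is the uniformity in Step 1 across $p$. In particular, the peak-section normalization must vary continuously, or at least have uniformly bounded constants, and the neglected terms (the $v_t$ corrections and the higher $\beta$ terms) must be controlled uniformly on a whole patch rather than on a single slice $\mathfrak N_p$. The approach I would try first is to redo the Cauchy–Schwarz estimates with $|b_\beta(z'')|^2$ in place of $|b_\beta(0)|^2$, bounding them by sub-mean-value inequalities on slightly smaller polydiscs.
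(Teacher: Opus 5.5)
Your proposal is correct and follows essentially the paper's route. The paper likewise obtains the theorem by combining the fiberwise limit of Proposition~\ref{prop-empty-wt} with an identification of each limiting parallelogram $\mathcal{P}_i(p)$ with $\Psi(\pi_{I,i}^{-1}(p))$, built from the bases $\mathcal{E}_{J,I}$, where the relation $b^{J,p}_{\beta(J)}(p)=c_p\, g_J(p)\prod_{v\in I\setminus J}h_v(p)$ absorbs the difference of types by a fiberwise rescaling. Your Steps 1, 3 and 4 (uniformity in $p$, the two Hausdorff inclusions, and taking the union over $p$ for the intersections) spell out details the paper leaves implicit.
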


\

\subsubsection{The case $D_I \neq \emptyset$}

Let $I \subset \{0, \ldots, m\}$ with $|I| \geq 2$, and for any $i \in I$, define $A_i$ to be the normal bundle of $X_{0,I}$ in $X_{0,I \setminus \{i\}}$, and set
\[
\zcal_{I,i} \coloneqq \Theta_{\{A_v \mid v \in I,\, v \neq i\}} \to X_{0,I}.
\]
For any $\delta > 0$ and any point $p \in X_{0,I}^\delta$, the same arguments as in the previous case apply, yielding the same conclusion as in Proposition~\ref{prop-empty-wt}. If we denote by $\wcal_{I,t}^{\delta}$ the set of points in $X_t$ whose distance to $X_{0,I}^\delta$ is less than $y_t^{-k+3/2}$, then we have:

\begin{proposition}
    As $u \to \infty$, the image $\Phi_{t_u,k+1}(\wcal_{I,t_u}^\delta)$ converges to the union of $|I|$ varieties $\bigcup_{i \in I} Y_{I,i}$, where each $Y_{I,i}$ is the image of the projective bundle $\zcal_{I,i}$.
\end{proposition}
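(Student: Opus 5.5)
The plan is to reduce the statement to the pointwise analysis already done for the case $D_I=\emptyset$ (Proposition~\ref{prop-empty-wt}) and then globalize over the compact set $X_{0,I}^\delta$. Cover $X_{0,I}^\delta$ by finitely many appropriate coordinate patches $U$ centered at points of $X_{0,I}^o$, chosen from $\ucal^I$ together with finitely many $M_1$-admissible patches, so that each $\overline{U}$ stays a fixed positive distance from $D_I$. On each such $U$ the divisors $X_{0,j}$ with $j\notin I$ do not meet $U$, so $S_{\hat I}$ is a nowhere-vanishing holomorphic factor there. Consequently the local expansions $f=\sum_\beta b_\beta z^\beta$ for sections in $\gcal_{I,k+1}$ and $\hcal_{I,k+1,2}$, and more generally for every $\hcal_{J,k+1}$ with $J\subset I$, have exactly the same form as in subsection~\ref{subsubsec-6.1}. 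The only new feature is that sections from $\bcal_{J,t,2}$ and from $\bcal_{J,t}$ with $J\not\subset I$ must be shown negligible on $\wcal^\delta_{I,t}$.

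First, at each $p\in X_{0,I}^\delta$ and for each $J\subset I$ containing a given $i$, construct the chosen peak section $s^{J,p}$. This uses surjectivity of $s\mapsto s/S_{\hat J}$ onto $H^0(X_{0,I},(k+1)L-\sum_{j\notin J}[X_{0,j}])$ for $k$ large, which follows from Kodaira vanishing as before. The coefficients $b^{J,p}_{\beta(J)}(p)$ are nonzero and depend continuously on $p$. Next, repeat the estimates \eqref{e-a-vt-u0} and \eqref{e-a-vt-0} for the $\bar\partial$-correction $v_t$. This works because $\bar\partial\varpi(\tau_J)$ is supported in $\ncal_J$, which is disjoint from $U\cap\wcal^\delta_{I,t}$ for $M_1>3M$, so $v_t$ is holomorphic there and the same Cauchy--Schwarz bound against $H_\alpha$ holds. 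The non-leading terms are handled in the same way, using the constants $C_{12}$ and $\epsilon_3$. Since $p$ ranges over a compact set, these constants can be taken uniform in $p$.

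With these estimates, the argument of Propositions~\ref{prop-wc-0} and~\ref{prop-empty-wt} applies verbatim on each vertical polydisc $\mathfrak N_p\cap\wcal^\delta_{I,t}$. The image converges to the union of the parallelograms $\mathcal P_i(p)$ of type $\{b^{J,p}_{\beta(J)}(p)\}_{i\in J\subset I}$ with vertices $\{\Phi_{J,k+1}(p)\}$. To identify the union over $p$ with $Y_{I,i}=\Psi(\zcal_{I,i})$, apply Proposition~\ref{prop-x-m} with $M=X_{0,I}$ and with line bundles $A_v$ for $v\neq i$. Using the bases $\mathcal E_{J,I}$ and the identity $b^{J,p}_{\beta(J)}(p)=c_p g_J(p)\prod_{v\in I\setminus J}h_v(p)$, the image of each fiber $\pi_{I,i}^{-1}(p)$ is the same parallelogram, after the linear change of fiber variables described in subsection~\ref{subsec-pro-bundle}.

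The main obstacle is uniformity: the pointwise convergence at each $p$ must be upgraded to Hausdorff convergence of $\Phi_{t_u,k+1}(\wcal^\delta_{I,t_u})$. I would handle this by showing that all error bounds, namely $C_{12}|t|$, the $v_t$ bound $c_7y_t^{-2k+1}d_{p,t}$, and the $O(y_t^{-2k+1})$ bound for negligible sections, hold with constants uniform over $p$ in the compact set $X_{0,I}^\delta$. Together with continuity of $p\mapsto$ (parallelogram type, vertices), this gives convergence of the images as sets. Finally, $\wcal^\delta_{I,t}$ is covered by the vertical polydiscs, so no points escape the description.
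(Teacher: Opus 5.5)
Your overall route is the paper's. Its proof is only the remark that the local analysis of the case $D_I=\emptyset$ (Proposition~\ref{prop-empty-wt}) goes through unchanged at each $p\in X_{0,I}^\delta$. Your covering of $X_{0,I}^\delta$, the uniformity of the constants over this compact set, and the fiberwise identification with $\Psi(\zcal_{I,i})$ via Proposition~\ref{prop-x-m} (with $F=-A_I$) spell that remark out. The union over $p\in X_{0,I}^\delta$ is strictly only the part of $Y_{I,i}$ over $X_{0,I}^\delta$, but that imprecision is already in the statement. One step, however, is wrong as you state it, and it concerns exactly what is new when $D_I\neq\emptyset$.

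\textbf{The gap.} The sections of $\bcal_{J,t,2}$ with $J\subsetneq I$ are not new. They already occur when $D_I=\emptyset$, and they are negligible because they vanish to order two along $D_J\supset X_{0,I}$. Near $p$ their local functions are therefore divisible by $(z^{\beta(J)})^2$ rather than $z^{\beta(J)}$. What is new is the splitting $\hcal_{I,k+1}=\gcal_{I,k+1}\oplus\hcal_{I,k+1,2}$ of the top block, and the sections of $\bcal_{I,t,2}$ are \emph{not} negligible on $\wcal^\delta_{I,t}$:
\begin{itemize}
\item A section $S_{\hat I}^{2}\otimes s'$ with $s'(p)\neq 0$ does not vanish at $p$, because $S_{\hat I}$ is a unit near $X_{0,I}^\delta$.
\item After the common normalization $y_t^{-\frac{l}{2}(2k+1)}$, its leading coefficient $b_0(p)$ sits at the same scale as the coefficient $b^{I,p}_{\beta(I)}(p)$ of the peak section.
\item Declaring these sections negligible would replace the $I$-vertex $\Phi_{I,k+1}(p)$, which is induced by all of $\bcal_{I,0}$, by the image of $p$ under the $\gcal_{I,k+1}$-coordinates alone. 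That is a different point in general.
\end{itemize}
Your surjectivity claim for $J=I$ has the same problem. The map $s\mapsto s/S_{\hat I}$ is an isomorphism from the whole $\hcal_{I,k+1}$ onto $H^0(X_{0,I},(k+1)L-A_I)$, but $\gcal_{I,k+1}$ alone is a proper subspace. So a peak section at $p$ chosen inside $\gcal_{I,k+1}$ does not leave all other sections of the $I$-block vanishing at $p$.

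\textbf{The fix.} Treat $\bcal_{I,t,1}\cup\bcal_{I,t,2}$ as one block. It comes from an orthonormal basis of $\hcal_{I,k+1}$, since $\gcal_{I,k+1}\perp\hcal_{I,k+1,2}$. The map $s\mapsto s_{t,\mathrm{rn}}$ is linear (the paper sets $\tilde s=\lambda_1\tilde s_1+\lambda_2\tilde s_2$), and $\qcal_{I,t}\to\mathrm{Id}$. You may therefore rotate the entire block by a unitary matrix so that exactly one section is nonzero at $p$, and use that section as $s^{I,p}$. With this choice, the $D_I=\emptyset$ argument applies verbatim. The $J=I$ summand in Proposition~\ref{prop-x-m} is then the full $H^0(X_{0,I},(k+1)L-A_I)\cong\hcal_{I,k+1}$, consistent with the vertex $\Phi_{I,k+1}(p)$.
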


\

\begin{definition}
    Let $I \subset \{0, \ldots, m\}$ with $|I| \geq 2$. Let $\sigma$ be a cell of maximal dimension such that the $I$-cell is on its boundary. We define the \emph{length} of $I$ to be $\dim(\sigma) - |I| + 1$.
\end{definition}

Suppose $U \in \ucal^I$ is centered at some $p \in X_{0,I}^o$. Let $J$ be a proper subset of $I$. Recall that in the expansion $f^{J,p} = \sum_\beta b^{J,p}_\beta z^\beta$, we have $\beta \geq \beta(J)$. As in subsection~\ref{subsubsec-6.1}, we have
\[
\sum_{\beta} \int_{|z_j| < R_j} |b^{J,p}_\beta|^2 c_\beta \nu_l < \frac{C_1}{k^{l+1} \epsilon_1 (2\pi)^{l+1}},
\]
and $|b^{J,p}_\beta(p)|^2 (2\pi)^{n-l} \prod_{j=l+1}^n R_j^2 \leq \int_{|z_i| < R_i} |b^{J,p}_\beta|^2 \vcal_l$. Therefore,
\[
\sum_{\beta > \beta(J)} |b^{J,p}_\beta(p)|^2 (2\pi)^{n-l} \prod_{j=l+1}^n R_j^2 c_\beta \leq \frac{C_1}{k^{l+1} \epsilon_1 (2\pi)^{l+1}}.
\]
Thus, for any $\epsilon > 0$, there exists $\epsilon' > 0$ such that on $\mathfrak{N}_p$, if $|z_j| < \epsilon'$ for every $j \in I$, then
\[
\sum_{\beta > \beta(J)} |b^{J,p}_\beta(p)|^2 |z^{\beta - \beta(J)}|^2 < \epsilon.
\]
This implies that for any $\epsilon > 0$, there exists $\epsilon' > 0$ such that if $|z_j| < \epsilon'$ for every $j \in I$, then
\[
\left| \frac{|f^{J,p}|^2}{|b^{J,p}_{\beta(J)}(p)|^2 |z^{\beta(J)}|^2} - 1 \right| < \epsilon.
\]

The same arguments show that for some smaller $\epsilon'$, on $\mathfrak{N}_p$, for any section $s$ in $\bcal_{J,k+1,2} \cup (\bcal_{J,k+1,1}' \setminus \{ s^{J,p}_{t,\mathrm{rn}} \})$, we have
\[
\frac{|f_s|^2}{|b^{J,p}_{\beta(J)}(p)|^2 |z^{\beta(J)}|^2} < \epsilon^2,
\]
where $f_s$ is the holomorphic function representing $s$.

The same arguments apply to the sections $s^{J,p'}_{t,\mathrm{rn}}$ for any point $p' \in X_{0,I} \cap U$. By induction, we obtain the following lemma.

\begin{lemma}\label{lem-epsilon-delta}
    For any $\epsilon > 0$, there exist $\delta > 0$ and $u_0$ such that for all $u > u_0$, for each $I$, each $U \in \ucal^I$ with coordinates $(z_0, \ldots, z_n)$, and each $p \in U \cap X_{0,I}$, we have
    \[
    \left| \frac{ \sum_{J \subset I} y_{t_u}^{-(|J|-1)(2k+1)} |b^{J,p}_{\beta(J)}|^2 |z^{\beta(J)}|^2 }{ \sum_{s \in \bigcup_I \bcal_{t,I}} |f_s|^2 } - 1 \right| < \epsilon
    \]
    on $\mathfrak{N}_p \cap (X_{t_u} \setminus X_{t_u}^\delta)$, where $f_s$ is the local holomorphic function representing $s$.
\end{lemma}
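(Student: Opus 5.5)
The plan is to compare, on each vertical polydisc $\mathfrak{N}_p$ near $X_{0,I}$, the total squared modulus $\sum_{s}|f_s|^2$ of the normalized sections with the explicit model $\sum_{J\subset I} y_t^{-(|J|-1)(2k+1)}|b^{J,p}_{\beta(J)}|^2|z^{\beta(J)}|^2$. We show that every other contribution is a small multiple of the model, uniformly in $p$, $U$ and $u$. The induction is on the length of $I$. For length zero ($D_I=\emptyset$) the estimates of subsection~\ref{subsubsec-6.1} already give the claim. For general $I$ we first treat points $p\in X_{0,I}^{\delta}$, and then points near $D_I$, where the patches $U\in\ucal^{J'}$ with $J'\supsetneq I$ are handled by the inductive hypothesis for $J'$, which has smaller length.

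\textbf{Step 1: the model dominates.} By the analogue of inequality~\eqref{e-dpt}, the model is bounded below by $\epsilon_3\prod_{j=1}^l(y_t^{-(2k+1)}+|z_j|^2)$ on $U^0(t)$, and similarly on $U^i(t)$. We need $\epsilon_3$ uniform in $p\in U\cap X_{0,I}$, and we get it from continuity and compactness. The sections $s^{J,p}$ are chosen via unitary matrices that depend continuously on $p$, and $|b^{J,p}_{\beta(J)}(p)|=|g_J(p)|\prod|h_i(p)|$ is bounded below on compact pieces of the fixed finite cover $\ucal$. After shrinking patches, $\epsilon_3$ is therefore uniform.

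\textbf{Step 2: the error terms are small.} We bound the three error sources uniformly.
\begin{itemize}
\item[(i)] The higher Taylor terms $\sum_{\beta>\beta(J)}b^{J,p}_\beta z^\beta$ are controlled by the Cauchy–Schwarz bound derived just before the lemma: $\sum_{\beta>\beta(J)}|b^{J,p}_\beta(p)|^2|z^{\beta-\beta(J)}|^2<\epsilon$ once $|z_j|<\epsilon'$. This is exactly the reason for restricting to $X_{t}\setminus X_t^\delta$ with $\delta=\epsilon'$.
\item[(ii)] The $t$-corrections $a_\alpha-b_{\overline{0\alpha}}$ are $O(|t|)$ by \eqref{e-alpha-not0}.
\item[(iii)] The $\bar\partial$-corrections $v_t$ satisfy \eqref{e-a-vt-0}, which gives a relative size $O(y_t^{-2k+1})$.
\end{itemize}
The remaining basis elements are handled as follows. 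Those in $\bcal_{J,t,2}$, those in $\bcal'_{J,t,1}\setminus\{s^{J,p}\}$, and those in $\bcal_{J',t}$ with $J'\not\subset I$ are bounded relative to the model by $\epsilon^2$ or by negligible $L^2$ mass. For this we use the statement right before the lemma, together with the fact that such sections either vanish at $p$ to higher order along the relevant strata or have $L^2$ norm $O(y_t^{-2k+1})$ on $U$. Adding these bounds gives the ratio estimate, with $u_0$ chosen so that the $|t|$- and $y_t^{-1}$-dependent terms fall below $\epsilon$.

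\textbf{Main obstacle: uniformity near $D_I$.} Near $D_I$ the bound in (i) degenerates, because the sections $s^{J,p}/S_{\hat J}$ and the constants $C_1$ are only controlled on $X_{0,I}^{\delta}$. Our first attempt is to cover the region near $D_I$ by patches in $\ucal^{J'}$ for $J'\supsetneq I$. On these the relevant model for $J'$ contains the model for $I$ as the subsum over $J\subset I$, while the extra terms with $J\not\subset I$ are nonnegative. Applying the inductive hypothesis for $J'$ then yields the estimate for $I$ on those patches. Since $\ucal$ is finite, taking the minimum of the resulting $\delta$'s and the maximum of the $u_0$'s over all patches and all $I$ completes the argument.
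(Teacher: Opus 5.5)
Your Steps 1 and 2 are essentially the paper's argument. They consist of the Cauchy--Schwarz control of $\sum_{\beta>\beta(J)}|b^{J,p}_\beta(p)|^2|z^{\beta-\beta(J)}|^2$, the $\epsilon^2$ bound for the non-peak sections of $\bcal'_{J,t,1}$ and for $\bcal_{J,t,2}$, and the negligibility of the $v_t$-parts (via \eqref{e-a-vt-0}) and of the sections attached to strata not containing $X_{0,I}$, all made uniform in $p\in U\cap X_{0,I}$. You also need these steps in the base case $D_I=\emptyset$: the leading-term reduction in subsection~\ref{subsubsec-6.1} is carried out only on the shrinking region $W_t$, where $|z_j|<y_t^{-k+3/2}$. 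In Step 1, the uniform lower bound does not depend on how the unitary matrices are chosen. Up to the nonvanishing factor $\prod_{i\notin J}|h_i(p)|$, $|b^{J,p}_{\beta(J)}(p)|$ is the norm of the evaluation functional $s\mapsto (s/S_{\hat J})(p)$ on $\gcal_{J,k+1}$. It is therefore intrinsic, continuous in $p$, and positive by the base-point freeness already used to define the peak sections.

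Your last paragraph should be dropped: it is not needed, and its inference is invalid.

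It is not needed because each $U\in\ucal^I$ is a half-polydisc of an appropriate coordinate patch on which only the components $X_{0,i}$ with $i\in I$ appear. Since the cover is finite, $U\cap X_{0,I}$ stays a fixed distance from $D_I$, and Steps 1--2 already give constants uniform in $p\in U\cap X_{0,I}$. Points near $D_I$ lie in patches of $\ucal^{J'}$ with $J'\supsetneq I$. There the assertion is just the $J'$-instance of the lemma, proved by the same Steps 1--2, so nothing has to be transferred from $J'$ to $I$.

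The inference fails for the following reason. Write $\mathrm{model}_{J'}$ and $\mathrm{model}_{I}$ for the numerators and $\mathrm{full}$ for the denominator. Then $\mathrm{model}_{J'}/\mathrm{full}\in(1-\epsilon,1+\epsilon)$ together with $\mathrm{model}_{J'}=\mathrm{model}_{I}+(\text{nonnegative terms})$ only yields $\mathrm{model}_{I}/\mathrm{full}<1+\epsilon$. The lower bound would need the terms with $J\not\subset I$ to be negligible, and this fails near $X_{0,J'}$. Moreover, the $J\subset I$ terms of $\mathrm{model}_{J'}$ use peak sections at a base point of $X_{0,J'}$ and monomials relative to $J'$, so they are not literally $\mathrm{model}_I$.

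Finally, your identification $\delta=\epsilon'$ is accurate only when $|I|=2$. Step (i) needs $|z_j|<\epsilon'$ for every $j\in I$, but for $|I|\geq 3$ being $\delta$-close to $\Sing(X_0)$ forces only two of the $|z_j|$ to be small. On the part of $\mathfrak{N}_p$ near some $X_{0,J}$, $J\subsetneq I$, where a coordinate $z_j$ is of order one, peak sections at $p$ are not the right comparison. So, exactly as with the paper's own sketch, what your argument establishes is the estimate on $\mathfrak{N}_p\cap\{|z_j|<\epsilon' \text{ for all } j\in I\}$.
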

\begin{theorem}\label{thm-y-j}
    For $|I| \geq 2$, the image $\Phi_{t_u,k+1}(\wcal_{I,t_u})$, as $u \to \infty$, converges to the union of a collection of varieties
    \[
    \bigcup_{J,\, I \subset J} Y_J,
    \]
    where each $Y_J$ is the union of $|J|$ varieties $\bigcup_{i \in J} Y_{J,i}$, and each $Y_{J,i}$ is the image of the projective bundle $\zcal_{J,i}$.
\end{theorem}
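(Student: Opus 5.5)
We argue by downward induction on $|I|$, or equivalently by induction on the length of $I$ (the Definition just above). If $I$ has length $0$ then $D_I=\emptyset$, the only $J\supset I$ is $J=I$, and the statement is the theorem proved at the end of subsection~\ref{subsubsec-6.1}. Assume now that $I$ has positive length and that the statement holds for every $J\supsetneq I$ with $X_{0,J}\neq\emptyset$. Each such $J$ has strictly smaller length, so $\Phi_{t_u,k+1}(\wcal_{J,t_u})$ converges to $\bigcup_{J\subset J'}Y_{J'}$.

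\textbf{Step 1: decompose $\wcal_{I,t}$.} Fix $\delta>0$ and split
\[
\wcal_{I,t}=\wcal_{I,t}^{\delta}\ \cup\ \Big(\wcal_{I,t}\cap\bigcup_{J\supsetneq I}\{q\mid d_J(q)<c\,\delta\}\Big).
\]
For the first piece, the Proposition for the case $D_I\neq\emptyset$ shows that $\Phi_{t_u,k+1}(\wcal^\delta_{I,t_u})$ converges to $\bigcup_{i\in I}Y_{I,i}$ restricted over $X^\delta_{0,I}$. Letting $\delta\to0$ exhausts $X_{0,I}^o$. Since $\zcal_{I,i}\to X_{0,I}$ is a compact $(\CP^1)^l$-bundle and the embedding of Proposition~\ref{prop-x-m} is continuous, the closure of the image over $X_{0,I}^o$ is the full $Y_{I,i}$.

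\textbf{Step 2: the region near $D_I$.} Take $U\in\ucal^J$ centered at $p\in X_{0,J}^o$ with $J\supsetneq I$. Lemma~\ref{lem-epsilon-delta} applies uniformly in $p$ and $U$, so on $\mathfrak{N}_p\cap X_{t_u}$ the map $\Phi_{t_u,k+1}$ is approximated, up to a factor $1+\epsilon$ in every coordinate, by the monomial map
\[
z\mapsto\Big[y_{t_u}^{-\frac{|J'|-1}{2}(2k+1)}\,b^{J',p}_{\beta(J')}(p)\,z^{\beta(J')}\Big]_{J'\subset J}.
\]
Points of $\wcal_{I,t}$ near $p$ have $|z_i|$ small for all $i\in I$, while the coordinates $z_j$ with $j\in J\setminus I$ are arbitrary, ranging from tiny to order one. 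We subdivide according to which $z_j$, $j\in J\setminus I$, satisfy $|z_j|<y_t^{-k+3/2}$. This places the point inside $\wcal_{J'',t}$ for the corresponding $J''$ with $I\subset J''\subset J$. For $J''\neq I$ the induction hypothesis gives limit points in $\bigcup_{J''\subset J'}Y_{J'}$. When no such $z_j$ is small, $\sigma_j$ stays bounded and the point lies at bounded distance in $\tau_I$-scale from $X_{0,I}^{\delta'}$ for some $\delta'$, so it is handled by Step 1.

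\textbf{Step 3: both inclusions.} Every limit point lies in $\bigcup_{J\supset I}Y_J$, by Steps 1 and 2. For the reverse inclusion, each fiber parallelogram of $Y_{J,i}$ over $p\in X_{0,J}^o$ is attained as a limit from $\wcal_{J,t}\subset\wcal_{I,t}$, which is already covered by Proposition~\ref{prop-empty-wt} applied at $p$. Fibers over $D_J$ are then obtained by closure.

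The main obstacle is the transition zone in Step 2. There $|z_j|$ for $j\in J\setminus I$ lies between $y_t^{-k+3/2}$ and a fixed $\delta$, and neither the $\delta$-version over $X_{0,I}^\delta$ nor the induction hypothesis applies directly. In that zone we will use the monomial approximation of Lemma~\ref{lem-epsilon-delta} directly. The terms indexed by $J'\not\subset I\cup\{\text{small }j\}$ carry the factor $y_t^{-(2k+1)/2}$ relative to the others and hence drop out. This should leave exactly the Segre parametrization of a fiber of $\zcal_{I,i}$ over the point $(z_{l+1},\dots)\in X_{0,I}$ near $D_I$, so such limit points lie on $Y_{I,i}$. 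Making this uniform in $p$ and compatible across overlapping patches, via the bounded-ratio estimates $C^{-1}|z_i'|<|z_i|<C|z_i'|$, is where the real care is needed.
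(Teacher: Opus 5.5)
Your proposal is essentially the paper's proof: induction on the length of $I$, with $\wcal_{I,t}$ split into three parts, which the paper handles the same way. The part over $X_{0,I}^\delta$ goes to the $\delta$-Proposition. The part within $y_t^{-k+3/2}$ of $D_I$ goes to the induction hypothesis via $\wcal_{I,D,t}=\bigcup_{J}\wcal_{J,t}$. The transition zone is treated directly with the monomial approximation of Lemma~\ref{lem-epsilon-delta}, discarding the sections $s^{I',p}_{t,\mathrm{rn}}$ with $I'\not\subset I$ because $|z_j|\geq c\,y_t^{-k+3/2}$ for $j\in J\setminus I$.

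Two small corrections. First, your Step 2 assertion that $\sigma_j$ stays bounded when no $z_j$ is small is false; as your last paragraph recognizes, $\sigma_j$ can be of order $\log y_t$ there. Second, the discarded terms are smaller only by a factor $y_t^{-(2k+1)/2}/|z_j|\leq c^{-1}y_t^{-2}$ per extra index, not $y_t^{-(2k+1)/2}$. That still suffices, and it is exactly why the threshold $y_t^{-k+3/2}$ is used.
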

\begin{proof}
    We proceed by induction on the length of $I$. The base case, where the length of $I$ is $0$ (i.e., $D_I$ is empty), has already been established. Assume now that the length of $I$ is at least $1$. By the induction hypothesis, for each $J$ with $I \subsetneq J$ and $X_{0,J} \neq \emptyset$, the image $\Phi_{t_u,k+1}(\wcal_{J,t_u})$ converges, as $u \to \infty$, to a subvariety as described in the theorem.

    Let
    \[
    \wcal_{I,D,t} = \left\{ q \in X_t \mid d(q, D_I) < y_t^{-k+3/2} \right\}.
    \]
    Since $\wcal_{I,D,t}$ is the union of the sets $\wcal_{J,t}$ for all $J$ with $I \subsetneq J$ and $|J| - |I| = 1$, it follows that the image $\Phi_{t_u,k+1}(\wcal_{I,D,t_u})$ converges, as $u \to \infty$, to the union
    \[
    \bigcup_{J,\, I \subsetneq J} Y_J,
    \]
    where each $Y_J$ is the union of $|J|$ varieties $\bigcup_{i \in J} Y_{J,i}$, with each $Y_{J,i}$ the image of the projective bundle $\zcal_{J,i}$.

    Now, fix any $\epsilon > 0$. There exists $\delta > 0$ (as in Lemma~\ref{lem-epsilon-delta}) such that $\delta < \delta_2$ and $-\log \delta > M$, where $\delta_2$ is the constant from Proposition~\ref{prop-delta-2}. For any $q \in X_{0,I}$ with $d(q, D_I) < \delta$, we have $q \in U$ for some $U \in \ucal$ centered at a point $p \in X_{0,J}^o$ for some $I \subsetneq J$. Without loss of generality, assume $I = \{0, \ldots, l\}$, $J = \{0, \ldots, l'\}$, and $\prod_{i=0}^{l'} z_i = t$ in local coordinates.

    Define
    \[
    \mathfrak{J}_{p,\delta,t} = \left\{ z \in \mathfrak{N}_p \mid \delta > d(z, D_I) \geq y_t^{-k+3/2} \right\}.
    \]
    For $q_0 \in X_{0,I} \cap \mathfrak{J}_{p,\delta,t}$, consider the set
    \[
    W_{q_0,t} = \left\{ q \in X_t \mid |z_i(q)| < y_t^{-k+3/2} \text{ for } i \leq l,\; z_j(q) = z_j(q_0) \text{ for } j > l \right\}.
    \]
    For each $I' \subset J$, as before, we have sections $s_{t,\mathrm{rn}}^{I',p}$ and corresponding terms \[y_t^{-\frac{|I'|-1}{2}(2k+1)} b_{\beta(I')}^{I',p} z^{\beta(I')},\] where $\beta(I')$ is the multi-index with $\beta_j = 0$ for $j \in I'$ and $\beta_j = 1$ for $j \in J \setminus I'$.

    The same arguments as in subsection~\ref{subsubsec-6.1} show that, to understand the image $\Phi_{t,k+1}(W_{q_0,t})$, we may ignore the $v_t$-part in $s_{t,\mathrm{rn}}$ for all $I'$, as well as the sections $s_{t,\mathrm{rn}}$ for those $I''$ with $I'' \nsubseteq J$. Moreover, $d(q, D_I) \geq 2 y_t^{-k+3/2}$ implies that for some $c > 0$, $|z_j| > c y_t^{-k+3/2}$ for $j \in J \setminus I$, so we can also ignore the sections $s_{t,\mathrm{rn}}^{I',p}$ with $I' \nsubseteq I$.

    Lemma~\ref{lem-epsilon-delta} further implies that, if we replace the local functions $y_t^{-\frac{|I'|-1}{2}(2k+1)} f^{I',p}$ by $y_t^{-\frac{|I'|-1}{2}(2k+1)} b_{\beta(I')}^{I',p} z^{\beta(I')}$ for $I' \subset I$ and ignore the other sections in $\bcal_{I',k+1,2} \cup (\bcal_{I',k+1,1}' \setminus \{ s^{I',p}_{t,\mathrm{rn}} \})$, then the new local map $\Phi'_t$ sends any $q \in W_{q_0,t}$ to a point whose distance to $\Phi_{t,k+1}(q)$ is less than $k^n \epsilon$.

    Now, it suffices to consider the map $\Phi''_t : W_{q_0,t} \to \CP^{2^{l+1}-2}$ defined by
    \[
    \left\{ y_t^{-\frac{|I'|-1}{2}(2k+1)} b_{\beta(I')}^{I',p} z^{\beta(I')} \right\}_{\emptyset \neq I' \subset I},
    \]
    which is equivalent to the map
    \[
    \left\{ y_t^{\frac{|I|-|I'|}{2}(2k+1)} b_{\beta(I')}^{I',p} z^{\beta(I') - \beta(I)} \right\}_{\emptyset \neq I' \subset I}.
    \]
    As before, for each $i \in I$, define
    \[
    W^i_{q_0,t} = \left\{ q \in W_{q_0,t} \mid \sigma_i > \frac{1}{l+1} \left( y_t - \sum_{j=l+1}^{l'} \sigma_j \right) \right\}.
    \]
    Since $\sigma_j < (k - \frac{3}{2}) \log y_t - \log c$ for each $j > l$, the extra term $\sum_{j=l+1}^{l'} \sigma_j$ does not affect the argument. Set $y_t' = \frac{1}{l+1} \left( y_t - \sum_{j=l+1}^{l'} \sigma_j \right)$. Then, on
    \[
    W^{0,c}_{q_0,t} = \left\{ q \in W^0_{q_0,t} \mid \sigma_j < y_t',\; 1 \leq j \leq l \right\},
    \]
    we can ignore the terms $y_t^{-\frac{|I'|-1}{2}(2k+1)} b_{\beta(I')}^{I',p} z^{\beta(I')}$ for $0 \notin I'$. By the substitution $w_i = y_t^{\frac{1}{2}(2k+1)} z_i$, we obtain a holomorphic map $(A_t)^l \to \CP^{2^l-1}$ defined by
    \[
    \left\{ b_{\beta(I')}^{I',p} w^{\beta(I') - \beta(I)} \right\}_{0 \in I' \subset I},
    \]
    where $A_t = \left\{ w \in \C \mid y_t^{\frac{1}{2}(2k+1)} e^{-y_t'} < |w| < y_t^2 \right\}$.

    It is then clear that the image $\Phi'_t(W^{0,c}_{q_0,t})$, as $u \to \infty$, converges to the parallelogram $\pcal_{I,p,0}$ of type $\{ b_{\beta(I')}^{I',p}(0) \}_{0 \in I' \subset I}$ with vertices $\{ \Phi_{I',k+1}(p) \}_{0 \in I' \subset I}$. Moreover, the convergence is uniform for $q_0 \in X_{0,I} \cap \mathfrak{J}_{p,\delta,t}$ in the sense that
    \[
    \sup_{q_0 \in X_{0,I} \cap \mathfrak{J}_{p,\delta,t_u}} d\left( \Phi'_{t_u}(W^{0,c}_{q_0,t_u}), \pcal_{I,p,0} \right) \to 0,
    \]
    where $d(\cdot, \cdot)$ denotes the distance in the target projective space with respect to the Fubini-Study metric.

    By induction on $l$, it follows that the image $\Phi'_{t_u}(W_{q_0,t_u})$, as $u \to \infty$, converges to the union of parallelograms $\bigcup_{i \in I} \pcal_{I,p,i}$, where each $\pcal_{I,p,i}$ is defined similarly to $\pcal_{I,p,0}$, uniformly for $q_0 \in X_{0,I} \cap \mathfrak{J}_\delta$. Therefore, for $u$ sufficiently large,
    \[
    d\left( \Phi_{t_u,k+1}(W_{q_0,t_u}), \bigcup_{i \in I} \pcal_{I,p,i} \right) < 2k^n \epsilon.
    \]

    Furthermore, replacing $p$ by any $p_0 \in X_{0,J} \cap U$ and $s_{t,\mathrm{rn}}^{I',p}$ by $s_{t,\mathrm{rn}}^{I',p_0}$, we can define
    \[
    \mathfrak{J}_{p_0,\delta,t} = \left\{ z \in \mathfrak{N}_{p_0} \mid \delta > d(z, D_I) \geq y_t^{-k+3/2} \right\}.
    \]
    For $q_0 \in X_{0,I} \cap \mathfrak{J}_{p_0,\delta,t}$, we can similarly define $W_{q_0,t}$ and show that
    \[
    d\left( \Phi_{t_u,k+1}(W_{q_0,t_u}), \bigcup_{i \in I} \pcal_{I,p_0,i} \right) < 2k^n \epsilon.
    \]
    By the convergence of $\wcal_{I,t_u}^\delta$, we conclude that for any $\epsilon > 0$, there exists $u_0 > 0$ such that for all $u > u_0$,
    \[
    d\left( \Phi_{t_u,k+1}(\wcal_{I,t_u}), \bigcup_{J,\, I \subset J} Y_J \right) < \epsilon.
    \]
    Therefore, $\Phi_{t_u,k+1}(\wcal_{I,t_u})$ converges, as $u \to \infty$, to $\bigcup_{J,\, I \subset J} Y_J$.
\end{proof}

Let $d_D$ denote the distance function to $\Sing(X_0)$, and set $\wcal_t = \{ q \in X_t \mid d_D(q) < y_t^{-k+3/2} \}$. Then, as a direct corollary of the preceding theorem, we have:

\begin{corollary}\label{cor-sing-x0}
    The image $\Phi_{t_u,k+1}(\wcal_{t_u})$, as $u \to \infty$, converges to the union of a collection of varieties $\bigcup_{I,\, X_{0,I} \subset \Sing(X_0)} Y_I$, where each $Y_I$ is the union of $|I|$ varieties $\bigcup_{i \in I} Y_{I,i}$, and each $Y_{I,i}$ is the image of the projective bundle $\zcal_{I,i}$.
\end{corollary}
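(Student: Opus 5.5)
The corollary will follow from Theorem~\ref{thm-y-j} by a finite union argument. First I will decompose $\wcal_t$ geometrically. Since $\Sing(X_0)=\bigcup_{|I|\geq 2}X_{0,I}$ and in fact $\Sing(X_0)=\bigcup_{|I|=2}X_{0,I}$, the distance function satisfies $d_D(q)=\min_{|I|=2,\,X_{0,I}\neq\emptyset} d_I(q)$. This gives the exact identity
\[
\wcal_t=\bigcup_{|I|=2,\; X_{0,I}\neq\emptyset}\wcal_{I,t},
\]
and the union is finite.

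Next I will apply Theorem~\ref{thm-y-j} to each $I$ with $|I|=2$. For each such $I$, $\Phi_{t_u,k+1}(\wcal_{I,t_u})$ converges to $\bigcup_{J\supset I} Y_J$. I will use the elementary fact that Hausdorff convergence in $\CP^{N_{k+1}-1}$ is compatible with finite unions: if $A_u^{(a)}\to A^{(a)}$ for $a=1,\dots,r$, then $\bigcup_a A_u^{(a)}\to\bigcup_a A^{(a)}$. This holds because $d_H(\bigcup_a A_u^{(a)},\bigcup_a A^{(a)})\leq\max_a d_H(A_u^{(a)},A^{(a)})$. The limit is therefore $\bigcup_{|I|=2}\bigcup_{J\supset I}Y_J$.

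It remains to identify the index set. Every $J$ with $|J|\geq 2$ and $X_{0,J}\neq\emptyset$ contains some two-element $I$ with $X_{0,I}\supset X_{0,J}\neq\emptyset$. Conversely, every $J$ appearing in the union has $|J|\geq 2$. So the union runs exactly over the $J$ with $X_{0,J}\subset\Sing(X_0)$, which are those with $|J|\geq 2$. The description of each $Y_J$ as $\bigcup_{i\in J}Y_{J,i}$, with $Y_{J,i}$ the image of $\zcal_{J,i}$, is inherited verbatim from Theorem~\ref{thm-y-j}.

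The only point needing care, and the one I expect to be the (mild) main obstacle, is that the $Y_J$ produced for different $I$ must agree when a common $J$ contains several two-element subsets. That is, the limit attached to $J$ must not depend on which $I\subset J$ it was reached from. I would handle this by noting that in the proof of Theorem~\ref{thm-y-j} the limit $Y_J$ is constructed intrinsically from the peak sections $s^{I',p}$, $I'\subset J$, and the fixed bases $\bcal_{I',0}$, none of which depend on $I$. Hence the various limits coincide and the union is well defined.
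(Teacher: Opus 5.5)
Your proposal is correct and is essentially the argument the paper intends: the paper states this as a direct corollary of Theorem~\ref{thm-y-j}, which amounts to writing $\wcal_t=\bigcup_{|I|=2}\wcal_{I,t}$ and taking the finite union of the limits, as you do. The same kind of decomposition already appears in the paper's proof of that theorem, where $\wcal_{I,D,t}$ is written as a union of the $\wcal_{J,t}$. Your remark that each $Y_J$ is defined intrinsically from the bases attached to subsets of $J$ is a sound way to settle the well-definedness point.
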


For $|t|$ sufficiently small, $X_t \setminus \wcal_t$ has $m+1$ connected components $\bigcup_{0 \leq i \leq m} X_{t,i}^W$, where $X_{t,i}^W$ consists of points whose distance to $X_{0,i}$ is less than their distance to any other $X_{0,j}$. For $\delta > 0$, we also have $X_t^\delta = \bigcup_{0 \leq i \leq m} X_{t,i}^\delta$.

\begin{theorem}
    There exists $c > 0$ such that for each $i$ and for each $s \in \bigcup_{i \notin I} \bcal_{I,t}$, we have
    \[
    \sup_{x \in X_t \setminus \wcal_t} \frac{\|s\|^2}{\sum_{s' \in \bcal_{i,t}} \|s'\|^2} < c y_t^{-4}.
    \]
\end{theorem}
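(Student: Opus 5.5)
We read the statement on $X_{t,i}^W$, the component of $X_t\setminus\wcal_t$ nearest $X_{0,i}$ (for other components the roles of the indices are exchanged). The claim is that there the sections built from strata $I$ not containing $i$ are pointwise negligible, at rate $y_t^{-4}$, against the sections $\bcal_{i,t}=\bcal_{\{i\},t}$ coming from $\hcal_{0,i,k+1}$. We split $X_{t,i}^W$ into a thick part $X_{t,i}^\delta$, at fixed distance $\delta$ from $\Sing(X_0)$, and a neck part, where $y_t^{-k+3/2}\le d_D<\delta$. Each part needs a lower bound for the denominator and an upper bound for the numerator.

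\emph{Thick part.} By Theorem~\ref{thm-ruan} and the construction of $s_{t,\mathrm{rn}}$ for $|I|=1$, the sections of $\bcal_{i,t}$ converge smoothly on $X_{t,i}^\delta$ to an orthonormal basis of $\hcal_{0,i,k+1}$. For $k$ large this basis has no common zero on $X_{0,i}^\delta$, so the denominator is bounded below by a positive constant there. Now take $s\in\bcal_{I,t}$ with $i\notin I$. Its $\varpi(\tau_I)s_t$ part is supported in $\{\tau_I>\frac32M\}$, which is disjoint from $X_{t,i}^\delta$ once $\delta$ is small relative to $M$. Hence on $X_{t,i}^\delta$ only $-y_t^{-\frac l2(2k+1)}v_t$ survives, and $v_t$ is holomorphic there because $\dbar\varpi(\tau_I)=0$ away from $\ncal_I$. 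The Hörmander bound gives $\|y_t^{-\frac l2(2k+1)}v_t\|_{L^2}^2<c_5y_t^{-2k+1}$. Since $\omega_t$ is uniformly equivalent to a fixed smooth metric on $X_{t,i}^\delta$, the mean value inequality (interior estimates on balls of fixed size) turns this into the pointwise bound $\|s\|^2\le Cy_t^{-2k+1}$. For $k\ge3$ this is at most $Cy_t^{-4}$.

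\emph{Neck part.} This is the main obstacle. Here we work in the charts $U\in\ucal^J$ with $i\in J$ and repeat the local expansion analysis of Section~\ref{sec-almost-bergman}, in the spirit of Lemma~\ref{lem-epsilon-delta}. Points of $X_{t,i}^W$ lie where $\sigma_i$ is the smallest of the $\sigma_j$, $j\in J$, and $|z_j|\gtrsim y_t^{-k+3/2}$ for $j\neq i$.
\begin{itemize}
\item[(a)] For the denominator, the peak section $s^{\{i\},p}$ contributes at least $\epsilon_3|z_{\hat{\{i\}}}|^2=\epsilon_3\prod_{j\in J\setminus\{i\}}|z_j|^2$, up to a factor $1+o(1)$. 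This follows from \eqref{e-dpt} and the estimates on the higher-order terms $b_\beta$ with $\beta>\beta(\{i\})$.
\item[(b)] For the numerator, a section $s^{I',p}$ with $i\notin I'$ has leading term $y_t^{-\frac{|I'|-1}{2}(2k+1)}b^{I',p}_{\beta(I')}z^{\beta(I')}$. Its monomial contains $z_i$, while the monomial of the denominator does not.
\item[(c)] The ratio is therefore bounded by $y_t^{-(|I'|-1)(2k+1)}\,|z_i|^2\prod_{j\in J\setminus I',\,j\neq i}|z_j|^2\big/\prod_{j\in J\setminus\{i\}}|z_j|^2$. Each $z_j$ with $j\in I'$ left uncancelled in the denominator costs at most $y_t^{2k-3}$. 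Each index of $I'$ also carries a factor $y_t^{-(2k+1)}$, apart from the leading one, which is paid for by $|z_i|^2\le e^{-\sigma_i}$.
\item[(d)] On the neck, the smallness of $|z_i|$ relative to the other coordinates, together with $d_D\ge y_t^{-k+3/2}$, should give a net gain of at least $y_t^{-4}$.
\end{itemize}

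The contributions of the $v_t$ parts and of the non-peak sections are handled as in \eqref{e-a-vt-0}: they are $O(y_t^{-2k+1})$ times the leading local expression. The terms with negative exponents are treated as in Case 3, by changing charts to $U^{i_0}(t)$.

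\emph{Assembling.} We combine the two parts using the finite cover $\ucal$ and the uniformity in $p$ established in the proof of Theorem~\ref{thm-y-j}. The exponent bookkeeping in step (c) on the neck is where we expect the difficulty. The threshold $y_t^{-k+3/2}$ in the definition of $\wcal_t$ was chosen precisely so that the worst case, namely $|I'|=1$ with $I'=\{j\}$, $j\neq i$, and $|z_j|$ at the threshold, still leaves a factor $y_t^{-4}$. If this fails, we would shrink $\wcal_t$ by a further power of $\log y_t$.
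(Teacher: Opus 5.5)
Your overall architecture matches the paper's. On the thick part $X^\delta_{t,i}$ you use the convergence of $\bcal_{i,t}$ from Theorem~\ref{thm-ruan} and the small $L^2$-mass of the other sections. On the neck you compare the leading monomials $y_t^{-\frac{|I'|-1}{2}(2k+1)}b^{I',p}_{\beta(I')}z^{\beta(I')}$ of peak sections, with the $v_t$-parts and non-peak sections costing only a relative $O(y_t^{-2k+1})$.

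\textbf{The gap is in the neck estimate itself}, which is the content of the theorem. Step (d) is asserted (``should give'') rather than derived, and the geometry you feed into it is inverted. On $X^W_{t,i}$ the coordinate $|z_i|$ is the \emph{smallest}, so $\sigma_i$ is the \emph{largest} of the $\sigma_j$, not the smallest. Also, $|z_i|^2\le e^{-\sigma_i}$ is an identity and yields nothing without a lower bound on $\sigma_i$.

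The missing observation, which is exactly what the paper uses, is the following. In a chart $U\in\ucal^J$, a point outside $\wcal_t$ that is closest to $X_{0,i}$ has $|z_j|\gtrsim y_t^{-k+3/2}$, i.e.\ $\sigma_j\le(2k-3)\log y_t+C$, for every $j\in J\setminus\{i\}$. Since $\sum_{j\in J}\sigma_j=y_t$, this forces $\sigma_i\ge y_t-(2k-3)(|J|-1)\log y_t-C$, i.e.\ $|z_i|^2\lesssim e^{-y_t}y_t^{(2k-3)(|J|-1)}$. The $\{i\}$-peak term is $\gtrsim y_t^{-(2k-3)(|J|-1)}$. Hence every leading monomial coming from $I'\subset J$ with $i\notin I'$ (all of which contain $z_i$) is $O(e^{-y_t/2})$ relative to the denominator. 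So for the statement as you read it, the ratio is really $O(y_t^{-2k+1})$, governed by the $v_t$-parts and the thick part.

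In particular, your ``worst case'' is misidentified. For $I'=\{j\}$ the ratio is $|z_i|^2/|z_j|^2$, which is exponentially small, not of size $y_t^{-4}$. The rate $y_t^{-4}$, and the reason for the threshold $y_t^{-k+3/2}$ in $\wcal_t$, come from the sections of $\bcal_{I',t}$ with $i\in I'$ and $|I'|\ge 2$, whose leading monomials do not contain $z_i$. Up to constants their ratio is $y_t^{-(|I'|-1)(2k+1)}\prod_{j\in I'\setminus\{i\}}|z_j|^{-2}\le y_t^{-4(|I'|-1)}$, which is extremal for $|I'|=2$ with $|z_j|$ at the threshold.

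The paper's proof bounds precisely these sections. They also have to be discarded for the subsequent claim that $\Phi_{t_u,k+1}(X^W_{t_u,i})$ converges to $\Phi_{0,k+1}(X_{0,i})$. Your proposal never treats them. Your fallback of shrinking $\wcal_t$ would change the statement and is not needed.

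A minor point in the thick part: disjointness of the support of $\varpi(\tau_I)$ from $X^\delta_{t,i}$ requires $\delta$ large compared with $e^{-3M/4}$. This clashes with the small $\delta$ needed on the neck (the paper even takes $\delta<e^{-M}$). It is harmless, for two reasons:
\begin{itemize}
\item for $D_I\neq\emptyset$, the extension $\tilde s=S_{\hat{I}}\otimes\tilde s'$ contains the factor $S_i$, which is $O(|t|)$ on $X_t$ near $X^\delta_{0,i}$;
\item for $D_I=\emptyset$, the stratum $X_{0,I}$ lies at positive distance from $X_{0,i}$.
\end{itemize}
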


\begin{proof}
    Without loss of generality, assume $i = 0$. When $\delta > 0$ is sufficiently small, any point $q \in X_t$ with $d_D(q) < \delta$ lies in some $U \in \ucal$ centered at a point $p \in X_{0,I}$ for some $I$.

    Let $s^{0,p} \in \hcal_{\{0\},t,1}$ be the chosen peak section at $p$ of degree $\{0\}$, and let $f^{0,p} = \sum_\beta b^{0,p}_\beta z^\beta$ be its local representation. Denote by $\beta(0)$ the multi-index with $\beta_0 = 0$ and $\beta_j = 1$ for $j \in I \setminus \{0\}$.

    We have shown that for any $\epsilon > 0$, there exists $\delta > 0$ such that on $U \cap (X_{t,0}^W \setminus X_{t,0}^\delta)$, for $J \subset I$,
    \[
    \left| \frac{|b^{J,p}_{\beta(J)} z^{\beta(J)}|^2}{\sum_{s \in \bcal_{J,t}} |f_s|^2} - 1 \right| < \epsilon,
    \]
    where $f_s$ denotes the local holomorphic function representing $s$.

    On the other hand, for any $J$ with $J \nsubseteq I$ and $s \in \bcal_{J,t}$, we have shown that for sufficiently small $\delta$, $|f_s|^2 = O(y_t^{-2k+1}) d_{p,t}$ on $\mathfrak{N}_p \cap (X_t \setminus X_t^\delta)$. If $0 \notin J$, then since $\sigma_0 > y_t - (2k-3)(|I|-1) \log y_t$ on $X_{t,0}^W$, i.e., $|z_0|^2 < e^{-y_t} y_t^{(2k-3)(|I|-1)}$, we have $|f_s|^2 = O(e^{-\frac{1}{2} y_t}) d_{p,t}$ on $\mathfrak{N}_p \cap (X_{t,0}^W \setminus X_{t,0}^\delta)$.

    This leaves the terms $|b^{J,p}_{\beta(J)} z^{\beta(J)}|^2$ for those $J$ with $0 \in J \subset I$. If $|J| \geq 2$, then
    \[
    |b^{J,p}_{\beta(J)} z^{\beta(J)}|^2 = O\left( y_t^{-(|J|-1)(2k+1) - (|I|-|J|)(2k-3)} \right)
    \]
    on $\mathfrak{N}_p \cap (X_{t,0}^W \setminus X_{t,0}^\delta)$.

    Clearly, on $U \cap (X_{t,0}^W \setminus X_{t,0}^\delta)$,
    \[
    |b^{0,p}_{\beta(0)} z^{\beta(0)}|^2 > c' y_t^{(-2k+3)(|I|-1)}
    \]
    for some $c' > 0$. Therefore,
    \[
    \sup_{x \in \mathfrak{N}_p \cap (X_{t,0}^W \setminus X_{t,0}^\delta)} \frac{\|s\|^2}{\sum_{s' \in \bcal_{0,t}} \|s'\|^2} < c y_t^{-4}
    \]
    for each $s \in \bigcup_{0 \notin I} \bcal_{I,t}$.

    The same argument applies when $p$ is replaced by any $p_0 \in \frac{1}{2} U \cap X_{0,I}$, so
    \[
    \sup_{x \in X_{t,0}^W \setminus X_{t,0}^\delta} \frac{\|s\|^2}{\sum_{s' \in \bcal_{0,t}} \|s'\|^2} < c y_t^{-4}
    \]
    for each $s \in \bigcup_{0 \notin I} \bcal_{I,t}$. For $i \neq 0$, similar estimates hold:
    \[
    \sup_{x \in X_{t,i}^W \setminus X_{t,i}^\delta} \frac{\|s\|^2}{\sum_{s' \in \bcal_{i,t}} \|s'\|^2} < c y_t^{-4}
    \]
    for each $s \in \bigcup_{i \notin I} \bcal_{I,t}$.

    It remains to estimate on $X_{t,0}^\delta$.

When $\delta$ is sufficiently small, the term $b^{i,p} z_i$ dominates with relative error $\epsilon$, so on the complement of $\wcal_{t_u}$, $|z_i|$ is large compared to $y_t^{-\frac{1}{2}(2k+1)}$. Moreover, for small enough $\delta$, the Bergman kernel on $X_i^{\delta}$ is small, and the volume of $X_i \setminus X_i^{\delta}$ is also small. Therefore, the orthonormal basis of $\hcal_{0,i,k+1}$ remains almost orthonormal when restricted to $X_i^{\delta}$; that is, the Bergman kernel of $(X_i^{\delta}, (k+1)L)$ is close to that of $(X_i, (k+1)L)$. By Theorem~\ref{thm-ruan}, the Bergman kernel of $(X_{t,i}^\delta, (k+1)L)$ is similarly close to that of $(X_i^{\delta}, (k+1)L)$. Since the $L^2$-norms of sections $s \in \bigcup_{i \notin I} \bcal_{I,t}$ are $O(y_t^{-k-1/2})$, it follows that on $X_{t,i}^\delta$, the pointwise norms satisfy
\[
\frac{\|s\|^2}{\sum_{s' \in \bcal_{i,t}} \|s'\|^2} = O(y_t^{-2k+1}).
\]

This completes the proof of the theorem.
\end{proof}

This theorem implies that, to study the convergence of $\Phi_{t_u,k+1}(X_{t_u,i}^W)$, we may effectively ignore the contributions from sections in $\bigcup_{i \notin I} \bcal_{I,t}$. Fix an exhaustion of compact subsets $F_\beta \subset\subset X_0 \setminus \Sing(X_0)$ and diffeomorphisms $\phi_{\beta,t}: F_\beta \to X_t$ as in Theorem~\ref{thm-ruan}. It then follows that $\Phi_{t_u,k+1}(\phi_{\beta,t}(F_\beta))$ converges to $\Phi_{0,k+1}(F_\beta)$ as $u \to \infty$. In other words, for any compact subset $\mathfrak{C}$ in the interior of $X_{t_u,i}^W$, the images $\Phi_{t_u,k+1}(\mathfrak{C})$ converge to $\Phi_{0,k+1}(\mathfrak{C})$.

Furthermore, for any $\epsilon > 0$, there exists $\delta > 0$ such that
\[
d\left(\Phi_{t_u,k+1}(X_{t_u,i}^W \setminus X_{t_u,i}^\delta),\, \Phi_{0,i,k+1,1}(X_{t_u,i}^W \setminus X_{t_u,i}^\delta)\right) < \epsilon,
\]
where $\Phi_{0,i,k+1,1}$ is the embedding defined by $\bcal_{\{i\},t,1}$, treating all other sections as zero. It is then straightforward to see that $\Phi_{0,i,k+1,1}(X_{t_u,i}^W \setminus X_{t_u,i}^\delta)$ converges to $\Phi_{0,k+1}(D_{\{i\}})$.

Therefore, we have shown that $\Phi_{t_u,k+1}(X_{t_u,i}^W)$ converges to $\Phi_{0,k+1}(X_{0,i})$ as $u \to \infty$. Combining this with Corollary~\ref{cor-sing-x0}, we have completed the proof of Theorem~\ref{thm-main}.

\

\subsection*{Proof of theorem \ref{thm-2}}
\begin{theorem}[Theorem \ref{thm-2}]
    Let $\rho_{t,k}$ denote the Bergman kernel of $\hcal_{t,k}$. Define
    \[
    \lambda_u(t,k) = \max_{x \in X_t} \rho_{t,k}(x), \qquad
    \lambda_l(t,k) = \min_{x \in X_t} \rho_{t,k}(x).
    \]
    Then for $k$ sufficiently large, there exist constants $c_k > 0$, $c_k' > 0$, and $c_k'' > 0$ such that
    \[
    c_k < \frac{\lambda_u(t,k)}{|\log |t||^{\kappa-1}} < c_k', \qquad
    \lambda_l(t,k) < c_k'' \left(|\log |t||^{-2k+1} a_t^{2k}\right)^{\kappa-1},
    \]
    where $a_t = \log |\log |t||$.
\end{theorem}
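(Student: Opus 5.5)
We work with the almost orthonormal basis $\bcal_t$ of $\hcal_{t,k}$ (with $k+1$ replaced by $k$) built in Section~\ref{sec-3}. The main tool is the extremal characterization
\[
\rho_{t,k}(x)=\sup\{\|s(x)\|^2_{\ke} : \|s\|_{L^2}=1\}.
\]
Since $\qcal_{t_u}\to \mathrm{Id}$ by Theorem~\ref{thm-almost-orthon}, for small $|t|$ the Bergman kernel is comparable, up to a factor tending to $1$, to $\sum_{s\in\bcal_t}\|s\|^2_{\ke}$. The argument is first carried out along a good sequence. Because every sequence $t\to0$ has a good subsequence and the constants below depend only on the uniform bounds of Proposition~\ref{prop-ke-t} and Corollary~\ref{cor-wt-bound}, the estimates then hold for all small $t$.

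\emph{Upper bound for $\lambda_u$.} By Corollary~\ref{cor-wt-bound}, on $U^0(t)$ the pointwise norm of $s=f(dz)^{\otimes k}/\prod z_i^k$ is comparable to $|f|^2\prod_{i=1}^l\sigma_i^{2k}$. Its $L^2$ norm is at least a constant times
\[
\int|f|^2\prod\sigma_i^{2k-2}\,\frac{d\sigma}{\,\cdot\,}.
\]
We then apply the sub-mean value inequality to $f$ on the polydisc in the coordinates $(\log z_1,\dots,\log z_l, z_{l+1},\dots)$. The relevant annulus has $\sigma_i\in[\sigma_i^0-1,\sigma_i^0+1]$, and there $\sigma_i$ varies by a factor $1+O(1/\sigma_i)$. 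This gives
\[
\|s(x)\|^2\le C_k\prod_{i\le l}\sigma_i(x)^{2}\,\|s\|^2_{L^2}\le C_k\,y_t^{2l}\ldots
\]
The exponent needs care. Because the metric $\omega_t$ has Poincaré-type components $d\sigma_i^2/\sigma_i^2$ (up to the $1/\alpha^2$ factors), a unit geodesic ball has Euclidean $\sigma$-width of order $\sigma_i$. Its volume in the measure $\prod\sigma_i^{-2}d\sigma\,d\theta$ is therefore of order $\prod\sigma_i^{-1}$. Since the metric is uniformly equivalent to a model with bounded geometry, the standard estimate $\|s(x)\|^2\le C\|s\|^2_{L^2(B_1(x))}/\mathrm{vol}(B_1(x))$ holds, and it yields $\rho\le C_k\prod_{i\le l}\sigma_i\le C_k y_t^{l}$. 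The largest value of $l=|I|-1$ is $\kappa-1$, which gives the upper bound $c_k'|\log|t||^{\kappa-1}$. On the non-collapsed part, Theorem~\ref{thm-ruan} gives a uniform bound.

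\emph{Lower bound for $\lambda_u$.} Take $I$ with $|I|=\kappa$, so that $D_I=\emptyset$. Take $s$ a unit element of $\hcal_{I,k}$ with $b_0(p)\ne0$, and the section $s_{t,\mathrm{rn}}$ of norm $\to1$. Evaluate at a point with $\sigma_i=y_t/(l+1)$ for all $i$. There, $\|s_{t,\mathrm{rn}}\|^2\approx y_t^{-l(2k-1)}|b_0|^2\prod\sigma_i^{2k}\sim y_t^{l}$. The $v_t$ part and the $a_\alpha$ with $\alpha\neq0$ are controlled as in Case 0 through Case 3 of Section~\ref{sec-4} and by \eqref{e-a-vt-0}, so they are negligible. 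This gives $\rho\ge c_k y_t^{\kappa-1}$.

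\emph{Upper bound for $\lambda_l$.} We choose the point where the kernel is small: the point of $W_t$ at the center of the collapsing region of a $\kappa$-stratum, in the regime where all $|z_j|\approx y_t^{-k+1/2}a_t^{k}$. At such a point, Lemma~\ref{lem-epsilon-delta} lets us reduce $\sum_{s}|f_s|^2$ to $d_{p,t}$ up to a factor $1+\epsilon$. Thus
\[
\rho\approx d_{p,t}\prod\sigma_i^{2k}\cdot(\text{conformal factor}).
\]
Every term of $d_{p,t}$ is then $O\bigl((y_t^{-(2k+1)}a_t^{2k})^{l}\bigr)$. Multiplying by $\prod\sigma_i^{2k}\lesssim (a_t)^{2k l}$ (since $\sigma_i\approx(2k-1)\log y_t$) and by the factor $y_t^{\,l}$ coming from $\prod|z_i|^{-2}\sigma_i^{-2}$ normalizations gives a bound $c_k''(y_t^{-2k+1}a_t^{2k})^{\kappa-1}$. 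This bound is non-optimal, as the statement acknowledges.

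The main obstacle is making this last pointwise reduction rigorous at a point where no single peak section dominates. Here the $v_t$ corrections and higher-order terms must be shown to be small relative to $d_{p,t}$, and not merely in $L^2$. I would first try to reuse \eqref{e-a-vt-0} together with the choice of $W_t$, adjusting the radius to $y_t^{-k+1/2}a_t^k$ (which still lies in $W_t$).
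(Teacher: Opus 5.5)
Your reduction for $\lambda_l$ is the paper's: Lemma~\ref{lem-epsilon-delta} plus almost orthonormality give $\rho\approx d_{p,t}\prod_j\sigma_j^{2k}$ on $W_t\cap U^0(t)$. However, the evaluation point you chose does not give the bound, and the computation that appears to give it is inconsistent. Take a point with $|z_j|^2\approx y_t^{-(2k-1)}a_t^{2k}$ for $1\le j\le l$. The summand of $d_{p,t}$ with $J=\{0\}$ carries no power of $y_t$ and equals
\[
|b^{\{0\},p}_{\beta(\{0\})}(0)|^2\prod_{j=1}^l|z_j|^2\approx|b^{\{0\},p}_{\beta(\{0\})}(0)|^2\,(y_t^{-2k+1}a_t^{2k})^l .
\]
It comes from the degree-$\{0\}$ peak section, which Lemma~\ref{lem-epsilon-delta} retains, so it genuinely bounds $\sum_s|f_s|^2$ from below. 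Since $\sigma_j\approx(2k-1)\log y_t\asymp a_t$ there, you get $\rho\gtrsim(y_t^{-2k+1}a_t^{4k})^{l}$ at your point. This exceeds the claimed $c_k''(y_t^{-2k+1}a_t^{2k})^{l}$ by a factor $a_t^{2kl}\to\infty$.

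You reach the right exponent only through two slips. (i) You used the exponent $2k+1$ in $d_{p,t}$; that is the paper's normalization for $(k+1)$-pluricanonical sections, whereas after your shift $k+1\to k$ the weights are $y_t^{-(|J|-1)(2k-1)}$. (ii) You inserted a factor $y_t^{l}$ that does not exist: $\prod_j|z_j|^{-2}\sigma_j^{-2}$ belongs to the volume form $\omega_t^n$, not to the pointwise norm $\|s\|^2_{\ke}=e^{k\phi_t}|f|^2\prod_j\sigma_j^{2k}$.

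The factor $a_t^{2k}$ in the statement must come from $\sigma_j^{2k}$, not from $|z_j|^2$. Evaluate instead at $|z_j|^2=y_t^{-(2k-1)}$, where
\[
d_{p,t}\le\max_J|b^{J,p}_{\beta(J)}(0)|^2\prod_{j=1}^l\bigl(y_t^{-(2k-1)}+|z_j|^2\bigr)=2^l\max_J|b^{J,p}_{\beta(J)}(0)|^2\,y_t^{-l(2k-1)}
\]
and $\prod_j\sigma_j^{2k}=((2k-1)\log y_t)^{2kl}$. This is the paper's point $q_0$, essentially the minimizer of each factor $(y_t^{-(2k-1)}+e^{-\sigma})\sigma^{2k}$; at your point the $e^{-\sigma}$ term still dominates. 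The pointwise control you flag as the main obstacle is already available: \eqref{e-a-vt-0} bounds the $v_t$-contribution on $W_t\cap U^0(t)$ by a negative power of $y_t$ times $d_{p,t}$.

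The rest is sound. Your lower bound for $\lambda_u$ is the paper's evaluation of $F_I$ at $q_c$, written with a single test section in the extremal characterization.

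Your upper bound for $\lambda_u$ is a genuinely different route. The paper reads $\sup\rho$ off the leading-term model $\sum_{I'}F_{I'}$, bounding each $F_{I'}$ via the maximum of $e^{-x}x^{2k+2}$ and $\sigma_j\le y_t$, so it rests on the whole basis construction along a good sequence. Your estimate $\rho(x)\lesssim\prod_j\sigma_j(x)\le y_t^{l}$ is local and needs only the $C^0$ bounds of Corollary~\ref{cor-wt-bound}. But the geometry is not bounded: the torus fibres have size $\asymp\sigma_j^{-1}$. So $\|s(x)\|^2\lesssim\|s\|^2_{L^2(B_1(x))}/\mathrm{vol}(B_1(x))$ must be proved, in one of two ways:
\begin{itemize}
\item on the local universal cover, i.e.\ Zhang's $w$-coordinates, with deck multiplicity $\asymp\prod_j\sigma_j$;
\item directly, on a product of annuli of $\sigma_j$-width comparable to $\sigma_j(x)$ inside $X_t\cap U$. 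There the Laurent expansion gives $|f(x)|^2\lesssim(\prod_j\sigma_j(x))^{-1}\int|f|^2\,d\sigma\,d\theta$, which with the weights $\prod\sigma_j^{2k}$ (pointwise) and $\prod\sigma_j^{2k-2}$ (in $L^2$) yields the claim.
\end{itemize}

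Finally, the constants in the $\lambda_l$ bound involve $b^{J,p}_{\beta(J)}$, which depend on the good sequence. So pass to arbitrary $t$ by the paper's contradiction argument, rather than by claiming the constants depend only on uniform bounds.
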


Let $\{t_u\}$ be a good sequence. In the following, we continue to use $t$ to denote an element of this sequence.

Let $\rho_{i,k+1}$ denote the Bergman kernel of $\hcal_{0,i,k+1}$, and set
\[
\mathfrak{S} = \max_i \sup_{X_{0,i}} \rho_{i,k+1}.
\]

Fix $\epsilon < \frac{1}{2}$. Then, by Lemma~\ref{lem-epsilon-delta}, there exists $\delta > 0$ such that its conclusion holds. On $X_t^{\delta}$, we have $\rho_{t,k+1} < 2\mathfrak{S}$ for $|t|$ sufficiently small.

For any $I$ with $|I| \geq 2$, we may assume $I = \{0, \ldots, l\}$. For each $U \in \ucal^I$, the set $^\delta U = \{ z \in U \mid d(z, \Sing(X_0)) < \delta \}$ contains a polydisc $\dcal = \{ z \in U \mid |z_i| < \delta_1,\, i \leq l \}$ for some $\delta_1 > 0$. For a smaller $\delta' > 0$, we have $^{\delta'} U \subset \dcal$. Thus, we may assume $\log \delta_1 < -(2k+2)$ and $^\delta U = \dcal$.

Set
\[
\mathfrak{I} = \min_i \inf_{X_{0,i}^{\delta}} \rho_{i,k+1}.
\]
The pointwise norm of the restriction $s_t$ of a section $\tilde{s} = f (dz_0 \wedge \cdots \wedge dz_n)^{\otimes (k+1)}$ on $U^0(t)$ is given by $|f|^2 e^{(k+1)\phi_t} \prod_{i=1}^l \sigma_i^{2k+2}$.

For each $p \in U \cap X_{0,I}$, on $\mathfrak{N}_p^\delta = \mathfrak{N}_p \cap (X_{t_u} \setminus X_{t_u}^{\delta})$, we can use the functions $\{ y_t^{-\frac{|I'|-1}{2}(2k+1)} b_{\beta(I')}^{I',p} z^{\beta(I')} \}_{\emptyset \neq I' \subset I}$ to approximate $\rho_{t,k+1}$. Denote
\[
\mathfrak{N}_p^{\delta,W} = \left\{ z \in \mathfrak{N}_p^\delta \mid |z_i|^2 \geq y_t^{-2k+3} \text{ for some } i \leq l \right\}.
\]

On $U^0(t)\cap \mathfrak{N}_p^{\delta,W}$, we consider the functions
\[
\{F_{I'}\}_{\emptyset\neq I'\subsetneq I},
\]
where
\[
F_{I'} = y_t^{-(|I'|-1)(2k+1)} |b_{\beta(I')}^{I',p} z^{\beta(I')}|^2 \prod_{i=1}^{l} (\sigma_i)^{2k+2}.
\]
For each $I'$ in the index set,
\[
F_{I'} = y_t^{-(|I'|-1)(2k+1)} |b_{\beta(I')}^{I',p}|^2 \prod_{i\in I'} (\sigma_i)^{2k+2} \prod_{i=1,\,i\notin I'}^{l} |z_i|^2 (\sigma_i)^{2k+2}.
\]
Since the function $e^{-x}x^{2k+2}$ on $[0,+\infty)$ attains its unique maximum $e^{-2k-2}(2k+2)^{2k+2}$ at $x=2k+2$, we have
\begin{align*}
    F_{I'} &< y_t^{(|I'|-1)(2k+2-(2k+1))} |b_{\beta(I')}^{I',p}|^{2} [e^{-2k-2}(2k+2)^{2k+2}]^{|I'|-1} \\
    &= y_t^{|I'|-1} |b_{\beta(I')}^{I',p}|^{2} [e^{-2k-2}(2k+2)^{2k+2}]^{|I'|-1}.
\end{align*}

Now, focus on the region $\mathfrak{N}_{p,W} \cap U^0(t)$, where $|z_i|^2 < y_t^{-2k+3}$ for $0\leq i\leq l$. For $I'\subsetneq I$, the same estimate holds:
\[
F_{I'} < y_t^{|I'|-1} |b_{\beta(I')}^{I',p}|^{2} [e^{-2k-2}(2k+2)^{2k+2}]^{|I'|-1}.
\]
For $I'=I$, we have
\[
F_I \leq |b_{\beta(I)}^{I,p}|^{2} y_t^{|I|-1},
\]
and at the point $q_c$ with $\sigma_i(q_c) = \frac{1}{l+1} y_t$ for $1\leq i\leq l$,
\[
F_I(q_c) = |b_{\beta(I)}^{I,p}|^{2} \left[\frac{1}{l+1} y_t\right]^{|I|-1}.
\]
Therefore,
\[
c_1(k)^{-1} < \frac{\sup_{\mathfrak{N}_p^{\delta}} \rho_{t,k+1}}{|b_{\beta(I)}^{I,p}|^{2} y_t^{|I|-1}} < c_1(k)
\]
for some constant $c_1(k)$ independent of $t$ and $U$. Thus,
\[
c_2(k)^{-1} < \frac{\sup_{X_t} \rho_{t,k+1}}{y_t^{\kappa-1}} < c_2(k)
\]
for some constant $c_2(k)$ independent of $t$. This proves the upper and lower bounds for the maximum of the Bergman kernel in Theorem~\ref{thm-2} for a good sequence $\{t_u\}$.

Suppose, for contradiction, that there exists a sequence $\{t_i\}$ such that
\[\frac{\sup_{X_{t_i}} \rho_{t_i,k+1}}{y_{t_i}^{\kappa-1}} \to \infty\]  as $i\to\infty$. Then, by extracting a subsequence which is a good sequence, we obtain a contradiction. Therefore, there exists $c_k$ such that $\frac{\sup_{X_t} \rho_{t,k+1}}{y_t^{\kappa-1}} < c_k$ for $|t|$ sufficiently small. The lower bound follows by the same argument. This establishes the first part of Theorem~\ref{thm-2}.

Next, we estimate the minimum of $\rho_{t,k+1}$ on $\mathfrak{N}_{p,W}$. Analogous to inequality~\ref{e-dpt}, we have
\begin{equation}
    \sum_{0\in J\subset I} y_t^{-(|J|-1)(2k+1)} |b^{J,p}_{\beta(J)}(0)|^2 |z_{\hat{J}}|^2 \leq \max_J |b^{J,p}_{\beta(J)}(0)|^2 \prod_{j=1}^{l} (y_t^{-(2k+1)} + |z_j|^2).
\end{equation}
Thus, we need to estimate the minimum of
\[
f(q) = \prod_{j=1}^{l} (y_t^{-(2k+1)} + |z_j|^2) \sigma_j^{2k+2}.
\]
Let $q_0 \in \mathfrak{N}_{p,W}$ be the point where $|z_j|^2 = y_t^{-(2k+1)}$ for $1\leq j\leq l$. Then
\[
f(q_0) = \left[2 y_t^{-(2k+1)} ((2k+1)\log y_t)^{2k+2}\right]^l.
\]
This implies
\[
\inf_{\mathfrak{N}_p^{\delta}} \rho_{t,k+1} < c_3(k) \max_J |b^{J,p}_{\beta(J)}(0)|^2 \left[y_t^{-(2k+1)} (\log y_t)^{2k+2}\right]^l
\]
for some constant $c_3(k)$ depending only on $k$. Therefore,
\[
\inf_{X_t} \rho_{t,k+1} < c_4(k) \left[y_t^{-(2k+1)} (\log y_t)^{2k+2}\right]^{\kappa-1}
\]
for some constant $c_4(k)$ independent of $t$. This again holds for $t$ in a good sequence $\{t_u\}$, and a similar contradiction argument yields the general result. This completes the proof of Theorem~\ref{thm-2}.
\subsubsection*{Proof that $\pi_*(kL)$ is locally free}

Since each $X_{0,i}$ is smooth, any section \[s \in H^0_0(X_{0,i}, kL)\] extends to a section on $\xcal$. Define $S^1(X_0) = \Sing(X_0)$ and inductively $S^{a+1}(X_0) = \Sing(S^a(X_0))$. Then we have the following decompositions:
\[
H^0(S^1(X_0), kL) = H^0(S^2(X_0), kL) \oplus \bigoplus_{|I|=2} H^0_0(X_{0,I}, kL),
\]
and more generally,
\[
H^0(S^{a}(X_0), kL) = H^0(S^{a+1}(X_0), kL) \oplus \bigoplus_{|I|=a} H^0_0(X_{0,I}, kL).
\]
Since $S^{n}(X_0)$, if nonempty, consists of finitely many points, any section in \[H^0(S^n(X_0), kL)\] can be extended to $\xcal$. By backward induction on $a$, it follows that every section of $H^0(X_0, kL)$ extends to $\xcal$. Therefore, $\pi_*(kL)$ is locally free for sufficiently large $k$.

\bibliographystyle{plain}
\bibliography{references}
\end{document}